\setlist[itemize]{wide = 0pt, labelwidth = 2em, labelsep*=0em, itemindent = 0pt, leftmargin = \dimexpr\labelwidth + \labelsep\relax, noitemsep,topsep = 1ex,}
\setlist[enumerate]{wide = 0pt, labelwidth = 2em, labelsep*=0em, itemindent = 0pt, leftmargin = \dimexpr\labelwidth + \labelsep\relax, noitemsep,topsep = 1ex}
\definecolor{hot}{RGB}{65,105,225}
\setlist[itemize]{wide = 0pt, labelwidth = 2em, labelsep*=0em, itemindent = 0pt, leftmargin = \dimexpr\labelwidth + \labelsep\relax, noitemsep,topsep = 1ex,}
\setlist[enumerate]{wide = 0pt, labelwidth = 2em, labelsep*=0em, itemindent = 0pt, leftmargin = \dimexpr\labelwidth + \labelsep\relax, noitemsep,topsep = 1ex}
\theoremstyle{plain}
\newtheorem{thmx}{Theorem}
\renewcommand{\thethmx}{\Alph{thmx}} 
\newtheorem{thm}{Theorem}[section]  
\newtheorem{lem}[thm]{Lemma}
\newtheorem{claim}[thm]{Claim} 
\newtheorem{proposition}[thm]{Proposition}
\newtheorem{conjecture}[thm]{Conjecture}
\newtheorem{corx}[thmx]{Corollary} 
\theoremstyle{definition}
\newtheorem{dfn}[thm]{Definition}
\theoremstyle{remark}
\newtheorem{rem}[thm]{Remark}
\numberwithin{equation}{section}  
\theoremstyle{plain}
\newlist{thmlist}{enumerate}{1}
\setlist[thmlist]{wide = 0pt, labelwidth = 2em, labelsep*=0em, itemindent = 0pt, leftmargin = \dimexpr\labelwidth + \labelsep\relax, noitemsep,topsep = 1ex, font=\normalfont, label=(\roman*), ref=\thethm.(\roman{thmlisti})}
\newlist{thmenum}{enumerate}{1} % also creates a counter called 'propenumi'
\setlist[thmenum]{wide = 0pt, labelwidth = 2em, labelsep*=0em, itemindent = 0pt, leftmargin = \dimexpr\labelwidth + \labelsep\relax, noitemsep,topsep = 1ex, font=\normalfont, label=(\roman*), ref=\thethmx.(\roman{thmenumi})}%{label=\alph*), ref=\thethmx~(\alph*)}
\newlist{corlist}{enumerate}{1} % also creates a counter called 'propenumi'
\setlist[corlist]{wide = 0pt, labelwidth = 2em, labelsep*=0em, itemindent = 0pt, leftmargin = \dimexpr\labelwidth + \labelsep\relax, noitemsep,topsep = 1ex, font=\normalfont, label=(\roman*), ref=\thecorx.(\roman{corlisti})}%{label=\alph*), ref=\thethmx~(\alph*)}
\crefname{lem}{Lemma}{Lemmas} 
\crefname{conjecture}{Conjecture}{Conjectures}
\crefname{thm}{Theorem}{Theorems}
\crefname{proposition}{Proposition}{Propositions}
\crefname{dfn}{Definition}{Definitions}
\crefname{rem}{Remark}{Remarks}
\crefname{cor}{Corollary}{Corollaries}
\crefname{corx}{Corollary}{Corollaries}
\crefname{problem}{Problem}{Problems}
\crefname{thmx}{Theorem}{Theorems}
\crefname{claim}{Claim}{Claims}
\crefname{assumption}{Assumption}{Assumptions}
\crefname{main}{Main Theorem}{Main Theorems}
\def\ep{\varepsilon}
\def\rank{{\rm rank}\,}
\newcommand{\cS}{\mathcal{S}}
\newcommand*{\rom}[1]{\expandafter\@slowromancap\romannumeral #1@}
\newcommand{\crefnames}[3]{%
	\@for\next:=#1\do{%
		\expandafter\crefname\expandafter{\next}{#2}{#3}%
	}%
}
\newcommand{\sD}{\mathscr{D}}
\newcommand{\sH}{\mathscr{H}}
\newcommand{\sL}{\mathscr{L}}
\newcommand{\sM}{\mathscr{M}}
\newcommand{\sP}{\mathscr{P}}
\newcommand{\sS}{\mathscr{S}}
\newcommand{\sV}{\mathscr{V}}
\newcommand{\sX}{\mathscr{X}}
\newcommand{\cC}{\mathcal C}
\newcommand{\cL}{\mathcal L}
\newcommand{\cF}{\mathcal F}
\newcommand{\cM}{\mathcal M}
\newcommand{\cN}{\mathcal N}
\newcommand{\cU}{\mathcal U}
\newcommand{\cW}{\mathcal W}
\newcommand{\cO}{\mathcal O}
\newcommand{\cR}{\mathcal R}
\newcommand{\bB}{\mathbb{B}}
\newcommand{\bC}{\mathbb{C}}
\newcommand{\bD}{\mathbb{D}}
\newcommand{\bF}{\mathbb{F}}
\newcommand{\bN}{\mathbb{N}}
\newcommand{\bP}{\mathbb{P}}
\newcommand{\bR}{\mathbb{R}}
\newcommand{\bZ}{\mathbb{Z}}
\def\db{\bar{\partial}}
 \def\d{\partial} 
\def\hess{\sn\partial\bar\partial}
\def\sn{\sqrt{-1}}
\def\End{{\rm \small  End}}
\def\vol{\text{\small  Vol}}
\def\btau{{\bm{\tau}}}
\def\bsigma{{\bm{\sigma}}}
\newcommand{\Hom}{{\rm Hom}}
\newcommand{\GL}{{\rm GL}}
\newcommand{\SL}{{\rm SL}}
\begin{document} 
	\title[$\Gamma$-dimension in families]{Deformation Openness of Big Fundamental Groups\\
		 and Applications}

	\author[Y. Deng]{Ya Deng}
	\address{CNRS, Institut \'Elie Cartan de Lorraine, Universit\'e de Lorraine, Site de
		Nancy,   54506 Vand\oe uvre-lès-Nancy, France} 
	\email{ya.deng@math.cnrs.fr}
	\urladdr{https://ydeng.perso.math.cnrs.fr} 
	
	\author[C. Mese]{Chikako Mese}
	\address{Johns Hopkins University, Department of Mathematics, Baltimore, MD}
	\email[Chikako Mese]{cmese@math.jhu.edu} 
	\urladdr{https://sites.google.com/view/chikaswebpage/home}
	
	\author[B. Wang]{Botong Wang}
	\address{\parbox{\linewidth}{Department of Mathematics, University of Wisconsin-Madison, 480 Lincoln Drive, Madison WI 53706-1388\\
	School of Mathematics, Institute for Adanced Study, 1 Einstein Drive, Princeton, NJ 08540}}
	\email {wang@math.wisc.edu}
	\urladdr{https://people.math.wisc.edu/~bwang274/}  
	
	\keywords{Big fundamental group, $\Gamma$-dimension, $\Gamma$-reduction, Shafarevich morphism, Deformation of harmonic maps, Bruhat-Tits building, Harmonic bundle, Canonical current, Pseudo-Brody hyperbolicity, Variation of (mixed) Hodge structures}
\subjclass{	14C30, 14D07, 53C43, 14F35}
	\begin{abstract}
In 2001, de Oliveira, Katzarkov, and Ramachandran conjectured that the property of smooth projective varieties having big fundamental groups is stable under small deformations. This conjecture was proven by Benoît Claudon in 2010 for surfaces and for threefolds under suitable assumptions. In this paper, we prove this conjecture for smooth projective varieties admitting a big complex local system. Moreover, we address a more general conjecture by Campana and Claudon concerning the deformation invariance of the \(\Gamma\)-dimension of projective varieties.

As an application, we establish the deformation openness of pseudo-Brody hyperbolicity for projective varieties endowed with a big and semisimple complex local system.

To achieve these results, we develop the deformation regularity of equivariant pluriharmonic maps into Euclidean buildings and  Riemannian symmetric spaces in families, along with techniques from the reductive and linear Shafarevich conjectures.

	\end{abstract}
	\maketitle
	\tableofcontents
	\section{Introduction} 
In \cite{Sha13}, Shafarevich conjectured that the universal cover of a smooth complex projective variety $X$ is holomorphically convex. If true, this conjecture implies the existence of a proper holomorphic fibration ${\rm sh}_X:X\to {\rm Sh}(X)$ (referred to as the Shafarevich morphism), which contracts to a point precisely those closed subvarieties $Z$ for which ${\rm Im}[\pi_1(Z)\to \pi_1(X)]$ is finite. In \cite{Kol93, Cam94}, Campana and Kollár independently established the existence of such a map, up to   birational equivalence   (so-called \emph{
	$\Gamma$-reduction} or \emph{Shafarevich map}, cf. \cref{thm:Campana} below). In \cite{Eys04,EKPR12}, Eyssidieux, Katzarkov, Pantev and Ramachandran proved the Shafarevich conjecture for smooth projective varieties with \emph{linear fundamental groups}.  	Recall that a finitely generated group  
$\Gamma$ is called \emph{linear} (resp. \emph{reductive}) 
if there exists an almost faithful (i.e. with finite kernel) linear (resp. \emph{reductive})  representation $ \Gamma\to \GL_N(\bC)$.  For more recent progress on the Shafarevich conjecture, see \cite{DY23}.

\medspace

  In \cite[\S 18.4]{Kol95}, Koll\'ar  asked the question of how  Shafarevich maps for complex projective varieties vary  in families.  
	 Precise conjectures were later proposed 	 by de Oliveira, Katzarkov, and Ramachandran \cite{dOKR02}, as well as by Campana  and Claudon \cite{Cla10}. This paper aims to explore these conjectures, offering some applications to the hyperbolicity of algebraic varieties.
	\subsection{Deformation of varieties with big fundamental groups}
	Let $X$ be a  compact K\"ahler manifold $X$.   We say $X$ has     \emph{big} (or \emph{generically large} in \cite{Kol95}) fundamental group if for any positive-dimensional irreducible subvariety $Z$ of $X$ passing through a very general point, the image ${\rm Im}[\pi_1(Z)\to \pi_1(X)]$ is an infinite group. Similarly, a representation $\varrho:\pi_1(X)\to \GL_N(\bC)$ is called \emph{big} if for any positive-dimensional irreducible subvariety $Z$ of $X$  passing through a very general point, the image $\varrho({\rm Im}[\pi_1(Z)\to \pi_1(X)])$ is an infinite group.  
	
As seen in the above definition, we emphasize that throughout this paper, when discussing properties of the fundamental group \(\pi_1(X)\) of a compact Kähler manifold \(X\), or representations \(\pi_1(X) \to \GL_N(\bC)\), we are not solely referring to the abstract group structure of \(\pi_1(X)\). Instead, we also take into account the structure inherently tied to the complex structure of \(X\) itself.

	In \cite[Conjecture 1.1]{dOKR02}, de Oliveira, Katzarkov, and Ramachandran proposed the following intriguing  conjecture. 
	\begin{conjecture}[\cite{dOKR02}]\label{conj:kollar}
		Let $f:\sX\to \bD$ be a smooth projective family of varieties over the unit disk $\bD$. For each $t\in \bD$, let $X_t:=f^{-1}(t)$ denote the fiber over $t$.  If $X_0$ has big fundamental group, then $X_t$ also has big  fundamental group for sufficiently small $t\in \bD$.
	\end{conjecture}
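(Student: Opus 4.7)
The plan is to exploit the hypothesis (established elsewhere in the paper) that $X_0$ admits a \emph{big complex local system}, and to prove that this local system remains big on all nearby fibres. Because $f:\sX\to\bD$ is smooth and proper, the Ehresmann fibration theorem provides canonical identifications $\pi_1(X_t)\simeq\pi_1(\sX)\simeq\pi_1(X_0)$, so any big representation $\varrho_0:\pi_1(X_0)\to \GL_N(\bC)$ extends to a family $\{\varrho_t\}_{t\in\bD}$. Since bigness of $\varrho_t$ automatically implies bigness of $\pi_1(X_t)$, it suffices to show that $\varrho_t$ remains big for all $t$ in a small neighbourhood of $0$.

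The strategy is to attach to each $\varrho_t$ a Shafarevich morphism ${\rm sh}_{\varrho_t}:X_t\to {\rm Sh}_{\varrho_t}(X_t)$ whose general fibres are precisely the positive-dimensional subvarieties on which $\varrho_t$ has finite image, and to control the dimension of its image as a function of $t$. Following Eyssidieux, Katzarkov, Pantev and Ramachandran, one constructs ${\rm sh}_{\varrho_t}$ from $\varrho_t$-equivariant pluriharmonic maps on the universal cover $\widetilde{X}_t$: into the Riemannian symmetric space attached to the Zariski closure of the semisimplification of $\varrho_t$ (Corlette-Simpson), and into the Bruhat-Tits buildings associated with the relevant non-archimedean completions (Gromov-Schoen). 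Bigness of $\varrho_0$ is equivalent to $\dim {\rm Sh}_{\varrho_0}(X_0)=\dim X_0$. The key analytic input I would then prove is a \emph{deformation regularity} statement: these equivariant pluriharmonic maps, and hence the canonical $(1,1)$-currents they define, vary continuously in $t$ (in $C^k_{\rm loc}$ or a suitable Sobolev sense). Granted this, the Stein factorisations assembling into ${\rm sh}_{\varrho_t}$ vary continuously with $t$, so $t\mapsto\dim {\rm Sh}_{\varrho_t}(X_t)$ is lower semicontinuous; combined with the trivial upper bound $\dim {\rm Sh}_{\varrho_t}(X_t)\le\dim X_t$, this forces equality, and hence bigness of $\varrho_t$, for all $t$ close to $0$.

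The main obstacle will be the deformation regularity when the harmonic map takes values in a Bruhat-Tits building: buildings are singular NPC spaces, so the usual elliptic theory available for smooth symmetric-space targets does not apply directly. I would handle this by setting up a Korevaar-Schoen energy framework for the whole family, extracting weak limits of pluriharmonic maps on nearby fibres, and invoking the Gromov-Schoen regularity theorem together with the uniqueness of equivariant harmonic maps into NPC targets to match any such limit with the reference pluriharmonic map on $X_0$. The final, delicate point is to transfer this convergence from the maps themselves to the associated canonical currents on $X_t$, since it is through these currents---and not directly through the harmonic maps---that the Shafarevich dimension of $\varrho_t$ is actually controlled.
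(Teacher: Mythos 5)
Your outline captures the overall strategy---using Shafarevich morphisms and deformation regularity of equivariant pluriharmonic maps---but there is a genuine gap at the heart of the proposal, and it is precisely the point that separates the merely linear case from the reductive case handled by \cref{thm:reductive}.

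You propose to construct $\mathrm{sh}_{\varrho_t}$ from pluriharmonic maps into the symmetric space of the Zariski closure of the \emph{semisimplification} of $\varrho_t$, together with maps into Bruhat--Tits buildings. This machinery only sees the reductive quotient of $\varrho_t$, and the semisimplification of a big representation is in general \emph{not} big: a big representation with a nontrivial unipotent radical can have a semisimple part with kernel large enough that positive-dimensional subvarieties map to finite groups. Concretely, what your construction gives is the Shafarevich morphism associated with $H_t = \bigcap\ker(\text{reductive reps})$, i.e.\ exactly \cref{main:Betti}, and this only yields $\gamma d(X_t,H_t)\geq\gamma d(X_0,H_0)$, which does not force $X_t$ to have big fundamental group. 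The paper closes this gap with the linear Shafarevich machinery of Eyssidieux--Katzarkov--Pantev--Ramachandran: one must add to the harmonic-map package an $\bR$-VMHS of weight length one (recording the relevant unipotent extension data), its mixed period map, and the associated \emph{semi-canonical form} $S_\alpha$ (\cref{def:canonical2}), and prove that $S_{\alpha_t}$ also varies smoothly in $t$ (via \cref{lem:harmonic}). It is the sum $\sum_i T_{\tau_{i,t}}+\omega_{\varrho_{1,t}}+\varepsilon S_{\alpha_t}$---not just the reductive piece---whose rank at a general point controls $\dim\mathrm{Sh}_{H_{N',t}}(X_t)$.

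A secondary issue: you invoke "the Stein factorisations assembling into $\mathrm{sh}_{\varrho_t}$ vary continuously with $t$" to get lower semicontinuity of the Shafarevich dimension. The paper does not attempt to vary the Shafarevich morphism continuously in $t$ (it is defined by an algebraic criterion and there is no reason it deforms well). Instead, the argument compares, at a single general point, the number of strictly positive eigenvalues of the sum of canonical/semi-canonical forms: this quantity is $\geq m$ on $\Omega\cap X_t$ by lower semicontinuity of the rank of a continuous matrix-valued function (via \cref{lem:continuity} and \cref{harmonic2}), and $\leq m'_t$ because these forms vanish on the fibres of $\mathrm{sh}_{H_{N',t}}$. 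You should also be careful about the claim "it suffices to show that $\varrho_t$ remains big"---the paper explicitly leaves open whether the \emph{given} $\varrho_t$ is big for small $t$ (see the remark after \cref{prop:big}), and instead works with a different, carefully chosen representation (or directly with $H_{N',t}$) to get around this.
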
  
\cref{conj:kollar} as stated here is  more general than in \cite[Conjecture 1.1]{dOKR02}, as we do not assume the existence of a linear representation of \(\pi_1(X_0)\) with infinite image. This conjecture was proved by Claudon for the case \(\dim X_0 = 2\) (cf. \cite[Corollaire 1.1]{Cla10}). Furthermore, he established \cref{conj:kollar} for \(\dim X_0 = 3\) (cf. \cite[Théorème 0.4]{Cla10}), contingent upon a conjecture regarding the virtual abelianity of the fundamental groups of special Kähler orbifold surfaces (cf. \cite[Conjecture 3]{Cla10}). 
However, beyond these works, there has been no progress on this conjecture in higher-dimensional cases.

In this paper, we prove the following theorem. 

\begin{thmx}[=\cref{mainA}] \label{thm:kollar}
	\cref{conj:kollar} holds if there exists a big representation \( \varrho:\pi_1(X_0) \to \GL_{N}(\bC) \). In particular, it holds if \(\pi_1(X_0)\) is linear.
\end{thmx}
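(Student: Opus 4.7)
The plan is to reduce \cref{thm:kollar} to a family version of the linear Shafarevich correspondence, using the big representation $\varrho:\pi_1(X_0)\to\GL_N(\bC)$. By Ehresmann's fibration theorem, after shrinking $\bD$ the diffeomorphism $X_t\simeq X_0$ yields a canonical isomorphism $\pi_1(X_t)\simeq\pi_1(X_0)$, producing a coherent family of representations $\varrho_t:\pi_1(X_t)\to\GL_N(\bC)$. It suffices to prove that $\varrho_t$ remains big for $t$ near $0$: if so, then the image of $\pi_1$ of any positive-dimensional subvariety through a very general point of $X_t$ is already infinite, and $X_t$ automatically has big fundamental group.

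To each $\varrho_t$ I would attach a Shafarevich-type morphism ${\rm sh}_{\varrho_t}:X_t\to S_{\varrho_t}$ whose positive-dimensional fibers are precisely the closed subvarieties $Z\subset X_t$ on which the image $\varrho_t({\rm Im}[\pi_1(Z)\to\pi_1(X_t)])$ is finite, in the spirit of Eyssidieux--Katzarkov--Pantev--Ramachandran. By construction $\varrho_t$ is big exactly when ${\rm sh}_{\varrho_t}$ is birational, so the problem becomes a semi-continuity statement for the generic fiber dimension of ${\rm sh}_{\varrho_t}$ at $t=0$. The construction of ${\rm sh}_{\varrho_t}$ splits along the Levi decomposition of the Zariski closure of $\varrho_t$: for the reductive part one uses the equivariant pluriharmonic map $u_t:\widetilde{X}_t\to\GL_N(\bC)/\rU(N)$ into the symmetric space of non-compact type, while for the unipotent/non-archimedean contribution one models $\varrho_t$ over a suitable non-archimedean local field of definition and uses pluriharmonic maps from $\widetilde{X}_t$ into the associated Bruhat--Tits building. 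In both cases ${\rm sh}_{\varrho_t}$ is built from the Stein factorization of the resulting harmonic map after contracting the positive-dimensional fibers on which all period/tropical data degenerate.

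The central new ingredient is a family version of existence and regularity for these equivariant pluriharmonic maps: $u_t$ and its non-archimedean counterpart must be chosen to vary continuously with $t$ while preserving both the rank of the $(1,0)$-differential on the archimedean side and the combinatorial structure of the image on the non-archimedean side. Granting this, the birationality of ${\rm sh}_{\varrho_0}$ forces the rank of the combined harmonic map to be maximal on a Zariski-open subset of $X_0$; this maximality then propagates to nearby fibers by upper-semicontinuity of relative fiber dimension (noting that $\dim X_t$ is constant), so ${\rm sh}_{\varrho_t}$ is birational for all small $t$, as required.

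The main obstacle is the deformation regularity of pluriharmonic maps into Bruhat--Tits buildings. In the symmetric-space target $\GL_N(\bC)/\rU(N)$ one can appeal to elliptic theory in smooth families to propagate $u_0$ to a nearby energy-minimizer $u_t$ of controlled rank, but a building is only a non-positively curved singular simplicial complex with no smooth structure, and classical elliptic deformation theory is unavailable. One is therefore forced to work purely energy-theoretically and to control how an energy-minimizing map into such a target depends on the parameter $t$, keeping track of the singular set, of the rank of the induced spectral forms, and of the simplicial image. This is precisely the analytic machinery the authors flag in their abstract, and it is where the real technical weight of the proof lies.
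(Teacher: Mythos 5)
Your high-level strategy — deform the equivariant pluriharmonic maps into $\GL_N(\bC)/\rU_N$ and into Bruhat–Tits buildings, and derive a semi-continuity statement for the Shafarevich dimension — is in the right spirit, and the observation that the non-archimedean target has no smooth structure and so requires purely energy-theoretic deformation control is exactly the technical heart of the paper. However, there are two substantive gaps in the reduction.

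First, your proposal is to show that $\varrho_t$ itself remains big for $t$ close to $0$ and then conclude via the Shafarevich morphism ${\rm sh}_{\varrho_t}$ attached to the single representation $\varrho_t$. The paper explicitly warns (see the remark following \cref{prop:big}) that it is \emph{unclear} whether the deformation $\varrho_t$ of a big $\varrho$ stays big, even after establishing the continuity of all the harmonic-map data. The reason is that when $\varrho$ is not reductive, the Shafarevich fibration associated with $\varrho$ alone involves mixed period maps (extension data of nilpotent orbits), and the semi-positivity and rank of the associated differential forms is not pinned down by $\varrho_t$ by itself. The paper circumvents this entirely: rather than tracking one representation, it works with the subgroup $H_{N'} \triangleleft \pi_1(X_t)$ defined as the intersection of the kernels of \emph{all} $\GL_{N'}(\bC)$-representations (for a carefully chosen $N' = r_N + N$), proves that ${\rm sh}_{H_{N'}}$ is birational on $X_0$, and then deduces it stays birational nearby. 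This uses the specific linear Shafarevich machinery of \cite{EKPR12} (in particular \cref{thm:eys}), for which the relevant object is the full Betti moduli space, not a single class.

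Second, your Levi decomposition dichotomy ``reductive $\to$ symmetric space, unipotent $\to$ Bruhat–Tits building'' is mislocated. The buildings handle the non-archimedean places entering the \emph{reductive} quotient of $\varrho$ (via representations $\tau_i:\pi_1(X)\to G_i(K_i)$ over $p$-adic fields and the canonical currents $T_{\tau_i}$). The unipotent radical is handled by a completely separate device: an $\bR$-VMHS of weight length $1$ whose extension class $\alpha$ yields a \emph{semi-canonical form} $S_\alpha$ (\cref{def:canonical2}). The crucial positivity inequality the paper establishes is that on $X_0$ a combination $\sum_i T_{\tau_i} + \omega_{\varrho_1} + \epsilon S_\alpha$ descends from the Shafarevich base and is $m$-positive for $m = \dim {\rm Sh}(X_0) = \dim X_0$; one then uses \cref{harmonic,harmonic2,lem:continuity,thm:family,lem:harmonic} to show these currents and forms vary continuously with $t$, and the lower semi-continuity of the rank of a continuous Hermitian form then forces $\dim {\rm Sh}(X_t) \geq m$ for small $t$. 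Your appeal to ``upper semi-continuity of relative fiber dimension'' does not apply directly because the ${\rm sh}_{\varrho_t}$ are not \emph{a priori} fibers of a single holomorphic map from $\sX$; the continuity input is supplied entirely by the deformation theory of the canonical and semi-canonical currents.
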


%\cref{thm:kollar} is actually a special case of \cref{main2} below. However, since some readers may find it sufficient to focus on varieties with big fundamental groups, we present it separately here.
	\subsection{$\Gamma$-dimension under deformation}
In \cite{Cam94}, Campana introduced the notion of the \emph{$\Gamma$-dimension} (distinct from the Von Neumann dimension in \cite{Ati76}!), based on his construction of the \emph{$\Gamma$-reduction}. This construction was also obtained  independently by Kollár  for algebraic varieties \cite{Kol93} under the name \emph{Shafarevich map}. 
	\begin{thm}[\cite{Cam94,Kol93}]\label{thm:Campana}
		Let $X$ be a  compact  K\"ahler manifold and let $H\triangleleft \pi_1(X)$ be a normal subgroup. Then there exists proper  surjective, \emph{almost holomorphic } map $\gamma_{(X,H)}:X\dashrightarrow \Gamma_H(X)$ with connected fibers such that for any closed positive-dimensional subvariety $Z\subset X$ passing through a very general point of $X$, $\gamma_{(X,H)}(Z)$ is a point if and only if ${\rm Im}[\pi_1(Z)\to \pi_1(X)/H]$ is finite.  Such a map is unique up to bimeromorphic equivalence. \qed
	\end{thm}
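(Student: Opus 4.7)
The plan is to construct $\gamma_{(X,H)}$ as the quotient of $X$ by an equivalence relation generated by a saturated family of compact subvarieties whose monodromy in $Q := \pi_1(X)/H$ is finite, following the machinery of quotients by covering families of cycles developed by Campana \cite{Cam94} and Koll\'ar \cite{Kol93}.

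First I would consider the Barlet space $\mathcal{B}(X)$ of compact cycles in $X$ together with its universal family, and introduce the subset $\mathcal{B}_H \subset \mathcal{B}(X)$ parametrizing positive-dimensional cycles $Z = \sum n_i Z_i$ such that the image of $\pi_1(\tilde Z_i) \to Q$ is finite for every irreducible component $Z_i$, where $\tilde Z_i \to Z_i$ is a desingularization. The key structural claim is that $\mathcal{B}_H$ is a countable union of irreducible components of $\mathcal{B}(X)$: over the smooth locus of any irreducible component the universal family is topologically locally trivial, so the conjugacy class of the image of $\pi_1$ of the fibers in $Q$ is locally constant, yielding a dichotomy between components lying entirely inside $\mathcal{B}_H$ and those disjoint from it. Degenerations at the boundary, where components may merge or split, are controlled via van Kampen's theorem, which shows that the finite-image property is preserved under such specialization.

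Next I would define an equivalence relation $\sim$ on $X$ by declaring $x \sim y$ whenever there is a finite chain $Z_1, \dots, Z_k$ of irreducible cycles in $\mathcal{B}_H$ with $x \in Z_1$, $y \in Z_k$ and $Z_i \cap Z_{i+1} \neq \emptyset$. Applying the Campana--Koll\'ar quotient theorem for saturated covering families to this relation produces a proper, surjective, almost holomorphic map $\gamma_{(X,H)} \colon X \dashrightarrow \Gamma_H(X)$ with connected fibers whose very general fiber is precisely a $\sim$-equivalence class.

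Finally one verifies the desired characterization: if $Z$ is a positive-dimensional irreducible subvariety through a very general point $x$ with $\pi_1(\tilde Z) \to Q$ having finite image, then $[Z] \in \mathcal{B}_H$, so $Z$ lies in a single chain and is contracted by $\gamma_{(X,H)}$; conversely, any positive-dimensional $Z$ contained in a very general fiber must itself map to $Q$ with finite image, since the fiber is swept out by chains of cycles in $\mathcal{B}_H$ and van Kampen delivers the required finiteness. Uniqueness up to bimeromorphic equivalence is then immediate from this intrinsic characterization of the very general fibers. The main obstacle is the first step---establishing the structure of $\mathcal{B}_H$ as a countable union of analytic subvarieties of $\mathcal{B}(X)$---which requires a delicate analysis of how the monodromy image of $\pi_1$ of a cycle in $Q$ varies under specialization inside the Barlet space.
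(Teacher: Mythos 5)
This statement is cited from \cite{Cam94,Kol93} and carries a \qed\ immediately after it; the paper offers no proof of its own. So there is no in-paper argument to compare against — what can be assessed is whether your reconstruction would actually work.

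Your sketch follows the spirit of Campana's cycle-space construction (Koll\'ar's proof for algebraic $X$ is organized rather differently, around generic cyclic covers and a more combinatorial quotient argument), but the first step contains a real error. You assert that $\mathcal{B}_H$ is a countable union of \emph{irreducible components} of the Barlet space, with the dichotomy coming from topological local triviality on the interior and a van Kampen argument at the boundary ``which shows that the finite-image property is preserved under specialization.'' The specialization inclusion runs the other way: if $Z_t$ degenerates to $Z_0$ in a proper flat family, a tubular neighborhood of $Z_0$ retracts onto $Z_0$ and contains $Z_t$, so up to conjugacy $\mathrm{Im}[\pi_1(Z_t)\to\pi_1(X)]\subset\mathrm{Im}[\pi_1(Z_0)\to\pi_1(X)]$. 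Finiteness of the image therefore passes from the special member to the general one, not from the general to the special, and $\mathcal{B}_H$ is in general only a countable union of Zariski-open subsets of components together with possibly further lower-dimensional strata, not a union of whole components. What actually makes the construction go through — and is the content of \cite{Cam94} — is that the characterization of $\gamma_{(X,H)}$ only concerns subvarieties through a \emph{very general} point, so one works with covering families of cycles and the (well-defined, constant) conjugacy class of the image of $\pi_1$ of the \emph{very general} member, and the chain-equivalence quotient is applied to this collection of covering families. Your steps (2)--(5) are fine once the object in step (1) is replaced by this locus; as stated, step (1) as you wrote it is false and the van Kampen appeal does not patch it.
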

Here, an almost holomorphic map refers to a meromorphic map whose indeterminacy locus does not dominate its image. When $H$ is a finite subgroup, one can verify that $\gamma_{(X,H)}$  is bimeromorphic to $\gamma_{(X,\{e\})}$, and in this case we  simply write $\gamma_{X}:X\dashrightarrow \Gamma(X)$ for $\gamma_{(X,H)}:X\dashrightarrow \Gamma_H(X)$, and call it the $\Gamma$-reduction of $X$.
	
	The following definition is introduced in \cite[D\'efinition 4.1]{Cam94}.
\begin{dfn}[$\Gamma$-reduction, $\Gamma$-dimension]
	 	Let $X$ be a compact K\"ahler manifold and let $H\triangleleft \pi_1(X)$ be a normal subgroup.  The  almost holomorphic map $\gamma_{(X,H)}:X\dashrightarrow \Gamma_H(X)$ in \cref{thm:Campana} is called the \emph{$\Gamma$-reduction} or \emph{Shafarevich map} of $(X,H)$. The  \emph{$\Gamma$-dimension} of $(X,H)$, denoted by $\gamma d(X,H)$,  is $\dim \Gamma_H(X)$. 
\end{dfn}
\begin{rem}
  The $\Gamma$-dimension, roughly speaking, measures the positivity of the fundamental group of a compact Kähler manifold. By \cref{thm:Campana}, it can be viewed as an analogue of the Kodaira dimension of canonical bundles for the fundamental groups of compact Kähler manifolds, with the $\Gamma$-reduction playing a role similar to the Iitaka fibration. In two extreme cases,  $X$ has big (resp. finite)  fundamental group if and only if $\gamma d(X)=\dim X$ (resp. $\gamma d(X)=0$). In \cite{Cla10}, varieties with big fundamental groups are also referred to as being of  
	 \emph{$\pi_1$-general type}, analogous to the concept of maximal Kodaira dimension. In \cite{Cam95}, Campana  established an upper bound for $\gamma d(X)$ in terms of an algebraic-geometric invariant $\kappa^+(X)$, which is  determined by the positivity properties of   $ \Omega^p_X$ for any $p>0$.
\end{rem}

As an analogue to the invariance of plurigenera proved by Siu \cite{Siu98,Siu02} (cf. also \cite{Pau07}), Campana and Claudon proposed the following conjecture, which is considerably more general than \cref{conj:kollar}.
\begin{conjecture}[\protecting{\cite[Conjecture 2]{Cla10}}]\label{conj:Campana}
	Let $f:\sX\to \bD$ be a smooth proper K\"ahler  family.  Then $t\mapsto \gamma d(X_t)$ is a continuous (hence constant) function on $\bD$. 
\end{conjecture}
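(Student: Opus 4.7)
The plan is to reduce the continuity of $t \mapsto \gamma d(X_t)$ to the relative deformation of Shafarevich-type morphisms in families, leveraging the deformation regularity of equivariant pluriharmonic maps advertised in the abstract.

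First, since $\gamma d(X_t)$ takes integer values on the connected base $\bD$, it suffices to establish separately that $t \mapsto \gamma d(X_t)$ is both upper and lower semicontinuous at $0$. The upper bound $\gamma d(X_t) \leq \gamma d(X_0)$ is the more accessible direction: the topological triviality of the smooth family $f:\sX\to\bD$ gives canonical identifications $\pi_1(X_t) \cong \pi_1(X_0)$, and if one can construct over a small disk $\bD' \subset \bD$ a relative almost holomorphic fibration whose restriction to $X_0$ is bimeromorphic to the $\Gamma$-reduction of $X_0$, then its fiberwise restrictions bound $\gamma d(X_t)$ from above by $\gamma d(X_0)$.

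The hard direction is lower semicontinuity, where the harmonic-map machinery enters. Assume $X_0$ carries an almost faithful reductive representation $\varrho:\pi_1(X_0)\to \GL_N(\bC)$ whose Shafarevich morphism realizes the $\Gamma$-reduction (the hypothesis underlying the linear Shafarevich conjecture). Choose an equivariant pluriharmonic map $u_0:\widetilde{X}_0 \to \Delta$, where $\Delta$ is either the Riemannian symmetric space of the Zariski closure of $\varrho(\pi_1(X_0))$ (Archimedean case) or a Bruhat-Tits building (non-Archimedean case). Using the deformation regularity developed in the paper, after shrinking $\bD$ one obtains a continuously varying family of equivariant pluriharmonic maps $u_t:\widetilde{X}_t\to \Delta$ whose Stein factorizations define fiberwise Shafarevich morphisms. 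Following Eyssidieux-Katzarkov-Pantev-Ramachandran, one assembles such maps across absolutely constructible loci of the character variety $M_{\rm B}(X_0, N)$; since this character variety is independent of $t$ by topological triviality, the assembly is uniform across the family and produces a relative meromorphic map $\sX \dashrightarrow \sY$ over $\bD$ whose fiberwise images have dimension $\gamma d(X_t)$. Upper semicontinuity of fiber dimensions of this morphism then yields lower semicontinuity of $t\mapsto \gamma d(X_t)$.

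The main obstacle is the fully general K\"ahler (or even projective non-linear) case: without a big linear local system the harmonic-map machinery offers no handle on the Shafarevich morphism, which itself remains conjectural in this generality. Consequently, the strategy above addresses \cref{conj:Campana} cleanly only under the extra hypothesis that $\pi_1(X_0)$ admits a reductive linear representation controlling the $\Gamma$-reduction, matching the setting of \cref{thm:kollar}. The technically most delicate step is the simultaneous regularity and convergence of the family $\{u_t\}$ — in particular the continuity in $t$ of its Stein factorization — together with the verification that the absolutely constructible loci used to package the harmonic-map data behave uniformly in the deformation parameter. Removing the linearity hypothesis would require a genuinely new input beyond the techniques outlined above.
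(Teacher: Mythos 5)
The statement you are addressing is a conjecture that the paper itself does not prove in full generality; it only establishes special cases (\cref{main3} under rigid and integral, \cref{main2} for lower semicontinuity under reductive or linear hypotheses). You note this limitation at the end, so the substantive question is whether your sketched strategy for the tractable cases is sound. Two of its components have genuine gaps.

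You call upper semicontinuity $\gamma d(X_t)\le\gamma d(X_0)$ ``the more accessible direction'' and propose to obtain it by spreading the $\Gamma$-reduction of $X_0$ out to a relative almost holomorphic fibration $\sX\dashrightarrow\sY$ over a small disk. This does not work: even granting the existence of such a spread-out fibration (itself unclear), there is no reason the general fiber of its restriction to $X_t$ should have finite $\pi_1$-image in $\pi_1(X_t)$ --- a subvariety with finite $\pi_1$-image in $X_0$ need not deform to one with the same property, since the $\Gamma$-reduction depends on the complex structure, not just the topological $\pi_1$. In fact the paper only obtains upper semicontinuity (hence constancy) under the much stronger rigid+integral hypothesis, and by a completely different mechanism: the direct sum over Archimedean places underlies a $\bC$-VHS, the cohomology class of its canonical form is integral because it equals $c_1$ of the Griffiths line bundle (\cref{lem:Griffiths}), hence this class is flat under Gauss--Manin; constancy of the intersection numbers $h_k(t)=\int_{X_t}\omega_{\tau_t}^k\wedge\omega_t^{n-k}$ then pins down $\dim p_t(X_t)$ in both directions simultaneously. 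Without integrality this argument breaks, and only lower semicontinuity survives --- so upper semicontinuity is in fact the hard direction.

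For lower semicontinuity your proposed mechanism --- assembling the fiberwise Shafarevich morphisms into a single relative meromorphic map $\sX\dashrightarrow\sY$ over $\bD$ and citing semicontinuity of fiber dimension --- also does not match what can actually be carried out. The $\bC$-VHS and $\bR$-VMHS structures entering the Shafarevich construction depend on the complex structure of each $X_t$, and their period domains and Katzarkov--Eyssidieux reductions need not glue over $\bD$ (the paper explicitly remarks in the proof of \cref{thm:3} that ``the period domains $\ldots$ might vary in $t$''). What the paper proves instead is purely local and metric: by \cref{harmonic2,harmonic,lem:continuity} the canonical currents/forms $T_{\tau_{i,t}},\omega_{\sigma_{j,t}}$ vary continuously in $t$, their sum is $m$-positive on an open set at $t=0$ with $m=\dim{\rm Sh}_M(X_0)$, and by continuity this $m$-positivity persists to nearby fibers; since on $X_t$ the sum is pulled back from ${\rm Sh}_{M_t}(X_t)$ and hence at most $\dim{\rm Sh}_{M_t}(X_t)$-positive, one concludes $\dim{\rm Sh}_{M_t}(X_t)\ge m$. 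This local positivity-rank comparison, rather than a global relative map, is what makes the argument work; you should replace your assembly step with it.
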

Here we say $f:\sX\to \bD$ is a \emph{smooth proper K\"ahler  family} if  $\sX$ is a K\"ahler manifold and $f$ is  a  holomorphic  proper submersion with connected fibers. 

 In \cite{Cla10},  Claudon proved  \cref{conj:Campana} when $\dim X_0=2$, and in the case where  $\dim X_0=3$ and $\kappa (X_0)\leq 2$.
 
We prove \cref{conj:Campana} for K\"ahler varieties with rigid and integral fundamental groups. 
 \begin{thmx}[=\cref{thm:3}]\label{main3}
	Let $f:\sX\to \bD$ be a proper smooth K\"ahler family.  If there exists an almost  faithful reductive representation $\varrho:\pi_1(X_0)\to \GL_N(\bC)$, which is rigid and integral, then $t\mapsto \gamma d(X_t)$ is constant on $\bD$.
\end{thmx}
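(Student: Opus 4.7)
The plan is as follows. By Ehresmann's fibration theorem, the fibers $X_t$ of $f:\sX\to \bD$ are mutually diffeomorphic, and the canonical identifications $\pi_1(X_t)\simeq \pi_1(X_0)$ transport $\varrho$ to a family $\varrho_t:\pi_1(X_t)\to \GL_N(\bC)$ of almost faithful, reductive, rigid, and integral representations. Since $\ker\varrho_t$ is finite, the discussion following \cref{thm:Campana} identifies $\gamma_{(X_t,\ker\varrho_t)}$ with $\gamma_{X_t}$ up to bimeromorphism, so it suffices to prove that $\dim\Gamma_{\ker\varrho_t}(X_t)$ is constant in $t$.

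Next, I would exploit the integrality to produce non-archimedean companions. For a number field $K$ containing the traces of $\varrho$, one obtains for every place $v$ of $K$ a representation $\varrho_t^v:\pi_1(X_t)\to \GL_N(K_v)$ into an archimedean Lie group or a non-archimedean algebraic group. At each archimedean place one pairs $\varrho_t^v$ with the associated Riemannian symmetric space $\sM_v$, and at each non-archimedean place with the Bruhat-Tits building $\Delta_v$. Reductivity then yields, at every place, a $\varrho_t^v$-equivariant pluriharmonic map $u_t^v$ from the universal cover $\widetilde{X_t}$ to the target. The Stein-type factorizations of the $u_t^v$'s, combined via the reductive/linear Shafarevich machinery that the paper extends, produce an almost holomorphic fibration $\sigma_t:X_t\dashrightarrow \Sigma_t$ whose fibers through very general points are precisely the maximal closed subvarieties on which $\varrho_t$ has finite image. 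By almost faithfulness of $\varrho_t$, $\sigma_t$ is bimeromorphic to $\gamma_{X_t}$, and thus $\dim\Sigma_t=\gamma d(X_t)$.

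The problem is thereby reduced to the constancy of $\dim\Sigma_t$, and this is where the deformation regularity theory of equivariant pluriharmonic maps advertised in the abstract enters. As $t$ varies, the maps $u_t^v$ should fit into a family that is continuous in an appropriate energy/Sobolev topology, so the generic rank of $du_t^v$ is upper semicontinuous in $t$. The main obstacle is the matching lower semicontinuity, i.e. ruling out a rank drop at special values of $t$. This is where the three hypotheses are crucial: rigidity of $\varrho$ keeps $\varrho_t^v$ confined to a discrete (hence locally constant) family, so the asymptotic geometry of the targets of the $u_t^v$'s cannot degenerate; integrality ensures the non-archimedean harmonic maps capture everything that the archimedean ones might miss (this is the mechanism underlying \cref{thm:kollar} as well); and reductivity underpins existence and regularity throughout. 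Combining these inputs with the deformation-openness results established earlier in the paper forces the rank---and therefore $\dim \Sigma_t=\gamma d(X_t)$---to be locally constant on $\bD$.
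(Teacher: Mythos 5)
Your proposal correctly identifies the ingredients at a high level (harmonic map continuity, the roles of rigidity and integrality, almost-faithfulness to pass between Shafarevich morphisms and the $\Gamma$-reduction), but there is a genuine gap: you do not provide a mechanism that actually forces \emph{constancy}, and the paper's mechanism is quite specific and absent from your argument.

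First, the paper uses rigidity in a much sharper way than your sketch suggests: by Simpson's theorem, rigidity of $\varrho_t$ over the number field $L$ implies that each archimedean conjugate $\sigma\varrho_t$ underlies a complex variation of Hodge structure. The sum $\tau_t=\oplus_{\sigma\in \mathrm{Ar}(L)}\sigma\varrho_t$ then underlies a $\bC$-VHS with \emph{discrete} monodromy (integrality is what gives discreteness here). The decisive step is then \cref{lem:Griffiths}: the canonical form $\omega_{\tau_t}$ of this $\bC$-VHS is, up to $\frac{1}{2\pi}$, the Chern class of the Griffiths line bundle, hence an \emph{integral} cohomology class on $X_t$. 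Combined with the continuity of $t\mapsto\{\omega_{\tau_t}\}$ from \cref{harmonic2}, this integrality forces the section $t\mapsto\{\omega_{\tau_t}\}$ of $R^2f_*\bR$ to be Gauss--Manin flat, and therefore the intersection numbers $h_k(t)=\int_{X_t}(\omega_{\tau_t})^k\wedge(\omega_t)^{n-k}$ are \emph{constant} in $t$. The dimension $\gamma d(X_t)$ is then read off as the largest $k$ with $h_k\neq 0$, using \cref{lem:gamma,lem:Shafarevich}. Your sketch never produces an integral, flat cohomological invariant, and ``rigidity keeps $\varrho_t^v$ confined to a discrete family'' plus continuity of harmonic maps only gives lower semicontinuity of the rank (which is the content of \cref{main2}, not \cref{main3}); it cannot by itself rule out the rank jumping up at special $t$.

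Second, the appeal to non-archimedean places, Bruhat--Tits buildings, and the full Shafarevich machinery is misplaced for this particular statement: the proof of \cref{main3} uses \emph{only} the archimedean conjugates and symmetric-space (i.e.\ Higgs bundle / $\bC$-VHS) harmonic theory. The Bruhat--Tits building techniques and the reductive/linear Shafarevich arguments enter in the proofs of \cref{main2,thm:kollar,main4}, where one only gets an inequality, not the equality you need here. Finally, note your claim that ``the generic rank of $du_t^v$ is upper semicontinuous in $t$'' is backwards for a continuous family of derivatives; rank is lower semicontinuous under continuous perturbation, which is another reason one cannot get constancy from continuity alone and must resort to the integrality of $c_1(L_{\tau_t})$.
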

\begin{rem}
By a deep theorem of Esnault and Groechenig \cite{EG}, any \emph{cohomologically rigid} local system on a smooth projective variety is integral. Consequently, \cref{main3} also holds if $X_0$ is  a smooth projective variety and $\varrho:\pi_1(X_0)\to \GL_N(\bC)$ is assumed to be only almost faithful and cohomologically rigid.
\end{rem}

We also prove the lower semi-continuity for $\Gamma$-dimension of varieties with linear fundamental groups, which includes \cref{thm:kollar} as a special case.
\begin{thmx}[=\cref{thm:reductive,mainC}]\label{main2}
Let \( f: \mathscr{X} \to \mathbb{D} \) be a smooth projective family.
\begin{thmenum}
\item \label{main:Betti}  Let \( M_t := M_{\mathrm{B}}(\pi_1(X_t), \mathrm{GL}_N)(\mathbb{C}) \) denote the Betti moduli space of \(\pi_1(X_t)\). Define \( H_t \triangleleft \pi_1(X_t) \) as the intersection of the kernels of all reductive representations \( \varrho: \pi_1(X_t) \to \mathrm{GL}_N(\mathbb{C}) \). Then  the function	 \( t \mapsto \gamma d(X_t, H_t) \) on \( \mathbb{D} \)  is   \emph{lower semicontinuous}.
\item \label{main:linear} If \( \pi_1(X_0) \) is linear, then   \( t \mapsto \gamma d(X_t) \) is also a lower semicontinuous function on \( \mathbb{D} \). 
\end{thmenum}
\end{thmx}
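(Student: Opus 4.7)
The plan is to reduce both parts to a single deformation-regularity input, which constitutes the main technical output of the paper. Via Ehresmann's fibration theorem one identifies $\pi_1(X_t) \cong \Gamma := \pi_1(X_0)$ for all $t \in \bD$, and since reductivity of a representation $\varrho: \Gamma \to \GL_N(\bC)$ is a group-theoretic condition, the subgroup $H_t$ appearing in \cref{main:Betti} is in fact a fixed normal subgroup $H \triangleleft \Gamma$. The key input to be exploited is: for every reductive $\varrho: \Gamma \to \GL_N(\bC)$, the Eyssidieux-Katzarkov-Pantev-Ramachandran Shafarevich-type morphism ${\rm sh}_\varrho: X_t \dashrightarrow {\rm Sh}_\varrho(X_t)$ contracts precisely those subvarieties $Z$ for which $\varrho({\rm Im}[\pi_1(Z) \to \Gamma])$ is finite, and that $t \mapsto \dim {\rm Sh}_\varrho(X_t)$ is lower semicontinuous on $\bD$.

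For \cref{main:Betti}, one has $\dim {\rm Sh}_\varrho(X_t) \leq \gamma d(X_t, H)$ for every reductive $\varrho$, because ${\rm sh}_\varrho$ factors through $\gamma_{(X_t, H)}$ up to bimeromorphic equivalence. Conversely, noetherianity of the Betti moduli space $M_{\rm B}(\Gamma, \GL_N)$ together with the construction of the reductive $\Gamma$-reduction allows one to choose finitely many reductive representations $\varrho_1, \ldots, \varrho_k$ realizing the equality $\dim {\rm Sh}_{\varrho_1, \ldots, \varrho_k}(X_0) = \gamma d(X_0, H)$. Applying the key input to this finite collection gives, for $t$ near $0$,
\[
\gamma d(X_t, H) \geq \dim {\rm Sh}_{\varrho_1, \ldots, \varrho_k}(X_t) \geq \dim {\rm Sh}_{\varrho_1, \ldots, \varrho_k}(X_0) = \gamma d(X_0, H),
\]
which is the desired lower semicontinuity. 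For \cref{main:linear}, linearity of $\Gamma$ implies $\pi_1(X_t)$ is linear for all $t$. By the linear Shafarevich theorem of Eyssidieux-Katzarkov-Pantev-Ramachandran, $\gamma d(X_t)$ is controlled by the combination of the reductive $\Gamma$-reduction $\gamma_{(X_t, H)}$ and a nilpotent Shafarevich morphism arising from the Malcev completion of the unipotent part of $\Gamma$; the reductive factor deforms lower-semicontinuously by \cref{main:Betti}, while the nilpotent factor is controlled through the variation of mixed Hodge structures on the Malcev completion, and the two combine to yield the conclusion.

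The hard part will be establishing the key input, namely lower semicontinuity of $\dim {\rm Sh}_\varrho(X_t)$ for a fixed reductive $\varrho$. This is the content of the deformation regularity of equivariant pluriharmonic maps into Riemannian symmetric spaces (and Bruhat-Tits buildings in the integral case) developed earlier in the paper. A priori, the generic rank of the family of pluriharmonic maps $u_t: \widetilde{X}_t \to \Delta(\varrho)$, which controls the Shafarevich dimension, could drop on special fibers; ruling this out requires delicate energy and regularity estimates for $u_t$ transported across the family, together with a careful analysis of the canonical current associated to the harmonic bundle via the Siu-Sampson formula and its stability under variation of the complex structure on $\sX$.
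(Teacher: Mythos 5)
Your high-level strategy matches the paper's: reduce to a positivity/deformation question about canonical currents and forms associated to a finite family of reductive and non-archimedean representations, and carry this positivity across the family via regularity of equivariant pluriharmonic maps. However, there is a genuine gap in how you formulate the ``key input.'' You assert that for \emph{every} reductive $\varrho: \Gamma \to \GL_N(\bC)$ the function $t \mapsto \dim {\rm Sh}_\varrho(X_t)$ is lower semicontinuous. The paper does not establish this for an arbitrary single $\varrho$, and explicitly notes (remark after \cref{prop:big}) that it is unclear whether bigness of $\varrho_0$ is preserved by the transported $\varrho_t$ --- which is exactly what your claimed lower semicontinuity would give in the big case. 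The obstruction is precisely the step where you need the deformed canonical currents $T_{\tau_{i,t}}$ and canonical forms $\omega_{\sigma_{j,t}}$ to vanish on fibers of ${\rm sh}_{\varrho_t}$. These objects are transported from $X_0$ (the $\sigma_{j,t}$ need not even underlie $\bC$-VHS on $X_t$), so there is no reason for them to factor through $\ker\varrho_t$, and thus no reason for them to die on fibers of ${\rm sh}_{\varrho_t}$. This breaks the $m_t$-positivity upper bound.

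The paper sidesteps this by working with the Shafarevich morphism ${\rm sh}_{M_t}$ of the \emph{full} Betti moduli space $M_t = M_{\rm B}(\pi_1(X_t), \GL_N)(\bC)$, not of a fixed finite collection: since $\tau_{i,t}$ and $\sigma_{j,t}$ are themselves reductive representations of $\pi_1(X_t)$, they automatically have finite image along the fibers of ${\rm sh}_{M_t}$ (this is \cref{claim:below} and \cref{claim:last} in \cref{subsec:AC}). Your argument does become correct if you instantiate it not with arbitrary $\varrho_1, \ldots, \varrho_k$ but with the single canonical $\tau$ of \cite[Lemma 3.25]{DY23} whose kernel equals $H_0$ --- for that $\tau$, the transported $\tau_{i,t}$ and $\sigma_{j,t}$ do factor through $\ker\tau_t$. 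But this is really a reformulation of what the paper already does, not an independent reduction. For \cref{main:linear}, your sketch is correct in spirit (VMHS/Malcev) but says nothing about the actual technical content of \cref{sec:linear}: the one-step $\bR$-VMHS and its semi-canonical form $S_\alpha$, the regularity of harmonic $1$-forms with values in a harmonic bundle across the family (\cref{lem:harmonic}), the positivity of $f^*S_\alpha$ along period-map fibers (\cref{lem:differential}), and the étale cover reduction via \cref{lem:lift}; these are where the real work lies, and they do not follow from \cref{main:Betti} plus handwaving about the unipotent part.
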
 
To prove \cref{thm:kollar,main3,main2}, we establish the deformation regularity of equivariant harmonic maps to Euclidean buildings or symmetric spaces  in families (see \cref{harmonic2,harmonic,lem:continuity,thm:family}). We then apply these results, together with techniques in \cite{Eys04,EKPR12,DY23} on the Shafarevich conjecture to prove  \cref{thm:kollar,main3,main2}.
\subsection{Applications to hyperbolicity}
We   give an application of \cref{main:Betti} to the hyperbolicity of algebraic varieties under deformation. First, we first recall the following definition of hyperbolicity (see   \cite{Dem20,CDY22} for related results and other notions of hyperbolicity).
	\begin{dfn}[Pseudo  Brody hyperbolicity]\label{def:Picard}
	A  compact complex manifold $X$ is called \emph{pseudo-Brody hyperbolic}   if there is a proper  Zariski closed  subset $Z\subsetneq X$ such that any  non-constant holomorphic map $f:\bC\to X$  has image in $Z$.  
		If $Z=\varnothing$, $X$ is simply called  \emph{Brody hyperbolic}.  
	\end{dfn}  Recall the following classical result on the openness of Brody hyperbolicity (see   e.g. \cite[Proposition 1.10]{Dem20}). 
	\begin{thm}\label{thm:Brody}
		Let $f:\sX\to \bD$ be a  holomorphic proper submersion from a complex manifold $\sX$ to  the unit disk with connected fibers.  If $X_0$ is  Brody hyperbolic, then there exists $\ep>0$ such that $X_t$ is Brody hyperbolic   for  $|t|<\ep$. 
	\end{thm}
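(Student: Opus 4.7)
The plan is a normal families argument based on Brody's reparametrization lemma; none of the harmonic map or local system machinery developed elsewhere in the paper is needed here. I would argue by contradiction: suppose $X_0$ is Brody hyperbolic but the conclusion fails, so there exist a sequence $t_n\to 0$ in $\bD$ and non-constant holomorphic maps $\varphi_n:\bC\to X_{t_n}$. Fix a Hermitian metric $\omega$ on a relatively compact open neighborhood $\sU\subset\sX$ of $X_0$. After discarding finitely many terms one has $X_{t_n}\subset \sU$, so each $\varphi_n$ has image in the fixed compact set $\overline{\sU}$.

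Applied in $(\sU,\omega)$, Brody's reparametrization lemma then produces non-constant entire curves $\psi_n:\bC\to X_{t_n}$ satisfying
\[
|d\psi_n(0)|_\omega=1 \qquad \text{and}\qquad \sup_{z\in\bC}|d\psi_n(z)|_\omega\le 1.
\]
The key point is that the reparametrization is carried out with respect to the \emph{ambient} metric $\omega$, so the normalization is uniform in $n$. The family $\{\psi_n\}$ is therefore uniformly $1$-Lipschitz with values in the compact set $\overline{\sU}$, and Arzelà--Ascoli yields a subsequence converging uniformly on compact subsets of $\bC$ to a continuous map $\psi_\infty:\bC\to \overline{\sU}$. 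By the Cauchy estimates this convergence is smooth, so $\psi_\infty$ is holomorphic; passing to the limit in the identity $f\circ\psi_n\equiv t_n$ gives $\psi_\infty(\bC)\subset X_0$, and $|d\psi_\infty(0)|_\omega=1$ forces $\psi_\infty$ to be non-constant. This contradicts the Brody hyperbolicity of $X_0$.

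The only step requiring genuine care is the application of Brody's lemma inside the total space: the classical statement is formulated on a fixed Hermitian manifold, so one must check that the reparametrization can be performed with respect to the ambient metric $\omega$ on $\sU$ rather than the varying induced metrics on the fibers $X_{t_n}$. This is precisely what allows the resulting limit $\psi_\infty$ to live in a single compact set where Arzelà--Ascoli applies; everything else is routine.
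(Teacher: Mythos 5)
Your argument is correct and is the standard normal-families proof of this classical fact. The paper itself does not prove \cref{thm:Brody}; it simply cites \cite[Proposition 1.10]{Dem20}, and the argument in that reference is precisely the Brody reparametrization scheme you outline, so there is no discrepancy to flag.

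Two small remarks on the presentation. First, your sentence that Brody's lemma ``produces non-constant entire curves $\psi_n:\bC\to X_{t_n}$'' with $|d\psi_n(0)|_\omega=1=\sup_\bC|d\psi_n|_\omega$ is accurate but hides a nested limit: for a fixed $n$, the Brody curve $\psi_n$ is not a single affine reparametrization of $\varphi_n$ but a locally uniform limit of reparametrizations of restrictions $\varphi_n|_{\bD_R}$, and that extraction already uses compactness of $X_{t_n}$. This is fine, but it means you perform Arzel\`a--Ascoli twice; one can streamline by producing only maps $g_n:\bD_{\rho_n}\to\overline{\sU}$ with $\rho_n\to\infty$, $|dg_n(0)|_\omega=1$ and $\sup_{\bD_{\rho_n}}|dg_n|_\omega\le 1+1/n$, and extracting a single subsequential limit. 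Second, the point you isolate at the end (that the Brody normalization must be taken with respect to the ambient $\omega$ rather than fiberwise metrics) is actually automatic: since $\psi_n$ has image in the closed submanifold $X_{t_n}$, the norm of $d\psi_n$ with respect to $\omega|_{X_{t_n}}$ equals its norm with respect to $\omega$, so the bounds are intrinsically uniform. Everything else --- the properness of $f$ giving $X_{t_n}\subset\sU$ for $n\gg 0$, the passage to the limit in $f\circ\psi_n\equiv t_n$, and the non-constancy of $\psi_\infty$ from $|d\psi_\infty(0)|_\omega=1$ --- is correct.
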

There has long been a folklore conjecture that such openness properties hold for pseudo-Brody hyperbolicity.
		\begin{conjecture}\label{conj:hyperbolic}
		Let $f:\sX\to \bD$ be as in \cref{thm:Brody}. If $X_0$ is pseudo-Brody   hyperbolic, then  $X_t$ is also pseudo  Brody hyperbolic for   sufficiently small $t$. 
	\end{conjecture}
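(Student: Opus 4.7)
The plan is to argue by contradiction, combining the classical Brody reparametrization argument with a compactness argument for Ahlfors currents, in an attempt to produce in $X_0$ a family of entire curves whose images are not confined to any proper Zariski closed subset. Concretely, assume there exists a sequence $t_n\to 0$ such that $X_{t_n}$ is \emph{not} pseudo-Brody hyperbolic; we seek to contradict the pseudo-Brody hyperbolicity of $X_0$. First I would fix a Kähler form $\omega$ on $\sX$ with restrictions $\omega_t$ on $X_t$, and use an Ehresmann connection to obtain a smooth trivialization $\Phi:\sX\to X_0\times\bD$, so that the almost complex structures $J_t:=(\Phi|_{X_t})_*J_{X_t}$ converge smoothly to $J_0$ on $X_0$.

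Next, by the failure of pseudo-Brody hyperbolicity of $X_{t_n}$, the Zariski closure of the union of images of all nonconstant holomorphic maps $\bC\to X_{t_n}$ equals $X_{t_n}$. From this I would extract, for each $n$, a countable collection $\{f_{n,j}:\bC\to X_{t_n}\}_{j\in\bN}$ whose images have Zariski-dense union in $X_{t_n}$. Applying Brody's reparametrization lemma to each $f_{n,j}$ yields Brody curves $\tilde f_{n,j}$ with $\|\tilde f_{n,j}'\|_{\omega_{t_n}}\leq 1$ and $\|\tilde f_{n,j}'(0)\|_{\omega_{t_n}}=1$. Transporting through $\Phi$ and applying Arzelà-Ascoli together with a diagonal extraction, I would produce nonconstant entire $J_0$-holomorphic maps $g_j:\bC\to X_0$ as smooth limits, using that $J_{t_n}\to J_0$ smoothly to ensure the limit satisfies $\bar\partial_{J_0}g_j=0$.

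The final and decisive step is to show that the collection $\{g_j\}$ cannot be confined to any proper Zariski closed subset $Z_0\subsetneq X_0$, thereby contradicting the pseudo-Brody hypothesis on $X_0$. The natural device is to pass from curves to Ahlfors currents: associate to each $\tilde f_{n,j}$ an Ahlfors current $T_{n,j}$, take weak-$\ast$ limits along subsequences, and attempt to exhibit a closed positive $(1,1)$-current on $X_0$ whose support is not contained in any fixed proper subvariety. Such a current would contradict the standard fact that pseudo-Brody hyperbolicity forces every Ahlfors current to be supported inside the exceptional Zariski closed set.

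The main obstacle, and in my view the reason the authors bypass the conjecture in its stated generality and instead prove it only under the additional hypothesis that $X_0$ carries a big semisimple complex local system, lies precisely in this last step: Brody reparametrization can collapse a Zariski-dense family of entire curves in $X_{t_n}$ to curves whose images concentrate in an arbitrarily small region of $X_0$, so there is no a priori mechanism forcing the limits $g_j$ (or their Ahlfors currents) to inherit the Zariski density enjoyed by the $f_{n,j}$. Without an algebraic witness to the exceptional locus — such as the Shafarevich/$\Gamma$-reduction morphism produced by a big local system, which furnishes a distinguished, deformation-stable Zariski closed subset $Z_t\subset X_t$ — one lacks a bookkeeping device to propagate the exceptional locus of $X_0$ to the nearby fibers. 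Overcoming this obstacle in full generality would seem to require a substantial new input, either a Green-Griffiths-Lang type theorem identifying the exceptional locus with the base locus of explicit differential-geometric data (e.g.\ jet differentials) that deforms well in families, or an entirely different compactness principle for entire curves that remembers Zariski density in the limit.
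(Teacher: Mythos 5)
Your proposal is not a proof, and you say as much yourself: the decisive final step --- showing that the limit curves $g_j$ (or their Ahlfors currents) are not confined to a proper Zariski closed subset of $X_0$ --- is left as an acknowledged obstacle rather than carried out. This is indeed a genuine gap, not a removable technicality. Brody reparametrization of a nonconstant entire curve $f:\bC\to X_{t_n}$ on a compact fiber produces a Brody curve whose image has no controlled relation to the image of $f$; in particular the Zariski density of $\bigcup_j f_{n,j}(\bC)$ in $X_{t_n}$ is already lost at the reparametrization stage, before any limit in $n$ is taken. Consequently the diagonal limits $g_j$ could all land inside the exceptional locus of $X_0$, and no contradiction results. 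Note also that the statement you were asked to prove is stated in the paper as \cref{conj:hyperbolic}, i.e.\ as an open conjecture: the paper does not prove it in general, and only observes that it would follow from the generalized Green--Griffiths--Lang conjecture (\cref{conj:GGL}) via invariance of plurigenera.

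For comparison, the route the paper actually takes in the special case it can handle (\cref{main4} = \cref{thm:open}, under the hypothesis of a big and reductive representation $\varrho:\pi_1(X_0)\to\GL_N(\bC)$) is entirely algebraic/Hodge-theoretic rather than analytic: pseudo-Brody hyperbolicity of $X_0$ plus \cite[Theorem C]{CDY22} gives that $X_0$ is of general type; Siu's invariance of plurigenera propagates general type to all $X_t$; and the lower semicontinuity of $\dim{\rm Sh}_{M_t}(X_t)$ (\cref{main:Betti}, proved via the deformation theory of equivariant pluriharmonic maps and canonical currents) shows that a suitable reductive representation of $\pi_1(X_t)$ remains big for small $t$, so \cite[Theorem C]{CDY22} applies again to conclude. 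Your diagnosis of why the general case resists the direct Brody/Ahlfors-current approach --- the absence of a deformation-stable algebraic witness for the exceptional locus --- is accurate and consistent with the paper's framing, but it does not constitute a proof of the statement.
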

	This conjecture is indeed a consequence of the following generalized Green-Griffiths-Lang conjecture.
	\begin{conjecture}[Generalized GGL conjecture]\label{conj:GGL}
Let $X$ be a smooth projective variety. Then $X$ is pseudo-Brody hyperbolic if and only if  it is of general type. 
	\end{conjecture}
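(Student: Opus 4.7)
This is the \emph{Generalized Green--Griffiths--Lang conjecture}, one of the central open problems in complex hyperbolicity, so any ``proof plan'' is necessarily a sketch of known partial strategies rather than a genuine roadmap. I would treat the two implications separately, and expect the implication ``of general type $\Rightarrow$ pseudo-Brody hyperbolic'' to be dramatically harder than the reverse.

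For the implication ``pseudo-Brody hyperbolic $\Rightarrow$ of general type'', the plan is a contrapositive argument via the Iitaka fibration. If $\kappa(X) < \dim X$, then either $\kappa(X) = -\infty$ and $X$ is uniruled, in which case rational curves through every point already destroy pseudo-Brody hyperbolicity, or $\kappa(X) \geq 0$ and, after a birational modification, the Iitaka fibration $f:X' \to Y$ has general fibre $F$ of Kodaira dimension zero. Granting the abundance conjecture together with a Beauville--Bogomolov type decomposition, $F$ is birational to a finite \'etale quotient of a product of abelian, strict Calabi--Yau, and irreducible holomorphic symplectic factors. Any abelian factor produces entire curves through every point of $F$, and translates of these sweep out $X$, contradicting pseudo-Brody hyperbolicity. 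The pure Calabi--Yau / hyperk\"ahler case reduces the problem to the expected (but open) Zariski density of entire curves on such varieties.

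For the implication ``of general type $\Rightarrow$ pseudo-Brody hyperbolic'', which is the classical GGL conjecture and still open for $\dim X \geq 3$, the natural route is through jet differentials. One would: first, construct enough global sections of the Demailly--Semple bundle of invariant jet differentials of order $k$ and weighted degree $m$ twisted by the inverse of an ample line bundle, by means of holomorphic or algebraic Morse inequalities on the Demailly--Semple tower; second, apply the fundamental vanishing theorem of Demailly--Siu to force every nonconstant holomorphic map $\mathbb{C}\to X$ into the common zero locus $Z$ of these sections; third, verify that $Z$ is a proper Zariski closed subset of $X$. Siu, Diverio--Merker--Rousseau, Brotbek and subsequent authors have realised this programme for generic hypersurfaces of sufficiently large degree in projective space, and for some related complete intersection settings.

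The main obstacle, in essentially every serious attempt, is the first step of the jet-differential route: producing sufficiently many global jet differentials with negative twist on an \emph{arbitrary} variety of general type, rather than on special classes such as generic hypersurfaces. No such method is currently known; a complete argument would presumably have to combine Demailly's Morse inequalities, McQuillan's foliation techniques, Campana's orbifold framework, and perhaps inputs from nonabelian Hodge theory akin to those developed in the main body of this paper. Because neither direction is within reach today, I would expect the conjecture to be stated here purely as motivation for \cref{conj:hyperbolic}, to be attacked in this paper only under the additional hypothesis of a big semisimple complex local system, via \cref{main:Betti}.
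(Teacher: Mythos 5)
You correctly recognized that \cref{conj:GGL} is an open conjecture, not a theorem, and that the paper neither proves it nor claims to. Your closing observation is exactly right: the conjecture is stated as motivation for \cref{conj:hyperbolic}, and the only instance of it that the paper actually invokes is the special case established in \cite[Theorem C]{CDY22} under the hypothesis of a big and reductive representation $\pi_1(X)\to \GL_N(\bC)$, which feeds into \cref{main4} via \cref{main:Betti}. Your survey of partial strategies (uniruledness and the Iitaka fibration for one direction, jet differentials and Morse inequalities on the Demailly--Semple tower for the other) is accurate as a description of the literature, but none of it appears in or is needed by the paper, which simply cites the conjecture and its known special case.
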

 Let us explain how to prove \cref{conj:hyperbolic} assuming \cref{conj:GGL}.  If $X_0$ is pseudo-Brody hyperbolic, then $X_0$ is of general type by \cref{conj:GGL}. By the invariance of plurigenera, $X_t$ is of general type for any $t\in \bD$. By  \cref{conj:GGL} again, $X_t$ is pseudo-Brody hyperbolic. This proves \cref{conj:hyperbolic}. 

\medspace

In  \cite[Theorem C]{CDY22},   we prove \cref{conj:GGL} if there exists a big and reductive representation $\pi_1(X)\to \GL_N(\bC)$.   We will apply \cref{main:Betti} together with \cite[Theorem C]{CDY22} and \cite{DY23} to prove the following result on \cref{conj:hyperbolic}. 
\begin{thmx}[=\cref{thm:open}]\label{main4}
	Let $f:\sX\to \bD$ be a smooth projective family. Assume that there is a big and reductive representation $\varrho:\pi_1(X_0)\to \GL_N(\bC)$. If $X_0$ is pseudo-Brody   hyperbolic, then there exists $\ep>0$ such that $X_t$ is also pseudo-Brody hyperbolic for sufficiently small $t$.
	\end{thmx}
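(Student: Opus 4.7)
The strategy will be to invoke \cite[Theorem C]{CDY22} at both the central and the nearby fibres, using \cref{main:Betti} as the bridge between the two applications. First, since $X_0$ admits the big reductive representation $\varrho$ and is pseudo-Brody hyperbolic, \cite[Theorem C]{CDY22} forces $X_0$ to be of general type. By Siu's invariance of plurigenera, every fibre $X_t$ is then also of general type. It therefore suffices to produce, for each $t$ close to $0$, a big reductive representation $\pi_1(X_t)\to \GL_M(\bC)$ for some $M$; a second application of \cite[Theorem C]{CDY22} on $X_t$ will then give pseudo-Brody hyperbolicity.

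To produce such a representation I will use \cref{main:Betti}. Identifying $\pi_1(X_t)\cong\pi_1(X_0)$ via Ehresmann's theorem, the normal subgroup $H$ appearing in \cref{main:Betti} is independent of $t$. Since $\varrho$ is reductive we have $H\subseteq\ker\varrho$, and bigness of $\varrho$ on $X_0$ is equivalent to $\gamma d(X_0,\ker\varrho)=\dim X_0$. The surjection $\pi_1(X_0)/H\twoheadrightarrow\pi_1(X_0)/\ker\varrho$ then yields $\gamma d(X_0,H)=\dim X_0$, and the lower semicontinuity from \cref{main:Betti} gives $\gamma d(X_t,H)=\dim X_t$ for all $t$ sufficiently close to $0$.

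The final step is to extract a single big reductive representation on $X_t$ from the \emph{collective} bigness encoded by $H$. Using the finite type of the Betti moduli space $M_{\rm B}(\pi_1(X_t),\GL_N)$ together with the reductive Shafarevich construction of \cite{DY23} and standard Noetherian/Tannakian arguments on categories of reductive representations, one should find finitely many reductive representations $\varrho_t^{(1)},\ldots,\varrho_t^{(k)}\colon \pi_1(X_t)\to\GL_N(\bC)$ with $\bigcap_i\ker\varrho_t^{(i)}=H$. Their direct sum $\bigoplus_i\varrho_t^{(i)}\colon\pi_1(X_t)\to\GL_{kN}(\bC)$ is then a reductive representation with kernel $H$, and it is big because $\gamma d(X_t,H)=\dim X_t$. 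Applying \cite[Theorem C]{CDY22} to $X_t$, which is of general type and carries this big reductive representation, concludes the argument.

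The principal obstacle I expect is precisely this extraction step: promoting the \emph{collective} bigness captured by $H$ to a single big reductive representation on $X_t$. It is expected to follow from the Noetherian structure of the Betti moduli and the explicit reductive Shafarevich construction, but the deformation-theoretic implementation (in particular making the choice of finitely many $\varrho_t^{(i)}$ coherent as $t$ varies near $0$) requires genuine care and is the only non-formal input beyond \cref{main:Betti} itself.
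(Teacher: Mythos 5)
Your proposal follows essentially the same architecture as the paper's proof: apply \cite[Theorem~C]{CDY22} at $t=0$ to get general type, invoke Siu's invariance of plurigenera to propagate general type to nearby fibres, use \cref{main:Betti} to deduce $\gamma d(X_t,H_t)=\dim X_t$ for small $t$, and then apply \cite[Theorem~C]{CDY22} once more on $X_t$. The single gap you flag --- upgrading collective bigness (encoded by $H$) to a single big reductive representation on $X_t$ --- is exactly what the paper supplies, via \cite[Lemma~3.25]{DY23}: there exists a single reductive $\tau:\pi_1(X_0)\to\GL_N(\bC)$ with $\ker\tau\subset\ker\sigma$ for every reductive $\sigma:\pi_1(X_0)\to\GL_N(\bC)$, hence $\ker\tau=H_0$ exactly. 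Your secondary worry, about making a finite family $\varrho_t^{(i)}$ vary coherently in $t$, is in fact not an obstacle: one fixes $\tau$ once on $\pi_1(X_0)$ and defines $\tau_t:=\tau\circ(\pi_1(X_t)\xrightarrow{\sim}\pi_1(X_0))$ using the Ehresmann identification, so $\tau_t$ is literally the same group homomorphism, automatically reductive with $\ker\tau_t=H_t$; bigness of $\tau_t$ then follows from $\gamma d(X_t,H_t)=\dim X_t$, precisely as in \cref{claim:big}. Your direct-sum variant would work once the finitely many $\varrho_t^{(i)}$ are produced, but \cite[Lemma~3.25]{DY23} already collapses the construction to a single representation inside $\GL_N$, so no Noetherian/Tannakian argument is needed.
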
 
	As a consequence, we prove the following result.
	\begin{corx}\label{corx}
		Let $X$ be a smooth projective variety.  Assume that either 
		\begin{enumerate}[label=(\alph*)]
			\item \label{item:VHS} there is a complex variation of Hodge structure ($\bC$-VHS for short) $\varrho:\pi_1(X)\to \GL_N(\bC)$ with discrete monodromy $\Gamma$ such that the period map $X\to \sD/\Gamma$ is   generically finite onto the image, or
			\item\label{item:semisimple} there is a big representation $\tau:\pi_1(X)\to \GL_N(\bC)$ such that the Zariski closure of $\tau(\pi_1(X))$ is a semisimple algebraic group.
		\end{enumerate} Then every small projective deformation of $X$ is  pseudo-Brody hyperbolic.
	\end{corx}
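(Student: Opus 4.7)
The plan is to reduce both cases to a direct application of \cref{main4}, whose hypotheses on $X_0 = X$ are (i) the existence of a big and reductive representation $\pi_1(X)\to\GL_N(\bC)$, and (ii) pseudo-Brody hyperbolicity of $X$ itself. Case (b) supplies (i) immediately: $\tau$ is big by assumption and has semisimple, hence reductive, Zariski closure. Case (a) also supplies (i): every $\bC$-VHS is semisimple by Deligne--Simpson, so $\varrho$ is reductive, and bigness of $\varrho$ will follow from the standard identification, up to birational equivalence, of the Stein factorization of the period map $p:X\to\sD/\Gamma$ with the $\Gamma$-reduction of $(X,\ker\varrho)$. Under this identification, generic finiteness of $p$ forbids any positive-dimensional subvariety through a very general point from being contracted, so $\varrho$ is big.

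To verify (ii), I would argue as follows. For case (b), having a big reductive representation implies that $X$ is of general type by results in \cite{DY23}; combining this with \cite[Theorem C]{CDY22}, which establishes \cref{conj:GGL} under the big-plus-reductive hypothesis, yields pseudo-Brody hyperbolicity. For case (a), I would invoke the canonical negatively curved current on $X$ associated to the VHS (in the spirit of Griffiths, Mok, Brunebarbe, and as used in \cite{CDY22}); its degeneracy locus is contained in the exceptional set of the period map, which by the generic finiteness of $p$ is a proper Zariski closed subset, so $X$ is again pseudo-Brody hyperbolic.

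With both hypotheses verified, I would apply \cref{main4} to an arbitrary smooth projective family $f:\sX\to\bD$ with $X_0\cong X$ and conclude that $X_t$ remains pseudo-Brody hyperbolic for small $t$. The main obstacle, in my view, is the bigness assertion in case (a): matching the fibers of $p$ with the Shafarevich reduction of $\varrho$ ultimately rests on the fact that a subvariety $Z\subset X$ on which the VHS has finite monodromy must be contracted by $p$, i.e., the restricted period map on $Z$ is constant. This is a rigidity statement standard in the Shafarevich-conjecture literature for $\bC$-VHS but is the only step that is not purely formal.
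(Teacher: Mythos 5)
Your overall plan---reduce both cases to \cref{main4} by verifying that $X$ itself is pseudo-Brody hyperbolic and carries a big reductive representation---is exactly what the paper does, and your treatment of case (a) agrees with it: the paper shows $\varrho$ is reductive by \cite{Sim92}, big by \cref{lem:Shafarevich} (which identifies the Stein factorization of $p$ with the Shafarevich morphism of $\varrho$), and concludes hyperbolicity of $X$ from the Griffiths--Schmid negative horizontal curvature of $\sD$ together with the Ahlfors--Schwarz lemma. The step you single out as the ``main obstacle''---that the period map contracts subvarieties with finite $\varrho$-monodromy---is precisely \cref{lem:Shafarevich}, already supplied by the paper, so that worry is unfounded.

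There is, however, a genuine gap in case (b). You write that ``having a big reductive representation implies that $X$ is of general type by results in \cite{DY23}.'' This is false: an abelian variety has a big (even almost faithful) reductive representation of its fundamental group, yet it is neither of general type nor pseudo-Brody hyperbolic. Bigness plus reductivity alone cannot yield general type; the semisimplicity of the Zariski closure in hypothesis \cref{item:semisimple} is essential and cannot be weakened to reductivity. Moreover, no result of the kind you invoke is established in \cite{DY23}, which concerns the Shafarevich conjecture rather than general type. The paper instead applies \cite[Theorem A]{CDY22} directly, which under the hypothesis ``big representation with semisimple image'' immediately gives pseudo-Brody hyperbolicity of $X$; your detour through general type and \cite[Theorem C]{CDY22} is not only longer but, as written, rests on an incorrect intermediate claim. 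Replacing your first step by a direct citation of \cite[Theorem A]{CDY22} (using semisimplicity, not just reductivity) would fix the argument and reduce it to the paper's.
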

	\begin{rem}
		The hyperbolicity results stated in this subsection also hold if we replace pseudo-Brody hyperbolicity with the stronger notion, known as pseudo-Picard hyperbolicity, introduced in \cite{Den23,CDY22}. To lighten the notion, we consider only pseudo-Brody hyperbolicity in this paper, and interested readers can refer to \cite{CDY22} for other notions of hyperbolicity.
	\end{rem}

	\subsection{Notation and Convention}\label{sec:notation}
\begin{enumerate}[label=(\alph*)]
	\item Throughout this paper, $\pi_1(X)$ always refers to   the topological fundamental group of the variety $X$. 
	\item For a complex space $X$, we denote by $X^{\rm reg}$ the regular locus of $X$. 
	\item Unless otherwise specified, all projective varieties are assumed to be defined over the field of complex numbers.
		\item Denote by $\bD$ the unit disk in $\bC$, and by $\bD^*$ the punctured unit disk. We write $\bD_\ep := \{ z \in \bC \mid |z| < \ep \}$, $\bD^*_\ep := \{ z \in \bC \mid 0 < |z| < r \}$.
	\item A \emph{smooth projective family} \(f:\sX \to \bD\) is a smooth projective morphism from \(\sX\) to \(\bD\) with connected fibers. A \emph{smooth proper Kähler family} \(f:\sX \to \bD\) is a holomorphic proper submersion from a Kähler manifold \(\sX\) to \(\bD\) with connected fibers. For any \(t \in \bD\), let \(X_t := f^{-1}(t)\) denote the fiber of \(f\) over \(t\). For notational simplicity, we often write \(X\) instead of \(X_0\). Denote by \(\pi_{\sX}:\widetilde{\sX} \to \sX\) the universal covering map.

		\item\label{def:rhot} Let \(f:\sX \to \bD\) be as above, and let \(\varrho:\pi_1(X_0) \to \GL_N(K)\) be a representation, where \(K\) is any field. We denote by \(\varrho_t:\pi_1(X_t) \to \GL_N(K)\) the representation induced by the composition of \(\varrho\) with the natural isomorphism \(\pi_1(X_t) \to \pi_1(X_0)\) induced by \(f\).
		
				\item Let \(\Omega \subset \bC^n\) be an open domain endowed with a Riemannian metric \(g\), and let \(z \in \Omega\) be any point. We denote by \(\bB_r(z)\) the Euclidean ball of radius \(r\) in \(\Omega\) centered at \(z\), and by \(B_r(z)\) the ball of radius \(r\) in \(\Omega\) centered at \(z\) with respect to the Riemannian metric \(g\).
				
		\item Let \(G\) be a reductive group defined over a non-archimedean local field \(K\). We denote by \(\Delta(G)_K\) the Bruhat-Tits building of \(G\), which is a non-positively curved (NPC) space. Denote by \(d(\bullet, \bullet)\) the distance function on \(\Delta(G)_K\). Let \(\sD G\) denote the derived group of \(G\), which is semisimple.
		
		\item A linear representation \(\varrho:\pi_1(X) \to \GL_N(K)\), where \(K\) is a field, is called \emph{reductive} if the Zariski closure of \(\varrho(\pi_1(X))\) is a reductive algebraic group over \(\overline{K}\). 
\end{enumerate} 
 \subsection*{Acknowledgment} We would like to thank Frédéric Campana, Benoît Claudon,  Philippe Eyssidieux,   Claire Voisin for helpful discussion.  The first author is supported in part by ANR-21-CE40-0010.  The second author is  supported in part by NSF DMS-2304697. 
	Part of this research was performed while the second author was visiting the Mathematical Sciences Research Institute (MSRI), now becoming the Simons Laufer Mathematical Sciences Institute (SLMath), which is supported by the National Science Foundation (Grant No. DMS-1928930). 
\section{Deformation of harmonic bundles} 
The aim of this section is to prove \cref{harmonic2}, which was used in the proofs of \cref{main3} and \cref{main2}. 
 \subsection{Some preliminary on Higgs bundle and harmonic bundle}\label{subsec:harmonic}
We first  recall the   definition of Higgs bundles and  harmonic bundles. We refer the readers to  \cite{Cor88,Sim88,Sim92} for further details.
 \begin{dfn}\label{Higgs}
 	Let $X$ be a complex manifold. A \emph{Higgs bundle} on $X$  consists of  $(E,\db_E,\theta)$ where $E$ is a holomorphic vector bundle with $\db_E$ its complex structure, and $\theta:E\to E\otimes \Omega^1_X$ is a holomorphic one form with value in $\End(E)$, say \emph{Higgs field},  satisfying $\theta\wedge\theta=0$. We will usually  write $(E,\theta)$ instead of $(E,\db_E,\theta)$  if no confusion arises. 
 	%	\begin{eqnarray}\label{higgs triple}
 		%	(\db +\theta)^2=0.
 		%	\end{eqnarray} 
 \end{dfn}

 Let  $(E,\theta)$ be a Higgs bundle  over a complex manifold $X$.   
 Suppose $h$ is a smooth hermitian metric of $E$.  Denote by $\nabla_h$  the Chern connection with respect to $h$, and by $\theta^*_h$  the adjoint of $\theta$ with respect to $h$.  The metric $h$ is  \emph{harmonic} if the operator $D_h:=\nabla_h+\theta+\theta_h^*$
 is integrable, that is, if  $D_h^2=0$. We write $\theta^*$ instead of $\theta_h^*$ to simply the notation if no confusion arises. 
 \begin{dfn}[Harmonic bundle] A harmonic bundle on a complex manifold $X$ is
 	a Higgs bundle $(E,\theta)$ endowed with a  harmonic metric $h$.
 \end{dfn}
 \begin{dfn}\label{def:caninical form}
 	 For a harmonic bundle $(E,\theta,h)$ on a complex manifold $X$,  its \emph{canonical form}  is a semi-positive closed $(1,1)$-form on $X$ defined by 
 	 $
 	 \sn{\rm tr}(\theta\wedge\theta^*).
 	 $ 
 \end{dfn} 
 	Let $(X,\omega)$ be a compact K\"ahler manifold.   Assume that $\varrho:\pi_1(X)\to \GL_N(\bC)$ is a reductive representation. Let $(V_\varrho,D)$ be the flat bundle on $X$ with the monodromy representation being $\varrho$. Let $h$ be any smooth hermitian metric for $V_\varrho$. Then there exists a unique decomposition $D=\nabla_h+\Phi_h$ such that $\nabla_h$ is a unitary connection of  $(V_\varrho,h)$ and $\Phi_h$ is a self-adjoint operator, namely, we have  $$\langle \Phi_h(e),e'\rangle_h=\langle e,\Phi_h(e')\rangle_h, \quad \forall \ e,e'\in V_\varrho.$$
 	Say $h$ is a \emph{harmonic metric} if $\nabla_h^*\Phi_h=0$. In this case, by the Siu-Sampson formula, we have the pluriharmonicity of $h$. In other words, if we decompose $\nabla_h=\nabla_h'+\nabla_h''$ and $\Phi_h=\Phi_h'+\Phi_h''$ into the $(1,0)$ and $(0,1)$-parts, then $(V_\varrho,\nabla_h'',\Phi'_h,h)$ is a harmonic bundle. By the theorem of Corlette \cite{Cor88}, such harmonic metric exists and is unique up to some obvious ambiguity.  We will also refer to $(V_\varrho,D,h)$ as a \emph{harmonic bundle} if no confusion arises.  In this case, the canonical form associated with the harmonic bundle $(V_\varrho,D_h'',\Phi'_h,h)$ introduced in \cref{def:caninical form} is uniquely defined, and we denote it by $\omega_\varrho:=\sn{\rm tr}(\Phi_h'\wedge \Phi_h'')$.  Note that it does not depend on the choice of the K\"ahler metric $\omega$ on $X$.
 	\begin{dfn}[Canonical form]\label{def:caninical form2}
 		We call the above semi-positive closed $(1,1)$-form $\omega_\varrho$ the \emph{canonical form}  associated with the reductive representation $\varrho:\pi_1(X)\to \GL_N(\bC)$. 
 	\end{dfn}
 	We leave it to the reader to verify as an exercise that for any $g \in \GL_N(\bC)$, we have $\omega_\varrho = \omega_{g \varrho g^{-1}}$, where $g \varrho g^{-1}$ is the conjugate of $\varrho$ by $g$.

 	\subsection{Deformation of canonical forms}
 	Let us  prove the main result of this section on the continuity of canonical forms in families. 
 	\begin{thm}\label{harmonic2} 
 		Let $f:\sX\to \bD$ be a proper smooth K\"ahler family. Let $\varrho:\pi_1(X)\to \GL_N(\bC)$ be a reductive representation. Then the fiberwise defined canonical form $\omega_{\varrho_t}$ on  $X_t$ associated with the representation $\varrho_t:\pi_1(X_t)\to \GL_N(\bC)$ varies continuously with respect to $t\in \bD$. 
 	\end{thm}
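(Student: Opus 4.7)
The plan is to realize the fiberwise canonical form $\omega_{\varrho_t}$ as the pullback, via a $\varrho_t$-equivariant pluriharmonic map $u_t \colon \widetilde{X}_t \to N$ into the Riemannian symmetric space $N := \GL_N(\bC)/\rU(N)$, of a fixed semi-positive $(1,1)$-form on $N$, and then to deduce the continuity of $\omega_{\varrho_t}$ in $t$ from a family regularity statement for equivariant harmonic maps. More precisely, by Corlette's theorem and the discussion in \cref{subsec:harmonic}, the harmonic metric $h_t$ on the flat bundle $V_{\varrho_t}$ corresponds to a $\varrho_t$-equivariant pluriharmonic map $u_t$, and a short computation identifies $\omega_{\varrho_t} = \sqrt{-1}\,{\rm tr}(\Phi'_{h_t} \wedge \Phi''_{h_t})$ with $u_t^*\kappa$, where $\kappa$ is the $(1,1)$-form on $N$ associated with the trace form on the tangent space. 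The residual ambiguity in $h_t$ is conjugation by the centralizer $Z(\varrho_t)$ of the image, and this ambiguity is killed by the trace, so $\omega_{\varrho_t}$ is intrinsically well defined on $X_t$.

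Next I would smoothly trivialize the family. Since $f \colon \sX \to \bD$ is a proper submersion with connected fibers, after shrinking $\bD$ we may fix a $C^\infty$-diffeomorphism $\Phi \colon \sX \xrightarrow{\sim} X \times \bD$ over $\bD$ restricting to the identity on $X_0$. Lifting $\Phi$ to universal covers gives a $\pi_1(X)$-equivariant diffeomorphism $\widetilde{\Phi} \colon \widetilde{\sX} \simeq \widetilde{X} \times \bD$, and under this identification the complex structures $J_t$ and Kähler metrics $g_t$ on the fixed smooth manifold $\widetilde{X}$ depend smoothly on $t$, while all monodromy representations are identified with a single $\varrho$. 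The theorem is thereby reduced to showing that the $\varrho$-equivariant pluriharmonic maps $u_t \colon (\widetilde{X}, g_t) \to N$ vary continuously in $t$ in the $C^2_{\rm loc}$-topology, after which pushing forward through $\widetilde{\Phi}$ yields continuity of $\omega_{\varrho_t} = u_t^*\kappa$ on $\sX$.

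The heart of the proof is a family regularity theorem for equivariant pluriharmonic maps into $N$, which is precisely one of the principal technical inputs developed elsewhere in the paper. The ingredients I would invoke are: (i) a uniform $C^0$-bound for $u_t$ on any fixed fundamental domain, stemming from the reductivity of $\varrho$ (the image admits no fixed point at infinity of $N$) together with uniform energy estimates in $t$ coming from the continuous variation of $g_t$; (ii) interior elliptic $C^{k,\alpha}$-estimates for the harmonic map system $\tau_{g_t}(u_t) = 0$, whose coefficients depend smoothly on $t$; and (iii) a normalization that fixes the $Z(\varrho)$-ambiguity by prescribing $u_t(\widetilde{x}_0)$ to lie in a fixed geodesic slice through a base point $p_0 \in N$. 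Combining these, one obtains a continuous family $t \mapsto u_t$ in $C^\infty_{\rm loc}(\widetilde{X}, N)$, from which the continuity of $\omega_{\varrho_t}$ is immediate.

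The main obstacle is the third step: promoting the pointwise existence of the harmonic maps $u_t$ to a continuous family. The delicate points are ruling out energy concentration or bubbling as $t$ varies, which requires uniform properness of $u_t$ on fundamental domains traceable to the absence of $\varrho$-invariant parabolic subgroups; making the base-point normalization compatible with the centralizer action of $Z(\varrho)$; and verifying that pluriharmonicity (and not merely tension-free harmonicity) is preserved in the limit, which follows from the Sampson Bochner identity once $C^2_{\rm loc}$-convergence is established. Granting the deformation regularity framework built up earlier in the paper for equivariant pluriharmonic maps into symmetric spaces, the conclusion of \cref{harmonic2} then follows by assembling these pieces.
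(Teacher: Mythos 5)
Your proposal is a valid line of attack, but it is not the route the paper actually follows for \cref{harmonic2}, and as written it has one genuine gap.

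The paper's own proof works on the Higgs-bundle side: it obtains uniform pointwise bounds on $|\theta_t|$ (via the Mochizuki-type estimate \cref{lem:uniform} on eigenvalues of the Higgs field combined with continuity of energy), converts these to uniform Chern-curvature bounds, and then invokes Uhlenbeck weak compactness following \cite[\S 7]{Sim94b} to extract a $C^0$-convergent subsequence of gauge-transformed Higgs data; identifying the limiting monodromy with $\varrho$ up to conjugacy finishes the argument. Your proposal instead argues on the harmonic-map side, via family regularity for equivariant pluriharmonic maps into the symmetric space. The paper does develop exactly this machinery later (\cref{prop:continuous,thm:family}), and indeed \cref{rem:smooth} observes that it upgrades \cref{harmonic2} to smooth dependence in $t$. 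So your plan would, if carried out, prove more; the paper defers that stronger statement and gives a softer compactness proof first because that suffices for its purposes.

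The gap in your step (i) is the assertion that reductivity of $\varrho\colon\pi_1(X)\to\GL_N(\bC)$ implies the image has no fixed point at infinity of $N=\GL_N(\bC)/\rU_N$, and hence that the action is proper in the sense of \cref{def:proper}. This fails as stated: the symmetric space splits as $\sP_N(\bC)_1\times\bR$, and any representation with nontrivial central character translates along the Euclidean factor, while a reductive representation into a compact subgroup acts with $\delta\equiv 0$. The no-fixed-point argument only works for the $\SL_N$-part after the image is Zariski dense there (so that a fixed point at infinity would force the Zariski closure into a proper parabolic, as in the proof of \cref{prop:continuous}). The paper handles this by first lifting $\varrho$ (after a finite étale cover, via Selberg's theorem, \cref{lem:lift}) to $\SL_N(\bC)\times\bC^*$ and decomposing into simple summands, then treating the abelian $\bC^*$-factor separately by elementary Hodge theory (\cref{lem:continuous2}), reassembling via the isometry $\sP_N(\bC)\simeq\sP_N(\bC)_1\times\bR$. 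You would need this reduction (or some surrogate) before invoking properness; without it, the Arzel\`a--Ascoli step can fail. Separately, the ``delicate point'' you raise about preserving pluriharmonicity in the limit is actually a non-issue: by Siu--Sampson/Corlette, the equivariant harmonic map from a compact K\"ahler manifold to $N$ is automatically pluriharmonic for each fixed $t$, so no limiting Bochner argument is needed.
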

  The proof of \cref{harmonic2}  follows essentially from the same arguments in \cite{Sim92,Sim94b}, based on Uhlenbeck's weak compactness theorem \cite{Uhl82}.   We first recall some preliminary results.

     Let $g$ denote the Euclidean metric $\sum d z_i \cdot d \bar{z}_i$ of $\bD^n$. Let $\omega$ be a Kahler form on $\bD^n$ such that there exists a constant $C_0>0$ such that $C_0^{-1} \cdot \omega \leq g \leq C_0 \cdot \omega$. Let $\left(E,  \theta,h\right)$ be a harmonic bundle on $\bD^n$.   We have the expression $\theta=\sum_{i=1}^{n} f_i \cdot d z_i$ for holomorphic sections $f_i \in \operatorname{End}(E)$ on $\bD^n$. 	Recall the following result in \cite{Moc06}.
 	\begin{lem}[\protecting{\cite[Lemma 2.13]{Moc06}}]\label{lem:Moc}
 		  There exist a constant  $C_1$   depending  only on $ \operatorname{rank} E$, the upper bound of absolute values of eigenvalues of $f_1,\ldots,f_n$ and the above constant $C_0$,  such that 
 		 $$
 \left|\theta\right|_{\omega,h}^2(x) \leq C_1, \quad \forall x \in \bD_{\frac{1}{2}}^n.  
 		 $$  
 		 Here $ \left|\theta\right|_{\omega,h}$ denotes the norm of $\theta$ with respect the metric $h$ and $\omega$.  \qed
 	\end{lem}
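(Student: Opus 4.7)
My approach combines two ingredients: (i) the plurisubharmonicity of $s := |\theta|^2_{\omega,h}$, which follows from the harmonic bundle structure on a Kähler manifold via the Sampson--Siu Bochner formula for pluriharmonic maps into an NPC symmetric space, and (ii) a priori uniform control on the coefficients of the characteristic polynomial of $\theta$, which are holomorphic on $\bD^n$. By the comparability $C_0^{-1}\omega \le g \le C_0 \omega$, it suffices to bound $s$ with respect to the Euclidean metric $g$; this only introduces a factor depending on $C_0$ and $n$. In what follows I also freely use that each $f_i \in \operatorname{End}(E)$ is a holomorphic endomorphism on $\bD^n$, since $\bar\partial\theta=0$.

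For ingredient (i), the harmonic bundle $(E,\theta,h)$ corresponds (after lifting to the universal cover) to an equivariant pluriharmonic map $u:\widetilde{\bD^n} \to \GL_N(\bC)/U(N)$ into a Riemannian symmetric space of non-compact type, which is NPC. The Sampson--Siu Bochner formula then yields $\sqrt{-1}\,\partial\bar\partial\, s \ge 0$, so $s$ is plurisubharmonic on $\bD^n$. The mean value inequality gives
\[
s(x) \;\le\; \frac{1}{|\bB_r(x)|}\int_{\bB_r(x)} s\,dV_g
\]
for every Euclidean ball $\bB_r(x) \subset \bD^n$, which reduces the pointwise bound on $\bD^n_{1/2}$ to an $L^1$-average bound on $\bD^n_{3/4}$.

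For ingredient (ii), the coefficients of $\det(tI - f_i)$ are holomorphic on $\bD^n$; by Vieta's formulas and the eigenvalue hypothesis, they are bounded pointwise by a constant $M = M(\operatorname{rank} E, \text{eigenvalue bound})$. Following Simpson's local analysis of harmonic metrics on Higgs bundles (cf.\ \cite{Sim92}), one establishes a pointwise comparison between $h$ and a ``model'' Hermitian metric built from the weight filtration associated with the nilpotent part of $\theta$, with distortion controlled solely by the characteristic polynomial data. Combined with the eigenvalue bound, this forces $s$ to be pointwise dominated by a function of $M$ and $\operatorname{rank} E$, which in particular provides the required $L^1$-bound on $\bD^n_{3/4}$. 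Plugging into the mean value inequality yields $s(x) \le C_1$ on $\bD^n_{1/2}$ with $C_1 = C_1(\operatorname{rank} E, \text{eigenvalue bound}, C_0)$.

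The main obstacle is the comparison argument in ingredient (ii): the ``semisimple'' contribution to $s$ is controlled directly by the eigenvalue bound via Vieta, but bounding the ``nilpotent'' contribution requires Simpson's spectral analysis of the harmonic metric, combining the Hitchin equation $F_h = -[\theta,\theta^*_h]$ with the Kähler identities on $(E,h)$ to extract an $M$-dependent bound on the full norm $|\theta|^2_h$. Once this comparison is in place, the plurisubharmonicity from ingredient (i) closes the argument via a routine mean value estimate.
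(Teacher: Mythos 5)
The paper does not actually prove this lemma: it is quoted from Mochizuki \cite[Lemma 2.13]{Moc06} and used as a black box (note the \qed directly after the statement), so the only internal comparison available is with the mechanism of the surrounding lemmas. Judged on its own terms, your ingredient (i) is essentially sound once stated precisely: after passing to the Euclidean metric $g$ (at the cost of a factor depending on $C_0$ and $n$, and this reduction is not optional, since contracting with a general $\omega^{-1}$ destroys plurisubharmonicity) one has $|\theta|^2_{g,h}=\sum_i |f_i|^2_h$, and each $|f_i|^2_h$ is plurisubharmonic because in $\sqrt{-1}\partial\bar\partial |f_i|^2_h=\sqrt{-1}\langle \nabla^{1,0}f_i,\nabla^{1,0}f_i\rangle-\sqrt{-1}\langle [R(E,h),f_i],f_i\rangle$ the Hitchin equation $R(E,h)=-[\theta,\theta^*_h]$ together with $[\theta,f_i]=0$ converts the curvature term into $+\sqrt{-1}\,|[\theta^*_h,f_i]|^2\geq 0$. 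This is Simpson's curvature computation rather than the Sampson--Siu formula, but it delivers what you want.

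The genuine gap is ingredient (ii). The entire content of the lemma is that the eigenvalue bound \emph{alone} --- with no integral hypothesis on $\theta$ --- controls $|\theta|_h$ pointwise in the interior, and your ingredient (ii) simply asserts this conclusion (``forces $s$ to be pointwise dominated by a function of $M$ and $\operatorname{rank} E$'') by appeal to an unspecified comparison of $h$ with a ``model metric built from the weight filtration''. That machinery belongs to the asymptotic study of \emph{tame} harmonic bundles along a divisor and provides no a priori interior estimate on a smooth disc; moreover, if it did yield the pointwise bound you claim, the lemma would already be proved and the mean-value step in (i) would be vacuous, so the argument as written is circular. The mechanism that actually closes the proof in Simpson and Mochizuki is different: the Weitzenb\"ock formula combined with an elementary linear-algebra inequality for matrices with eigenvalues bounded by $A$ (a matrix with small self-commutator is close to normal, hence has norm controlled by its spectral radius) yields a differential inequality of the shape $\Delta |\theta|^2_{g,h}\geq \epsilon\,|\theta|^4_{g,h}-C(A)\,|\theta|^2_{g,h}$ with $\epsilon$ and $C(A)$ depending only on the rank and on $A$, and one then concludes by an Ahlfors--Schwarz/Osserman-type comparison on the ball, which produces the bound on $\bD_{\frac{1}{2}}^n$ with no $L^1$ or $L^2$ input. (In the paper, the $L^2$-norm of $\theta$ enters only in \cref{lem:Moc2} and \cref{lem:uniform}, to bound the eigenvalues themselves; it is not a hypothesis available for the present lemma.) To repair your proof you would need to replace ingredient (ii) by this quartic self-improving inequality and the associated comparison argument.
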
 
 	\begin{lem}\label{lem:Moc2}
 	There exists a constant $C_2$ depending only on   $  \operatorname{rank} E$ and $C_0$ such that absolute values of eigenvalues of $f_1(x),\ldots,f_n(x)$  for each $x\in \bD_{\frac{1}{2}}^n$  are bounded from above by $C_2\lVert \theta\rVert^2_{\omega,h}$, where
 	$$
 	\lVert \theta\rVert^2_{\omega,h}:=\int_{\bD^n} |\theta|_{\omega,h}^2d\vol_\omega.
 	$$
 	\end{lem}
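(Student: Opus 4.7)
The plan is to combine Schur's classical spectral inequality with the sub-mean-value property applied to the function $|\theta|^2_{\omega,h}$, whose subharmonicity is the key harmonic-bundle input coming from the Hitchin--Simpson--Sampson Bochner formula.

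First, I would invoke Schur's inequality: for any endomorphism $A$ of a hermitian vector space $(V,h)$ of dimension $r$, the eigenvalues $\lambda_j(A)$ satisfy $\sum_{j=1}^{r}|\lambda_j(A)|^2 \le \operatorname{tr}(AA^*) = |A|^2_h$. Applied pointwise to $A = f_i(x)$, combined with the identity $|\theta|^2_{g,h} = \sum_i |f_i|^2_h$ (since $g$ is the flat Euclidean metric so the $dz_i$ are orthonormal up to a uniform constant) and the metric comparison $C_0^{-1}\omega \le g \le C_0\omega$ from the hypothesis, this yields the pointwise bound
\[
|\lambda_j(f_i(x))|^2 \;\le\; |f_i(x)|^2_h \;\le\; |\theta(x)|^2_{g,h} \;\le\; C_0\,|\theta(x)|^2_{\omega,h}, \qquad x \in \bD^n.
\]

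Second, for any harmonic bundle $(E,\theta,h)$ the function $u := |\theta|^2_{\omega,h}$ is a non-negative subharmonic function on $\bD^n$. This is the standard Bochner--Weitzenb\"ock identity: writing $D_h = \nabla_h + \theta + \theta^*_h$ and using $D_h^2 = 0$ together with the harmonicity relations $\nabla''\theta = 0$ and $[\theta,\theta]=0$, one obtains $\Delta|\theta|^2_{\omega,h} \ge 0$ in the distributional sense (cf.\ \cref{subsec:harmonic}). Applying the classical sub-mean-value inequality to $u$, for each $x \in \bD^n_{1/2}$ the Euclidean ball $B_{1/2}(x)$ is contained in $\bD^n$, hence
\[
u(x) \;\le\; \frac{1}{\vol_g(B_{1/2}(x))}\int_{B_{1/2}(x)} u\, d\vol_g \;\le\; C\int_{\bD^n} u\,d\vol_\omega \;=\; C\,\|\theta\|^2_{\omega,h},
\]
where $C$ depends only on $n$ and $C_0$ (via the volume comparison of the two metrics).

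Combining these two estimates gives $|\lambda_j(f_i(x))|^2 \le C_0 C \cdot \|\theta\|^2_{\omega,h}$ for every $x \in \bD^n_{1/2}$, yielding the desired bound with constant $C_2$ depending only on $\rank E$, $n$, and $C_0$. The only substantive step is the subharmonicity of $|\theta|^2_{\omega,h}$, which is a classical feature of harmonic bundles and can be quoted directly from the preliminary material in \cref{subsec:harmonic}; the rest is a routine application of Schur's inequality and the mean-value property. I expect no significant obstacle beyond making the Bochner-type subharmonicity input precise in this smooth, non-compact setting.
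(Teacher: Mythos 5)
Your argument is correct in spirit, but it takes a genuinely different route from the paper's, and the comparison is instructive. The paper bounds the eigenvalues through the coefficients $\sigma_{i,k}$ of the characteristic polynomial $\det(t-f_i)$, which are \emph{holomorphic} functions of $z$. Each $\tfrac{2}{k}\log|\sigma_{i,k}|$ is plurisubharmonic, hence so is $\log s$ with $s := \max_k |\sigma_{i,k}|^{2/k}$; the psh sub-mean-value inequality and Jensen's inequality then bound $s(z)$, and hence the eigenvalues via the root--coefficient inequality, by the $L^2$-norm. The crucial structural point is that this uses \emph{only} the holomorphicity of $\theta$ (i.e.\ the Higgs-bundle structure) and never the harmonicity of $h$; that is why the paper must then quote Mochizuki's \cref{lem:Moc} separately to upgrade the eigenvalue bound to a pointwise bound on $|\theta|^2$. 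Your proposal instead invokes the Eells--Sampson Bochner inequality: the pluriharmonic map $u:\bD^n \to \GL_N(\bC)/{\rm U}_N$ attached to $(E,\theta,h)$ is $g$-harmonic (pluriharmonicity is metric-independent on the Kähler domain), the flat metric $g$ has vanishing Ricci curvature, and $\GL_N(\bC)/{\rm U}_N$ has nonpositive sectional curvature, so $|du|_g^2 \propto |\theta|^2_{g,h}$ is subharmonic for the Euclidean Laplacian. Combined with the sub-mean-value property this directly gives the pointwise bound on $|\theta|^2$, and then Schur's inequality bounds the eigenvalues. This approach buys you more: it proves the stronger conclusion of \cref{lem:uniform} in one stroke, making the appeal to Mochizuki's lemma unnecessary. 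The price is that you invoke the PDE input (harmonicity of $h$ and the curvature sign of the symmetric space), which the paper's proof of this particular lemma avoids.

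Two corrections are needed to make your write-up precise. First, the reference to \cref{subsec:harmonic} for the subharmonicity of $|\theta|^2$ is misplaced: that subsection contains definitions (Higgs bundle, harmonic bundle, canonical form) but no Bochner inequality; the correct source is the classical Eells--Sampson Bochner formula applied to the pluriharmonic map, or equivalently a direct Weitzenböck computation using $D_h^2=0$ together with the nonpositivity of the complexified curvature of the target. Second, the function that is subharmonic for the \emph{Euclidean} Laplacian $\Delta_g$ is $|\theta|^2_{g,h}$, not $|\theta|^2_{\omega,h}$: the two differ by a bounded smooth factor, which preserves neither subharmonicity nor the sub-mean-value property. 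You should run the mean-value step on $|\theta|^2_{g,h}$ and only afterward convert to $\omega$ using $C_0^{-1}\omega \le g \le C_0\omega$, which affects the constants but not the conclusion.
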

 	\begin{proof}
Consider the characteristic polynomial $P_i:=\det (t-f_i)=t^r+\sigma_{i,1}t^{r-1}+\cdots+\sigma_{i,r}$, where $r=\rank E$ and  $\sigma_{i,j}$ are holomorphic functions on $\bD^n$.  For any $x\in \bD^n$, we denote by $\lambda_i(x)$ the maximum of the  absolute values of eigenvalues  of $f_i(x)$.  Then by
the classical inequalities between the norms of roots and coefficients of a polynomial, one has 
$$
\lambda_i(x)\leq 2\max_{k=1,\ldots,r} |\sigma_{i,k}(x)|^{\frac{1}{k}}.
$$ 
Note that there exists a constant $C_3$ depending only on $\rank E$ such that 
$$
s(x):=\max_{k=1,\ldots,r} |\sigma_{i,k}(x)|^{\frac{2}{k}}\leq C_3|f_i(x)|^2_h.
$$
For each $z\in \bD_{\frac{1}{2}}^n$, we set  $$\Omega_z:=\{w\in \bD^n\mid   |w_i-z_i|\leq \frac{1}{2} \mbox{ for each }i  \}.$$ 
Since $s(x)$ is a psh function, we have 
	\begin{align*}
	\log |s(z)|^2&\leqslant  \frac{4^n}{\pi^n }\int_{\Omega_z}  \log |s(w)|^2 d\mbox{vol}_{g}\\
	&\leqslant \log \big(\frac{4^n}{\pi^n }\cdot \int_{\Omega_z}   |s(w)|^2 d\mbox{vol}_{g}\big)\\  
	&\leq  \log  (\frac{4^n}{\pi^n })+ \log \int_{\bD^n}   |s(w)|^2 d\mbox{vol}_{g}\\
&\leq  \log  C_3(\frac{4^n}{\pi^n })+ \log \int_{\bD^n}   |f_i|_h^2 d\mbox{vol}_{g}.  
\end{align*}
On the other hand, note that there exists a constant $C_4$ depending only on $C_0$ such that
$$
\max_{i=1,\ldots,n}  \int_{\bD^n}   |f_i|_h^2 d\mbox{vol}_{g}\leq   C_4 \int_{\bD^n}   |\theta|_{\omega,h}^2 d\mbox{vol}_{\omega}=C_4\lVert \theta\rVert_{\omega,h}^2. 
$$
The above two inequalities yield the lemma. 
 	\end{proof}
\cref{lem:Moc,lem:Moc2} imply the following result. 
 	\begin{lem}\label{lem:uniform}
 		  There exists  a uniform constant $C$  depending only on   $ \rank E$,  the constant $C_0$ above, and the $L^2$-norm $\lVert \theta\rVert_{\omega,h}^2$ such that 
 	$$
 	\left|\theta\right|_{\omega,h}^2(x) \leq C, \quad \forall x \in \bD_{\frac{1}{2}}^n.
 	$$   \qed
 	\end{lem}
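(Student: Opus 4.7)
The strategy is to chain \cref{lem:Moc} and \cref{lem:Moc2} directly. By \cref{lem:Moc2}, for each $x \in \bD_{1/2}^n$ the absolute values of the eigenvalues of $f_i(x)$ are bounded from above by $C_2 \lVert \theta \rVert_{\omega,h}^2$, where $C_2$ depends only on $\rank E$ and $C_0$. Substituting this eigenvalue bound into \cref{lem:Moc} then yields a constant $C$ depending only on $\rank E$, $C_0$, and $\lVert \theta \rVert_{\omega,h}^2$ such that $\lvert \theta \rvert_{\omega,h}^2(x) \leq C$ for every $x \in \bD_{1/2}^n$, which is exactly the asserted bound.

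The sole technical subtlety is that \cref{lem:Moc} requires an eigenvalue bound for the $f_i$ on the whole polydisk $\bD^n$, whereas \cref{lem:Moc2} only supplies such a bound on the shrunken polydisk $\bD_{1/2}^n$. This gap is bridged by a routine covering and rescaling argument. Cover $\bD_{1/2}^n$ by finitely many sub-polydisks $\Delta(x_\alpha,\delta)$ with closures in $\bD^n$ and affinely rescale each to the unit polydisk $\bD^n$. Under such a rescaling, the Euclidean form $g$ and the K\"ahler form $\omega$ are both multiplied by the same positive constant, so the comparison constant $C_0$ is preserved; likewise the $L^2$-norm of $\theta$ on $\Delta(x_\alpha,\delta)$ is controlled by $\lVert \theta \rVert_{\omega,h}^2$ up to a dimensional factor. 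Applying \cref{lem:Moc2} on the rescaled polydisk provides an eigenvalue bound on the rescaled $\bD_{1/2}^n$, which after one further shrinking of the cover converts into an eigenvalue bound valid on the full unit polydisk of a still-smaller affine rescaling; \cref{lem:Moc} then delivers the desired pointwise estimate throughout the corresponding polydisk around $x_\alpha$ in the original coordinates, and finiteness of the cover yields the uniform bound on $\bD_{1/2}^n$.

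I do not foresee any serious obstacle. The genuine analytic content is already packaged inside \cref{lem:Moc} (a Bochner-type sub-mean value estimate) and inside \cref{lem:Moc2} (plurisubharmonicity of the coefficients of the characteristic polynomial combined with the sub-mean value inequality). What is left is essentially a bookkeeping exercise tracking how the constants $C_0$, $C_1$, $C_2$ transform under the affine rescalings used in the covering argument, and this is transparent because the dependences stated in both lemmas are explicit and behave polynomially under dilations.
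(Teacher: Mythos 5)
Your chaining of \cref{lem:Moc} and \cref{lem:Moc2} is precisely what the paper intends: the statement is asserted with an immediate \qed after the sentence ``\cref{lem:Moc,lem:Moc2} imply the following result,'' so the paper's own proof is nothing more than the direct composition of the two lemmas. The domain-matching concern you raise is real (as stated, \cref{lem:Moc} is naturally read as requiring the eigenvalue bound on all of $\bD^n$ in order to conclude on $\bD_{1/2}^n$, whereas \cref{lem:Moc2} only produces the eigenvalue bound on $\bD_{1/2}^n$), and your covering-and-rescaling argument is a valid way to close it: apply \cref{lem:Moc2} on a rescaled sub-polydisk to get eigenvalue bounds on its inner half, treat that inner half as a fresh unit polydisk so the eigenvalue bound now holds everywhere on it, then invoke \cref{lem:Moc} to control $|\theta|^2$ on its inner half, and conclude by compactness of $\overline{\bD_{1/2}^n}$.

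This is, however, heavier machinery than the situation calls for. The radius $1/2$ is immaterial in all three statements and in the sole application (in the proof of \cref{harmonic2} one is free to arrange the admissible charts so that whatever shrunken polydisks one ends up with still cover $f^{-1}(\bD_\ep)$); and Mochizuki's sub-mean-value estimate behind \cref{lem:Moc} in fact gives, for any $\rho<1$, a bound on $\bD_\rho^n$ with constant depending on $\rho$, the case $\rho=1/2$ being just one normalization. So the cleaner fix is to rerun the plurisubharmonic mean-value step in \cref{lem:Moc2} with boxes $\Omega_z$ of side $\varepsilon$, giving eigenvalue bounds on $\bD_{1-\varepsilon}^n$, and then apply the (rescaled) \cref{lem:Moc} on $\bD_{1-\varepsilon}^n$ with inner radius ratio $\rho$ chosen so that $\rho(1-\varepsilon)\geq 1/2$; no covering is needed. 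Two small inaccuracies in your write-up, neither of which affects correctness: under the affine rescaling $\omega$ is pulled back rather than multiplied by a constant (its coefficients are re-evaluated at the rescaled point), though the comparison $C_0^{-1}\omega\leq g\leq C_0\omega$ is indeed preserved because it is pointwise; and the $L^2$-norm of $\theta$ over a sub-polydisk is bounded by $\lVert\theta\rVert^2_{\omega,h}$ outright, with no dimensional factor, since the integrand is non-negative.
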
 
 	\begin{proof}[Proof of \cref{harmonic2}]
 		It suffices to prove the continuity of $\omega_{\varrho_t}$ at $t=0$.   Write $X$ for $X_0$ for notational simplicity.  We fix a smooth trivialization $F:\sX\to X\times \bD$ and let $F_t:X_t\to X$ be the induced diffeomorphism. To prove the theorem, it  is equivalent to show that $F_{t,*}\omega_{\varrho_t}$ are  continuously varying forms on $X$ at $t=0$. 
 		
 		  Fix a K\"ahler metric $g$ on $X$ and let $g_t:= g|_{X_t}$ be the induced K\"ahler metric on $X_t$.  Let $(V_\varrho,D)$ be the flat vector bundle on $\sX$ whose monodromy representation is $\varrho$.  By Corlette's theorem, for each $t\in \bD$, there exists a harmonic metric $h_t$ for $(V_t,D_t):=(V_\varrho,D)|_{X_t}$.  Let $(E_t,\theta_t,h_t)$ be the corresponding harmonic bundle on $X_t$. By the same arguments as we will see in  \eqref{unienergybd}, the   energy  of the $\varrho_t$-equivariant harmonic map from $\widetilde{X}_t$ to $\GL_N/U_N$ induced $(E_t,\theta_t,h_t)$ is a continuous function on $\bD$. Hence,   there exists a constant $C_1>0$ such that the energy
 		  $$  \int_{X_t}|\theta_{t}+\theta_{t}^*|_{g_t,h_t}^2d\vol_{g_t}\leq C_1 
 		  $$
 		  for any $t\in \bD_{\frac{1}{2}}$.   This implies that \begin{align}\label{eq:uniform2}
 		  	 \lVert\theta_t\rVert_{g_t,h_t}^2\leq C_1 \mbox{ for any } t\in \bD_{\frac{1}{2}}.
 		  \end{align}
  	We can find a constant $\ep\in (0,\frac{1}{2})$ and  admisisble coordinate systems    $\{\Omega_1,\ldots,\Omega_m\}$ centered at points in $X_0$ such that the following properties hold:
 		  \begin{enumerate}[label=(\alph*)]
 		  	\item  there are biholomorphisms $\varphi_i: \bD^n\times \bD_\ep\to \Omega_i $ with $f\circ \varphi_i(z,t)=t$.
 		  	\item Set $\Omega_i(\frac{1}{2}):=\varphi_i(\bD_{\frac{1}{2}}^n\times \bD_\ep)$. Then $\cup_{i=1}^{m}\Omega_i(\frac{1}{2})=f^{-1}(\bD_{\ep})$.
 		  	\item There exists a constant $C_0>0$ such that for the Euclidean metric $g=\sum d z_i \cdot d \bar{z}_i$ of $\bD^n$, 
 		  	for each $t\in \bD_{\ep}$,   we have
 		  	$$
 		  	C_0^{-1}\varphi_i^*g_t\leq g\leq C_0\varphi_i^*g_t.
 		  	$$
 		  \end{enumerate}
 		  By \cref{lem:uniform} and \eqref{eq:uniform2}, this implies that there exists a uniform constant $C>0$ such that  for any $t\in \bD_{\frac{1}{2}}$ and any $x\in X_t$, we have
\begin{align}\label{eq:uniform}
	 	  |\theta_t(x)|^2_{g_t,h_t}\leq C.
\end{align} 
 	Let $C_2$ be a constant such that $C_2^{-1} F_t^*g_0\leq g_t\leq C_2  F_t^*g_0$ for each $t\in \bD_{\ep}$. 	  Then we have the uniform bound for the Chern curvature of $(E_t,h_t)$ with respect to the metric $F_t^*g_0$ as follows: 
\begin{align}\label{eq:Uhlenbeck}
	 	|  R(E_t,h_t)(x)|_{F_t^*g_0,h_t}\leq 	C_2|  R(E_t,h_t)(x)|_{g_t,h_t}=C_2|[\theta_t,\theta_t^*](x)|_{g_t,h_t}\leq 2C^2C_2.
\end{align}  		
Pick $\{t_i\}_{i\in \bN}$ be  an arbitrary sequence of points in $\bD_{\ep}$ that converge  to $0$. To simplify notation,  let $F_i: X_i \to X$ be $F_{t_i}: X_{t_i} \to X$ and $(E_i, \theta_i, h_i)$ be $(E_{t_i}, \theta_{t_i}, h_{t_i})$. Using \eqref{eq:Uhlenbeck}, we can apply Uhlenbeck's weak compactness theorem \cite{Uhl82}, following the approach outlined in \cite[\S 7]{Sim94b}.

 Fix some large positive integer $p$. 
 		By    \cite[Proposition 7.9]{Sim94b}, after subtracting a subsequence,  there exists a harmonic bundle  $(E,\theta,h)$ on $X$   together with  a sequence of $L_2^p$-isometries (i.e. gauge transform in \cite{Uhl82}) $\eta_i:F_{i,*}(E_i,h_i)\stackrel{\simeq }{\to }(E,h)$ such that 
 		$
 		\eta_{i,*}F_{i,*}(\theta_i)-\theta
 		$, 	$
 		\eta_{i,*}F_{i,*}(\theta_i^*)-\theta^*
 		$, and  $
 		\eta_{i,*}F_{i,*}(D_i)-D
 		$ converges to zero strongly in $C^0$.   Here $D_i$ (resp. $D$) is flat connection associated with the harmonic bundle $(E_i,\theta_i,h_i)$ (resp. $(E,\theta,h)$). %Let us prove that $(E,\theta,h)$ is isomorphic to $(E_0,\theta_0,h_0)$. By Corlette's theorem on the unicity of harmonic bundles \cite{Cor88}, it suffices to prove their corresponding flat bundles are isomorphic.  
 		
 	Note that the monodromy representation  $\tau_i:\pi_1(X)\to \GL_N(\bC)$ of the flat bundle $$(\eta_{i,*}F_{i,*}(E_i),\eta_{i,*}F_{i,*}(D_i))$$ is conjugate to that of $D_i$.  Hence there exists an element $g_i\in \GL_N(\bC)$ such that $\tau_i=g_i\varrho g^{-1}_i$. Hence we have $[\tau_i]=[\varrho]$ in $M_{\rm B}(\pi_1(X),\GL_N)(\bC)$.   Let $\tau$ be the the monodromy representation of $D$.  Since $
 	\eta_{i,*}F_{i,*}(D_i)-D
 	$ converges to zero strongly in $C^0$, it follows that $\lim\limits_{i\to\infty}[\tau_i]=[\tau]$.
 	Hence $\tau$ is conjugate to $\varrho$.   One thus has
\begin{align}\label{eq:C1}
 	\omega_\varrho=\sn{\rm tr}(\theta_0\wedge\theta_0^*)=			\sn{\rm tr}(\theta\wedge\theta^*)=\omega_\tau.
\end{align}  Note that
\begin{align}\label{eq:C0}\nonumber
	 F_{i,*}	(\omega_{\varrho_{t_i}})&=F_{i,*}\left(	\sn{\rm tr}(\theta_i\wedge\theta_i^*) \right)
	 =	\sn{\rm tr}\left(F_{i,*}(\theta_i)\wedge   F_{i,*}(\theta_i^*)\right)\\
	 &=\sn{\rm tr}\left(\eta_{i,*}F_{i,*} (\theta_i)\wedge  	\eta_{i,*}F_{i,*}(\theta_i^*)\right).
\end{align} 
 	Since 
 $
 \eta_{i,*}F_{i,*}(\theta_i)-\theta
 $ 	and $
 \eta_{i,*}F_{i,*}(\theta_i^*)-\theta^*
 $ 
  converges to zero strongly in $C^0$, 
 $$
  \sn{\rm tr}\left(\eta_{i,*}F_{i,*}(\theta_i)\wedge  	\eta_{i,*}F_{i,*}(\theta_i^*)\right) 
 $$ converges to $
 	\sn{\rm tr}(\theta\wedge\theta^*) 
 $  strongly in $C^0$.  \eqref{eq:C1} and \eqref{eq:C0} imply that $F_{i,*}(\omega_{\varrho_{t_i}})$ converges to $\omega_\varrho$ strongly in $C^0$.   As $\{t_i\}$ is any sequence in $\bD_{\frac{1}{2}}$ converging to $0$, this implies the continuity of $\omega_{\varrho_t}$  at $X_0$. The theorem is proved.  		 
 	\end{proof}
 	\begin{rem}
Later, in \cref{thm:family}, we will establish a regularity result for equivariant harmonic maps into symmetric spaces in families. This result enables us to strengthen \cref{harmonic2}, showing that \(\omega_{\varrho_t}\) varies \emph{smoothly} with respect to \(t\), although this is not required for the proofs of \cref{main3,main2}. See \cref{rem:smooth}. 
 	\end{rem}
 	 \section{ $\Gamma$-dimension and rigid and integral fundamental groups}
 	 In this section we will prove \cref{main3}, utilizing \cref{harmonic2}.
 	\subsection{Some preliminary for $\bC$-VHS and Griffiths bundle}
We recall some notions of complex variation of Hodge structures ($\bC$-VHS for short). We refer the readers to \cite{Gri70,Sch73,Sim88,CMP,SS22} for more details. 

 	Let $X$ be a compact K\"ahler manifold and let $\tau:\pi_1(X)\to \GL_N(\bC)$ be a reductive representation underlying a $\bC$-VHS of weight $w$.  By \cite{Sim88}, it corresponds to  a \emph{system of Hodge bundles} $(E=\oplus_{p+q=w}E^{p,q},\theta)$ endowed with a Hodge metric $h$ induced by $\tau$.  Precisely, we have
 	\begin{enumerate}
 		\item  $(E,\theta,h)$ is a harmonic bundle on $X$.
 		\item  The decomposition $E=\oplus_{p+q=w}E^{p,q}$ is orthogonal with respect to $h$. 
 		\item We have
 		$$
 		\theta:E^{p,q}\to E^{p-1,q+1}\otimes \Omega_X
 		$$
 	%	\item There exists a real vector bundle $E_\bR$ such that $E=E_\bR\otimes_\bR\bC$, and $E^{p,q}=\overline{E^{q,p}}$. 
 		\item The connection $\nabla_h+\theta+\theta^*$ is flat, whose monodromy representation is $\tau$. 
 	\end{enumerate} 
 	In Griffiths \cite{Gri70} (cf. also \cite[Problem 13.3.3]{CMP}), Griffiths introduced a line bundle  	on $X$  (so-called \emph{Griffiths line bundle}) defined by
 	$$
 	L_\tau:= (\det E^{w,0})^{w}\otimes (\det E^{w-1,1})^{w-1}\otimes \cdots\otimes (\det E^{1,w-1}) 
 	$$
endowed with the metric $g$ induced by $h$. 
 	
 	We denote by $\theta_{p,q}:=\theta|_{E^{p,q}}$, and $h_{p,q}:=h|_{E^{p,q}}$. Then the Chern curvature of $(E^{p,q},h_{p,q})$ is given by
 	$$
 	R_{p,q}:=-\theta_{p,q}^*\wedge \theta_{p,q}- \theta_{p+1,q-1} \wedge \theta_{p+1,q-1}^*.
 	$$
 	Therefore, the Chern curvature of $(L_\tau,g)$ is
 	\begin{align*}
 		R(L_\tau,g)&=w{\rm tr}(-\theta_{w,0}^*\wedge \theta_{w,0})+\sum_{p=1}^{w-1}(w-q){\rm tr}(-\theta_{w-q,q}^*\wedge \theta_{w-q,q}- \theta_{w-q+1,q-1} \wedge \theta_{w-q+1,q-1}^*)\\
 		&=\sum_{q=0}^{w-1}{\rm tr}(-\theta_{w-q,q}^*\wedge \theta_{w-q,q})\\
 		&=\sum_{q=0}^{w-1}{\rm tr}( \theta_{w-q,q}\wedge  \theta_{w-q,q}^*)\\
 		& ={\rm tr}(\theta\wedge\theta^*).
 	\end{align*} 
 This implies the following result, which is already well-known to experts:
 	\begin{lem}\label{lem:Griffiths}
 		We have the integrality of the cohomology class
 		$$
 	\{\frac{1}{2\pi}\omega_\tau\}=	\{\frac{\sn}{2\pi}{\rm tr}(\theta\wedge\theta^*)\}=c_1(L_\tau)\in H^{2}(X,\bZ).
 		$$ 
 	\end{lem}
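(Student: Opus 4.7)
The plan is to combine the explicit Chern curvature computation for the Griffiths line bundle $L_\tau$ displayed just above the statement with standard Chern--Weil theory. The setup has already produced the identity $R(L_\tau,g)=\mathrm{tr}(\theta\wedge\theta^*)$ by a telescoping weighted sum over the Hodge summands $(E^{p,q},h_{p,q})$; this uses that flatness of $\nabla_h+\theta+\theta^*$ together with $h$-orthogonality of the Hodge decomposition forces $R_{p,q}=-\theta_{p,q}^{*}\wedge\theta_{p,q}-\theta_{p+1,q-1}\wedge\theta_{p+1,q-1}^{*}$, and then the sum $\sum_{p+q=w} p\cdot\mathrm{tr}(R_{p,q})$ collapses to $\mathrm{tr}(\theta\wedge\theta^{*})$ after reindexing, since each $\mathrm{tr}(\theta_{p,q}\wedge\theta_{p,q}^{*})$ appears exactly once with total coefficient $1$.

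Given this identity, the remaining step is just a normalization. Multiplying by $\sqrt{-1}/(2\pi)$ converts the right-hand side into $\tfrac{1}{2\pi}\omega_\tau$ via the definition $\omega_\tau=\sqrt{-1}\,\mathrm{tr}(\theta\wedge\theta^{*})$ from \cref{def:caninical form2}, while the left-hand side becomes the first Chern form of the hermitian holomorphic line bundle $(L_\tau,g)$. By Chern--Weil theory, this form represents $c_1(L_\tau)\in H^2(X,\bZ)$ when viewed inside $H^2(X,\bR)$, which is precisely the claimed equality.

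I do not anticipate any real obstacle: the lemma is a formal consequence of the preceding curvature computation together with the integrality of the first Chern class of any holomorphic line bundle. The one point that warrants care is the sign bookkeeping when passing from the $\mathrm{End}(E^{p,q})$-valued $R_{p,q}$ to its trace on $\det E^{p,q}$, which uses trace cyclicity combined with anticommutativity of one-forms under wedge, i.e.\ $\mathrm{tr}(-\theta_{p,q}^{*}\wedge\theta_{p,q})=\mathrm{tr}(\theta_{p,q}\wedge\theta_{p,q}^{*})$, exactly as in the displayed computation.
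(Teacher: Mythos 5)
Your proof is correct and is essentially the same argument the paper intends: the lemma is stated as an immediate consequence of the displayed computation $R(L_\tau,g)=\mathrm{tr}(\theta\wedge\theta^*)$, followed by the standard Chern--Weil fact that $\frac{\sqrt{-1}}{2\pi}R(L_\tau,g)$ represents $c_1(L_\tau)\in H^2(X,\bZ)$. Your added remark about the sign bookkeeping in $\mathrm{tr}(-\theta^*_{p,q}\wedge\theta_{p,q})=\mathrm{tr}(\theta_{p,q}\wedge\theta_{p,q}^*)$ (graded cyclicity of the trace on one-forms valued in different bundles) is exactly the point one must check, and you handle it correctly.
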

 	Let $\sD$ be the period domain of the $\bC$-VHS   $\tau$ and let $\Gamma$ be the monodromy group $\tau(\pi_1(X))$.  Assume that $\Gamma$ acts discretely on $\sD$. Then the quotient $\sD/\Gamma$ is a complex space and the period map is a holomorphic map $p:X\to \sD/\Gamma$, whose differential is $\theta$. 
 	\begin{lem}\label{lem:gamma}
 	Let $\tau:\pi_1(X)\to \GL_N(\bC)$ be a $\bC$-VHS with discrete monodromy. 	Let $\omega$ be a K\"ahler form on a compact K\"ahler $n$-fold $X$. Let $k_0$ be the largest integer $k$ such that
 		$$
 		\int_X  (\sn {\rm tr}(\theta\wedge\theta^*))^k\wedge \omega^{n-k} >0.
 		$$ 
  Then $k_0$ is the dimension of the image  $p(X)$.
 	\end{lem}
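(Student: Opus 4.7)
The plan is to reduce the statement to the pointwise identity $\rank(\omega_\tau)_x = \rank(dp_x)$, and then to invoke a standard positivity fact about semi-positive $(1,1)$-forms. No serious obstacle is anticipated; the only nontrivial input is the kernel identification, which is immediate from Griffiths transversality and the non-degeneracy of the Hilbert--Schmidt pairing.

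\emph{Step 1: rank identification.} By Griffiths transversality, the period map $p\colon X \to \sD/\Gamma$ is horizontal, and under the standard identification of the horizontal tangent space of $\sD$ at $p(x)$ with $\bigoplus_p \Hom(E^{p,q}_x, E^{p-1,q+1}_x)$, the differential $dp_x$ corresponds to the linear map $T^{1,0}_x X \to \End(E_x)$ sending $v \mapsto \theta(v)$. Writing $\theta = \sum_i \theta_i\, dz_i$ in local coordinates, one has $\omega_\tau = \sn \sum_{i,j} {\rm tr}(\theta_i \theta_j^*)\, dz_i \wedge d\bar z_j$, so the associated Hermitian form $(v,w) \mapsto {\rm tr}(\theta(v)\theta(w)^*)$ is the pullback under $\theta_x$ of the (positive-definite) Hilbert--Schmidt form on $\End(E_x)$. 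Consequently $\ker(\omega_\tau)_x = \ker(\theta_x) = \ker(dp_x)$, and hence $\rank(\omega_\tau)_x = \rank(dp_x)$ for every $x \in X$.

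\emph{Step 2: generic rank.} Set $m := \dim p(X)$. Since $p$ is holomorphic, the locus $U \subset X$ where $dp$ attains its maximal rank $m$ is nonempty and Zariski open (hence dense). By Step 1, $\omega_\tau$ has rank exactly $m$ on $U$, and rank at most $m$ on all of $X$.

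\emph{Step 3: positivity and conclusion.} I will invoke the elementary fact that, for a semi-positive real $(1,1)$-form $\alpha$ and a positive real $(1,1)$-form $\beta$ on a complex $n$-manifold, the top form $\alpha^k \wedge \beta^{n-k}$ is pointwise non-negative, and is strictly positive at $x$ if and only if $\rank(\alpha_x) \geq k$ (this follows by simultaneous diagonalization: $\alpha^k \wedge \beta^{n-k}$ equals a positive multiple of the $k$th elementary symmetric function in the eigenvalues of $\alpha$ relative to $\beta$, times $\beta^n$). Applied to $(\alpha,\beta) = (\omega_\tau,\omega)$: for $k > m$, the form $\omega_\tau^k$ vanishes pointwise, so the integral vanishes; for $k = m$, $\omega_\tau^m \wedge \omega^{n-m} \geq 0$ on $X$ and is strictly positive on $U$, so the integral is strictly positive. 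Hence $k_0 = m = \dim p(X)$, as required.
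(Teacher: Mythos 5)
Your proof is correct and follows essentially the same route as the paper's: identify the kernel of $\omega_\tau$ at each point with $\ker(dp_x)$ (the paper does this by noting $\theta$ is the differential of $p$ and ${\rm tr}(\theta\wedge\theta^*)(\xi,\bar\xi)=|\theta(\xi)|_h>0$ whenever $dp(\xi)\neq 0$), restrict to the Zariski open locus where $p$ has maximal rank $m=\dim p(X)$, count strictly positive eigenvalues, and conclude via the standard positivity of $\alpha^k\wedge\omega^{n-k}$ for semi-positive $\alpha$. The only difference is presentational: you spell out the Hilbert--Schmidt pairing and the eigenvalue/elementary-symmetric-function fact explicitly, while the paper leaves these as known.
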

 	\begin{proof}
 		Since $\theta$ is the differential of $p$,  it follows that  $\theta|_Z$ is  trivial for any fiber $Z$ of $p$. Hence, $\sn {\rm tr}(\theta\wedge\theta^*)|_Z$ is also trivial.  On the other hand, for any $\xi\in T_{X}$ such that $dp(\xi)\neq 0$, we have $ {\rm tr}(\theta\wedge\theta^*)(\xi,\bar{\xi})=|\theta(\xi)|_h>0$. Let $X^\circ$ be a Zariski dense open subset of $p$ such that $p|_{X^\circ}$ is a proper submersion.   This implies that, at  each point $x\in X^\circ$, the semi-positive $(1,1)$-form $\sn {\rm tr}(\theta\wedge\theta^*)$ has exactly $\dim p(X)$ strictly positive eigenvalues. Therefore,   $  (\sn {\rm tr}(\theta\wedge\theta^*))^{k}\wedge \omega^{n-k} $ is a positive measure  on   $X^\circ$ for $k=\dim p(X)$, and is trivial for $k>\dim p(X)$ on $X^\circ$.  The lemma is proved. 
 	\end{proof}
 	
 	\subsection{Notion of Shafarevich morphism}
Let us give the definition of the Shafarevich morphism (cf. \cite{Eys04,Eys11,DY23}).  
\begin{dfn}[Shafarevich morphism]\label{def:Shafarevich}  
	Let \( X \) be a compact Kähler manifold, and let \( H \triangleleft \pi_1(X) \) be a normal subgroup. A proper holomorphic fibration  
$
	{\rm sh}_H: X \to {\rm Sh}_H(X)
$
	onto a complex normal variety \( {\rm Sh}_H(X) \) is called the \emph{Shafarevich morphism} associated with \( (X, H) \) if, for any closed subvariety \( Z \) of \( X \), \( {\rm sh}_H(Z) \) is a point if and only if the image  
	\[
	{\rm Im}[\pi_1(Z) \to \pi_1(X)/H]
	\]  
	is finite. We shall write \( {\rm sh}_X: X \to {\rm Sh}(X) \) for the Shafarevich morphism associated with \( (X, \{e\}) \), and refer to it as the Shafarevich morphism of \( X \).  
	
	Let \( M \) be a subset of the \emph{Betti moduli space} \( M_{\rm B}(\pi_1(X), \GL_N)(\bC) \) (cf. \cite{Sim94} for the definition). Let \( H = \bigcap_{\tau} \ker \tau \), where \( \tau \) ranges over all reductive representations \( \tau: \pi_1(X) \to \GL_N(\bC) \) with \( [\tau] \in M \). If the Shafarevich morphism \( {\rm sh}_H: X \to {\rm Sh}_H(X) \) exists, then we shall write  
	\[
	{\rm sh}_M: X \to {\rm Sh}_M(X)
	\]  
	in place of \( {\rm sh}_H: X \to {\rm Sh}_H(X) \), and call it the Shafarevich morphism associated with \( M \).  
\end{dfn}  

%The \emph{Shafarevich conjecture} asserts that the universal cover of a smooth projective variety is holomorphically convex. Consequently, the Shafarevich morphism of a smooth projective variety \( X \) exists if the conjecture holds for \( X \).  

In \cite{Eys04} (cf. \cite[Proposition 3.37]{DY23} for the singular case), it is shown that when \( X \) is projective and \( M := M_{\rm B}(\pi_1(X), \GL_N)(\bC) \), the Shafarevich morphism associated with \( M \) exists and is algebraic.

\begin{lem}\label{lem:Shafarevich2}
Let $X$ be a smooth projective variety. Set $M:=M_{\rm B}(\pi_1(X),\GL_N)(\bC)$. 	If there exists an almost faithful reductive representation $\varrho:\pi_1(X)\to \GL_N(\bC)$,  then the {Shafarevich morphism}  ${\rm sh}_M:X\to {\rm Sh}_M(X)$ associated with  $M$ is the Shafarevich morphism of $X$.  
\end{lem}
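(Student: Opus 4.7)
The plan is to reduce the statement to the observation that the defining normal subgroup $H$ is finite, after which the contraction criterion characterizing $\mathrm{sh}_M$ coincides with the one defining the Shafarevich morphism of $X$.

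First, I would unwind the definition of $H$. By hypothesis $\varrho : \pi_1(X) \to \GL_N(\bC)$ is reductive, so its conjugacy class $[\varrho]$ lies in $M = M_{\rm B}(\pi_1(X), \GL_N)(\bC)$. In the formula $H = \bigcap_\tau \ker \tau$, where $\tau$ ranges over the reductive representations with $[\tau] \in M$, the representation $\varrho$ appears among the contributing factors, hence $H \subseteq \ker \varrho$. Since $\varrho$ is almost faithful, $\ker \varrho$ is finite, and therefore $H$ is finite as well.

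Next, I would verify that $\mathrm{sh}_M$ satisfies the defining property of the Shafarevich morphism of $X$. Fix a closed subvariety $Z \subseteq X$ and set $G := \mathrm{Im}[\pi_1(Z) \to \pi_1(X)]$. By the very definition of $\mathrm{sh}_M$ in \cref{def:Shafarevich}, the image $\mathrm{sh}_M(Z)$ is a point if and only if $\mathrm{Im}[\pi_1(Z) \to \pi_1(X)/H] \cong G/(G \cap H)$ is finite. Because $G \cap H$ is a finite group (being a subgroup of the finite group $H$), the natural surjection $G \twoheadrightarrow G/(G \cap H)$ has finite kernel, so $G$ is finite if and only if $G/(G\cap H)$ is. Consequently, $\mathrm{sh}_M(Z)$ is a point if and only if $\mathrm{Im}[\pi_1(Z) \to \pi_1(X)]$ is finite, which is exactly the defining property of the Shafarevich morphism of $X$.

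Finally, I would invoke the result of Eyssidieux recalled in the paragraph preceding the lemma to ensure that $\mathrm{sh}_M : X \to \mathrm{Sh}_M(X)$ is a genuine (algebraic) proper fibration. Combined with the uniqueness of the Shafarevich morphism, the fact that $\mathrm{sh}_M$ satisfies the contraction criterion for $(X,\{e\})$ forces it to coincide with the Shafarevich morphism $\mathrm{sh}_X$. There is no real obstacle here; the proof is essentially a bookkeeping consequence of the finiteness of $H$, which itself follows directly from the almost faithfulness of $\varrho$.
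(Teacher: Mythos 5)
Your proof is correct and takes essentially the same approach as the paper: both arguments hinge on the observation that almost faithfulness of $\varrho$ forces the relevant normal subgroup to be finite, so that passing to the quotient does not change whether $\mathrm{Im}[\pi_1(Z)\to\pi_1(X)]$ is finite. Your version is just slightly more explicit, spelling out that $H\subseteq\ker\varrho$ is finite and carrying out the group-theoretic bookkeeping $G/(G\cap H)$ finite $\Leftrightarrow$ $G$ finite, whereas the paper compresses this into a one-line remark about $\varrho(\mathrm{Im}[\pi_1(Z)\to\pi_1(X)])$.
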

\begin{proof}
	Since $\varrho:\pi_1(X)\to \GL_N(\bC)$ is almost faithful, then for any closed subvariety $Z$ of $X$,  $\varrho({\rm Im}[\pi_1(Z)\to \pi_1(X)])$ is finite if and only if ${\rm Im}[\pi_1(Z)\to \pi_1(X)]$ is finite.  The lemma follows from \cref{def:Shafarevich}. 
		\end{proof}
		\begin{rem}\label{rem:dim}
	Let $H \triangleleft \pi_1(X)$ be a normal subgroup. 		Note that the Shafarevich morphism ${\rm sh}_H:X\to {\rm Sh}_H(X)$ associated with $(X,H)$, if exists, is bimemorphic to the $\Gamma$-reduction of $(X,H)$. Hence, we have $\dim {\rm Sh}_H(X)=\gamma d(X,H)$. 
		\end{rem}

The next result is well-known, and we provide a proof for the sake of completeness.
\begin{lem}\label{lem:Shafarevich}
Let $\tau$ be a $\bC$-VHS on a compact K\"ahler manifold  $X$ with discrete monodromy $\Gamma$. 	The Stein factorization $g:X\to Y$ of its period map $p:X\to \sD/\Gamma$ is the Shafarevich morphism associated with $\tau$.
\end{lem}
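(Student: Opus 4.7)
\emph{Proof plan.} The plan is to reduce the assertion to the following claim: for any closed subvariety $Z\subset X$, the image $p(Z)$ is a point if and only if $\tau(\mathrm{Im}[\pi_1(Z)\to\pi_1(X)])$ is finite. Granted this, the Stein factorization $p=h\circ g$, where $g:X\to Y$ is proper surjective with connected fibers and $h:Y\to\sD/\Gamma$ has finite (equivalently discrete) fibers onto the image, gives $g(Z)$ is a point iff $p(Z)$ is a point. Since $\pi_1(X)/\ker\tau\cong\Gamma$, this matches the defining property of the Shafarevich morphism ${\rm sh}_H$ with $H=\ker\tau$ in \cref{def:Shafarevich}, which is what is meant by the Shafarevich morphism associated with $\tau$.

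For the forward direction, assume $p(Z)=\{z\}$. Let $\widetilde{p}:\widetilde{X}\to\sD$ be the $\tau$-equivariant lift of $p$ through the universal cover $\pi_{X}:\widetilde{X}\to X$. Since $\Gamma$ acts properly discontinuously on $\sD$ (the isotropy $V\subset G_\bR$ at any point of $\sD=G_\bR/V$ is compact and $\Gamma$ is discrete), the fibers of $\sD\to\sD/\Gamma$ are discrete. Therefore any connected component $\widetilde{Z}$ of $\pi_{X}^{-1}(Z)$ satisfies $\widetilde{p}(\widetilde{Z})=\{\widetilde{z}\}$ for a single lift $\widetilde{z}$ of $z$. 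For any $\gamma$ in the image of $\pi_{1}(Z,x_{0})\to\pi_{1}(X,x_{0})$, the corresponding deck transformation preserves $\widetilde{Z}$ (lifts of loops in $Z$ stay in $\widetilde{Z}$), so $\tau(\gamma)\cdot\widetilde{z}=\widetilde{p}(\gamma\cdot\widetilde{x}_{0})=\widetilde{z}$. Hence $\tau$ sends this image into $\mathrm{Stab}_{\Gamma}(\widetilde{z})=\Gamma\cap V_{\widetilde{z}}$, an intersection of a discrete subgroup with a compact one, which is finite.

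For the backward direction, suppose $\tau(\mathrm{Im}[\pi_{1}(Z)\to\pi_{1}(X)])$ is finite. Take a desingularization $\mu:Z'\to Z$; the induced map $\pi_{1}(Z')\to\pi_{1}(Z)$ is surjective by a classical result (see, e.g., Koll\'ar), so the pulled-back monodromy on $\pi_{1}(Z')$ also has finite image. Choose a finite \'etale cover $\nu:\widetilde{Z}'\to Z'$ which kills this finite image; then $p\circ\mu\circ\nu$ has trivial $\tau$-monodromy and thus lifts to a holomorphic map $\widetilde{q}:\widetilde{Z}'\to\sD$. Because the differential of any lift of a period map is valued in the horizontal distribution, $\widetilde{q}$ is horizontal. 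By Griffiths' rigidity theorem, any horizontal holomorphic map from a compact K\"ahler manifold into the period domain $\sD$ is constant; this is a consequence of the negative semi-definiteness of the Hodge metric on the horizontal subbundle together with a maximum-principle argument. Hence $\widetilde{q}$ is constant, and $p|_{Z}$ is constant.

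The main obstacle is the backward implication, which hinges on Griffiths' rigidity for horizontal holomorphic maps from compact K\"ahler manifolds to a period domain, and ultimately on the curvature geometry of $\sD$ along horizontal directions. The reductions (desingularization, finite \'etale cover, $\pi_{1}$-surjectivity under resolution) are routine, and the forward direction is a direct consequence of the equivariance of the lifted period map and the compactness of the isotropy subgroup in $G_{\bR}$.
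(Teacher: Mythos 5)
Your proposal is correct and follows essentially the same route as the paper's proof: the forward direction uses that the isotropy of a point of $\sD$ in $G(\bR)$ is compact, so its intersection with the discrete group $\Gamma$ is finite; the backward direction kills the finite monodromy by passing to a desingularization and a finite étale cover, lifts the period map to $\sD$, and invokes rigidity to conclude it is constant. The only cosmetic differences are that the paper cites Schmid's rigidity theorem \cite[Proposition 7.24]{Sch73} where you invoke Griffiths' horizontal-curvature argument (essentially the same fact), and the paper takes the étale cover before the resolution while you do it in the opposite order; also note the surjectivity of $\pi_1(Z')\to\pi_1(Z)$ you cite is unnecessary (and delicate for non-normal $Z$)—the obvious inclusion of images already gives what you need.
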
	\begin{proof}
 	Let $Z$ be a connected component of any fiber of $g$. Recall that (cf. \cite{CMP}), there exists a real semisimple algebraic group $G \subset \GL_N$ such that $G(\mathbb{R})$ acts transitively on $\sD$. For any point $P \in \sD$, the stabilizer of $P$ under the action of $G(\mathbb{R})$ is a compact subgroup $K$ of $G(\mathbb{R})$. Moreover, we have $\tau(\pi_1(X)) \subset G(\mathbb{R})$. Then $\tau(\text{Im}[\pi_1(Z) \to \pi_1(X)])$ is contained in some conjugate $K'$ of $K$. Since $\Gamma$ is discrete, $K' \cap \Gamma$ is finite. Thus, $\tau(\text{Im}[\pi_1(Z) \to \pi_1(X)]) \subset K' \cap \Gamma$ is finite.
 	
 	Now, let $Z$ be a closed subvariety of $X$ such that $\tau(\text{Im}[\pi_1(Z) \to \pi_1(X)])$ is finite. Then, for some finite étale cover $Z'$ of $Z$, $\tau(\text{Im}[\pi_1(Z') \to \pi_1(X)])$ is trivial. By the rigidity of VHS (cf. \cite[Proposition 7.24]{Sch73}), the pullback of the $\mathbb{C}$-VHS underlying $\tau$ on the desingularization of $Z'$ is thus trivial. Hence, the composite map $Z' \to Z \stackrel{p}{\to} \sD / \Gamma$ is trivial. This completes the proof of the lemma.
 	\end{proof}
 	
 	\subsection{Proof of \cref{main3}}
 	In this subsection, we will prove \cref{main3}.
 	\begin{thm}[=\cref{main3}]\label{thm:3}
 		Let $f:\sX\to \bD$ be a proper smooth K\"ahler family.  If there exists a faithful reductive representation $\varrho:\pi_1(X_0)\to \GL_N(\bC)$, which is rigid and integral, then $t\mapsto \gamma d(X_t)$ is constant on $\bD$.
 	\end{thm}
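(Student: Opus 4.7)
The plan is to reduce $\gamma d(X_t)$ to a cohomological intersection number coming from a $\bC$-VHS, and then combine the continuity result \cref{harmonic2} with the integrality statement \cref{lem:Griffiths} to show that this number is simultaneously lower and upper semicontinuous in $t$.

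The first step is to invoke Simpson's theorem that every rigid reductive representation of the fundamental group of a compact K\"ahler manifold underlies a $\bC$-VHS. The smooth trivialization of $f:\sX\to \bD$ identifies $\pi_1(X_t)$ with $\pi_1(X_0)$ and carries $\varrho_t$ to $\varrho$, so rigidity of $\varrho_t$ on $X_t$ is immediate from rigidity of $\varrho$, and hence $\varrho_t$ underlies a $\bC$-VHS on $X_t$ for every $t$; by rigidity the Hodge numbers are the same on every fiber, so the period domain $\sD$ is the same throughout. Integrality of $\varrho$ makes the monodromy $\Gamma:=\varrho(\pi_1(X_0))$ an arithmetic subgroup of the real algebraic group acting transitively on $\sD$, hence discrete in $\sD$. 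Then \cref{lem:Shafarevich} identifies the Stein factorization of the period map $p_t:X_t\to \sD/\Gamma$ with the Shafarevich morphism of $(X_t,\ker\varrho_t)$; almost faithfulness of $\varrho$ makes $\ker\varrho_t$ finite, so by \cref{lem:Shafarevich2} together with \cref{rem:dim} one obtains
\[
\gamma d(X_t)=\dim p_t(X_t).
\]

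Next, I would fix a K\"ahler form $\omega$ on $\sX$ and set $\omega_t:=\omega|_{X_t}$. By \cref{lem:gamma} applied fiberwise,
\[
\gamma d(X_t)=\max\Big\{\,k \;\Big|\; f_t(k):=\int_{X_t}\omega_{\varrho_t}^{k}\wedge \omega_t^{\,n-k}>0\Big\}.
\]
\cref{harmonic2} makes $t\mapsto \omega_{\varrho_t}$ continuous, hence $f_t(k)$ is a continuous function of $t$. In particular, $f_0(k)>0$ implies $f_t(k)>0$ for $t$ near $0$, which yields the lower bound $\gamma d(X_t)\ge \gamma d(X_0)$ on a neighborhood of $0$.

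For the matching upper bound, suppose $f_0(k)=0$. Semi-positivity of $\omega_{\varrho_0}$ together with positivity of $\omega_0$ forces $\omega_{\varrho_0}$ to have rank $<k$ pointwise, so $\omega_{\varrho_0}^{k}\equiv 0$ as a form and $[\omega_{\varrho_0}]^k=0$ in $H^{2k}(X_0,\bR)$. By \cref{lem:Griffiths}, $\tfrac{1}{2\pi}[\omega_{\varrho_t}]=c_1(L_{\varrho_t})$ lies in $H^{2}(X_t,\bZ)$; transported to $H^2(X_0,\bZ)$ through the topological trivialization, continuity of $[\omega_{\varrho_t}]$ (again from \cref{harmonic2}) together with discreteness of the integer lattice forces $c_1(L_{\varrho_t})$ to be locally constant in $t$. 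Therefore $[\omega_{\varrho_t}]^k=0$ for $t$ near $0$, and so $f_t(k)=[\omega_{\varrho_t}]^k\cdot[\omega_t]^{n-k}=0$. Combining with the lower semicontinuity gives $\gamma d(X_t)=\gamma d(X_0)$ in a neighborhood of $0$; running the same argument at any base point $t_0\in \bD$ (since $\varrho_{t_0}$ is again rigid and integral) and using connectedness of $\bD$ concludes the proof. The delicate point is really the first step: assembling the fiberwise $\bC$-VHS structures and the discrete action of $\Gamma$ on $\sD$ from Simpson's rigidity theorem and the integrality hypothesis; granted these, the rest is a clean interplay between the $C^0$-continuity of \cref{harmonic2} and the integrality of \cref{lem:Griffiths}.
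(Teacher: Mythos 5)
Your overall architecture matches the paper's (reduce $\gamma d(X_t)$ to the dimension of a period map via \cref{lem:Shafarevich}, then control it by the intersection numbers $\int_{X_t}\omega^k\wedge\omega_t^{n-k}$ using the continuity from \cref{harmonic2} and the integrality from \cref{lem:Griffiths}), but there is a genuine gap at the very first step, and it is precisely the step where integrality is supposed to be used.

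You claim that ``integrality of $\varrho$ makes the monodromy $\Gamma:=\varrho(\pi_1(X_0))$ an arithmetic subgroup of the real algebraic group acting transitively on $\sD$, hence discrete in $\sD$.'' This is false in general. After conjugation, integrality puts $\varrho(\pi_1(X_0))$ inside $\GL_N(O_L)$ for some number field $L$, but $\GL_N(O_L)$ is \emph{not} discrete in $\GL_N(\bC)$ when $L$ has more than one archimedean place (e.g.\ $\GL_N(\bZ[\sqrt 2])$ is dense in $\GL_N(\bR)$). It is only an arithmetic, hence discrete, subgroup of the restriction of scalars $\prod_{\sigma\in\mathrm{Ar}(L)}\GL_N(\bC)$. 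The paper's proof handles this by replacing $\varrho_t$ by $\tau_t:=\oplus_{\sigma\in\mathrm{Ar}(L)}\sigma\varrho_t$; each summand $\sigma\varrho_t$ is rigid, hence underlies a $\bC$-VHS by Simpson, and the direct sum $\tau_t$ underlies a $\bC$-VHS \emph{with discrete monodromy}. Every subsequent invocation of \cref{lem:gamma}, \cref{lem:Shafarevich}, \cref{harmonic2}, and \cref{lem:Griffiths} then has to be applied to $\tau_t$, not $\varrho_t$, and the conclusion $\gamma d(X_t)=\dim p_t(X_t)$ uses that $\ker\tau_t$ (not $\ker\varrho_t$) is finite, which follows since $\ker\tau_t\subset\ker\varrho_t$ and $\varrho_t$ is almost faithful. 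Without this construction the period map $p_t$ is not even defined (its target $\sD/\Gamma$ need not be Hausdorff), so the argument does not get off the ground.

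A smaller inaccuracy: you assert that rigidity forces the Hodge numbers of the $\bC$-VHS underlying $\varrho_t$ to be independent of $t$, so the period domain is fixed. The paper explicitly disclaims this (``We stress here that the period domains of these $\bC$-VHS $\tau_t$ might vary in $t$''), and your argument doesn't actually need the period domain to be constant: one simply works with $p_t:X_t\to\sD_t/\Gamma_t$ for each $t$ and never compares period domains across fibers. Once the discreteness issue is repaired by passing to $\tau_t$, the rest of your argument --- continuity of $t\mapsto[\omega_{\tau_t}]$ from \cref{harmonic2}, integrality and hence local constancy of $c_1(L_{\tau_t})$, the pointwise rank argument showing $f_0(k)=0$ implies $[\omega_{\tau_0}]^k=0$, and lower semicontinuity for the other direction --- does assemble into a valid proof of the constancy of $\gamma d(X_t)$, essentially equivalent to the paper's observation that $h_k(t)$ is a flat section of $R^{2}f_*\bR$.
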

 	\begin{proof}
 		By the integrality and rigidity of $\varrho$, after replacing $\varrho$  by some conjugate, there exists a number field $L$ such that  we have 
 		$$
 		\varrho:\pi_1(X_0)\to \GL_N(O_L).
 		$$   Hence we have
 		$$
 		\varrho_t:\pi_1(X_t)\to \GL_N(O_L) 
 		$$
 		for each $t\in \bD$ by the definition of $\varrho_t$.

 	Let ${\rm Ar}(L)$ be the set of archimedean places of $L$,   consisting of all embedding $\sigma:L\to \bC$.  Then the direct sum
 	$$
 	\oplus_{\sigma\in {\rm Ar}(L)}\sigma\varrho_t:\pi_1(X_t)\to \oplus\GL_N(\bC)
 	$$
 	has discrete image for each $t\in \bD$. 	Let $\tau_t:\pi_1(X_t)\to \GL_{M}(\bC)$   be $ 	\oplus_{\sigma\in {\rm Ar}(L)}\sigma\varrho_t$.  
 		
Since the rigidity of \( \varrho \) implies the rigidity of \( \varrho_t \), as we have the natural isomorphism between \( M_{\rm B}(\pi_1(X_t), \GL_N) \) and \( M_{\rm B}(\pi_1(X_0), \GL_N) \), by \cite{Sim92}, for any embedding \( \sigma \in \text{Ar}(L) \), \( \sigma \varrho_t: \pi_1(X_t) \to \GL_N(\mathbb{C}) \) corresponds to a \( \mathbb{C} \)-VHS on \( X_t \). Hence, \( \tau_t \) underlies a \( \mathbb{C} \)-VHS with discrete monodromy. We stress here that the period domains of these \( \mathbb{C} \)-VHS \( \tau_t \) might vary in \( t \).

 		Let $\omega_{\tau_t}$ be the canonical form on $X_t$ associated with $\tau_t$ defined in \cref{def:caninical form2}.  By \cref{harmonic2}, the cohomology classes $\{\omega_{\tau_t}\}_{t\in \bD}$ vary continuously in $H^2(\sX,\bR)\simeq H^2(X_t,\bR)$ with respect to $t$.   
 		
 		On the other hand,  by \cref{lem:Griffiths}, $\frac{1}{2\pi} \omega_{\tau_t}\in  H^{2}(X_t,\bZ)$ for each $t$. Therefore, the section $\left(t\mapsto \{\omega_{\tau_t}\}\right)\in \Gamma(\bD,R^2f_*(\bR))$  is flat with respect to the Gauss-Manin connection. 
 		
 		Let $\omega$ be a K\"ahler form on $\sX$ which gives the polarization. Denote by $\omega_t=\omega|_{X_t}$. Then the smooth section   $\left( t\mapsto \{\omega_t\}\right)\in \Gamma(\bD,R^2f_*(\bR))$ is also flat with respect to the Gauss-Manin connection of the local system $R^2f_*(\bR)$. It follows that  for each $k\in \bN$, 
 		\begin{align*}
 			h_k:\bD&\to \bN\\
 			t&\mapsto \int_{X_t} ( \omega_{\tau_t})^k\wedge (\omega_t)^{n-k}
 		\end{align*} 
 		is a constant function on $\bD$, where $n$ denotes the relative dimension of $f$.  Set
 		$$
 		k_0=\max_k\{h_k\neq 0\}.
 		$$
 	Since $\tau_t$ has discrete image,   its image $\Gamma_t:=\tau_t(\pi_1(X_t))$ acts discretely on $\sD_t$, and thus $\sD_t/\Gamma_t$ is a complex space. Denote by $p_t:X_t\to \sD_t/\Gamma_t$    the corresponding period map of $\tau_t$. By \cref{lem:gamma}, $k_0$ is the dimension of the image   $p_t(X_t)$. By \cref{lem:Shafarevich}, we have $ \dim {\rm Sh}_{\tau_t}(X_t)=k_0$ for each $t\in \bD$, where ${\rm sh}_{\tau_t}:X_t\to  {\rm Sh}_{\tau_t}(X_t)$ is the Shafarevich morphism of $\tau_t$.   Since $\varrho_t$ is assumed to be almost faithful, it follows that $\ker\tau_t$ is a finite subgroup of $\pi_1(X_t)$. This implies that ${\rm sh}_{\tau_t}$  is the Shafarevich morphism of $X_t$.  Hence $\gamma d(X_t)=k_0$ for any $t\in \bD$.  The theorem is  proved. 
 	\end{proof}

 			\section{Deformation of  harmonic maps   into Euclidean buildings} 
 	To prove \cref{thm:kollar,main2}, we need to apply techniques used in studying the reductive Shafarevich conjecture (see \cite{Eys04,DY23}). This involves considering local systems over non-archimedean local fields on projective varieties. In this context, we apply tools developed by Gromov and Schoen \cite{GS92} regarding the existence of equivariant harmonic maps to Bruhat-Tits buildings. In this section, we will explore the deformation of these harmonic maps in families (see \cref{harmonic2}). Analogous to the canonical form introduced in \cref{def:caninical form2}, there is a notion of \emph{canonical currents} (see \cref{def:canonical}) associated with representations of fundamental groups into algebraic groups over non-archimedean local fields. Our ultimate goal is to apply \cref{harmonic} to establish a result similar to \cref{harmonic2} concerning the deformation of canonical currents (see \cref{lem:continuity}).
 \subsection{NPC spaces} 
 For the definitions in this subsection, we refer the readers to \cite{bridson-haefliger,Rou09,KP23}. 
 \begin{dfn}[Geodesic  space] Let $(X,d_X)$ be a metric space.  A curve $\gamma:[0, \ell] \rightarrow X$ into $X$ is called a geodesic if the length $d_X(\gamma(a),\gamma(b))=b-a$ for any subinterval $[a, b] \subset[0,\ell]$.  A metric space $(X,d_X)$ is a \emph{geodesic space} if there exists a geodesic connecting every pair of points in $X$.
 \end{dfn}
 \begin{dfn}[NPC space]An NPC (non-positively curved) space $(\cN,d_\cN)$ is a complete geodesic space that satisfies the following condition: for any three points $P,Q,R\in \cN$ and a  geodesic $\gamma:[0, \ell] \rightarrow \cN$ with $\gamma(0)=Q$ and $\gamma(\ell)=R$, we have
 	$$
 	d^{2} (P, Q_{t} ) \leq(1-t) d^{2}(P, Q)+t d^{2}(P, R)-t(1-t) d^{2}(Q, R)
 	$$
 	for any $t\in [0,1]$, where $Q_{t}:=\gamma(t\ell)$.
 \end{dfn}
 
 An NPC space generalizes a Hadamard manifold, which is a complete, simply connected Riemannian manifold with non-positive sectional curvature. A relevant example of an NPC space that is not a Riemannian manifold, pertinent to this paper, is the Bruhat-Tits building $\Delta(G)_K$ associated with a reductive algebraic group $G$ defined over a non-archimedean local field $K$. While we do not provide the full definition of Bruhat-Tits buildings here, readers can refer to sources such as \cite{Rou09} and \cite{KP23} for precise details. Notably, a Bruhat-Tits building consists of a collection of apartments, each of which is an isometric and totally geodesic embedding of Euclidean space $\mathbb{R}^N$, where $N = \dim(\Delta(G)_K)$. Additionally, the group $G(K)$, which denotes the $K$-points of $G$, acts isometrically on $\Delta(G)_K$ and transitively on its set of apartments. The dimension of $\Delta(G)_K$ equals the $K$-rank of the algebraic group $G$, which is the dimension of a maximal $K$-split torus in $G$.
 
In this paper, we mainly work on two types of NPC spaces:  
\begin{enumerate}[label=(\arabic*)]  
	\item The Riemannian symmetric spaces \(\GL_{N}(\mathbb{C})/{\rm U}_N\) and \(\SL_N(\mathbb{C})/{\rm SU}_N\).  
	\item The Bruhat-Tits building \(\Delta(G)_K\) of a reductive algebraic group \(G\) over a non-archimedean local field \(K\).  
\end{enumerate}

 %	A smooth Riemannian manifold with nonpositive sectional curvature  is  an NPC  space. Among these, the Bruhat-Tits building $\Delta(G)$ associated with a reductive algebraic group  $G$ defined over a non-archimedean local field  $K$ is noteworthy an example  of NPC spaces.   We will not provide the lengthy definition of Bruhat-Tits buildings here, but interested readers can find precise definitions in references such as \cite{Rou09} and \cite{KP23}.  It is noteworthy that $G(K)$ acts isometrically on the building $\Delta(G)$, and transitively on its set of apartments.  Here, $G(K)$ denotes the group of $K$-points of $G$. The dimension of $\Delta(G)$ is equal to the $K$-rank of the algebraic group $G$, which is the dimension of a maximal $K$-split torus in $G$. 
 
 %Let $G$ be a semisimple algebraic group over a non-archimedean local field $K$. The Bruhat-Tits building $\Delta(G)$ associated with $G$ is a locally finite Euclidean building, and 
 
 \subsection{Some preliminary of harmonic maps to NPC spaces}\label{subsec:NPC}
 For further details of  definitions and results in this subsection, we refer the readers to \cite{GS92,KS93,KS97}, \cite[\S 2.2]{BDDM} and \cite[\S 4]{DM24}. 
 
 Consider a map $f: \Omega\to \cN$ from an  $n$-dimensional Riemannian manifold $(\Omega, g)$ to an NPC space $(\cN, d_\cN)$.  When the target space $\cN$ is a smooth  Riemannian manifold of nonpositive sectional curvature,  the energy of a smooth map $f: \Omega \rightarrow \cN$ is
 $$
 E^{f}=\int_{\Omega}|d f|^{2} {\rm dvol}_g
 $$
 where $(\Omega, g)$ is a Riemannian domain and ${\rm dvol}_g$ is the volume form of $\Omega$.   We say $f: \Omega\to \cN$ is harmonic if it is locally energy minimizing; i.e.~for any $x \in \Omega$, there exists $r>0$ such that the restriction $\left.u\right|_{B_{r}(x)}$ minimizes energy amongst all finite energy maps $v: B_{r}(x) \rightarrow  {\cN}$ with the same boundary values as $\left.u\right|_{B_{r}(x)}$. Here $B_{r}(x)$ denotes the geodesic ball of radius $r$ centered at $x$.
 
 In this paper, we also consider the target $\cN$ to be  NPC spaces, not necessarily smooth. Let us recall the definition of  harmonic maps in this context  (cf. \cite{KS93} for more details).
 
 Let $(\Omega, g)$ be a bounded Lipschitz Riemannian domain. Let $\Omega_{\ep}$ be the set of points in $\Omega$ at a distance least $\ep$ from $\partial \Omega$.  Denote by $S_{\ep}(x):=\partial B_{\ep}(x)$. We say $f: \Omega \rightarrow \cN$ is an $L^{2}$-map (or that $f \in L^{2}(\Omega, \cN)$ ) if for some point $P\in \Omega$, we have  
 $$
 \int_{\Omega} d^{2}(f(x), P) d \mathrm{vol}_{g}<\infty.
 $$
 For $f \in L^{2}(\Omega, \cN)$, define
 $$
 e^f_{\ep}: \Omega \rightarrow \mathbb{R}, \quad e^f_{\ep}(x)= \begin{cases}\int_{y \in S_{\ep}(x)} \frac{d^{2}(f(x), f(y))}{\ep^{2}} \frac{d \sigma_{x, \ep}}{\ep^{n-1}} & x \in \Omega_{\ep} \\ 0 & \text { otherwise }\end{cases}
 $$
 where $\sigma_{x, \ep}$ is the induced measure on $S_{\ep}(x)$. We define a family of functionals
 $$
 E_{\ep}^{f}: C_{c}(\Omega) \rightarrow \mathbb{R}, \quad E_{\ep}^{f}(\varphi)=\int_{\Omega} \varphi e^f_{\ep} d\vol_{g} .
 $$
 We say $f$ has \emph{finite energy}, denoted by   $f \in W^{1,2}(\Omega, \cN)$,  if
 $$
 E^{f}[\Omega]:=\sup _{\varphi \in C_{c}(\Omega), 0 \leq \varphi \leq 1} \limsup _{\ep \rightarrow 0} E_{\ep}^{f}(\varphi)<\infty .
 $$ 
 In this case, it was proven in \cite[Theorem 1.10]{KS93} that   there exists an absolutely continuous function $e^f(x)$ with respect to   Lebesgue measure,   which we call the \emph{energy density}, such that $e^f_{\ep}(x) d \mathrm{vol}_{g} $ converges weakly to $ e^f(x) {\rm dvol}_g$ as $\ep$ tends to $0$. In analogy to the case of smooth targets, we write $|\nabla f|^{2}(x)$ in place of $e^f(x)$. Hence $|\nabla f|^{2}(x)\in L^1_{\rm loc}(\Omega)$. In particular, the (Korevaar-Schoen) energy of $f$ in $\Omega$ is
 \begin{align}\label{eq:defenergy}
 	E^{f}[\Omega]=\int_{\Omega}|\nabla f|^{2} {\rm dvol}_g . 
 \end{align}
 Furthermore, we say $f \in L^2(\Omega, \cN)$ has \emph{locally finite energ}y if for any $x_0 \in \Omega$, there exists a bounded Lipschitz Riemannian subdomain $\Omega_{x_0}$ containing $x_0$ such that $f \in W^{1,2}(\Omega_{x_0}, \cN)$.
 \begin{rem} \label{equivariantmaps}
 	If $(M,g)$ is a closed Riemannian manifold and $\varrho:\pi_1(M)\to {\rm Isom}(\cN)$  is a homomorphism, where ${\rm Isom}(\cN)$ is the isometry group of the NPC space $(\cN,d_\cN)$, then for any  $\varrho$-equivariant  map $u:\widetilde{M}\to \cN$ with locally finite energy, the energy density $|\nabla u|^2$ is a locally $L^1$-function on $\widetilde{M}$ that is invariant under $\pi_1(M)$-action. It then descends to a function on $M$,  which we abusively denote by $|\nabla u|^2$.  Denote
 	\[
 	^gE^u := \int_M |\nabla u|^2 d\vol_g
 	\]
 	if this integral exists (as a finite number). 		In this case,  we say $u$ is a  finite energy map and define $^gE^u$ as the energy of $u$. We write 
 	$$
 	u \in W^{1,2}_\varrho(\tilde M, \cN).
 	$$   If the dependence of energy on the metric $g$ is clear from context, we will omit the superscript $g$ and write $E^u$ instead.
 \end{rem}

 \begin{dfn}[Harmonic maps]
 	We say a continuous map $f: \Omega \rightarrow \cN$ from a Lipschitz Riemannian domain $(\Omega,g)$ is harmonic if it is locally energy minimizing; more precisely, at each $x \in \Omega$, there exists  
 	$r>0$ such that $u|_{B_r(x)} \in W^{1,2}(B_r(x),\cN)$ and $E^{u|_{B_r(x)}} \leq E^v$ for any $v \in W^{1,2}(B_r(x),\cN)$ with $\mbox{tr}(u|_{B_r(x)}) =\mbox{tr}(v)$.  Here,  $\mbox{tr}(\cdot)$ is the trace of a finite energy map defined in \cite[\S1.12]{KS93}.
 \end{dfn} 
 
 For  $V \in \Gamma \Omega$ where $\Gamma \Omega$ is the set of
 Lipschitz vector fields  on $\Omega$,  in \cite[\S 2.3]{KS93}, the \emph{directional energy} $|f_*(V)|^2$  is similarly
 defined.  The real valued $L_{\rm loc}^1$ function $|f_*(V)|^2$ generalizes
 the norm squared on the directional derivative of $f$.  The
 generalization of the pull-back metric is the continuous, symmetric, bilinear, non-negative and tensorial operator
 \[
 \pi_f(V,W)=\Gamma \Omega \times \Gamma \Omega \rightarrow
 L^1(\Omega, \bR)
 \]
 where
 \[
 \pi_f(V,W)=\frac{1}{2}|f_*(V+W)|^2-\frac{1}{2}|f_*(V-W)|^2.
 \]
 We refer to  \cite[\S 2.3]{KS93} for more
 details.

 Let  $(x_1, \dots, x_n)$ be local coordinates of $(\Omega,g)$, and $g=(g_{ij})$, $g^{-1}=(g^{ij})$ be the local metric expressions.  Then energy density function of
 $f$ can be written (cf.~\cite[(2.3vi)]{KS93})
 \[
 |\nabla f|^2 =  g^{ij} \pi_f(\frac{\partial}{\partial x_i}, \frac{\partial}{\partial x_j})
 \]
 Next assume  $(\Omega, g)$ is a Hermitian domain and let $(z_1=x_1+ix_2, \dots, z_n=x_{2n-1}+ix_{2n})$ be local complex coordinates.   If we extend $\pi_f$ linearly  over $\bC$, then we have
 \[
 \frac{1}{4}|\nabla f|^2= g^{i\bar j} \pi_f ( \frac{\partial f}{\partial z_i},
 \frac{\partial f}{\partial \bar z_j}).
 \]

 \begin{dfn}[Proper action] \label{def:proper}
 	Let $\Gamma\subset {\rm Isom}(\cN)$ be a finitely generated group with  $\gamma_1, \dots, \gamma_p$ being the finite set of generators, where  ${\cN}$ be an NPC space. Define
 	\begin{align}\label{eq:delta}
 		\delta: \cN \to [0,\infty), \ \ \  \delta(P) = \max d( \gamma_\alpha.P, P),  \alpha=1, \dots,p.
 	\end{align} 
 	We will call the action \emph{proper} if the sublevel sets of the function $\delta$ are bounded in $\cN$; i.e. if $P_0 \in \cN$, then for any $L > 0$, there is a number $R > 0$ (depending on $L, P_0$) so that
 	$\{P \in \cN :\delta(P)\leq L\} \subset B_R(P_0).$	
 \end{dfn}
% The following result by \cite[Theorem 2.2.1]{KS97}  is crucial for us. 
The following result is implicitly addressed in \cite{KS97}. For the sake of completeness, we provide a detailed proof here.
 \begin{lem}\label{lem:proper}
 	Let $\Gamma$ and $\cN$ be as in \cref{def:proper}. If $\Gamma$ does not fix a point at the visual boundary $\partial \cN$ of $\cN$ (see \cite[Chapter II.8]{bridson-haefliger} for the definition), then the action of $\Gamma$ on $\cN$ is proper. 
 \end{lem}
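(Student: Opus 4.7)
The plan is to argue by contradiction. Suppose the $\Gamma$-action on $\cN$ is not proper. Then by \cref{def:proper} there exist a constant $L > 0$, a basepoint $P_0 \in \cN$, and a sequence $\{Q_n\}_{n \in \bN} \subset \cN$ with $\delta(Q_n) \le L$ for all $n$ while $d(P_0, Q_n) \to \infty$. The goal is to manufacture a point $\xi \in \partial \cN$ that is fixed by every generator $\gamma_\alpha$, contradicting the hypothesis.

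First I would recall the elementary CAT(0) fact that for every isometry $\gamma$ of an NPC space the displacement function $P \mapsto d(\gamma P, P)$ is continuous and convex along geodesics (cf.\ \cite[Chapter II.2]{bridson-haefliger}). Taking the maximum over the finite generating set $\{\gamma_1,\dots,\gamma_p\}$, the function $\delta$ is itself continuous and convex, so the sublevel set $\cC_L := \delta^{-1}([0,L])$ is a closed convex subset of $\cN$. Since it contains the sequence $\{Q_n\}$, it is unbounded. Fix $Q_0 \in \cC_L$; by convexity the geodesic segments $[Q_0, Q_n]$ lie entirely inside $\cC_L$. The NPC spaces relevant to this paper, namely Bruhat--Tits buildings of reductive groups over non-archimedean local fields and finite-dimensional Riemannian symmetric spaces, are locally compact, so Arzel\`a--Ascoli yields, after passing to a subsequence, a unit-speed geodesic ray $\sigma : [0, \infty) \to \cC_L$ emanating from $Q_0$ as the limit of these segments reparametrized by arclength. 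Set $\xi := [\sigma] \in \partial \cN$ for the corresponding point in the cone topology boundary.

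To conclude, I would verify that $\gamma_\alpha \xi = \xi$ for every generator. The translated curve $\gamma_\alpha \sigma : [0, \infty) \to \cN$ is again a unit-speed geodesic ray, and because $\sigma(t) \in \cC_L$ for every $t \ge 0$ we have $d(\gamma_\alpha \sigma(t), \sigma(t)) \le L$ uniformly in $t$. Hence $\sigma$ and $\gamma_\alpha \sigma$ are asymptotic rays in the sense of \cite[Chapter II.8]{bridson-haefliger}, so they represent the same point at infinity; equivalently $\gamma_\alpha \xi = \xi$. As this holds for every generator, $\Gamma$ fixes $\xi$, contradicting the assumption. The main technical point is the extraction of the ray $\sigma$ inside the unbounded closed convex set $\cC_L$, which is where local compactness of $\cN$ enters; the conceptual heart is simply the convexity of displacement functions combined with the characterization of boundary points via asymptotic rays, both of which are standard in CAT(0) geometry and readily available in all settings used in this paper.
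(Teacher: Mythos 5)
Your proof is correct and follows essentially the same strategy as the paper: by contradiction, extract a point $\xi \in \partial\cN$ from an unbounded sublevel set of $\delta$ using local compactness, and show that bounded displacement along a sequence escaping to infinity forces each generator (hence all of $\Gamma$) to fix $\xi$. The only difference is one of presentation: you explicitly build a geodesic ray inside the closed convex sublevel set $\cC_L$ (invoking convexity of the displacement functions and Arzel\`a--Ascoli on the reparametrized segments $[Q_0,Q_n]$), whereas the paper simply passes to a subsequence converging in the compact space $\overline{\cN} = \cN \cup \partial\cN$ and notes that $d(\gamma P_n, P_n) \le \ell(\gamma) L$ forces $\gamma Q = Q$ in the limit. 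Your convexity observation is a clean way to guarantee the entire ray stays in $\cC_L$ and makes the asymptoticity of $\sigma$ and $\gamma_\alpha\sigma$ immediate, but it is not a genuinely different route.
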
   
 \begin{proof}
 Let $\overline{\cN}:=\cN\cup\partial \cN$. We equip $\overline{\cN}$ with the \emph{cone topology} (see \cite[Chapter II, Definition 8.6]{bridson-haefliger}) Since $\cN$ is locally compact, $\overline{\cN}$ is compact. 
 
 	Assume that $\Gamma$ is not proper. Then there exists  some $L>0$ such that $C_L:=\{P\in \cN\mid \delta(P)\leq L\}$ is an unbounded subset in $\cN$. Then we can choose  a sequence   of points $\{P_n\}_{n\in \bN}\subset \cN$ such that $\lim\limits_{n\rightarrow\infty}P_n=Q$ for some $Q\in \partial\cN$. Note that for any $P_n$, we have
 	$$
 	d(\gamma_\alpha. P_n,P_n)\leq L,
 	$$
 	where $\{\gamma_1,\ldots,\gamma_p\}$ is a finite set of generators of $\Gamma$. 
 	This implies that, for any $\gamma\in \Gamma$,  one has
 	$$
 	d(\gamma. P_n,P_n)\leq \ell(\gamma) L,
 	$$
 	where $\ell(\gamma)$ is the word length of $\gamma$ with respect to  $\{\gamma_1,\ldots,\gamma_p\}$.  Thus, we have $\gamma.Q=Q$ for each $\gamma\in \Gamma$. This contradicts with the assumption that $\Gamma$ does not fix a point at $\partial N$.  The lemma is proved. 
 \end{proof}

 \begin{dfn}\label{proper}
 	Let  $M$ be a closed Riemannian manifold without boundary,  $\cN$ be a locally compact NPC space, and $\varrho:\pi_1(M)\to \cN$ be a representation.  The metric space $(L^2_{\varrho}(\tilde M, \cN),d_2)$ is defined as follows:  	Let $L^2_{\varrho}(\tilde M, \cN)$ to be the space of $\varrho$-equivariant, locally $L^2$-maps. The  distance function $d_2$ is given by
 	$$
 	d_2(u, v) = \left( \int_M d^2(u(x),v(x))\, d\mu  \right)^{\frac{1}{2}}.
 	$$
 	Here, since $x \to  d^2(u(x),v(x))$ is $\pi_1(M)$-invariant on $\tilde M$, it descends to a
 	function on $M$. By \cite[Lemma 2.1.2]{KS97},  $(L^2_{\varrho}(\tilde M, \cN), d_2)$ is an NPC space.  Note that  $W^{1,2}_{\varrho}(\tilde M, \cN)$ is a  subspace of $L^2_{\varrho}(\tilde M, \cN)$ (cf.~\cref{equivariantmaps}).
 \end{dfn}
 
 \begin{dfn}\label{def:energymini}
 	% Let $X$, $\cN$, and $\varrho$ be as in \cref{proper}.  We say a finite energy $\varrho$-equivariant map\footnote{Do we need assume Lipschitz continuous?  {\color{red} See footnote (b)}} $u: \widetilde{X} \rightarrow \cN$ is   \emph{within $\varepsilon$ of minimizing}  if the energy $E^u$ satisfies 
 	%		$$
 	%		E^u \leq \inf E^v+\varepsilon,
 	%		$$ 
 	%		where the $v$ range through all finite energy $\varrho$-equivariant 
 	Let $M$, $\cN$, and $\varrho$ be as in \cref{proper}.  We say $\{u_k\}$ is a energy minimizing sequence if $u_k \in W^{1,2}_\varrho(\tilde M, \cN)$ and 
 	$$
 	\lim_{k \to \infty} E^{u_k} =\inf_{v \in W^{1,2}_\varrho(\tilde M, \cN)} E^v.  
 	$$   
 \end{dfn}

 %%%%%%%%%

 %	\begin{thm}[\protecting{\cite[Theorem 3.9]{KS97}}]
 	%	\label{ksthm3.9}
 	% Let $X$ be a \sout{closed} {\color{red} compact} Riemannian manifold,  $\cN$ an NPC space and $\varrho:\pi_1(X)\to {\rm Isom}(\cN)$ be a representation.   If $\left\{u_k: \widetilde{X} \rightarrow \cN\right\} \subset Y$ converges locally uniformly, in the pullback sense, to a limit map $u: \bar{X} \rightarrow \cN$ and  $u_k$ is within $\varepsilon_k$ of minimizing, with $\varepsilon_k \rightarrow 0$ as $k \rightarrow 0$, then $u$ is an energy minimizing equivariant map. Furthermore, the Sobolev energy density measures and the directional energy density measures of the $u_k$ converge weakly to those of $u$.
 	%	\end{thm}

 \begin{proposition} \label{arzela}
 	Assume  $M$ is compact,  $\cN$ is  locally compact and  $\varrho$ is proper.
 	If $\{u_k\}$ is a energy minimizing sequence  that are uniformly locally Lipschitz continuous,  then a subsequence of $\{u_k\}$  converges uniformly to a harmonic map $v$. Moreover, the   energy density measures and the directional energy density measures of the $u_k$ converge weakly to those of $v$. 
 \end{proposition}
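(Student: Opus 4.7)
The plan is in three steps: (i) use properness of $\varrho$ together with the uniform local Lipschitz bound to extract a uniformly convergent subsequence via Arzela-Ascoli; (ii) identify the limit $v$ as an energy minimizer, and hence as a harmonic map, via lower semicontinuity of the Korevaar-Schoen energy; (iii) deduce weak convergence of the (directional) energy density measures from the convergence of total energies combined with the NPC parallelogram identity.

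For step (i), fix a basepoint $\tilde x_0 \in \tilde M$ and generators $\gamma_1, \ldots, \gamma_p$ of $\pi_1(M)$. By $\varrho$-equivariance of $u_k$ and the uniform local Lipschitz bound applied along a fixed path in $\tilde M$ from $\tilde x_0$ to $\gamma_\alpha \tilde x_0$, one obtains
\[
d\bigl(\varrho(\gamma_\alpha)\cdot u_k(\tilde x_0),\, u_k(\tilde x_0)\bigr) \;=\; d\bigl(u_k(\gamma_\alpha\cdot\tilde x_0),\, u_k(\tilde x_0)\bigr) \;\leq\; L
\]
for a constant $L$ independent of $k$. Thus $\delta(u_k(\tilde x_0))\leq L$ in the notation of \eqref{eq:delta}, and by properness (\cref{def:proper}) the sequence $\{u_k(\tilde x_0)\}$ is contained in a fixed bounded set $B_R(P_0)\subset \cN$. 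Since $\cN$ is complete, locally compact, and geodesic, Hopf-Rinow applies: closed bounded subsets of $\cN$ are compact. Together with the uniform Lipschitz bound this yields pointwise pre-compactness and equicontinuity of $\{u_k\}$ on every compact subset of $\tilde M$, and a diagonal Arzela-Ascoli argument produces a subsequence (still denoted $\{u_k\}$) converging uniformly on compact subsets of $\tilde M$ to a $\varrho$-equivariant continuous map $v\colon \tilde M\to \cN$, which inherits the same local Lipschitz constants and therefore lies in $W^{1,2}_\varrho(\tilde M,\cN)$.

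For step (ii), by the definition of the Korevaar-Schoen energy as $\sup_\varphi \limsup_\ep E_\ep^u(\varphi)$, the uniform convergence $u_k\to v$ on the compact manifold $M$ yields the lower semicontinuity $E^v \leq \liminf_k E^{u_k} = \inf_{W^{1,2}_\varrho} E$, so $v$ attains the infimum. Any local competitor on a small geodesic ball can be glued with $v$ outside the ball and extended $\varrho$-equivariantly, producing a global competitor; thus $v$ is locally energy minimizing, i.e., harmonic. For step (iii), apply the NPC parallelogram inequality to the pointwise midpoint map $w_k$ between $u_k$ and $v$, which is again $\varrho$-equivariant. At the level of directional energy densities this yields, in the sense of $L^1_{\rm loc}$,
\[
|\nabla w_k|^2 \;\leq\; \tfrac12 |\nabla u_k|^2 + \tfrac12 |\nabla v|^2 - \tfrac14 \bigl|\nabla d(u_k,v)\bigr|^2,
\]
following \cite{KS93}. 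Integrating over $M$ and using $E^{w_k}\geq E^v$ together with $E^{u_k}\to E^v$ forces $\int_M |\nabla d(u_k,v)|^2\, d\vol_g \to 0$, which combined with the uniform convergence $u_k\to v$ yields weak convergence $|\nabla u_k|^2\, d\vol_g \to |\nabla v|^2\, d\vol_g$ as well as $\pi_{u_k}(V,W)\, d\vol_g \to \pi_v(V,W)\, d\vol_g$ for every $V,W\in\Gamma M$. I expect the principal technical obstacle to be the careful justification of the midpoint inequality at the level of Korevaar-Schoen directional energy densities, rather than its formal integrated consequence.
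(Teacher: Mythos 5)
Your steps (i) and (ii) follow essentially the same route as the paper: use properness of $\varrho$ together with the uniform local Lipschitz bound to trap $u_k$ of a fundamental domain in a fixed ball (the paper bounds $\delta(u_k(x))$ for all $x$ in a fundamental domain $\cF$, whereas you only bound it at a single base point $\tilde x_0$ and then propagate via the Lipschitz estimate, which is equally fine), invoke Arzel\`a--Ascoli using local compactness of $\cN$, and conclude $v$ is energy minimizing by lower semicontinuity of the Korevaar--Schoen energy. One small point you should make explicit: local uniform convergence on compacta of $\widetilde M$, together with $\varrho$-equivariance and compactness of $M$, upgrades to the claimed \emph{uniform} convergence, since $d(u_k,v)$ is $\pi_1(M)$-invariant and hence controlled on $\widetilde M$ by its values on $\overline{\cF}$.

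For step (iii) you diverge from the paper, which simply cites \cite[Theorem 3.9]{KS97}; you instead propose to reprove it via the NPC midpoint/quadrilateral inequality. This is a legitimate alternative (and roughly what sits inside the cited theorem), but as written there is a gap, and you misidentify where it lies. The density-level midpoint inequality you invoke is standard in \cite{KS93}; the missing step is the final inference. Having shown $\int_M |\nabla d(u_k,v)|^2\,d\vol_g \to 0$, you assert that this together with $u_k\to v$ uniformly yields weak convergence of the energy density measures and of $\pi_{u_k}(V,W)\,d\vol_g$ to $\pi_v(V,W)\,d\vol_g$. This requires a quantitative triangle-type comparison between the pull-back tensors of two nearby maps into an NPC space, of the form $|\pi_{u_k}(V,V)^{1/2} - \pi_v(V,V)^{1/2}| \le |V(d(u_k,v))|$ (or a suitable $L^1_{\mathrm{loc}}$ version), which you do not state. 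Without it the passage from $L^2$-smallness of $\nabla d(u_k,v)$ to weak convergence of the directional energy densities is not justified, and then one must also polarize to pass from $\pi(V,V)$ to the general bilinear $\pi(V,W)$. So either supply this comparison estimate (it is essentially in \cite{KS93, KS97}), or simply cite \cite[Theorem 3.9]{KS97} as the paper does.
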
   
 \begin{proof}
 	Let $\cF \subset \widetilde M$ be a fundamental domain for the action of $\pi_1(M)$ on $\widetilde M$. We fix a point $P_0 \in \cN$.  The compactness of  $\overline{\cF}$ and the uniform local Lipschitz continuity of $\{u_k\}$ implies that there exists $c>0$ such that
 	$$d(u_k(\gamma_\alpha x), u_k(x)) \leq  cd_{\widetilde M} (\gamma_\alpha x, x), \ \ \forall x \in \cF, \ \ \alpha=1,\dots,p
 	$$
 	where $\gamma_1, \dots, \gamma_p$ are the generators of $\pi_1(M)$.
 	The equivariance of $u_k$ implies  that  for any $x \in \cF$ and any    $\alpha\in \{1,\dots,p\}$, we have
 	$$
 	d(\varrho(\gamma_\alpha)(u_k(x)), u_k(x))= 	d( u_k(\gamma_\alpha.x), u_k(x))  \leq  c_1:=c \sup_{\beta=1, \dots, p; x\in \cF}  d_{\widetilde M} (\gamma_\beta(x), x).
 	$$
Since $\varrho$ is proper, it follows that $\delta(u_k(x)) \leq c_1$ for $x \in \cF$ and for all $k$, where $\delta$ is the function defined in \eqref{eq:delta}. 
 	\begin{claim}
 		There exists some constant $R>0$ such that $d(u_k(\gamma.\overline{\cF}),\varrho(\gamma).P_0) \leq    R$ for any $k \in \bN$ and any $x\in \overline{\cF}$.
 	\end{claim}
 	\begin{proof}
 		By \cref{def:proper}, there exists some $R>0$ such that
 		$$
 		\{P \in \cN \mid \delta(P)\leq c_1\} \subset B_R(P_0). 
 		$$
 		It follows that $u_k(\cF)\subset B_R(P_0)$ for any $k \in \bN$.  
 		Since $u_k$  is  $\varrho$-equivariant and continuous, it follows that for any  $x\in \overline{\cF}$, and any $k \in \bN$, we have
 		$$
 		d(u_k(\gamma.x),\varrho(\gamma).P_0)=   		 d(\varrho(\gamma).u_k( x),\varrho(\gamma).P_0)=d( u_k( x), P_0)\leq  R. 
 		$$  
 	\end{proof}  
 	Since $\cN$ is locally compact, $u_k$ is uniformly locally Lipschitz continuous, and $\widetilde{M}=\cup_{\gamma\in \pi_1(M)}\overline{\cF}$,   by the Arzela-Ascoli theorem, the above claim yields that there exists  a subsequence of $\{u_k\}$ that converges    to a  map $v\in W^{1,2}_\varrho(\tilde M, \cN)$.  The lower semicontinuity of energy implies that $v$ is energy minimizing, and thus $\nu$ is harmonic.   Last statement follows from  \cite[Theorem 3.9]{KS97}. 
 \end{proof}

 %\begin{thm}[\protecting{\cite[Theorem 3.9]{KS97}}]
 %Let $M$ be a closed Riemannian manifold.  Let $\varrho:\pi_1(M)\to G(K)$ be a representation.   Let $\left\{u_k: \widetilde{M} \rightarrow \cN\right\}$ be a sequence of $\varrho$-equivariant maps to an NPC space $\cN$. Assume $\left\{u_k\right\}$ converges locally uniformly, in the pullback sense, to a limit map $u: \bar{M} \rightarrow X$. Suppose that the energies of the $u_k$ are uniformly bounded and that $u_k$ is within $\varepsilon_k$ of minimizing, with $\varepsilon_k \rightarrow 0$ as $k \rightarrow 0$. Then $u$ is a minimizing equivariant map. Furthermore, the  energy density measures and the directional energy density measures of the $u_k$ converge weakly to those of $u$. 
 %\end{thm}
 
 \subsection{Regular and singular points of harmonic maps}

 %	 	\begin{dfn}[Regular points and singular points] \label{def:sing}
 	% 		Let $X$ be a projective variety, $\widetilde X$ its universal cover, $\pi_X:\widetilde X \to X$ the universal covering map, and $u:\widetilde X \to \Delta(G)_K$ a  harmonic map.		A point $x \in \widetilde X$ is said to be a {\it regular point} of ${u}$  if there exists a neighborhood $\cU$ of $x$ in $X$ and an apartment $A \subset \Delta(G)_K$ such that $ {u}(\cU) \subset A$.    The set of regular points of $u$ is denoted by $ {\cR}(u)$.   Assume further that there is a representation $\varrho:\pi_1(X)\to  G(K)$ such that  $ {u}$ is $\varrho$-equivariant. Since $G(K)$ acts transitively on the apartments of $\Delta(G)_K$,    it follows that if $x \in \widetilde{X}$ is a regular point  of $ {u}$, then every point of $\pi_X^{-1}(\pi_X(x))$ is a  regular point  of $ {u}$. 
 	% 		We abusively denote by $ \cR( {u})\subset X$ the image of the set of regular points of $u$ under the universal cover map $\widetilde{X}\to X$. 
 	% 	\end{dfn} 
 
 In \cite{GS92}, Gromov and Schoen investigated the regularity properties of harmonic maps into a class of NPC spaces they defined as $F$-connected. It is straightforward to verify that Bruhat-Tits buildings satisfy the conditions for being $F$-connected.

 \begin{dfn}[$F$-connected] An $N$-dimensional complex $\mathfrak F$ is called {\it $F$-connected} if it is an NPC space, each of its simplices is isometric to a linear image of the standard simplex and any two adjacent simplices are contained in an apartment $A$,  an isometric and totally geodesic subcomplex isometric to $\bR^N$.
 \end{dfn}
 
 \begin{dfn}[Regular points and singular points]  	\label{def:regular}	 Let $u:\Omega \to \mathfrak F$ be a harmonic map from a Riemannian domain into an $F$-connected complex.  	A point $x \in \Omega$ is said to be a {\it regular point} of ${u}$  if there exists a neighborhood $\cU$ of $x$ in $X$ and an apartment $A \subset \mathfrak F$ such that $ {u}(\cU) \subset A$.    The set of regular points of $u$ is denoted by $ {\cR}(u)$.   
 \end{dfn}
 
 \begin{thm}[\cite{GS92}]
 	\label{gsregularity}
 	If  $u:\Omega \to \mathfrak F$ is a harmonic map from an $n$-dimensional Riemannian domain $\Omega$ into an $F$-connected complex, then the Hausdorff dimension of $\cS(u)$ is at most $n-2$.
 \end{thm}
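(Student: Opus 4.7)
The plan is to follow the original Gromov--Schoen strategy, which proceeds via a monotonicity formula, an order function with a spectral gap at singular points, and a Federer-style dimension reduction using tangent maps. Throughout, the $F$-connectedness of $\mathfrak{F}$ is what makes the target behave like a smooth manifold whenever the image can be captured inside a single apartment.

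First I would establish the basic analytic machinery. For a harmonic map $u:\Omega\to\mathfrak F$ and a point $x_0\in\Omega$, one defines
\[
E(r):=\int_{B_r(x_0)}|\nabla u|^2\,d\mathrm{vol}_g,\qquad I(r):=\int_{\partial B_r(x_0)}d^2\bigl(u,u(x_0)\bigr)\,d\sigma,
\]
and shows that $r\mapsto \frac{r\,E(r)}{I(r)}$ is monotone non-decreasing in $r$, using the Korevaar--Schoen directional energy calculus from \S\ref{subsec:NPC} and the fact that $d^2(\cdot,Q)$ is subharmonic along any harmonic map into an NPC space. This yields the well-defined \emph{order}
\[
\mathrm{ord}^u(x_0)=\lim_{r\to 0^+}\frac{r\,E(r)}{I(r)}\ge 1,
\]
and an almost-scale-invariance of the blow-ups $u_\lambda(y):=u(x_0+\lambda y)$ rescaled in the target by $\mu(\lambda)^{-1}:=\bigl(\lambda^{1-n}I(\lambda)\bigr)^{-1/2}$.

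Next I would run the tangent map analysis. Using the energy bounds and the equicontinuity that comes from monotonicity, the rescaled family $\{u_\lambda\}$ is precompact (after choosing a basepoint) in the Korevaar--Schoen sense, and subsequential limits produce a \emph{tangent map} $u_*:\mathbb R^n\to (\mathfrak F_*,d_*)$ into a tangent cone of $\mathfrak F$, which is again an NPC space, with homogeneity $u_*(\lambda y)=\lambda^{\alpha}\cdot u_*(y)$ for $\alpha=\mathrm{ord}^u(x_0)$. Because $\mathfrak F$ is $F$-connected, any tangent cone is $F$-connected and every minimal tangent map has its image in a single apartment, where harmonicity reduces to a classical system of linear equations. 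This gives the crucial dichotomy: either $u_*$ lies entirely in an apartment, in which case $\alpha=1$ and $x_0$ is regular by a lifting-and-comparison argument (the target being $F$-connected lets one extend the apartment locally); or $u_*$ genuinely hits the singular strata of $\mathfrak F_*$, and then a direct computation using the simplicial structure forces $\alpha\ge 1+\varepsilon_0$ for a universal $\varepsilon_0>0$. This \emph{order gap} is the step I expect to be the main obstacle, since it requires a careful comparison argument using the $F$-connectedness to rule out orders strictly between $1$ and $1+\varepsilon_0$; the rest of the argument is essentially formal once one has it.

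Finally I would conclude by Federer-style dimension reduction. Stratify $\mathcal S(u)=\bigcup_{k=0}^{n-1}\mathcal S_k(u)$, where $\mathcal S_k(u)$ consists of singular points at which no tangent map splits off more than $k$ Euclidean factors (i.e.\ is translation-invariant along at most a $k$-plane). A standard Almgren--Federer dimension reduction argument, using upper semicontinuity of the order function and the homogeneity of tangent maps, gives $\dim_{\mathcal H}\mathcal S_k(u)\le k$. It remains to see that $\mathcal S_{n-1}(u)=\varnothing$: if a tangent map $u_*$ splits off $\mathbb R^{n-1}$, then $u_*$ is determined by its restriction to a line, which is a harmonic (hence geodesic) map $\mathbb R\to\mathfrak F_*$; any geodesic in an $F$-connected complex lies inside an apartment, so together with the $\mathbb R^{n-1}$-translation invariance the image of $u_*$ is contained in an apartment, forcing $x_0\in\mathcal R(u)$, a contradiction. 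Therefore $\mathcal S(u)=\bigcup_{k\le n-2}\mathcal S_k(u)$ has Hausdorff dimension at most $n-2$, as required.
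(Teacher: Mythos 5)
Theorem \ref{gsregularity} is stated in the paper as a direct citation to \cite{GS92} with no proof given, so the question is whether your sketch correctly reconstructs the Gromov--Schoen argument. It does not: there is a genuine gap at the decisive step. You claim that a tangent map $u_*$ whose image lies in an apartment forces $x_0\in\mathcal{R}(u)$, and on that basis conclude that $\mathcal{S}_{n-1}(u)=\varnothing$. That implication is false. The effective regularity statement (Theorem \ref{thm5.1}, quoting \cite[Theorem 5.1]{GS92}) only says that if $u_*$ has image in an $m$-dimensional flat $\mathbb{F}$, then $u$ maps a neighbourhood of $x_0$ into the subcomplex $S\cong\mathbb{R}^m\times T_2$ swept out by the apartments containing $\mathbb{F}$; unless $m=\dim\mathfrak{F}$, this is not an apartment and $x_0$ can still be singular. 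Concretely, let $\mathfrak{F}=\Lambda_1\times\Lambda_2$ be a product of two trees and $u=(v_1,v_2)$ a product of harmonic maps. At a point $x_0$ where $v_1$ is immersive and regular but $v_2$ is singular of order $>1$, one has $\mathrm{Ord}^u(x_0)=1$, and the tangent map of $u$ at $x_0$ collapses the $\Lambda_2$-factor and is a linear map onto a line in the first factor---a $1$-flat contained in apartments and translation-invariant along an $(n-1)$-plane---yet $x_0\in\mathcal{S}(u)$ because it is singular for $v_2$. So the dichotomy ``$\alpha=1$ and regular, or $\alpha\ge 1+\varepsilon_0$'' is wrong, and the spine stratification applied directly to $\mathcal{S}(u)$ does not close.

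What Gromov and Schoen actually do involves an induction on the dimension of the target building, which your outline omits entirely. The Federer reduction you sketch does correctly prove $\dim_{\mathcal{H}}\tilde{\mathcal{S}}_0(u)\le n-2$ for $\tilde{\mathcal{S}}_0(u)=\{\mathrm{Ord}^u>1\}$ (this is Lemma \ref{snot}), because a nonconstant degree-$\alpha$ homogeneous harmonic map with $\alpha>1$ cannot be translation-invariant along an $(n-1)$-plane without being constant. But the order-$1$ singular points are handled via the local product structure of Lemma \ref{item:splitting}: near such a point, $u=(u^1,u^2):B_{\sigma_0}(x_0)\to\mathbb{R}^m\times T_2$ with $u^1$ a smooth rank-$m$ harmonic map and $u^2$ harmonic into the strictly lower-dimensional $F$-connected cone $T_2$, one identifies $\mathcal{S}(u)$ near $x_0$ with $\mathcal{S}(u^2)$, and one applies the inductive hypothesis on $\dim\mathfrak{F}$ to $u^2$. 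Without this dimension reduction on the target, the order-$1$ singular stratum is not controlled.
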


 % \begin{dfn} \label{singset}
 	% 	%For a point $P_0$ in a $N$-dimensional $F$-connected complex  $\cF$, let $\cF_{P_0}$ denote the tangent cone of $\cF$ at $P_0$.
 	% 	A point $x_0 \in \Omega$ is a \emph{regular point} of a harmonic map $u:\Omega \rightarrow \cF$ if there exists $\sigma_0>0$ and a $N$-flat $A \subset \cF$ such that $u(B_{\sigma_0}(x_0)) \subset A$.  The set $\mathcal R(u)$ of regular points is called the \emph{regular set}.  The set $\mathcal S(u)=\Omega \backslash \mathcal R(u)$ is the \emph{singular set}.  A point of $\mathcal S(u)$ is a \emph{singular point}.
 	% \end{dfn}

 \begin{dfn}\label{ord}
 	Let $u$ be as in \cref{gsregularity}.  For $x \in \Omega$,  set
 	\[
 	{\rm Ord}^u(x) = \lim_{\sigma \rightarrow 0} \frac{e^{c\sigma^2} \sigma \displaystyle{ \int_{B_\sigma(x)} |\nabla u|^2 d\mu} }
 	{\displaystyle{\min_{Q \in \Delta(G)} \int_{\partial B_\sigma(x)} d^2(u,Q) d\Sigma}}.
 	\]
 \end{dfn} 
 As a limit of non-decreasing sequence of functions, $x \mapsto {\rm Ord}^u(x)$ is a upper semicontinuous function.  Thus, we have the following:
 \begin{enumerate}[label=(\alph*)]
 	\item By \cite[Lemma 1.3]{GS92}, ${\rm Ord}^u(x) \geq 1$ for all $x \in \Omega$.
 	\item By \cite[Theorem 6.3.(i)]{GS92},  if $x_i \rightarrow x$ and ${\rm Ord}^u(x_i)>1$, then ${\rm Ord}^u(x) >1$. 
 \end{enumerate} 
 \begin{lem}[\cite{GS92},  proof of Theorem 6.4]  \label{snot}
 	Let $u$ be as in \cref{gsregularity} and $\tilde{\mathcal S}_0(u)$ to be the set of points $x \in \Omega$ such that ${\rm Ord}^u(x)>1$.  Then $\tilde{\mathcal S}_0(u)$ is a closed set such that $\dim_{\mathcal H}(\tilde{\cS}_0(u)) \leq n-2$. \qed
 \end{lem}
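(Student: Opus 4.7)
The closedness of $\tilde{\cS}_0(u)$ is immediate from the upper semicontinuity of the order function stated as property (b) just above the lemma: if $\{x_i\}\subset \tilde{\cS}_0(u)$ converges to $x\in \Omega$, then $\operatorname{Ord}^u(x_i)>1$ for all $i$ forces $\operatorname{Ord}^u(x)>1$, so $x\in \tilde{\cS}_0(u)$. The substantive content of the lemma is the Hausdorff dimension bound, which I will prove by a Federer-type dimension-reduction argument on tangent maps, following the strategy of Gromov--Schoen.

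The first step is to establish the basic monotonicity: a suitable $e^{c\sigma^2}$-modified version of the frequency quotient appearing in \cref{ord} is monotone non-decreasing in $\sigma$, so $\operatorname{Ord}^u(x)$ exists at every point and, moreover, controls the blow-up behavior. Fix $x_0\in \tilde{\cS}_0(u)$ with $\alpha:=\operatorname{Ord}^u(x_0)>1$ and define the rescalings
\[
u_\lambda(x) \;=\; \mu_\lambda^{-1}\bigl(u(x_0+\lambda x) - u(x_0)\bigr),
\qquad \mu_\lambda^2 = \lambda^{-n}\!\int_{\partial B_\lambda(x_0)} d^2(u,u(x_0))\,d\Sigma,
\]
(interpreting differences inside a local apartment containing $u(x_0)$). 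By the standard a priori Lipschitz bound for harmonic maps into $F$-connected complexes, together with the monotonicity, a subsequence of $\{u_\lambda\}$ converges locally uniformly (and in energy) to a tangent map $u_\star: T_{x_0}\Omega\to \mathfrak{F}_\star$, where $\mathfrak{F}_\star$ is an $F$-connected tangent cone at $u(x_0)$; $u_\star$ is harmonic, non-constant, and homogeneous of degree $\alpha>1$. Let $L_\star\subset T_{x_0}\Omega$ denote the maximal linear subspace along which $u_\star$ is translation-invariant.

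The key geometric input, which is where the $F$-connected hypothesis enters, is that $\dim L_\star \leq n-2$. Indeed, if $\dim L_\star = n-1$, then $u_\star$ factors through a homogeneous harmonic map from a half-line into $\mathfrak{F}_\star$, which forces $\alpha=1$ (a homogeneous harmonic function of a single Euclidean variable is linear); if $\dim L_\star = n$, then $u_\star$ is constant. Both contradict $\alpha>1$. With this codimension bound in hand, I stratify
\[
\tilde{\cS}_0(u) \;=\; \bigcup_{k=0}^{n-2} \tilde{\cS}_0^{\,k}(u),\qquad
\tilde{\cS}_0^{\,k}(u)=\{x\in\tilde{\cS}_0(u): \dim L_\star(x)\leq k\},
\]
and run the Federer reduction: if $\dim_{\mathcal H}\tilde{\cS}_0^{\,k}(u) > k$, then at a density point one produces a tangent map whose translation-invariant subspace has dimension $>k$, contradicting the definition of the stratum. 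Consequently $\dim_{\mathcal H}\tilde{\cS}_0^{\,k}(u)\leq k\leq n-2$, and taking the union over $k$ yields the claim.

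The main obstacle will be the codimension-two step for $L_\star$: one must use the $F$-connected structure of the tangent cone $\mathfrak{F}_\star$ to rule out codimension-one translation invariance for a non-trivial homogeneous harmonic tangent map of degree strictly greater than $1$. Everything else (monotonicity, existence and homogeneity of tangent maps, and the Federer covering argument) is formal once the Lipschitz/energy estimates of \cite{GS92} are available in the $F$-connected setting.
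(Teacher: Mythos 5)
The paper cites \cite{GS92}, proof of Theorem~6.4, and gives no proof of its own; your sketch is a faithful reproduction of the Gromov--Schoen argument: closedness from the semicontinuity property (b) of $\operatorname{Ord}^u$, blow-up at a singular point to a nonconstant homogeneous tangent map of degree $\alpha>1$ into the tangent cone, the codimension-two bound on its translation-invariance spine $L_\star$ (a nonconstant homogeneous harmonic map from a one-dimensional domain into an NPC cone is a constant-speed geodesic, hence degree one), and Federer dimension reduction on the resulting stratification. The minor imprecisions---rescaling via the tangent cone $T_{u(x_0)}\mathfrak F$ rather than ``differences inside a local apartment'' (no apartment need contain a neighborhood of a singular value), and defining the strata via the supremum of $\dim L_\star$ over all tangent maps since uniqueness is not automatic---do not affect the correctness of the outline.
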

 
 \begin{lem}[\cite{GS92}, proof of Proposition 2.2, Theorem 2.3] \label{lem23}
 	Let $u$  be as in  \Cref{gsregularity}.  For $x \in \Omega$, let $\alpha:={\rm Ord}^u(x)$. There exists a constant $c>0$ and $\sigma_0>0$ such that 
 	\[
 	\sigma \mapsto \frac{e^{c\sigma^2}}{\sigma^{n-1+2\alpha}} \int_{\partial B_\sigma(x)} d^2(u,u(x)) d\Sigma
 	\]
 	and
 	\[
 	\sigma \mapsto \frac{e^{c\sigma^2}}{\sigma^{n-2+2\alpha}} \int_{B_\sigma(x)} |\nabla u|^2 d\mu
 	\]
 	are non-decreasing functions in the interval $(0,\sigma_0)$. \qed
 \end{lem}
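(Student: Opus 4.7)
The plan is to follow the Gromov--Schoen frequency function strategy, which reduces the desired monotonicity to two independent estimates plus a combination step. Write
\[
H(\sigma) := \int_{\partial B_\sigma(x)} d^2(u, u(x))\, d\Sigma, \qquad I(\sigma) := \int_{B_\sigma(x)} |\nabla u|^2 \, d\mu,
\]
so that ${\rm Ord}^u(x) = \lim_{\sigma \to 0} e^{c\sigma^2} \sigma I(\sigma)/H(\sigma) = \alpha$ for an appropriate $c$. First I would differentiate $H(\sigma)$ using geodesic polar coordinates around $x$. Applying the divergence theorem together with the NPC subharmonicity bound $\Delta_g\, d^2(u, Q) \geq 2\, |\nabla u|^2$ (a standard consequence of convexity of $P \mapsto d^2(P, Q)$ on an NPC target, transferred to harmonic maps by \cite{KS93}), and absorbing the error coming from the fact that $(\Omega,g)$ is only Riemannian rather than Euclidean, one obtains
\[
H'(\sigma) \;\geq\; \tfrac{n-1}{\sigma}\bigl(1 - c_1 \sigma^2\bigr) H(\sigma) \;+\; 2\bigl(1 - c_1\sigma^2\bigr) I(\sigma).
\]

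Second, I would establish the monotonicity of the modified frequency function $\mathrm{ord}(\sigma) := e^{c_2 \sigma^2}\, \sigma\, I(\sigma)/H(\sigma)$ for a suitable $c_2$. This is the technical heart of the argument. One computes $(\log \mathrm{ord})'$ and controls each summand via a Korevaar--Schoen stationarity identity of Pohozaev type (the vanishing of the first variation of energy against radial deformations; in the NPC setting this replaces the classical chain-rule computation available for smooth targets), combined with the Cauchy--Schwarz inequality
\[
\Bigl(\int_{\partial B_\sigma}\! d(u,u(x))\sqrt{\pi_u(\partial_r,\partial_r)}\, d\Sigma\Bigr)^2 \leq H(\sigma)\int_{\partial B_\sigma}\! \pi_u(\partial_r, \partial_r)\, d\Sigma,
\]
where $\pi_u$ is the Korevaar--Schoen pullback tensor of \S\ref{subsec:NPC}. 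The factor $e^{c_2\sigma^2}$ absorbs the Riemannian curvature correction of $g$. Since $\mathrm{ord}(\sigma)$ is non-decreasing on $(0,\sigma_0)$ and tends to $\alpha$ as $\sigma \to 0$, we conclude $\sigma I(\sigma)/H(\sigma) \geq \alpha\, e^{-c_2\sigma^2}$ throughout $(0,\sigma_0)$.

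Finally, combining the two steps, feed the lower bound $I(\sigma) \geq \alpha\, H(\sigma)\, e^{-c_2\sigma^2}/\sigma$ into the differential inequality of Step 1 to get
\[
\frac{d}{d\sigma} \log H(\sigma) \;\geq\; \frac{n - 1 + 2\alpha}{\sigma} \;-\; c_3\, \sigma,
\]
and integrate from small $\sigma$ to see that $\sigma \mapsto e^{c\sigma^2}\, H(\sigma)/\sigma^{n-1+2\alpha}$ is non-decreasing, proving the first claim. The second claim is obtained analogously: differentiate $I(\sigma)$ using the co-area formula, which yields $I'(\sigma) = \int_{\partial B_\sigma} |\nabla u|^2\, d\Sigma$, then use the stationarity identity to express this in terms of radial directional energies and invoke the same lower bound on $\sigma I/H$. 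The main obstacle is Step 2: the frequency monotonicity requires genuine use of the Korevaar--Schoen variational machinery and must cope with the possible presence of the singular set $\mathcal{S}(u)$, which is handled through capacity-based cutoff arguments and the fact that $\dim_{\mathcal H} \mathcal S(u) \leq n-2$ (\Cref{gsregularity}). In the Euclidean flat-target case, this step is a one-line classical computation; in the $F$-connected setting it is the content of \cite[Proposition 2.2, Theorem 2.3]{GS92}.
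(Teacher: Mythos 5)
The paper does not give a proof of \cref{lem23}; it is stated with a \texttt{\textbackslash qed} and attributed directly to Gromov--Schoen (Proposition~2.2 and Theorem~2.3 of \cite{GS92}). Your sketch is in fact a reconstruction of the argument in \cite{GS92}, and the overall structure is sound: differentiate $H(\sigma)$ via the divergence theorem and the weak subharmonicity $\Delta_g d^2(u,Q)\ge 2|\nabla u|^2$; prove monotonicity of the modified frequency function via a domain-variation (Rellich/Pohozaev) identity combined with Cauchy--Schwarz on $\partial B_\sigma$; then feed the resulting lower bound on $\sigma I(\sigma)/H(\sigma)$ back into the differential inequality for $\log H$ and, separately, for $\log I$. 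The role of the factor $e^{c\sigma^2}$ to absorb the curvature errors of the Riemannian domain metric is also correctly identified.

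There is, however, one misattribution in Step~2 that should be corrected. You write that the frequency monotonicity ``must cope with the possible presence of the singular set $\mathcal S(u)$'' and is ``handled through capacity-based cutoff arguments and the fact that $\dim_{\mathcal H}\mathcal S(u)\le n-2$.'' This is not needed, and in fact cannot be invoked here without circularity: the Hausdorff-dimension bound on $\mathcal S(u)$ is \cref{gsregularity}, which in \cite{GS92} (Theorem~6.4) is proved \emph{using} the monotonicity formulae of Proposition~2.2 and Theorem~2.3, not the other way around. The monotonicity itself holds for harmonic maps into an arbitrary NPC target and relies only on (i) local energy-minimality, giving stationarity under interior domain variations and hence the Pohozaev identity $I'(\sigma)=\tfrac{n-2}{\sigma}I(\sigma)+2\int_{\partial B_\sigma}|u_r|^2\,d\Sigma+O(\sigma)I(\sigma)$, (ii) the NPC convexity of $P\mapsto d^2(P,Q)$ giving $\Delta_g d^2(u,Q)\ge 2|\nabla u|^2$ in the distributional sense, and (iii) Cauchy--Schwarz on the sphere. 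None of these steps involves the $F$-connected structure or the regular/singular decomposition; the $F$-connectedness of $\Delta(G)_K$ first enters later, in establishing local regularity and the apartment structure near regular points. Apart from this, your derivation of both monotone quantities from the frequency lower bound $\sigma I(\sigma)/H(\sigma)\ge \alpha e^{-c_2\sigma^2}$ is correct and matches the strategy of \cite{GS92}.
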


 \begin{rem} \label{rem:energydensity}
 	For a finite energy map $u:\Omega \rightarrow \mathfrak F$ into an $F$-connected complex,   $|\nabla u|^2\in L^1_{\rm loc}$ is not necessarily defined at all points of $\Omega$.  On the other hand,  it follows from Lemma~\ref{lem23} that 
 	for a harmonic map $u$, 
 	we can define $|\nabla u|^2$ at every point of $x \in \Omega$ by setting
 	\[
 	|\nabla u|^2(x) = \lim_{\sigma \rightarrow 0} \frac{1}{c_n \sigma^n}
 	\int_{B_\sigma(x)} |\nabla u|^2 \, d\mu
 	\] 
 	where $c_n \sigma^n$ is the volume of a ball or radius $\sigma$ in Euclidean space.  
 \end{rem}

 To further study  the local behavior of harmonic maps, we consider the following notion (cf.~\cref{localproperties} below).
 
 \begin{dfn}[$F$-connected cone]
 	An $F$-connected complex $T$ is called an {\it $F$-connected cone} if there exists an isometric embedding of $T$ in Euclidean space as a geometric cone; that is,  after identifying $T$ as a subset of Euclidean space via the isometric embedding, we have 
 	\begin{equation} \label{cone}
 		Q \in T \Rightarrow \lambda Q \in T, \forall \lambda \in [0,\infty).
 	\end{equation} 
 	Henceforth, we will always assume that $T$ is a subset of Euclidean space.  Note that  the origin $\vec{0}$ of the Euclidean space is  the cone point, i.e.~the vertex of the cone. 
 \end{dfn}
 
 \begin{rem}	\label{localproperties}
 	A neighborhood of a point $P_0 \in \Delta(G)_K$ is isometric to a neighborhood of the origin in the tangent cone $T_{P_0}\Delta(G)$ (cf. \cite[p.~190]{bridson-haefliger} for the definition).  Two simplices (which are actually simplicial cones) in $T_{P_0}\Delta(G)$ are contained in a totally geodesic subcomplex $T_{P_0}A$ where $A$ is an apartment of $\Delta(G)_K$. In other words, $T_{P_0}\Delta(G)_K$ is  an  $N$-dimensional $F$-connected  complex.
 	Thus, when we study the local behavior of harmonic maps $u:\Omega \to \Delta(G)_K$ at a point $x_0 \in \Omega$, we can assume that $u$ maps into the $N$-dimensional, $F$-connected cone   $T:=T_{P_0}\Delta(G)_K$ where $P_0=u(x_0)$.   
 \end{rem}
 % \[
 % \int_{\partial B_1(0)} d^2(u^2_*, u^2_*(0)) d\Sigma =\lim_{n \rightarrow \infty} \int_{\partial B_1(0)} d^2(u^2_{s_j}, u^2_{s_j}(0)) d\Sigma=0.
 % \]
 % Thus, for any sequence $s_j \rightarrow 0$, the blow up maps $u_{s_j}$ converges locally uniformly to 
 % \[
 % u_*=L: \bB_1(0) \rightarrow \bR^m.
 % \]
 % 

 %%%%%

 The following theorem follows from \cite[Sections 5 and 6]{GS92}
 \begin{thm}[\cite{GS92}]
 	\label{thm5.1}
 	Let $T$ be an $F$-connected cone  and  $l: \bR^n \to T$ be a homogeneous degree 1 harmonic map; i.e.~$l(\lambda x)=\lambda l(x)$.  
 	\begin{thmlist}
 		\item  The map $l$ can be decomposed as $l=J \circ v$ where $J:\bR^m \to T$ is an isometric and totally geodesic map  and $v:\bR^n \to \bR^m$ is a linear map of rank $m$.
 		\item Let $S \subset T$ be the union of all apartments containing the $m$-flat $\bF=l(\bR^n)$.  Then $S$ is an $F$-connected cone isometric to $\bR^m \times T_2$ where $T_2$ is a lower dimensional $F$-connected cone.  Furthermore, $S$ is essentially regular and $l$ is effectively contained in $S$  in the sense of \cite[Section 5]{GS92}.
 		\item\label{item:dependence} Let $\cM$ be a family of  Riemannian metrics defined on a Euclidean unit ball $\bB_1(0) \subset \bR^n$    
 		and, for $g \in \cM$, define
 		\begin{equation} \label{normg}||g||=\sum_{i,j=1}^n \sup_{\bB_1(0)} |g_{ij}-\delta_{ij}| + \sum_{i,j,k=1}^n \sup_{\bB_1(0)}\left| \frac{\partial g_{ij}}{\partial x_k} \right| 	+ 	\sum_{i,j,k,l=1}^n \sup_{\bB_1(0)}\left| \frac{\partial^2 g_{ij}}{\partial x_k \partial x_l} \right|	  \end{equation}
 		where  $g=(g_{ij})$ is the matrix expression of $g$  with respect to the Euclidean coordinates $(x_1, \dots, x_n)$.    Then there exists  $\kappa_0>0$, $\delta_0>0$ and $\sigma_0\in (0,1)$ such that if  $g \in \cM$ with $||g||<\kappa_0$ and   $u:(\bB_1(0), g) \to T$ is a harmonic map with 
 		\[
 		\sup_{\bB_1(0)} d(u(x),l(x)) \leq \delta_0,
 		\]
 		then $u(\bB_{\sigma_0}(0)) \subset S$. \qed
 	\end{thmlist}
 \end{thm}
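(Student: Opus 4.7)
The plan is to follow \cite[Sections~5--6]{GS92} directly: parts (i) and (ii) are essentially extracted from the classification of homogeneous degree-one harmonic maps into $F$-connected cones, while part (iii) is the quantitative stability statement proven in the same reference.

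For (i), I would exploit that a degree-one homogeneous map has energy density invariant under radial rescaling, so the Korevaar--Schoen pullback tensor $\pi_l$ on $\bR^n$ is radially invariant; combined with harmonicity and the NPC inequality applied to midpoints of images of parallel rays, one deduces that $\pi_l$ is a translation-invariant positive semi-definite quadratic form on $\bR^n$. Setting $K = \ker \pi_l$ and $m = n - \dim K$, the composition of the orthogonal projection $\bR^n \to \bR^n/K$ with an isometry $\bR^n/K \simeq \bR^m$ defines $v$, and the induced map $J:\bR^m \to T$ satisfies $l = J \circ v$. The image $J(\bR^m) = l(\bR^n)$ is isometric because $\pi_l$ descends to the Euclidean metric on the quotient, and it is totally geodesic because it is a union of geodesic rays through the cone point. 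For (ii), the $m$-flat $\bF = J(\bR^m)$ lies in at least one apartment, any two apartments containing $\bF$ agree on a convex subcomplex containing $\bF$ by the $F$-connectedness axioms, and a link analysis at a point of $\bF$ shows that $S$ is isometric to the geometric cone over $\bF \times \operatorname{Lk}(\bF, S)$. A standard splitting of metric cones then yields $S \simeq \bR^m \times T_2$ with $T_2$ itself an $F$-connected cone of strictly lower dimension. Essential regularity and effective containment of $l$ in $S$ both reduce to checking the definitions in \cite[\S5]{GS92}: by construction $l$ has no component in the $T_2$ factor.

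For (iii), the substantive part, I would argue by contradiction using a blow-up and compactness scheme. Assume no such $\kappa_0, \delta_0, \sigma_0$ exist: then there are sequences $g_k \in \cM$ with $\|g_k\| \to 0$ and harmonic maps $u_k:(\bB_1(0), g_k) \to T$ satisfying $\sup_{\bB_1(0)} d(u_k, l) \to 0$, together with points $x_k \to 0$ such that $u_k(x_k) \notin S$. Uniform Lipschitz bounds for harmonic maps into $F$-connected complexes, combined with the monotonicity formulas of \cref{lem23}, yield equicontinuity of the $u_k$ and of their rescalings $u_k^{(\lambda)}(x) = \lambda^{-1} u_k(\lambda x)$. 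Passing to a subsequence, the $u_k$ converge uniformly on compact subsets of $\bB_1(0)$ to a harmonic map which must coincide with $l$ by the $L^\infty$ hypothesis. The failure $u_k(x_k) \notin S$ persists in a rescaled blow-up at the origin and produces a homogeneous degree-one tangent map $l_\infty$ which on the one hand equals $l$ (since close harmonic maps have close tangent maps via the order monotonicity), and on the other hand has a nontrivial component transverse to $\bR^m$ in $T_2$, contradicting the effective containment from (ii).

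The main obstacle is the quantitative uniformity of the constants $\kappa_0, \delta_0, \sigma_0$: they must depend only on $T$, $n$, and $l$, but not on the individual $u$ or on $g$. This requires both the monotonicity constants in \cref{lem23} and the Lipschitz estimates for $u$ on $\bB_{\sigma_0}(0)$ to depend only on the $C^2$-size of $g$ measured by \eqref{normg}, so that a single triple $(\kappa_0, \delta_0, \sigma_0)$ works uniformly for all $g \in \cM$ with $\|g\| < \kappa_0$. Tracking this dependence carefully through the blow-up compactness argument is where the real technical difficulty lies, and is precisely the content of \cite[\S6]{GS92}.
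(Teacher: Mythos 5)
The paper does not give an original proof of this theorem: parts (i) and (ii) are attributed directly to \cite[\S\S 5--6]{GS92} (hence the \textup{\qedsymbol} at the end of the statement), and part (iii) is handled in the remark immediately following by \emph{tracing through} the proof of \cite[Theorem 5.1]{GS92} (or \cite[Lemma~5.5]{DM21}) and recording the three concrete conditions that $\kappa_0$ must ensure (near-Euclidean volume comparison, $e^{c\sigma^2}\le 2$ in the monotonicity formulae of \cref{lem23}, and a uniform sub-mean-value inequality for $g$-subharmonic functions). Your sketches of (i) and (ii) are plausible reconstructions of the Gromov--Schoen argument and don't require comment.

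For (iii), however, you take a genuinely different route --- a compactness/contradiction argument --- from the direct, scale-by-scale argument used in \cite{GS92,DM21} and invoked by the paper, and your route has a real gap. Suppose $g_k,u_k,x_k$ are as in your contradiction hypothesis with $\sigma_k:=|x_k|\to 0$ and $u_k(x_k)\notin S$. The $C^0$-convergence $u_k\to l$ on $\bB_1(0)$ alone produces no contradiction: $u_k(x_k)\to l(0)$, which is the cone point and lies in $S$, since $S$ is a \emph{closed} subcomplex. Your proposed fix is to blow up at the origin at scale $\sigma_k$, but the blowups $\mu_k(\sigma_k)\,u_k(\sigma_k\,\cdot)$ need not converge to $l$: the normalization $\mu_k(\sigma_k)$ and the very existence of a limit are governed by the \emph{order} of $u_k$ at $0$, which is not controlled by $\sup d(u_k,l)\to 0$ (a perturbation that is $o(1)$ in $C^0$ can raise the order at $0$ or shift the base point $u_k(0)$ off the cone vertex, sending the blowup to a different homogeneous map, or into a different tangent cone altogether). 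Your parenthetical ``close harmonic maps have close tangent maps via the order monotonicity'' is precisely the claim that needs proof and is false without further input. Even if one arranged convergence of the blowups to $l$, the condition $u_k(x_k)\notin S$ still fails to survive the limit, again because $S$ is closed; the best one could extract is a point of $l(\bR^n)$ on $\partial S$, which is compatible with $l$ mapping into $S$ and yields no nontrivial $T_2$-component. What \cite[\S 5]{GS92} actually does is a \emph{direct} iterative argument: one shows that the $L^2$-distance of $u$ to $S$ on $\bB_r$ decays by a fixed factor as $r$ halves, using essential regularity of $S$, the monotonicity of \cref{lem23}, and interior elliptic estimates; the constants $\kappa_0,\delta_0,\sigma_0$ come out of this iteration, which is why the paper's remark isolates exactly those properties of the domain metric $g$ that enter each step. (Also, Theorem~5.1 is in \cite[\S 5]{GS92}, not \S 6; \S 6 concerns the structure of degree-one homogeneous maps used in (i)--(ii).) If you want a soft compactness proof of (iii) you would need, at minimum, an a~priori upper bound $\mathrm{Ord}^{u_k}(0)\le 1+o(1)$ for harmonic maps $C^0$-close to $l$, which is itself a nontrivial stability statement of the same flavor as the theorem.
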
 
 	\begin{rem} 
 		\Cref{item:dependence} explicitly addresses the dependence on the domain metric $g$ in \cite[Theorem 5.1]{GS92}. This dependence can be deduced by following the proof in either \cite{GS92} or \cite[Lemma 5.5]{DM21}. Specifically, $\kappa_0$ must be chosen sufficiently small such that, if $||g|| \leq \kappa_0$,
 		then $g$ meets the criteria stated prior to \cite[Lemma 5.5]{DM21}; namely:
 		
 		\begin{itemize}
 			\item For any smooth submanifold $S$ of $\bB_1(0)$,   $\frac{15}{16} \vol(S) \leq \vol_{g}(S) \leq \frac{17}{16} \vol (S)$ where $\vol(S)$ is the Euclidean volume.
 			\item The error term $e^{c\sigma^2}$ that appears in the monotonicity formulae of \cref{lem23} is $\leq 2$ for all $\sigma \in (0,1]$.
 			\item There exists a constant $c_0$  such that for any subharmonic function $f:\bB_1(0) \rightarrow \bR$  with respect to the metric $g$, we have 
 			$$
 			\sup_{\bB_{\frac{15r}{16}}(0)} f  \leq  \frac{c_0}{r^{n-1}} \int_{\partial \bB_r(0)} f d\Sigma.
 			$$
 		\end{itemize}
 	\end{rem}

 Let $u:\Omega \rightarrow T$ be a harmonic map into an $F$-connected cone with cone point  $u(x_0)=0$. 
 Use normal coordinates centered at $x_0$ to identify $x_0=0$ and let  $\bB_r(0)=\{x=(x^1, \dots, x^n) \in \bR^n:  |x|<r\}$.  
 Let 
 \[
 \mu(\sigma)= \left(\sigma^{1-n} \int_{\partial \bB_\sigma(0)} d^2(u,u(0)) d\Sigma \right)^{-\frac{1}{2}}.
 \]
 
 \begin{dfn} \label{blowups} The {\it blow up map} is defined by
 	\[
 	u_\sigma:\bB_1(0) \rightarrow T, \ \ \ u_\sigma(x)=\mu(\sigma) u(\sigma x).
 	\]
 	Here, we are using the property \cref{cone} of the geometric cone $T$ in Euclidean space. 	\end{dfn}
 
 By \cite[Proposition 3.3]{GS92} and the paragraph proceeding it, there exists a sequence $\sigma_i \rightarrow 0$ such that  $u_{\sigma_i}$ converges locally uniformly to a non-constant homogeneous harmonic map $u_*$ of degree $\alpha:={\rm Ord}^u(x_0)$. Consequently,   \cref{thm5.1} (see also \cite[Theorem 6.3]{GS92}) implies the following.
 
 \begin{lem}[\cite{GS92}] \label{item:splitting}
 	If ${\rm Ord}^u(x_0)=1$, then 
 	there exists $\sigma_0>0$ such that $u(\bB_{\sigma_0}(x_0)) \subset  \bR^m \times T_2$ where $T_2$ is a lower dimensional $F$-connected cone.  If we write 
 	\begin{equation} \label{productstructure}
 		u=(u^1, u^2): \bB_{\sigma_0}(x_0) \rightarrow \bR^m \times T_2,
 	\end{equation}
 	then $u^1: \bB_{\sigma_0}(x_0) \rightarrow \bR^m$ is a smooth harmonic map of rank $m$ and $u^2:\bB_{\sigma_0}(x_0) \rightarrow T_2$ is a harmonic map  with $ {\rm Ord}^{u^2}(x_0) >1$.
 \end{lem}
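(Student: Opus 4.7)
The plan is to use the blow-up analysis at $x_0$ combined with the structure theorem \Cref{thm5.1} for homogeneous degree $1$ harmonic maps. After fixing normal coordinates centered at $x_0$ so that $x_0=0$, we consider the blow-up maps $u_\sigma: \bB_{1}(0) \to T$ defined in \Cref{blowups}. By \cite[Proposition 3.3]{GS92}, there exists a sequence $\sigma_i \downarrow 0$ such that $u_{\sigma_i}$ converges locally uniformly to a homogeneous harmonic map $u_*$ of degree $\alpha = {\rm Ord}^u(x_0) = 1$. By \Cref{thm5.1}(i), $u_* = J \circ v$ with $v:\bR^n \to \bR^m$ linear of rank $m$ and $J: \bR^m \to T$ an isometric totally geodesic embedding. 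Let $S \subset T$ be the $F$-connected subcone $\bR^m \times T_2$ in which $u_*$ is essentially contained, as in \Cref{thm5.1}(ii).

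Next, I apply \Cref{thm5.1}(iii) to the blow-ups $u_{\sigma_i}$. The domain metric $g_{\sigma_i}$ on the rescaled ball is the pullback of $g$ under $x \mapsto \sigma_i x$; in normal coordinates this is $g_{\sigma_i}(x) = g(\sigma_i x)$, which converges to the Euclidean metric in $C^2$-norm as $\sigma_i \to 0$. Consequently $\|g_{\sigma_i}\| < \kappa_0$ for $i$ sufficiently large. Together with the uniform convergence $u_{\sigma_i} \to u_*=J\circ v$ on $\bB_1(0)$, the hypotheses of \Cref{item:dependence} are met for $\ell=J\circ v$; hence there is $\sigma_* >0$ with $u_{\sigma_i}(\bB_{\sigma_*}(0)) \subset S$. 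Since $S$ is a cone and $u_{\sigma_i}(x) = \mu(\sigma_i) u(\sigma_i x)$, we conclude $u(\bB_{\sigma_0}(0)) \subset S \cong \bR^m \times T_2$ with $\sigma_0 := \sigma_i \sigma_*$. Writing $u = (u^1, u^2)$, both components are harmonic because $\bR^m$ and $T_2$ are totally geodesic factors of $S$, and $u^1$ is smooth by standard regularity for real-valued harmonic maps. The rank of $u^1$ equals $m$ on a dense open set since its blow-up at $0$ recovers the linear map $v$ of rank $m$, and rank is lower semicontinuous.

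The most delicate point is establishing that ${\rm Ord}^{u^2}(x_0) > 1$. For this, suppose towards a contradiction that $\alpha_2 := {\rm Ord}^{u^2}(x_0) = 1$ (it cannot be smaller, by \cite[Lemma 1.3]{GS92}). Using the Korevaar-Schoen-Gromov-Schoen monotonicity of \Cref{lem23} applied separately to $u$, $u^1$ and $u^2$, and the fact that $d^2(u, 0) = |u^1|^2 + d^2(u^2, 0)$, the normalization factors $\mu(\sigma)$, $\mu^1(\sigma)$, $\mu^2(\sigma)$ satisfy $\mu(\sigma) \asymp \sigma^{-1}$, $\mu^i(\sigma) \asymp \sigma^{-\alpha_i}$. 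The blow-up of $u^2$ with the normalization $\mu(\sigma)$ thus behaves like $\mu(\sigma)/\mu^2(\sigma) \asymp \sigma^{\alpha_2 - 1}$ times a bounded nonzero quantity on the unit sphere. If $\alpha_2 = 1$, this remains bounded away from zero, so the $T_2$-component of the blow-up limit $u_*$ would be nonzero; this contradicts $u_* = J \circ v$ with image in the $\bR^m$ factor. Hence $\alpha_2 > 1$.

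The main obstacle I anticipate is the verification of the hypotheses of \Cref{item:dependence} for the rescaled data: one must simultaneously control the closeness of $g_{\sigma_i}$ to Euclidean in the norm \eqref{normg} and the $C^0$-closeness of $u_{\sigma_i}$ to $u_* = J\circ v$ on the unit ball. The first is a straightforward Taylor expansion in normal coordinates, but the second requires the locally uniform convergence $u_{\sigma_i} \to u_*$ on all of $\bB_1(0)$, not merely on compact subsets of the interior, which is where the uniform Lipschitz regularity of blow-ups in \cite{GS92} (underlying \Cref{arzela}) plays a crucial role.
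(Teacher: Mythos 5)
Your proof is correct and reconstructs the standard blow-up argument from \cite[Theorems 5.1 and 6.3]{GS92}, which the paper cites for this lemma without reproducing the details; the blow-up convergence to the degree-one tangent map $u_*=J\circ v$, the application of \Cref{thm5.1} (in particular \Cref{item:dependence}) with the cone rescaling, and the order-comparison contradiction argument for ${\rm Ord}^{u^2}(x_0)>1$ all match the source. Two minor refinements worth recording: first, the rank of $du^1$ equals $m$ at $x_0$ itself (by $C^\infty$-convergence of the smooth $\bR^m$-component blow-ups to the linear map $v$, which forces $du^1(x_0)$ to have rank $m$), hence on the whole ball after shrinking $\sigma_0$; second, the two-sided asymptotic $\mu^2(\sigma)\asymp\sigma^{-\alpha_2}$ you invoke rests on the positivity of the limit of the almost-monotone quantity in \Cref{lem23} applied to $u^2$ (equivalently, the nonconstancy of the tangent map of $u^2$), which is itself a nontrivial \cite{GS92} input; granting that, and disposing of the trivial case $u^2\equiv\mathrm{const}$, your contradiction argument is complete.
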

 % 	If ${\rm Ord}^u(x_0)=1$, then have the following: 
 % 	\begin{enumerate}[label=(\alph*)]
 	% 		\item  By~\cite[Proposition 3.1]{GS92},  there exists $m \in \{1, \dots, \min\{ n,N\} \}$ such that 
 	% 		\[
 	% 		u_*=J \circ v\big|_{\bB_1(0)}
 	% 		\]
 	% 		for  an isometric and totally geodesic  embedding $J:\bR^m \rightarrow T$ and   a  linear map $v:\bR^n \rightarrow \bR^m$ of full rank.  
 	% 		\item By \cite[Lemma 6.2]{GS92}, the union of all $N$-flats of $T$ containing $J(\bR^m)$ is isometric to   $\bR^m \times B$ where $B$ is a $(N-m)$-dimensional, $F$-connected cone.
 	% 		\item \label{item:splitting}By  \cite[Theorem 6.3]{GS92}, there exists $\sigma_0>0$ such that $u(\bB_{\sigma_0}(x_0)) \subset  \bR^m \times B$.  If we write 
 	% 		\begin{equation} \label{productstructure}
 		% 			u=(u^1, u^2): \bB_{\sigma_0}(x_0) \rightarrow \bR^m \times B,
 		% 		\end{equation}
 	% 		then $u^1: \bB_{\sigma_0}(x_0) \rightarrow \bR^m$ is a smooth harmonic map of rank $m$ and $u^2:\bB_{\sigma_0}(x_0) \rightarrow B$ is a harmonic map  with $ {\rm Ord}^{u^2}(x_0) >1$.
 	% 	\end{enumerate}

 \begin{rem} \label{def:sing}
 Let \( X \) be a smooth projective variety, \( \widetilde{X} \) its universal cover, \( \pi_X: \widetilde{X} \to X \) the universal covering map, \( \varrho: \pi_1(X) \to G(K) \) a representation, and \( u: \widetilde{X} \to \Delta(G)_K \) a \( \varrho \)-equivariant harmonic map.  
 
 Since \( G(K) \) acts transitively on the apartments of \( \Delta(G)_K \), it follows that if \( x \in \widetilde{X} \) is a regular point of \( u \), then every point of \( \pi_X^{-1}(\pi_X(x)) \) is also a regular point of \( u \).   
 We abusively denote by \( \cR(u) \subset X \) the image \( \pi_X(\cR(u)) \).   
 \end{rem}
 
 \begin{dfn}  \label{dfn:M}
 	Let $u:\Omega \rightarrow \Delta(G)_K$ be as in \cref{def:sing}.   For $x_0 \in \tilde \cS_0(u)$ (cf.~\cref{snot}), define $m_{x_0}=0$.  For $x_0 \in \Omega \backslash \tilde \cS_0(u)$, let $m_{x_0}$ be  the integer $m$ in \cref{productstructure}.   Let
 	${\rm rank}(u):=\sup_{x_0 \in \Omega} m_{x_0}$.  We say the  point $x_0 \in \Omega$ is a  \emph{critical point} if $m_{x_0} < {\rm rank}(u)$.  We denote the set of critical points by $\tilde \cS(u)$.  Define $\tilde \cR(u) = \Omega \backslash \tilde \cS(u)$. 
 \end{dfn}  
 \subsection{Equivariant harmonic maps to Bruhat-Tits buildings in families}
 Throughout this subsection, let $f:\sX\to \bD$ be a smooth projective family.  Let $\pi_{\sX}:\widetilde{\sX}\to \sX$ be the universal covering map. Then the  fiber $\widetilde{X_t}$ of $\widetilde{\sX}\to \bD$ at $t\in\bD$ is the universal cover  of $X_t:=f^{-1}(t)$.  
 \begin{dfn} \label{rmk:omega}   For $x_0 \in \widetilde{X}_0$ (resp. $x_0\in X_0$), we may take an open neighborhood $\Omega$ of $x_0$ in $\widetilde{\sX}$ (resp. $\sX$) together with  a biholomorphism $\varphi: \bD^n\times\bD_\ep\to \Omega$   such that $x_0$ is identified with $(0,\dots, 0,0)$ via $\varphi$ and $f\circ \pi_{\sX}\circ \varphi(z_1,\ldots,z_n,t)=t$ (resp. $f\circ \varphi(z_1,\ldots,z_n,t)=t$).    For simplicity of notation, we write ${\bf z}=(z_1, \dots, z_n) \in {\bD}^n$. We call such coordinate system an \emph{admissible coordinate system} centered at $x_0$.
 \end{dfn}
 
 We first prove the following regularity result in the absolute case. 
 \begin{lem}\label{lem:continuous harmonic}
 	If $u:\Omega\to \Delta(G)$ is a harmonic map from a Lipschitz Riemannian domain, then the energy density $|\nabla u|^2$ is a \emph{continuous function}.
 \end{lem}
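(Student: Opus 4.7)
The plan is to verify continuity of $|\nabla u|^2$ at an arbitrary point $x_0\in \Omega$ by dichotomizing on the order $\alpha_0:={\rm Ord}^u(x_0)\geq 1$. I will work with the pointwise formula $|\nabla u|^2(y)=\lim_{\sigma\to 0}(c_n\sigma^n)^{-1}\int_{B_\sigma(y)}|\nabla u|^2 d\mu$ provided by \cref{rem:energydensity}, together with the monotonicity formula of \cref{lem23}. The two ingredients I will repeatedly use are that monotonicity at any point $y$ (with its order $\alpha_y\geq 1$) bounds $\sigma^{-n}\int_{B_\sigma(y)}|\nabla u|^2 d\mu$ by its value at any larger scale up to a factor $(\sigma/\sigma_1)^{2\alpha_y-2}$, together with the elementary ball inclusion $B_d(x)\subset B_{2d}(x_0)$ for $d=d(x,x_0)$.

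For the case $\alpha_0>1$, monotonicity at $x_0$ gives $\int_{B_\sigma(x_0)}|\nabla u|^2 d\mu\lesssim\sigma^{n-2+2\alpha_0}$, which forces $|\nabla u|^2(x_0)=0$. Continuity at $x_0$ will follow once I establish the quantitative decay $|\nabla u|^2(x)\lesssim d(x,x_0)^{2\alpha_0-2}$ for $x$ near $x_0$. To get it, I first apply monotonicity at $x$ at scale $\sigma_1=d$ and let $\sigma\to 0$; since $(\sigma/d)^{2\alpha_x-2}\leq 1$ whenever $\sigma\leq d$ and $\alpha_x\geq 1$, this produces the bound $|\nabla u|^2(x)\leq C d^{-n}\int_{B_d(x)}|\nabla u|^2 d\mu$ uniformly in $\alpha_x$. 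I then use the ball inclusion together with the $x_0$-monotonicity bound to pass the decay from $x_0$ to $x$, yielding $|\nabla u|^2(x)\lesssim d^{2\alpha_0-2}\to 0$ as $x\to x_0$.

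For the case $\alpha_0=1$, I would invoke the product decomposition of \cref{item:splitting}: on some ball $\bB_{\sigma_0}(x_0)$, $u$ factors as $u=(u^1,u^2):\bB_{\sigma_0}(x_0)\to\bR^m\times T_2$, where $u^1$ is smooth harmonic into $\bR^m$, $u^2$ is harmonic into a lower-dimensional $F$-connected cone $T_2$, and ${\rm Ord}^{u^2}(x_0)>1$. Since $\bR^m\times T_2$ carries the product NPC metric, the energy density splits additively as $|\nabla u|^2=|\nabla u^1|^2+|\nabla u^2|^2$. The first summand is smooth at $x_0$, and the continuity of the second at $x_0$ follows by applying the first case to the harmonic map $u^2$, whose order at $x_0$ strictly exceeds~$1$.

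I expect the main obstacle to be the uniform decay estimate in the first case. By upper semicontinuity, the order $\alpha_x$ can only drop below $\alpha_0$ as $x\to x_0$, so the monotonicity formula at $x$ on its own gives no decay, and one might worry about a non-vanishing contribution from nearby points of order exactly $1$. The decisive observation is that $(\sigma/d)^{2\alpha_x-2}\leq 1$ whenever $\alpha_x\geq 1$ and $\sigma\leq d$, which allows one to discard the $\alpha_x$-dependence after taking $\sigma\to 0$; the actual decay is then imported from $x_0$ by the ball inclusion $B_d(x)\subset B_{2d}(x_0)$. Once this point is carefully handled, the remainder of the proof is a clean combination of the Gromov--Schoen monotonicity formula, the product decomposition at order-one points, and the standard smoothness of harmonic maps into the Euclidean factor $\bR^m$.
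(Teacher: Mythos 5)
Your proof is correct, and the overall architecture is the same as the paper's: you dichotomize on $\alpha_0:={\rm Ord}^u(x_0)$ and, when $\alpha_0=1$, invoke the order-one splitting $u=(u^1,u^2)$ from \cref{item:splitting} so that $|\nabla u|^2=|\nabla u^1|^2+|\nabla u^2|^2$ with $u^1$ smooth and ${\rm Ord}^{u^2}(x_0)>1$, reducing to the first case.

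The point of genuine difference is how the first case ($\alpha_0>1$) is handled. The paper first records that $|\nabla u|^2$ is globally upper semicontinuous, by noting that for fixed $\sigma$ the averaged quantity $x\mapsto e^{c\sigma^2}\sigma^{-n}\int_{B_\sigma(x)}|\nabla u|^2\,d\mu$ is continuous and, by the monotonicity formula of \cref{lem23}, decreases to $|\nabla u|^2(x)$ as $\sigma\to 0$; it then cites \cite[Lemma 4.13]{DM24} for $|\nabla u|^2(x_0)=0$, at which point upper semicontinuity together with nonnegativity gives continuity at $x_0$. You instead run the monotonicity formula twice — at $x_0$ to get $\int_{B_\sigma(x_0)}|\nabla u|^2\lesssim\sigma^{n-2+2\alpha_0}$ (and hence the vanishing $|\nabla u|^2(x_0)=0$ without citing DM24), and at a nearby $x$ at scale $d:=d(x,x_0)$ to get $|\nabla u|^2(x)\leq C d^{-n}\int_{B_d(x)}|\nabla u|^2$, the key observation being that $(\sigma/d)^{2\alpha_x-2}\leq 1$ so the unknown exponent $\alpha_x$ drops out — and then transfer the decay from $x_0$ to $x$ via $B_d(x)\subset B_{2d}(x_0)$, arriving at $|\nabla u|^2(x)\lesssim d^{2\alpha_0-2}\to 0$. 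Your route is slightly more laborious but entirely self-contained and gives a quantitative modulus of continuity at order-$>1$ points, whereas the paper's route is shorter because it trades that for a clean abstract upper semicontinuity argument plus an external reference. One small hygiene point worth spelling out in a write-up: the constants $c$ and $\sigma_0$ in \cref{lem23} must be taken locally uniform near $x_0$ (they depend on the metric and the distance to $\partial\Omega$, not on the order), which is needed for both the monotonicity at $x$ and the requirement $2d<\sigma_0$ to make sense simultaneously.
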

 \begin{proof}
 	%By \cite[Theorem 2.4]{GS92}, $u$ is locally Lipschitz continuous. Thus, for $\sigma>0$ small enough,  the function  
 	By the absolute continuity of Lebesgue integrals, 
 	\begin{align*}
 		e_\sigma:\Omega &\to \bR \\
 		x &\mapsto \frac{e^{c\sigma^2}}{\sigma^{n-2+2\alpha}} \int_{\bB_\sigma(x)} |\nabla u|^2 d\mu
 	\end{align*} 
 	is continuous. By \cref{lem23}, $e_\sigma$ are non-decreasing functions in the interval $(0,\sigma_0)$ for some $\sigma_0>0$, whose limit is $|\nabla u|^2(x)$.     As a decreasing limit of continuous functions, the energy density function  $|\nabla u|^2(x)$  is upper semicontinuous. 
 	
 	Pick any $x_0\in \Omega$.  If ${\rm Ord}^u(x_0)>1$, then by \cite[Lemma 4.13]{DM24}, $|\nabla u|^2(x_0)=0$. The upper semi-continuity of $|\nabla u|^2(x)$ implies that $|\nabla u|^2(x)$ is continuous at $x_0$.
 	
 	If ${\rm Ord}^u(x_0)=1$, by \cref{item:splitting}, there exists $\sigma_0>0$ such that $u(\bB_{\sigma_0}(x_0)) \subset  \bR^m \times \cF_2$, where $\cF_2$ is another $F$-complex.  By \eqref{productstructure}, we write
 	\begin{equation*} 
 		u=(u^1, u^2): \bB_{\sigma_0}(x_0) \rightarrow \bR^m \times \cF_2,
 	\end{equation*}
 	and  $u^1: \bB_{\sigma_0}(x_0) \rightarrow \bR^m$ is a smooth harmonic map of rank $m$, and $u^2:\bB_{\sigma_0}(x_0) \rightarrow \cF_2$ is a harmonic map  with $ {\rm Ord}^{u^2}(x_0)>1$. Hence  $|\nabla u^2|(x)$ is continuous at $x_0$ by the above arguments. Since $|\nabla u|^2(x)=|\nabla u^1|^2(x)+|\nabla u^2|(x)$, and $u^1$ is smooth, it follows that $|\nabla u|(x)$ is continuous at $x_0$. The lemma is proved. 
 \end{proof}
 In the relative case, we have the following result.  
 
 \begin{lem} \label{preliminarycontinuity}
 Let \( T \) be an \( F \)-connected cone, and let \( g_t \) be a   family of Riemannian metrics on \( \bD^n \). Suppose \( u_t: \bD^n \to T \), \( t \in \bD_\ep \), is a family of uniformly Lipschitz harmonic maps with respect to \( g_t \), such that
 	\begin{itemize}
 		\item $u_t({\bf z})$ varies uniformly in $t$ in the following sense:  \ $\forall \ep_0>0$, $\exists \delta_0>0$ such that    ${\bf z}\in \bD^n,  t_1,t_2\in \bD_\ep$ with $|t_1-t_2|<\delta_0$ $\Rightarrow$ $|u_{t_1}({\bf z})-u_{t_2}({\bf z})|<\ep_0$. 
 		\item The component functions of $g_t({\bf z})$ with respect to the coordinates $(z_1, \dots, z_n)$ of $\bD^n$ varies smoothly in $t$.
 		\item For any $\bB_r({\bf z}) \subset \bD^n$, and any   $t_0\in \bD_\ep$
 		\begin{equation} \label{applyportmanteau}
 			\lim_{t \to t_0} \int_{\bB_r({\bf z})} |\nabla u_t|^2 d\vol_{g_t} = \int_{\bB_r({\bf z})} |\nabla u_{t_0}|^2 d\vol_{g_{t_0}}. 
 		\end{equation}
 	\end{itemize}
 	Then the map $({\bf z},t)=(z_1, \dots, z_n,t)   \mapsto |\nabla u_t|^2({\bf z})$ is a continuous function in $\bD^n \times \bD_\ep$.% with ${\rm Ord}^{u_{t_0}}({\bf z}_0)>1$.		
 \end{lem}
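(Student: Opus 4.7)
The plan is to bracket $|\nabla u_t|^2({\bf z})$ from above by manifestly jointly continuous spatial averages and from below by a smooth comparison map extracted via the Gromov--Schoen splitting, then match the two.

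First I consider the averaged density
\[
F_\sigma({\bf z},t):=\frac{1}{c_n\sigma^n}\int_{\bB_\sigma({\bf z})}|\nabla u_t|^2\,d\vol_{g_t},
\]
i.e.\ the quantity from \Cref{rem:energydensity}, and verify that $F_\sigma$ is jointly continuous in $({\bf z},t)$ for each fixed $\sigma$: continuity in $t$ at fixed ${\bf z}$ is hypothesis \eqref{applyportmanteau}, while continuity in ${\bf z}$ uniformly in $t$ follows from the uniform Lipschitz bound $|\nabla u_t|^2\leq C$ together with $\vol(\bB_\sigma({\bf z}_1)\triangle\bB_\sigma({\bf z}_2))\to 0$ as $|{\bf z}_1-{\bf z}_2|\to 0$. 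Writing $c_n e^{c\sigma^2}F_\sigma({\bf z},t)=\sigma^{2\alpha-2}\cdot g(\sigma)$ with $\alpha={\rm Ord}^{u_t}({\bf z})\geq 1$ and $g(\sigma):=\frac{e^{c\sigma^2}}{\sigma^{n-2+2\alpha}}\int_{\bB_\sigma({\bf z})}|\nabla u_t|^2\,d\vol_{g_t}$ the non-decreasing function from \Cref{lem23}, both factors are non-decreasing in $\sigma$, hence so is $e^{c\sigma^2}F_\sigma$. Combined with \Cref{rem:energydensity}, this shows $|\nabla u_t|^2({\bf z})$ is the decreasing limit of continuous functions and is therefore upper semi-continuous on $\bD^n\times\bD_\ep$; smoothness of $g_t$ in $t$ makes the monotonicity constants uniform on compact subsets.

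For lower semi-continuity at $({\bf z}_0,t_0)$, two cases arise. If ${\rm Ord}^{u_{t_0}}({\bf z}_0)>1$, then $|\nabla u_{t_0}|^2({\bf z}_0)=0$ and there is nothing to prove. Otherwise, by \Cref{item:splitting} there exist $\sigma_0>0$ and a splitting $u_{t_0}=(u^1_{t_0},u^2_{t_0}):\bB_{\sigma_0}({\bf z}_0)\to\bR^m\times T_2=:S$ with $u^1_{t_0}$ smooth harmonic and ${\rm Ord}^{u^2_{t_0}}({\bf z}_0)>1$, so that $|\nabla u_{t_0}|^2({\bf z}_0)=|\nabla u^1_{t_0}|^2({\bf z}_0)$. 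Let $l:\bR^n\to S$ be the tangent map of $u_{t_0}$ at ${\bf z}_0$ (a homogeneous degree-1 harmonic map) and consider the rescaled maps $u_t^\rho({\bf w}):=\rho^{-1}u_t({\bf z}_0+\rho{\bf w})$ on $\bB_1(0)$. By \Cref{blowups}, $u_{t_0}^\rho\to l$ in $C^0(\bB_1(0))$ as $\rho\to 0$; the uniform convergence hypothesis $u_t\to u_{t_0}$ then makes $u_t^\rho$ arbitrarily $C^0$-close to $l$ for $\rho$ small and $t$ near $t_0$, while $C^2$-smoothness of $g_t$ in $t$ makes the pullback metric $g_t^\rho$ (normalized in normal coordinates at ${\bf z}_0$) satisfy $\|g_t^\rho\|<\kappa_0$ in the sense of \eqref{normg} for $\rho$ small uniformly in $t$. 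Applying \Cref{item:dependence} now produces $\sigma_1>0$ such that $u_t(\bB_{\sigma_1}({\bf z}_0))\subset S$ for all $t$ in a neighborhood of $t_0$; writing $u_t=(u_t^1,u_t^2)$ there, the component $u_t^1:\bB_{\sigma_1}({\bf z}_0)\to\bR^m$ is a smooth $g_t$-harmonic map and standard interior Schauder estimates for the linear elliptic system $\Delta_{g_t}u_t^1=0$, applied together with the uniform $C^0$-convergence $u_t\to u_{t_0}$, yield $u_t^1\to u_{t_0}^1$ in $C^\infty_{\rm loc}$ jointly in $({\bf z},t)$. In particular $({\bf z},t)\mapsto|\nabla u_t^1|^2({\bf z})$ is jointly continuous near $({\bf z}_0,t_0)$; combining $|\nabla u_t|^2\geq|\nabla u_t^1|^2$ with $|\nabla u_{t_0}^1|^2({\bf z}_0)=|\nabla u_{t_0}|^2({\bf z}_0)$ gives $\liminf_{({\bf z},t)\to({\bf z}_0,t_0)}|\nabla u_t|^2({\bf z})\geq|\nabla u_{t_0}|^2({\bf z}_0)$, completing the proof.

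I expect the main obstacle to be the simultaneous verification of the hypotheses of \Cref{item:dependence}: producing a single scale $\rho$ and neighborhood of $t_0$ for which both $\sup_{{\bf w}\in\bB_1(0)}d(u_t^\rho({\bf w}),l({\bf w}))<\delta_0$ and $\|g_t^\rho\|<\kappa_0$ hold \emph{uniformly} in $t$. The closeness-to-$l$ estimate is a two-step affair --- first choose $\rho$ small from the fixed-$t_0$ blowup convergence to ensure $\delta_0/2$-closeness of $u_{t_0}^\rho$ to $l$, then shrink the neighborhood of $t_0$ using the uniform convergence $u_t\to u_{t_0}$ to preserve $\delta_0$-closeness after rescaling --- while the metric estimate is controlled purely by $C^2$-smoothness of $g_t$ in $t$ and standard estimates in normal coordinates at ${\bf z}_0$.
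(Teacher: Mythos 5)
Your proof is correct, and it reorganizes the argument in a way that differs meaningfully from the paper's, so a comparison is warranted.

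The paper proves joint continuity at a point directly by cases on $\mathrm{Ord}^{u_0}(\mathbf{0})$: when the order exceeds $1$ it runs a sequential contradiction argument (bounding $|\nabla u_{t_i}|^2(\mathbf{z}_i)$ from above by averaged energy, comparing $\bB_r(\mathbf{z}_i)$ with $\bB_r(\mathbf{0})$, and invoking \eqref{applyportmanteau}); when the order equals $1$ it establishes the product splitting $u_t = (u_t^1, u_t^2)$ uniformly in $t$, shows $u_t^1$ converges smoothly, and then feeds $u_t^2$ back into Case~1 to get continuity of $|\nabla u_t^2|^2$ --- which silently requires the analogue of \eqref{applyportmanteau} for the $u_t^2$-family (a fact that does follow, since it holds for $u_t$ by hypothesis and for $u_t^1$ by smooth convergence, but which the paper does not flag). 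Your approach decouples the two halves of continuity. You first obtain \emph{global} upper semicontinuity of $(\mathbf{z},t)\mapsto|\nabla u_t|^2(\mathbf{z})$ by showing $e^{c\sigma^2}F_\sigma(\mathbf{z},t)$ is non-decreasing in $\sigma$ (factorizing it against the monotone quantity of \cref{lem23} using $\mathrm{Ord}\geq 1$) and jointly continuous at fixed $\sigma$ (from \eqref{applyportmanteau} in $t$, and from the Lipschitz bound plus $\vol(\bB_\sigma(\mathbf{z}_1)\triangle\bB_\sigma(\mathbf{z}_2))\to 0$ in $\mathbf{z}$, uniformly in $t$); a decreasing limit of continuous functions is USC, with no case split needed. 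For lower semicontinuity you only need the one-sided estimate $|\nabla u_t|^2\geq|\nabla u_t^1|^2$ together with $|\nabla u_{t_0}|^2(\mathbf{z}_0)=|\nabla u_{t_0}^1|^2(\mathbf{z}_0)$ at a non-critical point, so you never have to verify the \eqref{applyportmanteau} hypothesis for the $u_t^2$-family. The trade-off is that your USC step leans slightly more on the uniformity of the monotonicity constants $c,\sigma_0$ across the family $\{g_t\}$ --- you note this and it is indeed covered by the smooth dependence of $g_t$ on $t$ (as in the remark following \cref{thm5.1}) --- whereas the paper's Case~1 contradiction argument works with a fixed sequence and a fixed radius $r$ and so is more self-contained on that point. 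Both proofs rest on the same hard inputs (\cref{lem23}, \cref{rem:energydensity}, \cref{item:splitting}, \cref{thm5.1}), so the difference is organizational: your version is cleaner as a USC-plus-LSC decomposition and avoids an unstated hypothesis check, at the cost of being marginally more explicit about uniformity of the monotonicity constants.

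One small notational point: you write the normalizing factor as $c_n\sigma^n$ following \cref{rem:energydensity}, where $n$ denotes the real dimension of the domain, while the paper's Case~1 uses $\omega_{2n}r^{2n}$ because the domain $\bD^n$ has real dimension $2n$. These are consistent once the convention is fixed, but in the final write-up it is worth aligning with the paper's $\omega_{2n}r^{2n}$ to avoid ambiguity.
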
 
 \begin{proof}
 	
 	We will show that $({\bf z},t) =(z_1, \dots, z_n,t) \mapsto |\nabla u_t|^2(z_1, \dots, z_n)$ is continuous at $({\bf 0},0) \in \bD \times \bD_\ep$ where ${\bf 0}=(0,\dots, 0) \in \bD^n$.  The same argument will show that this function is continuous at any other point of $\bD^n \times \bD_\ep$.  
We consider the following two cases:  
 	\\
 	\\
 	{\sc Case 1:} ${\rm Ord}^{u_0}({\bf 0}) >1$.  Thus, $|\nabla u_0|^2({\bf 0})=0$.   
 	Assume on the contrary  that $({\bf z},t)  \mapsto |\nabla u_t|^2({\bf z})$ is not  continuous  at $({\bf 0},0)$; i.e.~there exists $\ep>0$ and ${\bf z}_i=(z_{i1}, \dots, z_{in}) \to {\bf 0}$ such that $\ep \leq |\nabla u_{t_i}|^2({\bf z}_i)$. 
 	
 	To derive a contradiction, we first apply the monotonicity property of energy in \cref{lem23} and note that the order is $\geq 1$, to obtain
 	\begin{equation} \label{mp}
 		\ep \leq \frac{e^{cr}}{\omega_{2n} r^{2n}} \int_{\bB_r({\bf z}_i)} |\nabla u_{t_i} |^2 d\vol_{g_{t_i}}
 	\end{equation}
 	where $\omega_{2n}$ is the volume of the unit ball in $\bR^{2n}$. Second,
 	\begin{eqnarray*}
 		\left|  \int_{\bB_r({\bf z}_i)} |\nabla u_{t_i} |^2 d\vol_{g_{t_i}}  -   \int_{\bB_r({\bf 0})} |\nabla u_{t_i} |^2 d\vol_{g_{t_i}} \right|
 		&  \leq & \int_{\bB_r({\bf z}_i) \triangle \bB_r({\bf 0})} |\nabla u_{t_i} |^2 d\vol_{g_{t_i}}
 		\\
 		& \leq & 
 		C \vol_{g_{t_i}}(\bB_r({\bf z}_i) \triangle \bB_r({\bf 0}))
 	\end{eqnarray*}
 	where $C$ is the Lipschitz bound.
 	Combining the above two inequalities with the fact that $\vol(\bB_r({\bf z}_i) \triangle \bB_r({\bf 0})) \to 0$ as $i \to \infty$, we obtain 
 	\begin{eqnarray*}
 		\ep & \leq & \lim_{i \to \infty}  \frac{1}{\omega_{2n} r^{2n}} \int_{\bB_r({\bf 0})} |\nabla u_{t_i} |^2 d\vol_{g_{t_i}}.
 	\end{eqnarray*}
 	Combining this inequality with assumption \cref{applyportmanteau}, we get
 	\[
 	\ep \leq \frac{1}{\omega_{2n} r^{2n}} \int_{\bB_r({\bf 0})} |\nabla u_0 |^2 d\vol_{g_0}.\]
 	By \cref{rem:energydensity}, we have 
 	\[
 	\lim_{r \to 0}  \frac{1}{\omega_{2n} r^{2n}} \int_{\bB_r({\bf 0})} |\nabla u_0 |^2 d\vol_{g_0} = |\nabla u_0 |^2({\bf 0}),
 	\]  
 	we conclude  $\ep \leq |\nabla u_0 |^2({\bf 0})$, 
 	a contradiction. 
 	\\
 	\\
 	{\sc Case 2:} ${\rm Ord}^{u_0}({\bf 0}) =1$.  	Choose normal coordinates centered at $\bf 0$.
 	Let $(g_{t\sigma,ij})$ be the matrix expression of the metric $g_{t\sigma}(x):=g_t(\sigma x)$ in $\bB_1(0)$.  We will let $\cM$ be the collection of metrics $g_{t\sigma}$.  Note that $||g_{t\sigma}||\to 0$ (cf.~\cref{thm5.1})  as  $|t| \to 0$ and $\sigma \to 0$.
 	
 	Denote the distance function on $T$ by $d_T$.	  Define a scaling factor
 	$$
 	\mu(\sigma)= \left(\sigma^{1-n} \int_{\partial \bB_\sigma(0)} d^2(u_0,u_0(0)) d\Sigma \right)^{-\frac{1}{2}}. 
 	$$ and a new distance function on $T$ by setting $d_\sigma:=\mu(\sigma) d_T$.  The blow up maps of $u_0$ at $x_0$ is defined by 
 	$$u_{0\sigma}:\bB_1(0) \to (T,d_\sigma), \ \ \ u_{0\sigma}(x)=u_0(\sigma x).$$
 	Choose a subsequence $\sigma_i \to 0$ such that $u_{0\sigma_i}$ converges to a tangent map $u_{0*}$.   Also define 
 	$$u_{t\sigma}:\bB_1(0) \to (T,d_\sigma), \ \ \ u_{t\sigma}(x)=u_t(\sigma x).
 	$$  
 	Here, we note that $u_{t\sigma}$ is not the usual blow up map of $u_t$ since the choice of the scaling $\mu(\sigma)$ to define the target distance function is not consistent with the scaling used for blow up maps of $u_t$.  For the blow up maps of $u_t$, the correct scaling is  $ \left(\sigma^{1-n} \int_{\partial \bB_\sigma(0)} d^2(u_t,u_t(0)) d\Sigma \right)^{-\frac{1}{2}}$. 
 	
 	As in \cref{localproperties}, we assume $u_0$ maps into the tangent cone $T:=T_{P_0}(\Delta(G))$.  Since  ${\rm Ord}^{u_0}({\bf 0}) =1$,  $u_{0*}:\bB_1(0) \to T$ is  a homogeneous degree 1 harmonic map.  By \cref{thm5.1}, $u_{*0}$ is effectively contained in a essentially regular subcomplex $\bR^m \times T_2$ where $T_2$ is a lower dimensional $F$-connected cone.

 	% 		Since  the order of $u_0$ at $x_0$ is equal to 1,  the  image of $u_{0*}$ is a flat $\bF$ by \cite[Section 3]{GS92}.   Let $S \subset T$ be the subbuilding defined as the  union of all apartments  containing $\bF$.  By \cite[Lemma 6.2]{GS92}, $S$ is isometric to $\bR^m \times B$ where $B$ is a lower dimensional $F$-connected cone.  Henceforth, we will denote $S$ by $\bR^m \times B$.  

 	Let $\kappa_0>0$, $\delta_0>0$ and $\sigma_0>0$ be from Theorem~\ref{thm5.1}.  Fix $\tau_1>0$ and $i$ sufficiently large such that $||g_{t\sigma_i} ||<\kappa_0$ for $|t|<\tau_1$ and $\sup_{\bB_1(0)}d(u_{0\sigma_i}, u_{0*}) <\delta_0$.  Since $u_{t\sigma_i}$ converges uniformly to $u_{0\sigma_i}$, there exists $\tau_0 \in (0,\tau_1)$ such that $|t|<\tau_0$ implies  $\sup_{\bB_1(0)}d(u_{t\sigma_i}, u_{0*}) <\delta_0$.  Hence, \cref{thm5.1} implies  that there exists $\sigma_0>0$ such that  $u_{t\sigma_i}(\bB_{\sigma_0}(0)) \subset \bR^m \times B$, or equivalently, $u_t(\bB_{\sigma_0\sigma_i}(0)) \subset  \bR^m \times B$.	 % In other words, shrinking $\Omega$ if necessary, we can assume $u(\Omega) \subset \bR^m \times B$ and 
 	We write 
 	\begin{equation} \label{product}
 		u_t=(u_t^1,u_t^2), \ \ \ \ u_t^1:\bB_{\sigma_0\sigma_i}(0) \to \bR^m, \ u_t^2: \bB_{\sigma_0\sigma_i}(0) \to T_2
 	\end{equation} 
 	in terms of the product structure.  Since $u_t^1:\bB_{\sigma_0\sigma_i}(0) \to \bR^m$ is a smooth harmonic map for each $t$, the uniform convergence of $u_t^1$ to $u_0^1$ implies the local $C^\infty$-convergence of $u_t^1$ to $u_0^1$ by elliptic regularity.  Thus,  
 	$$
 	({\bf z},t) \mapsto	\frac{\partial u^1_{i}}{\partial z_j}(z_1,\ldots,z_n,t)  \mbox{ is continuous in }\bB_{\sigma_0\sigma_i}(0), \ \ \forall  i=1,\ldots,N \mbox{ and } j=1,\ldots,m.			
 	$$
 	Since ${\rm Ord}^{u_0^2}({\bf 0})>1$, {\sc Case 1} implies that  $({\bf z}, t) \to |\nabla u_t^2|^2({\bf z})$ is continuous at $({\bf 0}, 0)$.  Combining the continuity of $|\nabla u_t^1|^2$ and $|\nabla u_t^2|^2$ at $({\bf 0},0)$, we conclude that   $({\bf z}, t) \to |\nabla u_t|^2({\bf z})$ is  continuous at $({\bf 0},0)$. The lemma  is proved. 
 \end{proof}
 Let us now state and prove the main result of this section.   
 \begin{thm} \label{harmonic} 
 	Let $f:\sX\to \bD$ be the smooth projective family. 	Let $\varrho:\pi_1(\sX)\to G(K)$ be a Zariski dense representation, where $G$ is a semisimple algebraic group over    a non-archimedean local field $K$.  Then there is  a    $\varrho$-equivariant  map   $u:\widetilde{\sX}\to\Delta(G)_K$ such that
 	\begin{enumerate}[label=(\arabic*)] 
 		\item \label{item:1}For  each $t\in \bD$,  $u_t:=u|_{\widetilde{X}_t}:\widetilde{X}_t\to \Delta(G)_K$  is    $\varrho_t$-equivariant and pluriharmonic.
 		\item \label{item:2} Let $m={\rm rank}(u_0)$ and let  $x_0 \in \tilde \cR(u_0)$; i.e.~$x_0$ is a non-critical point of $u_0$ defined in \cref{dfn:M}.   Assume that $u_0$ maps some neighborhood of $x_0$ into the tangent cone $T:=T_{u(x_0)}(\Delta(G)_K)$ (cf.~\cref{localproperties}). Then there exists an admissible coordinate system  $\Omega\simeq \bD^n\times\bD_\ep$ centered at $x_0$ (cf. \cref{rmk:omega}), an apartment $A$ of $T$,  and a flat $\bF \subset A$ satisfying the following:  
 		%	Then there exists an apartment $A$ and a flat $\bF \subset A$ satisfying the following:
 		\begin{enumerate}[label=(\roman*)]
 			\item  \label{item:i} $u_0(\Omega \cap \widetilde{X}_0) \subset A$. (cf.~\cref{rmk:omega}).			\item\label{item:ii} The union of all flats parallel to $\bF$ is a subbuilding isometric to $\bR^m \times B$ where $B$ itself is another Euclidean building.  Then $u(\Omega) 	\subset \bR^m \times B$.  
 			\item \label{item:iii}
 			Write $u_t=u|_{\bD^n \times \{t\}} = (u_t^1, u_t^2)$ where 
 			$$
 			u_t^1: \bD^n \to \bR^m \mbox{ \ and \ } u_t^2: \bD^n \to B.
 			$$  
 			Here we adopt the notation that if $m=\dim(\Delta(G)_K$), then  $u_t=u_t^1$.   
 			Define  
 			$$u^1(z_1, \dots, z_n,t) := u_t^1(z_1, \dots, z_n)
 			$$ and let   $u^1=(u^1_{1}, \dots, u^1_{m})$ be its expression in terms of coordinate functions.		
 			Then we have the following:  \begin{enumerate}[label=(\alph*)]
 				\item \label{item:a}$u_0^2$ is identically constant.
 				\item  \label{item:b}The  partial derivatives of the coordinate functions of $u^1$ with respect to $z_j$ are continuous in $\Omega$, i.e. for each $i=1,\ldots,m$ and $j=1,\ldots,n$,
 				$$
 				(z_1,\ldots,z_n,t) \mapsto \frac{\partial u^1_{i}}{\partial z_j}(z_1,\ldots,z_n,t) \in C^0(\bD^n \times \bD_\ep).
 				$$
 				\item  \label{item:c0}$(z_1, \dots, z_n,t) \mapsto |\nabla u_t^2|^2(z_1, \dots, z_n)$ is a continuous function on $\Omega=\bD^n \times \bD_\ep$ and $|\nabla u_t^2|^2 \to 0$ as $t \to 0$.  
 			\end{enumerate}	 
 		\end{enumerate}
 	\end{enumerate} 
 \end{thm}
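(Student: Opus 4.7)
My plan is to construct $u$ fiberwise and then establish the transverse regularity properties. For each $t \in \bD$, I would invoke the Gromov--Schoen existence theorem \cite{GS92} to produce a $\varrho_t$-equivariant pluriharmonic map $u_t : \widetilde{X}_t \to \Delta(G)_K$: Zariski density of $\varrho$ in the semisimple group $G$ guarantees that $\varrho_t(\pi_1(X_t))$ fixes no point at the visual boundary of $\Delta(G)_K$, so the action is proper by \cref{lem:proper}; a uniform fiberwise energy bound on $\bD_{\frac{1}{2}}$ (analogous to the one established while proving \cref{harmonic2}) combined with \cref{arzela} then provides the minimizer, and the Sampson--Gromov--Schoen Bochner identity promotes each $u_t$ to be pluriharmonic, establishing the first part of the theorem.

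Next, I would assemble the family $\{u_t\}$ into a single continuous map $u : \widetilde{\sX} \to \Delta(G)_K$. The centralizer of a Zariski dense subgroup in $G(K)$ is the finite center $Z(G)(K)$, so the fiberwise harmonic map is essentially unique, and the uniform local Lipschitz bounds from Gromov--Schoen together with a diagonal Arzel\`a--Ascoli argument (as in \cref{arzela}) yield continuity of $t \mapsto u_t$ in the compact-open topology on equivariant maps; this is enough to obtain a continuous $u$ on the total space.

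For the local structure at a non-critical point $x_0 \in \tilde{\cR}(u_0)$, I would first apply \cref{item:splitting} to $u_0$ to obtain a splitting $u_0 = (u_0^1, u_0^2)$ on $\bB_{\sigma_0}(x_0)$ with $u_0^1$ smooth harmonic of rank $m = \rank(u_0)$ and ${\rm Ord}^{u_0^2}(x_0) > 1$. The rank maximality of $x_0$, combined with the effective containment statement of \cref{thm5.1} applied at neighboring points, then forces $u_0^2$ to be identically constant on the ball: otherwise a non-trivial $u_0^2$ at some nearby $x'$ would, upon iterating the local splitting, enlarge the effective flat direction and exceed rank $m$. Thus $u_0$ maps the fiber slice $\Omega \cap \widetilde{X}_0$ into a flat $\bF$, giving items (i) and the vanishing assertion of (iii)(a). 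To extend the splitting transversely across $\Omega \cong \bD^n \times \bD_\varepsilon$, I would invoke \cref{item:dependence}: the fiberwise metrics $g_t$ pulled back through the admissible trivialization vary smoothly, so $\|g_t\|$ lies below the threshold $\kappa_0$ for $|t|$ small, while the continuity of $u$ forces $u_t$ to remain uniformly $\delta_0$-close to $u_0$ on $\bD^n$. Taking $l$ to be the homogeneous degree-one tangent map of $u_0$ at $x_0$, effectively contained in $\bR^m \times B$, \cref{thm5.1} yields $u_t(\bD^n) \subset \bR^m \times B$, which is item (ii). Writing $u_t = (u_t^1, u_t^2)$, the component $u_t^1$ is a smooth $\bR^m$-valued harmonic map of uniformly bounded energy; standard elliptic regularity with continuous dependence on the domain metric then delivers the continuity of $\partial u^1_i / \partial z_j$ in $(z, t)$, which is item (b).

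The continuity claim in item (c) and the vanishing $|\nabla u_t^2|^2 \to 0$ as $t \to 0$ follow by applying \cref{preliminarycontinuity} to $u^2 : \bD^n \times \bD_\varepsilon \to B$, whose three hypotheses (uniform Lipschitz bound, smooth variation of $g_t$, and total energy convergence on balls) are all consequences of the previous steps. The main obstacle will be twofold: first, promoting the pointwise order condition ${\rm Ord}^{u_0^2}(x_0) > 1$ to actual constancy of $u_0^2$ on a whole neighborhood, which requires careful use of rank maximality and the essentially regular subcomplex structure of \cite{GS92}; and second, verifying the energy convergence hypothesis of \cref{preliminarycontinuity}, since upgrading weak convergence of energy measures to convergence on balls in the building setting requires the uniform Lipschitz estimates stemming from the family energy bound.
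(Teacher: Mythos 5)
Your overall strategy is the same as the paper's: fiberwise existence via Gromov--Schoen, an Arzel\`a--Ascoli convergence argument (via \cref{arzela}), a blow-up and essentially-regular-subcomplex argument (\cref{thm5.1}) to obtain the product splitting, and \cref{preliminarycontinuity} for item (iii)(c). However, there is one genuine gap that you do not flag, plus two points you do flag whose resolution I will sketch.

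The unflagged gap is your uniqueness argument. You claim that because the centralizer of the Zariski dense subgroup $\varrho(\pi_1(\sX))$ in $G(K)$ is the finite center, the fiberwise equivariant harmonic map is ``essentially unique.'' This does not follow: two $\varrho_t$-equivariant energy minimizers into an NPC space need not differ by an element of $\mathrm{Isom}(\Delta(G)_K)$ at all, let alone by one commuting with $\varrho_t(\pi_1)$. In a Euclidean building, if $u_1$ and $u_2$ are two equivariant minimizers then $d(u_1(\cdot),u_2(\cdot))$ is constant and the geodesic interpolation gives a whole family of minimizers; flat directions in the target (unavoidable in a building of positive rank) are the source of non-uniqueness, and the finiteness of $Z(G)(K)$ does not rule them out. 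The paper circumvents this by defining $u_t$ to be the equivariant pluriharmonic map into a \emph{minimal} $\varrho(\pi_1)$-invariant closed convex subset $\cC\subset\Delta(G)_K$ (from \cite[Lemma 2.2]{BDDM}), within which uniqueness holds (\cite[Theorem 2.1]{BDDM}, \cite[Theorem B]{DM24}). This uniqueness is not cosmetic: it is what lets one identify the Arzel\`a--Ascoli limit in the convergence claim with $u_0$ itself. Without it the argument would only produce \emph{some} limit harmonic map, and items (ii), (iii) would not follow.

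Your two flagged obstacles are real but are resolved in the paper as follows. For the constancy of $u_0^2$: after shrinking $\Omega$ one may assume $\mathrm{rank}\,du_0^1 = m$ at every point (it equals $m$ at $x_0$, is lower semicontinuous, and cannot exceed $m$ since the target is $\bR^m$); if $u_0^2$ were non-constant, then at a nearby regular point of $u_0^2$ the integer $m_{x'}$ from \cref{dfn:M} would exceed $m=\mathrm{rank}(u_0)$, a contradiction. There is no need to ``iterate'' the effective containment statement. For the energy convergence on balls required by \cref{preliminarycontinuity}: the paper combines the weak convergence of energy density measures (the last statement of \cref{arzela}) with convergence of the \emph{total} energy (via the sandwich inequality \eqref{unienergybd}) to obtain weak convergence of the normalized probability measures $\mu_t$, and then invokes Portmanteau's theorem, using that the limit energy density is absolutely continuous (so $\partial\bB_r$ is a continuity set), to upgrade weak convergence of measures to convergence of $\int_{\bB_r}|\nabla u_t^2|^2$. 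Your observation that the uniform Lipschitz bound is what makes this workable is correct, but the Portmanteau step is the missing ingredient.
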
  
 
 \begin{proof}[Proof of \cref{harmonic}]
 	Let $\cC$ be a $\varrho(\pi_1(X))$-invariant minimal closed convex subset of $\Delta(G)_K$ introduced in \cite[Lemma 2.2]{BDDM}. The $\varrho$-equivariant map $u:\widetilde{\sX}\to\Delta(G)_K$ is  defined by setting $u|_{\widetilde{X}_t}$ to be the  $\varrho$-equivariant pluriharmonic map into $\cC$, whose existence is established in \cite{GS92}, and uniqueness is shown in the proof of  \cite[Theorem 2.1]{BDDM}. 		This proves the existence assertion in \Cref{item:1}. 
 	
 	Before we give a proof of \Cref{item:2}, we need a preliminary result (cf.~\cref{convergence} below). Let $F_t: \widetilde{X}_0 \to \widetilde{X}_t$ be a lift of a   smooth family of  maps $X_0 \to X_t$ depending smoothly on $t \in \bD$ such that $F_0$ is the identity map.   Let $g_t$ be the pullback via $F_t$ of  the restriction to $\widetilde{X}_t$ of the Riemannian metric on $\widetilde{\sX}$.
 	For any $\varrho$-equivariant map $v: X_0 \to \Delta(G)_K$, let $^{g_t}E^v$ be the energy of $v$ with respect to the metric $g_t$ on $X_0$.  Note that, since $u_0$ and $u_t \circ F_t$ are both $\varrho$-equivariant,  the function $d^2(u_0, u_t \circ F_t)$ defined on $\widetilde{X}_0$ is $\pi_1(\sX)$-invariant, and hence descends to $X_0$.  Same is true for the   energy density functions and the directional energy density functions of $u_0$ and $u_t \circ F_t$.

 	\begin{claim} \label{convergence}
 		The function $u_t \circ F_t$ converges uniformly to $u_0$  on $\widetilde{X}_0$.			%Furthermore, the Sobolev energy density measures and the directional energy density measures of $u_t \circ F_t$ converges weakly to those of $u$.  
 	\end{claim}
 	\begin{proof}
 		Since the sequence of metrics $g_t$ converges uniformly in $C^{\infty}$ to $g_0$, there exists a constant $C>0$ such that
 		\begin{equation} \label{unienergybd}
 			(1-Ct){^{g_0} E^{u_0}} \leq (1-Ct){^{g_0} E^{u_t \circ F_t}}  \leq {^{g_t} E^{u_t \circ F_t}} \leq {^{g_t} E^{u_0}} \leq (1+Ct) {^{g_0} E^{u_0}}.
 		\end{equation}
 		Here, the first inequality is due to $u_0$ being energy minimizing with respect to the metric $g_0$.  The second inequality is due to the fact that $g_t \to g_0$.  In particular, if $(x_1, \dots, x_{2n})$ is the real coordinates on $X_0$ and $g_t^{ij}$ is the expression of the co-metric of $g_t$ with respect to these coordinates, then $g_t^{ij}=(1+O(t))g^{ij}_0$. Thus, the energy density function of $u_t$ with respect to these coordinates is $$|\nabla u_t|_{g_t}^2=g_t^{ij}\frac{\partial u_t}{\partial x_i} \cdot \frac{\partial u_t}{\partial x_j} =  (1+O(t))g^{ij}_0 \frac{\partial u_t}{\partial x_i} \cdot \frac{\partial u_t}{\partial x_j}= (1+O(t)) |\nabla u_t|_{g_0}^2$$ where $|\nabla u_t|_{g_t}^2$ (resp.~$|\nabla u_t|_{g_0}^2$) is the energy density function of $u_t$ with respect to the domain metric $g_t$ (resp.~$g_0$) and we use the notation $\frac{\partial u}{\partial x_i} \cdot \frac{\partial u}{\partial x_j}$ to denote the pullback inner product $\pi(\partial_{x_i}, \partial_{x_j})$ defined in \cite[Section 2.3]{KS93} with the coordinate vector fields $(\partial_{x_1}, \dots, \partial_{x_{2n}})$.  The third (fourth) inequality follows from a reason analogous to  the first (second) inequality. 
 		
 		In particular,  \cite[Theorem 2.4]{GS92} implies that  there exists a constant $C_0$ such that, for all $t$ is sufficiently close to $0$,  $u_t \circ F_t$ is Lipschitz continuous with Lipschitz constant bounded by $C_0$ (with respect to the distance function on $\widetilde{X}_t$ induced by the metric $g_t$, and hence also with respect to the metric $g_0$ since these metrics are uniformly equivalent).

 		Suppose that $u_t \circ F_t$ does not converge uniformly to $u_0$ as $t\to 0$.			
 		Then there exists $\ep>0$, a sequence $t_i \to 0$, $x_i \in \widetilde{X}_0$ such that  $d(u_{t_i} \circ F_{t_i}(x_i), u_0(x_i)) \geq \ep$.  
 		Since $\varrho$ is a Zariski dense representation, it does not fix a point at infinity. By \cite[Theorem 2.2.1]{KS97}, the action of $\varrho$ is proper (cf.~\cref{def:proper}). 
 		Furthermore,  $\lim\limits_{i\to\infty}{^{g_0}E^{u_{t_i}} \circ F_{t_i}} -{^{g_0}E^{u_0}}=0$ by (\ref{unienergybd}), which shows that $\{u_{t_i} \circ F_{t_i}\}$ is a minimizing sequence.
 		Taking a subsequence if necessary,   Lemma~\ref{arzela} implies that $\{u_{t_i} \circ F_{t_i}\}$ converges uniformly to a $\varrho$-equivariant energy minimizing map $v$.
 		%			By \cite[Theorem 3.9]{KS97} to conclude that    $v$ is a $\varrho$-equivariant pluriharmonic map.\footnote{What is the definition of $\nu$ here? Is it some limit of $ u_{t_i} \circ F_{t_i} $, whose existence is ensured by \cite[Theorem 3.9]{KS97}, as $\varrho$ is proper?} 
 		By the uniqueness of the equivariant harmonic map into $\cC$ (cf.~\cite[Section 2.7]{BDDM}), we conclude $u_0=v$. Thus, $\lim\limits_{i\to\infty}d(u_{t_{i}} \circ F_{t_{i}}(x_i), u_0(x_i)) = 0$.  This contradiction proves $u_t \circ F_t$ converges uniformly to $u_0$.
 		%				
 		%\footnote{{\color{red}ADDED: I am a bit confused since in \cite{KS97}, the domain together with the Riemannian metric are fixed. Here our domain is fixed, but the metric $g_t$ varies. Do we need to modify the arguments in \cite[Theorem 3.9]{KS97} to prove the claim?} {\color{blue} The point is that the energy of $u_t  \circ F_t$ measured with respect to $g_0$ is close to the energy of $u_0$.  So we can apply Theorem 3.9 of Korevaar-Schoen.}} 
 	\end{proof}

 	We now give a proof of  \Cref{item:2}. 	%For an appropriate choice of $F_t$ above, we have  $u_t(z_1, \dots, z_n)=u(z_1, \dots, z_n,t)$ for $(z_1, \dots, z_n,t) \in \Omega$.  
 	Since $x_0 \in\tilde \cR(u_0) \subset \cR(u_0)$, there exists a admissible coordinate system $\Omega\simeq \bD^n\times\bD_\ep$ centered at  $x_0$   in \cref{rmk:omega}, and an apartment $A$ such that  $u_0(\bD^n) \subset A$.  This proves  \Cref{item:i}. 
 	Moreover,  we have ${\rm Ord}^{u_0}(x_0)=1$ and  the  image of $u_{0*}$ is a $m$-dimensional flat $\bF$ since $x_0 \in \tilde \cR(u_0)$ and rank$(u_0)=m$. Here $u_{0*}$ is defined just before \cref{item:splitting}.   Adopting the notation for {\sc Case 2} in the proof of \cref{preliminarycontinuity}, let $\bR^m \times B \subset T$ be the subbuilding defined as the  union of all apartments  containing $\bF$.   
 	
By \cref{convergence}, and following the argument in the proof of \cref{preliminarycontinuity}, we apply \cref{thm5.1} to conclude that there exist  $\tau_0>0$  and $\delta>0$ such that $u_t(\bD_\delta^n) \subset \bR^m \times B$ for $|t|<\tau_0$.  In other words, by rescaling the coordinates of $\Omega$ via   the transformation $({\bf z},t)\mapsto (\frac{\bf z}{\delta}, t)$ and replacing $\ep$ by  $\tau_0$,    we obtain $u(\bD^n\times\bD_{\ep}) \subset \bR^m \times B$. This  completes the proof of \Cref{item:ii}.
 	
 	Let us prove \Cref{item:iii}. Since  
 	$u_t(\bD^n) \subset \bR^m \times B$, we can write $u_t=(u_t^1, u_t^2)$.  
 	By shrinking $\Omega$ if necessary, we can assume that the rank of $du_0^1$ is equal to $m$ at all points of $\bD^n$ which then implies that $u_0^2$ is identically a constant.  This proves  \Cref{item:a}.  
 	\Cref{item:b} follows from  the same argument as in {\sc Case 2} of \cref{preliminarycontinuity}.

 	Next,
 	analogously to \eqref{unienergybd},  for any $t_0\in \bD_{\ep}$ we have 
 	\[
 	(1-Ct){^{g_{t_0}} E^{u_{t_0} \circ F_{t_0}}} \leq (1-Ct){^{g_{t_0}} E^{u_t \circ F_t}}  \leq {^{g_t} E^{u_t \circ F_t}} \leq {^{g_t} E^{u_{t_0} \circ F_{t_0}}} \leq (1+Ct) {^{g_{t_0}} E^{u_{t_0} \circ F_{t_0}}},
 	\]
 	which implies  the convergence of the total energy 
 	\begin{equation} \label{ttotnot}
 		\lim_{t \to t_0} {^{g_t} E^{u_t \circ F_t}}  =  {^{g_{t_0}} E^{u_{t_0} \circ F_{t_0}}}.
 	\end{equation}
 	Thus,  following the proof of \cref{convergence}, we can show that $u_t \circ F_t({\bf z})$,  varies uniformly in $t$ in the sense of \cref{preliminarycontinuity}.  Since $u_t^1$ varies uniformly in $t$  (see the sentence after \eqref{product} in proof of \cref{preliminarycontinuity}), we have that
 	\begin{equation} \label{portmanteau2}
 		\lim_{t \to t_0} \int_{\bB_r({\bf z})} |\nabla u^1_t|^2 d\vol_{g_t} = \int_{\bB_r({\bf z})} |\nabla u^1_{t_0}|^2 d\vol_{g_{t_0}}
 	\end{equation}
 	and 
 	$u_t^2({\bf z})$ varies uniformly in $t$ in the sense of \cref{preliminarycontinuity}. 
 	\begin{claim}\label{claim:portmanteau}
 		For any Euclidean ball $\bB_r({\bf z}) \subset \bD^n$,  we have
 		\begin{equation} \label{portmanteau3}
 			\lim_{t \to t_0} \int_{\bB_r({\bf z})} |\nabla u^2_t|^2 d\vol_{g_t} = \int_{\bB_r({\bf z})} |\nabla u^2_{t_0}|^2 d\vol_{g_{t_0}}. 
 		\end{equation}
 	\end{claim}
 	\begin{proof} 
 		Consider the probability measure  $\mu_t$  on $X_0$ defined by
 		$$
 		d\mu_t =\frac{|\nabla (u_{t_i} \circ F_t)|^2 d\vol_{g_t}}{^{g_t}E^{u_t \circ F_t}}.
 		$$
 		Combining the weak convergence of the energy density measure of $u_t \circ F_t$ to $u_{t_0} \circ F_{t_0}$ (which follows from~\cref{arzela}),  the convergence of  total energy (cf.~\eqref{ttotnot}) implies the weak convergence of $\mu_t$ to $\mu_{t_0}$.  By \cref{subsec:NPC}, $\mu_0$ is an absolutely continuous measure. Hence  $\partial \bB_r({\bf 0})$ is a continuity set for $\mu_0$, i.e.~$\mu_0(\partial \bB_r({\bf 0}))=0$.  By Portmanteau's theorem (cf. \cite[Theorem 2.1]{Bil99}), one has  
 		\[
 		\lim_{t \to t_0} \mu_t(\bB_r({\bf 0})) =\mu_{t_0}(\bB_r({\bf 0})).
 		\] 
 		Therefore, using the fact that  $\lim\limits_{t\to t_0}{^{g_t} E^{u_t \circ F_t}}  ={^{g_{t_0}} E^{u_{t_0} \circ F_{t_0}}}$, we conclude
 		\[
 		\lim_{t \to t_0} \int_{\bB_r({\bf z})} |\nabla u_t|^2 d\vol_{g_t} = \int_{\bB_r({\bf z})} |\nabla u_{t_0}|^2 d\vol_{g_{t_0}}. 
 		\] 
 		Since $|\nabla u_t|^2=|\nabla u^1_t|^2+|\nabla u^2_t|^2$,   \eqref{portmanteau3} follows from \eqref{portmanteau2}. The claim is proved. 
 	\end{proof}  
 	Recall that $u_t^2({\bf z})$ varies uniformly in $t$ in the sense of \cref{preliminarycontinuity}.    By  \cref{convergence,claim:portmanteau}, conditions in \cref{preliminarycontinuity} are satisfied. Hence 	  the map $$({\bf z},t)=(z_1, \dots, z_n,t)   \mapsto |\nabla u_t|^2({\bf z})$$ is a continuous function in $\bD^n \times \bD_\ep$. \Cref{item:c0}  follows  from  the fact that $|\nabla u_0^2({\bf z})|^2=0$ for any ${\bf z}\in \bD^n$.  
 	This completes the proof of \Cref{item:iii}, and hence the proof of \Cref{item:2}.  The theorem is proved. 
 \end{proof}

 \section{Lower semi-continuity of $\Gamma$-dimension: reductive case}
In this section, we prove \cref{main2} in the case where $\pi_1(X_0)$ is reductive. We apply  \cref{harmonic2,harmonic} in combination with techniques from the reductive Shafarevich conjecture \cite{Eys04,DY23}.
 \subsection{Deformation of canonical currents}
  Let $X$ be a compact K\"ahler manifold and 	let $\varrho:\pi_1(X)\to G(K)$ be a Zariski dense representation, where $G$ is a reductive  group over a non-archimedean local field $K$.    Let $u:\widetilde{X}\to \Delta(G)$ be a  $\varrho$-equivariant pluriharmonic map.
 
Note that any $x \in \cR(u)$ has an open neighborhood $\Omega_x \subset \widetilde X$ such that $u(\Omega_x)\subset A$ where $A$ is an apartment of $\Delta(G)$.    We fix an isometry $i_A:A\to \bR^N$. Write $i_A\circ u|_{\Omega_x}=(u_{1},\ldots,u_N)$.    The pluriharmonicity of $u$ implies that $\hess u_{i}=0$ for each $u_i$.  We consider a smooth  semi-positive $(1,1)$-form on $\Omega_x$ defined by 
 $$
 \sqrt{-1}\sum_{i=1}^{N}\partial u_{i}\wedge  \db u_{i}. 
 $$
 Note that such $(1,1)$-form  does not depend on the choice of the isometry $i_A$.
 By \cite[\S 3.3.2]{Eys04},  it is also invariant under $\pi_1(X)$-action. Therefore, it descends to a smooth real closed semi-positive $(1,1)$-form on $\cR(u)$.  
 It is shown in \cite[p. 540]{Eys04} that it extends to a  positive closed $(1,1)$-current $T_{\varrho}$  on $X$ with continuous potential.  
 \begin{dfn}[Canonical current]\label{def:canonical}
 	The above closed positive $(1,1)$-current 	$T_\varrho$ on $X$ is called the \emph{canonical current} of $\varrho$.   
 \end{dfn}  
% \begin{rem}
 	%  Note that it is not clear where the above $\varrho$-equivariant harmonic map $u$ is unique. However, by \cite[Theorem B]{DM24},  $T_\varrho$ does not depend on the choice of  $u$.  
% \end{rem}

 		Let $f:\sX\to \bD$ be the smooth projective family, and let  $\varrho:\pi_1(\sX)\to G(K)$ be a Zariski dense representation, where $G$ is a semi-simple algebraic group defined over a non-archimedean local field $K$. We apply \cref{harmonic}. Then there is  a    $\varrho$-equivariant continuous map   $u:\widetilde{\sX}\to\Delta(G)_K$ such that
  $u_t:=u|_{\widetilde{X}_t}:\widetilde{X}_t\to \Delta(G)_K$  is  a $\varrho$-equivariant pluriharmonic for each $t\in \bD$ satisfying the properties therein.  Precisely, pick any $x_0\in \tilde{\cR}(u_0)$. There exists an open neighborhood $\Omega$  of $x_0$ in $\widetilde{\sX}$ such that $\pi_{\sX}:\Omega\to \pi_{\sX}(\Omega)$ is an isomorphism, and  we have 
 $$
 u(\Omega)\subset \bR^m\times B.
 $$
 Here $B$ is also a Euclidean  building, and $\Omega\simeq \bD^n\times \bD_\ep$ such that $f\circ \pi_{\sX}(z_1,\ldots,z_n,t)=t$ within this coordinate system.   Write $u_t=u|_{\bD^n \times \{t\}} = (u_t^1, u_t^2)$ where 
 $$
 u_t^1: \bD^n \to \bR^m \mbox{ \ and \ } u_t^2: \bD^n \to B.
 $$  
 Then $u_0^2$ is constant. We abusively write $\Omega$ for $\pi_{\sX}(\Omega)$.   Consider the canonical current $T_{\varrho_t}$ on $X_t$ associated with $\varrho_t:\pi_1(X)\to G(K)$.   In this case, we have
 $$
 T_{\varrho_0}|_{\Omega\cap X_0}=\sqrt{-1}\sum_{i=1}^{m}\partial u_{0,i}^1\wedge\bar\partial u_{0,i}^1
 $$
 where we write $u_t^1:=(u_{t,1}^1,\ldots,u_{t,m}^1)$.  By \cref{harmonic},  
  $
 \sqrt{-1}\sum_{i=1}^{m}\partial u_{t,i}^1\wedge\bar\partial u_{t,i}^1
 $ 
 is a smooth semi-positive $(1,1)$-form on $\bD^n$, that varies continuously with    $t\in \bD_\ep$.  
 
 Fix any $t\in \bD_\ep$. Pick any $x\in \bD^n\cap \cR(u^2_t)$, where $ \cR(u^2_t)$ is the regular set of the pluriharmonic map $u^2_t$ defined in \cref{def:regular}. Then there exists a neighborhood $\Omega_2$ of $x$ in $\bD^n$ such that 
 $
 u^2_t(\Omega_2)\subset A_2,
 $ 
 where $A_2$ is an apartment of the building $B$. We fix an isometry $i_{A_2}:A_2\to \bR^{N-m}$.  Write $i_{A_2}\circ u_{t}^2=(u_{t,m+1}^2,\ldots,u_{t,N}^2)$. Then  by our construction of canonical currents, we have
 $$
 T_{\varrho_t}|_{\Omega_2}=\sqrt{-1}\sum_{i=1}^{m}\partial u_{t,i}^1\wedge\bar\partial u_{t,i}^1+\sn\sum_{j=m+1}^{N}\partial u_{t,j}^2\wedge\bar\partial u_{t,j}^2.
 $$  
 Since $x$ is an arbitrary point in $\bD^n\cap \cR(u^2_t)$, then  $$
 T_{\varrho_t}|_{\bD^n\cap \cR(u^2_t)}\geq  \sqrt{-1} \sum_{i=1}^{m}\partial u_{t,i}^1\wedge\bar\partial u_{t,i}^1.
 $$ Since $T_{\varrho_t}$ has continuous potential, and $\bD^n\backslash \cR(u^2_t)$ is a closed subset of $\bD^n$ of Hausdorff codimension at least two by  \cref{gsregularity},  it follows that the above inequality holds over the whole $\bD^n$.   In conclusion, we have the following result: 
 \begin{lem}\label{lem:continuity0}
  Let $f:\sX\to \bD$ be a smooth projective family and $\varrho:\pi_1(X_0)\to G(K)$ be a Zariski dense representation, where $G$ is a semi-simple algebraic group over a non-archimedean local field $K$.	There exists a full measure open subset $X_0^\circ \subset X_0$, such that   any $x_0\in X_0^\circ$ has an admissible coordinate system $\Omega\simeq \bD^n\times \bD_\ep$ in $\sX$ centered at $x_0$, with a real $(1,1)$-form  
 $
T(z,t)=\sn\sum_{i,j}a_{ij}(z,t)dz_i\wedge d\bar{z}_j
  $  on $\bD^n$ whose coefficients $a_{ij}(z,t)$ are continuous function, such that 
  \begin{thmlist} 
  	\item For each fixed $t\in \bD_\ep$, $T_{t}(z):=T(z,t)$ is a smooth semi-positive closed $(1,1)$-form on $\bD^n$.
  	\item For each $t\in \bD_\ep$, one has  $ T_{\varrho_t}|_{\Omega\cap X_t}\geq T_t$.
   	\item  $T_{\varrho_{0}}|_{\Omega\cap X_0}=T_0$.
  \end{thmlist} 
 \end{lem}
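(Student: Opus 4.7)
The plan is to extract the three properties directly from the construction already carried out in the paragraph just before the statement, using \cref{harmonic} as the main input. First I would apply \cref{harmonic} to the Zariski dense representation $\varrho:\pi_1(\sX)\to G(K)$ to produce the $\varrho$-equivariant continuous map $u:\widetilde{\sX}\to\Delta(G)_K$ whose fiberwise restrictions $u_t$ are $\varrho_t$-equivariant and pluriharmonic. I would then take $X_0^\circ\subset X_0$ to be the image under $\pi_{\sX}$ of $\tilde{\cR}(u_0)$. Since $\tilde{\cR}(u_0)$ is $\pi_1(X_0)$-invariant (cf. \cref{def:sing}), the projection is well defined and open, and its complement has Hausdorff codimension at least two by \cref{gsregularity} together with the definition \cref{dfn:M} of $\tilde\cR$; thus $X_0^\circ$ has full measure in $X_0$.

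Given $x_0\in X_0^\circ$, I would use the admissible coordinate system $\Omega\simeq \bD^n\times\bD_\ep$ and the product decomposition $u_t=(u_t^1,u_t^2):\bD^n\to\bR^m\times B$ supplied by \Cref{item:2} of \cref{harmonic}, and set
\[
T(z,t):=\sn\sum_{i=1}^{m}\partial u_{t,i}^1(z)\wedge\db u_{t,i}^1(z),
\]
where $u_t^1=(u_{t,1}^1,\ldots,u_{t,m}^1)$. Property (i) is then immediate: for each fixed $t$ the map $u_t^1$ is a smooth harmonic map into $\bR^m$, so $T_t$ is a smooth closed semi-positive $(1,1)$-form. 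The joint continuity of the coefficients $a_{ij}(z,t)$ is precisely \Cref{item:b} of \cref{harmonic}. For (iii) I would use that $u_0^2$ is identically constant by \Cref{item:a} of \cref{harmonic}, so the formula defining the canonical current in the apartment containing $u_0(\Omega\cap\widetilde{X}_0)$ reduces to $T_{\varrho_0}|_{\Omega\cap X_0}=\sn\sum_{i=1}^{m}\partial u_{0,i}^1\wedge\db u_{0,i}^1=T_0$.

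The main step is (ii). For fixed $t\in\bD_\ep$ and any $x\in \bD^n\cap\cR(u_t^2)$, I would pick a neighborhood $\Omega_2\subset \bD^n$ together with an apartment $A_2$ of $B$ containing $u_t^2(\Omega_2)$ and an isometry $i_{A_2}:A_2\to\bR^{N-m}$. Writing $i_{A_2}\circ u_t^2=(u_{t,m+1}^2,\ldots,u_{t,N}^2)$, the formula of \cref{def:canonical} gives
\[
T_{\varrho_t}\big|_{\Omega_2}=\sn\sum_{i=1}^{m}\partial u_{t,i}^1\wedge\db u_{t,i}^1+\sn\sum_{j=m+1}^{N}\partial u_{t,j}^2\wedge\db u_{t,j}^2\geq T_t.
\]
Hence $T_{\varrho_t}\geq T_t$ holds on the open subset $\bD^n\cap\cR(u_t^2)$. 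The harder part is to propagate this inequality across the singular set $\bD^n\setminus\cR(u_t^2)$, which by \cref{gsregularity} is closed of Hausdorff codimension at least two. My plan is to exploit that $T_{\varrho_t}$ has a continuous local potential (by Eyssidieux's construction \cite{Eys04}) while $T_t$ is smooth: the difference $T_{\varrho_t}-T_t$ then admits a continuous local potential which is plurisubharmonic on $\bD^n\cap\cR(u_t^2)$, and the standard extension theorem for plurisubharmonic functions across closed sets of Hausdorff codimension $\geq 2$ promotes it to a plurisubharmonic potential on all of $\bD^n$, whence (ii). I expect this extension step to be the only nontrivial point; everything else is a direct reorganization of \cref{harmonic} and \cref{def:canonical}.
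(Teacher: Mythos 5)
Your proposal is correct and follows essentially the same route as the paper: both define $T(z,t)=\sn\sum_{i=1}^m\partial u_{t,i}^1\wedge\db u_{t,i}^1$ via the product decomposition supplied by Theorem~\ref{harmonic}, verify (i) from pluriharmonicity of $u_t^1$, (iii) from $u_0^2$ being constant, the joint continuity of the coefficients from \Cref{item:b}, and (ii) by first establishing the inequality on $\bD^n\cap\cR(u_t^2)$ via the local apartment formula of \cref{def:canonical} and then propagating it across the Hausdorff-codimension-$\geq2$ singular set using the continuity of the local potential of $T_{\varrho_t}$. Your explicit identification of $X_0^\circ=\pi_{\sX}(\tilde\cR(u_0))$ and the psh-extension phrasing of the final step are only slightly more spelled out than the paper's terse formulation, but the underlying argument is identical.
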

 \begin{proof}
 Within the notions above, 	we set $T(z,t):=\sqrt{-1} \sum_{i=1}^{m}\partial u_{t,i}^1\wedge\bar\partial u_{t,i}^1.$ The above arguments together with \cref{harmonic} yield the lemma. 
 \end{proof}
 When $G$ is a tori, we have a much simpler result. 
 \begin{lem} \label{lem:tori}
 	Let $f:\sX\to \bD$ be the smooth projective family. 	Let $\varrho:\pi_1(\sX)\to G(K)$ be a   representation, where $G$ is a tori  over    a non-archimedean local field $K$.  Then  the canonical current  $T_{\varrho_t}$ on $X_t$ induced by $\varrho_t:\pi_1(X_t)\to G(K)$  is a smooth  semi-positive $(1,1)$-form  on $X_t$ that varies smoothly  in $t$.
 \end{lem}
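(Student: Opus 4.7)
The plan is to reduce to classical (abelian) Hodge theory: when $G$ is a torus, the Bruhat--Tits building $\Delta(G)_K$ is a flat Euclidean space rather than a genuine NPC complex with singularities, so all the fiberwise pluriharmonic maps are smooth, and the canonical current is built out of holomorphic 1-forms that vary smoothly in families by standard variation-of-Hodge-structure considerations. None of the hard regularity machinery of \cref{harmonic} is needed.

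First, I would identify the target. For a torus $G$ over a non-archimedean local field $K$, the building $\Delta(G)_K$ is isometric to a real Euclidean space $\bR^N$ (with $N$ equal to the $K$-rank of $G$), and $G(K)$ acts by translations through the natural homomorphism $G(K) \to \bR^N$ coming from valuations on the character lattice. Composing with $\varrho$ gives, for each $t \in \bD$, a real character $\chi_t : \pi_1(X_t) \to \bR^N$. A $\chi_t$-equivariant map $u_t : \widetilde{X}_t \to \bR^N$ is pluriharmonic if and only if each of its $N$ components $u_{t,i}$ is a real pluriharmonic function on $\widetilde{X}_t$, in which case $u_t$ is automatically smooth.

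Second, I would read off the canonical current. Setting $\alpha_{t,i} := \partial u_{t,i}$, pluriharmonicity gives $\bar\partial \alpha_{t,i} = 0$, and equivariance by translations gives $\pi_1(X_t)$-invariance, so $\alpha_{t,i}$ descends to a global holomorphic 1-form on $X_t$. Because the whole manifold is ``regular'' in the sense of \cref{def:regular}, the construction of \cref{def:canonical} yields globally on $X_t$
\[
T_{\varrho_t} \;=\; \sqrt{-1}\sum_{i=1}^N \alpha_{t,i} \wedge \overline{\alpha_{t,i}},
\]
which is manifestly a smooth semi-positive closed $(1,1)$-form.

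Third, the smooth variation in $t$ is a direct consequence of relative Hodge theory. The characters $\chi_t$ come by fiberwise restriction from the fixed homomorphism $\pi_1(\sX) \to \bR^N$, so the cohomology classes $[\chi_{t,i}] \in H^1(X_t, \bR)$ assemble into flat sections of $R^1 f_* \bR$. The Hodge filtration $F^1 R^1 f_* \bC = f_* \Omega^1_{\sX/\bD}$ is a smooth (in fact holomorphic) subbundle, and the 1-forms $\alpha_{t,i}$ are precisely the $(1,0)$-projections of the harmonic representatives of $[\chi_{t,i}]$. The smooth variation of the Hodge decomposition in a smooth proper family therefore provides smooth variation of $\alpha_{t,i}$ as sections of $f_*\Omega^1_{\sX/\bD}$, and hence smooth variation of $T_{\varrho_t}$.

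The only delicate point is the identification of $\Delta(G)_K$ as a Euclidean space on which $G(K)$ acts by translations; once this is in hand, the singular/critical set analysis of \cref{harmonic} is vacuous and smoothness in $t$ is the standard smoothness of the Hodge filtration. I therefore do not anticipate a genuine obstacle.
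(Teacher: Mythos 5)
Your proposal is correct and follows essentially the same route as the paper: identify $\Delta(G)_K$ with $\bR^N$ acted on by translations, reduce the equivariant pluriharmonic map to $N$ real pluriharmonic functions whose $\partial$-derivatives are $\pi_1$-invariant holomorphic $1$-forms $\alpha_{t,i}$, write $T_{\varrho_t}=\sqrt{-1}\sum_i\alpha_{t,i}\wedge\overline{\alpha_{t,i}}$, and deduce smooth variation in $t$ from classical Hodge theory. The only cosmetic difference is that the paper phrases the smoothness step via a smoothly varying family of $g_t$-harmonic $1$-forms pulled back through the $C^\infty$-trivialization, whereas you invoke the smooth (holomorphic) variation of the Hodge filtration $F^1R^1f_*\bC$; these are the same fact.
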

 \begin{proof}
 	The Bruhat-Tits building of $G$ is  a real Euclidean space $V:=\bR^N$ such that $G(K)$ acts on $V$ by translation (cf. \cite{KP23}). For the action of $G(K)$ on $V$, it induces a representation 
 	$
 	\tau:\pi_1(\sX)\to (V,+).
 	$  	Let the homomorphism ${\rm pr}_i:(\bR^N, +)\to (\bR, +)$ be  the projection into $i$-th factor, and let $\tau_i={\rm pr}_i\circ \tau$.   Then $\tau_i:\pi_1(\sX)\to (\bR,+)$ can be identified with an element $\lambda_i\in H^1(\sX,\bR)\simeq H^1(X_0,\bR)$ since  $H^1(\sX,\bR)\simeq {\rm Hom}(H_1(\sX,\bZ), \bR)$.  Let $g$ be a K\"ahler metric on $\sX$. We fix a smooth trivialization $F:\sX\to X_0\times \bD$, and let $F_t:X_t\to X_0$ be the induced diffeomorphism.  Then $F_t$ induces a smooth family of Riemannian metrics $g_t$ on $X_0$.   Therefore, there exists a family of smooth 1-forms $\xi_{i,t}$ on $X_0$, each is harmonic with respect to the metric $g_t$ and varying smoothly in $t$, such that their cohomology class $\{\xi_{i,t}\}={\lambda_i}$.  In this case,  $F_t^*\xi_{i,t}$ is a harmonic 1-form on $(X_t, g|_{X_t})$.    Let $\eta_{i,t} = (F_t^*\xi_{i,t})^{1,0}$ be the $(1,0)$-part of $\xi_{i,t}$, which is moreover a holomorphic 1-form on $X_t$ by the Hodge theory. Then $\eta_{i,t}$ varies smoothly with respect to $t$. In other words, for any $x_0\in X$ and any admissible coordinate system $\Omega\simeq \bD^n\times \bD_\ep$ centered at $x_0$ in $\sX$,  one has
 	$$\eta_{i,t}|_{\bD^n}=\sum_{j=1}^{n}a_j(z,t)dz_j,$$  such that $a_j(z,t)$  are smooth functions on $\bD^n\times \bD_\ep$.    By \cite{DM24}, the canonical current of $\varrho_t$ is given by
 	$$
 	T_{\varrho_t}|_{\Omega\cap X_t}=	\sn \sum_{i=1}^{n}\eta_{i,t}\wedge \overline{\eta_{i,t}}. 
 	$$
Consequently,  $
T_{\varrho_t} 
$ is a smooth closed semi-positive $(1,1)$-form on $X_t$ varying smoothly with $t$. 
 \end{proof}
\cref{lem:continuity0,lem:tori} imply the following stronger result than \cref{lem:continuity0}.
 \begin{thm}\label{lem:continuity}
 Let $f:\sX\to \bD$ be a smooth projective family and $\varrho:\pi_1(X_0)\to G(K)$ be a Zariski dense representation, where $G$ is a reductive algebraic group over a non-archimedean local field $K$.	There exists a full measure open subset $X_0^\circ\subset X_0$ such that  any $x_0\in X_0^\circ$ has an admissible coordinate system $\Omega\simeq \bD^n\times \bD_\ep$ in $\sX$ centered at $x_0$, with a real $(1,1)$-form  
 $
 T(z,t)=\sn\sum_{i,j}a_{ij}(z,t)dz_i\wedge d\bar{z}_j
 $  on $\bD^n$ satisfying 
 \begin{thmlist}
 	\item the coefficents $a_{ij}(z,t)$ are continuous function on $\bD^n\times\bD_{\ep}$.
 	\item For each fixed $t\in \bD_\ep$, $T_{t}(z):=T(z,t)$ is a smooth semi-positive closed $(1,1)$-form on $\bD^n$.
 	\item For each $t\in \bD_\ep$, one has  $ T_{\varrho_t}|_{\Omega\cap X_t}\geq T_t$.
 	\item  $T_{\varrho_{0}}|_{\Omega\cap X_0}=T_0$.
 \end{thmlist} 
 \end{thm}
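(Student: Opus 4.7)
The plan is to reduce the reductive case to a combination of the semisimple case (\cref{lem:continuity0}) and the torus case (\cref{lem:tori}) using the structural decomposition of reductive groups together with the corresponding decomposition of their Bruhat-Tits buildings.

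First, I would exploit the almost-direct-product decomposition $G = \mathcal{D}G \cdot Z(G)^\circ$ of the reductive group $G$. This gives a quotient $\pi: G \to G/Z(G)^\circ$ onto a semisimple (adjoint-type) group and a quotient $\tau: G \to G/\mathcal{D}G =: A$ onto a torus. Set $\varrho_s := \pi \circ \varrho$ and $\varrho_A := \tau \circ \varrho$; the first is Zariski dense in the semisimple quotient since $\varrho$ is Zariski dense in $G$, and the second lands in a torus. The Bruhat-Tits building admits a canonical decomposition
\[
\Delta(G)_K \;=\; \Delta(G/Z(G)^\circ)_K \times V_G,
\]
where $V_G$ is a real Euclidean space; the action of $G(K)$ is the product of its action on the first factor through $\pi$ and its action on $V_G$ by translations through $\tau$.

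Second, I would use this product decomposition to split the $\varrho$-equivariant pluriharmonic map $u_t : \widetilde{X_t} \to \Delta(G)_K$ as $u_t = (u_{s,t}, u_{A,t})$, with $u_{s,t}$ being $\varrho_{s,t}$-equivariant pluriharmonic and $u_{A,t}$ being $\varrho_{A,t}$-equivariant pluriharmonic. The key observation is that the local formula defining the canonical current, namely $\sqrt{-1}\sum_i \partial u_i \wedge \bar\partial u_i$ in apartments, splits additively into contributions from coordinates on $\Delta(G/Z(G)^\circ)_K$ and on $V_G$ (picking apartments compatible with the product structure). Consequently the canonical currents satisfy
\[
T_{\varrho_t} \;=\; T_{\varrho_{s,t}} + T_{\varrho_{A,t}} \qquad \text{for every } t \in \bD.
\]

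Third, I would apply \cref{lem:continuity0} to $\varrho_s$ and \cref{lem:tori} to $\varrho_A$. The former yields, on a full-measure open subset $X_0^{\circ,s} \subset X_0$, admissible coordinates $\Omega \simeq \bD^n \times \bD_\ep$ and a real $(1,1)$-form $T^s(z,t)$ with continuous coefficients, fiberwise smooth, semi-positive and closed, with $T_{\varrho_{s,t}}|_{\Omega \cap X_t} \geq T^s_t$ and equality at $t=0$. The latter gives a smooth family $T^A_t$ of smooth closed semi-positive $(1,1)$-forms with $T_{\varrho_{A,t}}|_{X_t} = T^A_t$ varying smoothly in $t$; in particular, in the same coordinates, $T^A(z,t)$ has continuous (indeed smooth) coefficients. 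Taking $X_0^\circ := X_0^{\circ,s}$ and setting $T(z,t) := T^s(z,t) + T^A(z,t)$, the conclusions of \cref{lem:continuity} are immediate from the additivity above.

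The main obstacle I anticipate is rigorously establishing the decomposition $u_t = (u_{s,t}, u_{A,t})$ of the equivariant pluriharmonic map and, hand-in-hand with it, the additivity of the canonical current across the product structure. The product decomposition of the NPC target induces a decomposition of any $\varrho$-equivariant finite-energy map, and minimizing the energy factor-by-factor together with the uniqueness of the $\varrho$-equivariant pluriharmonic map into the minimal invariant convex subset (used already in the proof of \cref{harmonic}) ensures the factor maps are themselves pluriharmonic and equivariant under $\varrho_s$ and $\varrho_A$ respectively. Once this is in place, the additivity of the canonical current is a local computation in product-compatible apartments, and the theorem follows by superposition.
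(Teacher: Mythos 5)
Your proposal is correct and follows essentially the same strategy as the paper: decompose the (enlarged) Bruhat-Tits building as a product of the building of the semisimple part and a Euclidean space on which $G(K)$ acts by translation, split the equivariant pluriharmonic map and canonical current additively along this product, and then apply \cref{lem:continuity0} to the semisimple factor and \cref{lem:tori} to the toral factor. The only cosmetic difference is that you project to the adjoint quotient $G/Z(G)^\circ$ whereas the paper phrases the semisimple factor via the derived group $\sD G$; since the Bruhat--Tits building depends only on the adjoint type, these are the same building, and the two write-ups are interchangeable.
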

 \begin{proof}
 	Consider the enlarged Bruhat-Tits building $\Delta(G)$. It is indeed the product of the Bruhat-Tits building of    $\Delta(\sD G)$ where $\sD G$ is the derived group of $G$, with a real Euclidean space $V:=\bR^N$ such that $G(K)$ acts on $V$ by translation (cf. \cite{KP23}).    Note that there is a natural action of $\sD G(K)$    on $\Delta(\sD G)$. The action of $G(K)$ on $\Delta(\sD G)$ is given by the composition of $G(K)\to \sD G(K)$ and the action of    of $\sD G(K)$    on $\Delta(\sD G)$. 
 
 We consider the representation $\sigma:\pi_1(X)\to \sD G(K)$  induced by $\varrho$, which is Zariski dense.  Let $T_{\sigma_t}$ be the canonical currents associated with $\sigma_t:\pi_1(X_t)\to \sD G(K)$. Then it satisfies the properties in \cref{lem:continuity}.
  
 On the other hand, for the action of $G(K)$ on $V$, it induces a representation 
 $
 \tau_t:\pi_1(X_t)\to (V,+) 
 $  for each $t\in \bD$. By \cref{lem:tori}, the canonical current  $T_{\tau_t}$  forms a family of smooth semi-positive closed $(1,1)$-forms  on $X_t$ that vary smoothly in $t$.  Observe that
 $
 T_{\varrho_t}=T_{\tau_t}+T_{\sigma_t} 
 $ is the canonical current associated with $\varrho_t$.  \cref{lem:continuity0} together with \cref{lem:tori}  establish the theorem.  
 \end{proof}

 \subsection{Proof of \cref{main:Betti}} 	\label{subsec:AC} 
 In this subsection, we prove \cref{main:Betti}, which is sufficient for establishing \cref{main4}. This will also imply \Cref{main2}  in the case where $\pi_1(X_0)$ is reductive.  
 \begin{dfn}[$m$-positive form]\label{def:m-positive}
 Let \( X \) be a complex manifold. A semi-positive \((1,1)\)-form \( \omega \) on $X$ is said to be \emph{\( m \)-positive} if  at each point of \( X \), \( \omega \) has at least \( m \) strictly positive eigenvalues. 
 \end{dfn}
 \begin{thm}\label{thm:reductive}
 	Let \( f: \mathscr{X} \to \mathbb{D} \) be a smooth projective family.   Let \( M_t := M_{\mathrm{B}}(\pi_1(X_t), \mathrm{GL}_N)(\mathbb{C}) \) denote the Betti moduli space of \(\pi_1(X_t)\). Define \( H_t \triangleleft \pi_1(X_t) \) as the intersection of the kernels of all reductive representations \( \varrho: \pi_1(X_t) \to \mathrm{GL}_N(\mathbb{C}) \). Then  the function	 \( t \mapsto \gamma d(X_t, H_t) \) on \( \mathbb{D} \)  is   \emph{lower semicontinuous}.
 \end{thm}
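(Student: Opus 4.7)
The plan is to set $d_0 := \gamma d(X_0, H_0) = \dim {\rm Sh}_{M_0}(X_0)$ (cf. \cref{rem:dim}) and show $\gamma d(X_t, H_t) \geq d_0$ for $t$ near $0$; the same argument applied at an arbitrary base point then gives global lower semicontinuity on $\bD$. The strategy is to exhibit, on each nearby fiber $X_t$, a closed positive $(1,1)$-current of generic rank at least $d_0$ that factors through the Shafarevich morphism ${\rm sh}_{M_t}$, forcing $\dim {\rm Sh}_{M_t}(X_t) \geq d_0$. Invoking the reductive Shafarevich machinery of \cite{Eys04,DY23}, I would first select a finite collection of Zariski dense non-archimedean representations $\tau_i: \pi_1(X_0) \to G_i(K_i)$ (with $G_i$ reductive over a non-archimedean local field $K_i$) and $\bC$-VHS representations $\sigma_j: \pi_1(X_0) \to \GL_N(\bC)$ arising in the construction of ${\rm sh}_{M_0}$, so that the closed positive current $\Theta_0 := \sum_i T_{\tau_i} + \sum_j \omega_{\sigma_j}$ has class equal to ${\rm sh}_{M_0}^*[\omega_{Y_0}]$ for an ample class on $Y_0 := {\rm Sh}_{M_0}(X_0)$. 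Since each summand factors through ${\rm sh}_{M_0}$ (the associated pluriharmonic maps are constant on fibers of ${\rm sh}_{M_0}$) and the top self-intersection $\{\Theta_0\}^{d_0} \cdot \{\omega\}^{\dim X_0 - d_0}$ against a K\"ahler class $\omega$ on $X_0$ is positive, $\Theta_0$ has generic rank exactly $d_0$ on a dense open subset of the regular locus of $X_0$.

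Next, define $\Theta_t := \sum_i T_{\tau_{i,t}} + \sum_j \omega_{\sigma_{j,t}}$ on $X_t$ via the natural identifications of \cref{def:rhot}; each $\tau_{i,t}$ remains Zariski dense in $G_i(K_i)$ and each $\sigma_{j,t}$ remains reductive (though no longer necessarily a $\bC$-VHS for $t \neq 0$). Pick a very general $x_0 \in X_0$ at which $\Theta_0$ has rank $d_0$ and an admissible chart $\Omega \simeq \bD^n \times \bD_\ep$ in $\sX$ centered at $x_0$. Applying \cref{lem:continuity} to each $T_{\tau_{i,t}}$ produces jointly continuous semi-positive $(1,1)$-form families $T_{i,t}(z)$ with $T_{\tau_{i,t}}|_{\Omega \cap X_t} \geq T_{i,t}$ and equality at $t=0$; combined with \cref{harmonic2} for the canonical forms $\omega_{\sigma_{j,t}}$, this yields a jointly continuous minorant $\sum_i T_{i,t} + \sum_j \omega_{\sigma_{j,t}}|_{\Omega \cap X_t}$ of $\Theta_t|_{\Omega \cap X_t}$ which coincides with $\Theta_0$ at $(x_0,0)$ and is therefore $d_0$-positive there in the sense of \cref{def:m-positive}. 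Since $d_0$-positivity is an open condition on smooth forms with jointly continuous coefficients, $\Theta_t$ has generic rank at least $d_0$ on $X_t$ for all $t$ in a neighborhood of $0$.

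To conclude, the Shafarevich morphism ${\rm sh}_{M_t}: X_t \to {\rm Sh}_{M_t}(X_t)$ exists for each $t$ by \cite{Eys04,DY23}. Since $[\sigma_{j,t}] \in M_t$, the harmonic bundle underlying $\sigma_{j,t}$ is trivialized on each fiber of ${\rm sh}_{M_t}$, so $\omega_{\sigma_{j,t}}$ factors through ${\rm sh}_{M_t}$. In the hybrid archimedean/non-archimedean construction of ${\rm sh}_{M_t}$ from \cite{Eys04,DY23} the non-archimedean data are obtained as specializations of $\bC$-points of $M_t$, so the fibers of ${\rm sh}_{M_t}$ are also contracted by each $\tau_{i,t}$-equivariant pluriharmonic map into $\Delta(G_i)_{K_i}$, and $T_{\tau_{i,t}}$ likewise factors through ${\rm sh}_{M_t}$. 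Therefore $\Theta_t$ factors through ${\rm sh}_{M_t}$, its generic rank is bounded above by $\dim {\rm Sh}_{M_t}(X_t) = \gamma d(X_t, H_t)$, and combining with the previous paragraph gives $\gamma d(X_t, H_t) \geq d_0$, as required.

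The principal obstacle is the last factorization step for the non-archimedean currents $T_{\tau_{i,t}}$: a priori, $\tau_{i,t}$ only yields its own Shafarevich-type fibration, not one adapted to $H_t$, since $H_t$ was defined purely in terms of $\bC$-reductive representations. The resolution relies on the specific construction of ${\rm sh}_{M_t}$ in \cite{Eys04,DY23}, in which the non-archimedean representations arise as degenerations of archimedean points of $M_t$ so that the fibers of ${\rm sh}_{M_t}$ sit inside the fibers of every $\tau_i$-Shafarevich map. A secondary issue is that the \emph{same} finite collection $(\tau_i, \sigma_j)$ chosen at $t=0$ must continue to give the relevant upper bound $\dim {\rm Sh}_{M_t}(X_t)$ for nearby $t$; this follows from the canonical topological isomorphism $M_t \cong M_0$ induced by the triviality of $f$ as a topological fibration, combined with the openness established in the second paragraph above.
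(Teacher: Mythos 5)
Your proposal follows essentially the same route as the paper: invoke the construction of the reductive Shafarevich morphism from \cite{Eys04,DY23} to produce the auxiliary non-archimedean representations $\tau_i$ and $\bC$-VHS $\sigma_j$ whose canonical currents and forms sum to the pullback of a big and nef class on ${\rm Sh}_{M_0}(X_0)$, then apply \cref{lem:continuity} and \cref{harmonic2} to propagate the $m$-positivity (with $m = d_0$) to nearby fibers, and finally argue that this sum factors through ${\rm sh}_{M_t}$ to force $\dim {\rm Sh}_{M_t}(X_t) \geq d_0$. This is the exact structure of the paper's proof.

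Two small comments. First, for the generic rank statement on $X_0$ you cite the positivity of $\{\Theta_0\}^{d_0} \cdot \{\omega\}^{n-d_0}$; the paper justifies this more carefully via the non-pluripolar product of \cite{BEGZ}, relying on the fact that the $T_{\tau_i}$ have continuous local potentials and hence minimal singularities, which is needed for the non-pluripolar product to compute the intersection number. You should make that dependence explicit rather than treating it as a formal intersection computation. Second, your resolution of the ``principal obstacle'' is slightly circuitous: the reason $T_{\tau_{i,t}}$ factors through ${\rm sh}_{M_t}$ is simply that $K_i$ is a non-archimedean local field of characteristic zero and hence embeds into $\bC$, so $\tau_{i,t}$ can be viewed as a reductive representation $\pi_1(X_t) \to \GL_N(\bC)$ with conjugacy class in $M_t$; therefore $H_t \subset \ker\tau_{i,t}$ and $\tau_{i,t}$ descends to $\pi_1(X_t)/H_t$, so \cref{def:Shafarevich} directly gives finiteness of $\tau_{i,t}({\rm Im}[\pi_1(Z)\to\pi_1(X_t)])$ on fibers $Z$, and then uniqueness of equivariant harmonic maps (\cite{DM24}) gives triviality of $T_{\tau_{i,t}}|_Z$. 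Invoking ``specializations of $\bC$-points'' is neither needed nor the precise mechanism; the field embedding is the clean statement and is what the paper implicitly uses.
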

 \begin{proof} 
 We write $X$ for $X_0$.         Consider the Betti moduli space $M:=M_{\rm B}(\pi_1(X),\GL_N)(\bC)$.  By  \cite{Eys04},  \cite[Theorems 3.29]{DY23} and \cite[Proof of Theorem 4.31]{DY23} there exist  Zariski dense representations   $\{\tau_i:\pi_1(X)\to G_i(K_i)\}_{i=1,\ldots,k}$, where each $G_i\subset \GL_N$ is a reductive group over  a non-archimedean local field $K_i$ of characteristic zero, and $\bC$-VHS $\{\sigma_i:\pi_1(X)\to\GL_N(\bC)\}_{i=1,\ldots,\ell}$ such that   the Shafarevich morphism ${\rm sh}_M:X\to {\rm Sh}_M(X)$ of $M$ defined in \cref{def:Shafarevich} exists and satisfies  the following properties:
 \begin{enumerate} [label=(\roman*)]
 	\item    there exist closed positive $(1,1)$-currents    $S_1,\ldots,S_k$  and closed semi-positive $(1,1)$-forms $\omega_1,\ldots,\omega_\ell$ on ${\rm Sh}_M(X)$    such that   ${\rm sh}_M^*S_i=T_{\tau_{i}}$ and ${\rm Sh}_M^*\omega_j=\omega_{\sigma_{j}}$.  Here $T_{\tau_{i}}$  is the canonical current associated with  $\tau_{i}:\pi_1(X)\to G_i(K_i)$ defined in \cref{def:canonical} and $\omega_{\sigma_{j}}$ is the canonical form associated with $\sigma_{j}:\pi_1(X)\to \GL_N(\bC)$ defined in \cref{def:caninical form2}. 
 	\item   \label{kahler}
There is  a K\"ahler form $\omega_{{\rm Sh}_M(X)}$ on ${\rm Sh}_M(X)$   such that   	$$ \sum_{i=1}^{k}S_{i}+\sum_{j=1}^{\ell}\omega_j = \omega_{{\rm Sh}_M(X)}+\hess \psi,$$ 
where $\psi$ is a continuous function on ${\rm Sh}_M(X)$.
 	\item There exists a Zariski open subset of ${\rm Sh}_M(X)$ over which each $S_i$ is smooth.
 	\item    For each fiber $Z$ of ${\rm sh}_M$ and any reductive representation $\tau:\pi_1(X)\to \GL_N(\bC)$,  the image $\tau({\rm Im}[\pi_1(Z)\to \pi_1(X)])$  is a finite group, 
 \end{enumerate}    
  Since each $S_i$  has continuous potential, it follows that the positive closed $(1,1)$-current $(\sum_{i=1}^{k}S_i+\sum_{j=1}^{\ell}\omega_j)$ has \emph{minimal singularity} in the sense of Demailly, i.e.    it is less singular than any other positive current in its cohomology class  (cf. \cite[\S 1.4]{BEGZ} for the definition).  Let ${\rm Sh}_M (X)^\circ$ be the regular locus of ${\rm Sh}_M(X)$.  Set $m:= \dim {\rm Sh}_M(X)$.  By  \cite[Definition 1.17]{BEGZ} and \Cref{kahler}, the  \emph{non-pluripolar product} of $(\sum_{i=1}^{k}S_{i}+\sum_{j=1}^{\ell}\omega_j)$ is equal to
 $$
 \int_{{\rm Sh}_M(X)^\circ} \left(\sum_{i=1}^{k}S_{i}+\sum_{j=1}^{\ell}\omega_j\right)^{m}=\int_{{\rm Sh}_M(X)^\circ}(\omega_{{\rm Sh}_M(X)})^{m}>0.
 $$ 
 This implies that,  there exists an open subset $U$ of ${\rm Sh} _M(X)^\circ$ such that
 \begin{enumerate}[label=(\arabic*)] 
 	\item \label{item:11}${\rm sh}_M^{-1}(U)\to U$ is a proper holomorphic submersion.
 	\item \label{item:3}$\sum_{i=1}^{k}S_{i}+\sum_{j=1}^{\ell}\omega_j$ is smooth and strictly positive over $U$.
 \end{enumerate}  
 
 We define $\{\tau_{i,t}:\pi_1(X_t)\to G_i(K_i)\}_{i=1,\ldots,k}$ to be the representation as the compositions of  $\tau_i$ with the natural isomorphism $\pi_1(X_t)\to \pi_1(X)$. Similarly, the representations  $\{\sigma_{i,t}:\pi_1(X_t)\to\GL_N(\bC)\}_{i=1,\ldots,\ell}$ are defined as the compositions of   $\sigma_i$ and   $\pi_1(X_t)\to \pi_1(X)$. Note that $\sigma_{i,t}$ might not necessarily underlie a $\bC$-VHS for $t\neq 0$. Let $T_{\tau_{i,t}}$ be the canonical current on $X_t$ of $\tau_{i,t}$. We apply \Cref{lem:continuity}. Then there exists a point $x_0\in {\rm sh}_M^{-1}(U)$   such that   it has an admissible coordinate system $(\Omega\simeq \bD^n\times \bD_\ep;(z,t))$ in $\sX$ centered at $x_0$ together with a   continuous  $(1,1)$-form  
 $
 T_i(z,t)=\sn\sum_{i,j}a_{ij}(z,t)dz_i\wedge d\bar{z}_j
 $  on $\bD^n$  satisfying the following properties: 
 \begin{enumerate}[label=(\alph*)]
 	\item  \label{item:a1}$\Omega\cap X_0\subset {\rm sh}_M^{-1}(U)$.
 	\item  \label{item:a0}the coefficents $a_{ij}(z,t)$  of $T_i(z,t)$ are continuous function on  $\bD^n\times\bD_{\ep}$.
 	\item For each fixed $t\in \bD_\ep$, $T_{i,t}(z):=T_i(z,t)$ is a smooth semi-positive closed $(1,1)$-form on $\bD^n$.
 	\item \label{item:c1} For each $t$, we have $ T_{\tau_{i,t}}(z)|_{\Omega\cap X_t}\geq T_{i,t}(z)$.
 	\item \label{item:d1}  For $t=0$, we have $T_{\tau_{i,0}}(z)|_{\Omega\cap X_0}=T_{i,0}(z)$. 
 %	\item ${\rm sh}_M(z_1,\ldots,z_n)=(z_1,\ldots,z_{m})$.
 \end{enumerate}   
  Let $T_{\sigma_{i,t}}$ be the canonical form on $X_t$ associated with $\sigma_{i,t}$. By \cref{harmonic2},   the canonical forms $\omega_{\sigma_{i,t}}$ on $X_t$ vary continuously in $t$. Recall that  $$\sum_{i=1}^{k}T_{\tau_{i,0}}+\sum_{j=1}^{\ell}\omega_{\sigma_{j,0}}=\sum_{i=1}^{k}T_{\tau_{i}}+\sum_{j=1}^{\ell}\omega_{\sigma_{j}}={\rm sh}_M^{*}(\sum_{i=1}^{k}S_{i}+\sum_{j=1}^{\ell}\omega_j).$$ 
 Hence by \Cref{item:11,item:3}, \Cref{item:a1,item:a0,item:d1}, $\left(\sum_{i=1}^{k}T_{i,0}+\sum_{j=1}^{\ell}\omega_{\sigma_{j,0}}\right)|_{\Omega\cap X_0}$ is $m$-semipositive.   Note that the matrix continuous function has lower semicontinuous rank.  This implies that, after we shrink the neighborhood $\Omega$ of $x_0$ in $\sX$, there exists some  $\ep>0$ such that  for each $t\in \bD_\ep$,  we have  $X_t\cap \Omega\neq\varnothing$, and 
 $\left(\sum_{i=1}^{k}T_{i,t}+\sum_{j=1}^{\ell}\omega_{\sigma_{j,t}}\right)|_{\Omega\cap X_t}$  is  $m$-positive. 
   Therefore,  by \Cref{item:c1}, we conclude the following
   \begin{claim}\label{claim:below}
For each $t\in \bD_\ep$, the positive $(1,1)$-current    	 $\left(\sum_{i=1}^{k}T_{\tau_{i,t}}+\sum_{j=1}^{\ell}\omega_{\sigma_{j,t}}\right) |_{\Omega\cap X_t}$ is bounded from below by  the smooth closed $m$-positive $(1,1)$-form $\left(\sum_{i=1}^{k}T_{i,t}+\sum_{j=1}^{\ell}\omega_{\sigma_{j,t}}\right)|_{\Omega\cap X_t}$.
   \end{claim} 

Note that $\pi_1(X_t)\simeq \pi_1(X)$. Hence $M_t:=M_{\rm B}(\pi_1(X_t),\GL_N)(\bC)$ is naturally identified with $M$.   For each $t\in \bD_\ep$, we  consider the Shafarevich morphism ${\rm sh}_{M_t}:X_t\to {\rm Sh}_{M_t}(X_t)$,  whose existence is also proven in \cite{Eys04,DY23}. Let $Z$ be any smooth fiber  of ${\rm sh}_{M_t}$.  By \cref{def:Shafarevich},  $\tau_{i,t}({\rm Im}[\pi_1(Z)\to\pi_1(X_t)])$ is finite. By the unicity of $\tau_{i,t}$-equivariant harmonic maps in \cite[Theorem B]{DM24}, it follows that  $T_{\tau_{i,t}}|_{Z}$ is trivial.  By the unicity of harmonic bundles in \cite{Cor88},    $\omega_{\sigma_{j,t}}|_{Z}$ is also trivial.   Let ${\rm Sh}_{M_t}(X_t)^\circ$ be a Zariski dense open subset of the regular locus of  ${\rm Sh}_{M_t}(X_t)$ such that ${\rm sh}_{M_t}:X_t\to {\rm Sh}_{M_t}(X_t)$ is   smooth over ${\rm Sh}_{M_t}(X_t)^\circ$. Set $X_t^\circ={\rm sh}_{{M_t}}^{-1}({\rm Sh}_{M_t}(X_t)^\circ)$, and $m_t:=\dim {\rm Sh}_{M_t}(X_t)$. The above arguments implies   that
 $\left(\sum_{i=1}^{k}T_{\tau_{i,t}}+\sum_{j=1}^{\ell}\omega_{\sigma_{j,t}}\right)$ is \emph{at most} $m_t$-positive over $X_t^\circ$. 
 
 Note that  $X^\circ_t\cap \Omega_t$ is non-empty. By \Cref{claim:below} we conclude that $m_t\geq m$. It yields
 \begin{claim}\label{claim:last}
 	 For each $t\in \bD_\ep$,  we have  $\dim {\rm Sh}_{M_t}(X_t)\geq \dim {\rm Sh}_M(X_0)$.
 \end{claim}

 By \cref{rem:dim}, we note that the $\Gamma$-reduction $\gamma_{(X_t,H_t)}:X_t\dashrightarrow \Gamma_{H_t}(X_t)$ for $(X_t,H_t)$ is bimeromorphic to ${\rm sh}_{M_t}:X_t\to {\rm Sh}_{M_t}(X_t)$. Hence we have $\gamma d(X_t,H_t)=\dim {\rm Sh}_{M_t}(X_t)$ for each $t$.  \cref{main:Betti} then follows from \cref{claim:last}.  

Let us  prove \cref{main:linear} if $\pi_1(X_0)$ is reductive.   If there exists an almost faithful reductive representation $\varrho:\pi_1(X)\to \GL_N(\bC)$, then $\varrho_t:\pi_1(X_t)\to \GL_N(\bC)$ is also almost faithful and reductive for each $t\in \bD$. By \cref{lem:Shafarevich2}, ${\rm sh}_{M_t}:X_t\to {\rm Sh}_{M_t}(X_t)$  is the   Shafarevich morphism of $X_t$. From \cref{rem:dim},  one has
 $
\dim {\rm Sh}_{M_t}(X_t)=\gamma d(X_t). 
$  By \cref{claim:last}, this directly implies  the second claim of   \cref{main2}.  
%We prove \cref{thm:kollar}  if $\pi_1(X_0)$ is reductive.  If there exists a reductive and big representation $\varrho:\pi_1(X)\to \GL_N(\bC)$, then ${\rm sh}_M:X\to {\rm Sh}_M(X)$ is a birational morphism. Hence $\dim {\rm Sh}_M(X)=\dim X$ is maximal.  By \cref{claim:last},  $\dim {\rm Sh}_{M_t}(X_t)=\dim X$ for each $t\in \bD_\ep$. By \cref{def:Shafarevich}, one can see  that $\gamma d(X_t)\geq \dim {\rm Sh}_{M_t}(X_t)$. It follows that $\gamma d(X_t)=\dim X_t$. \cref{thm:kollar} is proved.  
\end{proof}
\section{Pseudo-Brody hyperbolicity in families}
 As an application of \cref{main:Betti}, we prove a result on  the openness of pseudo-Brody hyperbolicity. 
\begin{thm}[=\cref{main4}] \label{thm:open}
	Let $f:\sX\to \bD$ be a smooth projective family. Assume that there is a big and reductive representation $\varrho:\pi_1(X_0)\to \GL_N(\bC)$. If $X_0$ is pseudo-Brody   hyperbolic, then   $X_t$ is also pseudo-Brody hyperbolic   for sufficiently small $t$.
\end{thm}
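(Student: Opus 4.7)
The plan is to combine \cite[Theorem C]{CDY22} with the lower semicontinuity of the $\Gamma$-dimension established in \cref{main:Betti}. The strategy is to first reduce to showing that $X_t$ is of general type and carries a big reductive representation for $t$ sufficiently small; the desired pseudo-Brody hyperbolicity then follows from another application of \cite[Theorem C]{CDY22}.

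Since $\varrho$ is big and reductive on $X_0$ and $X_0$ is pseudo-Brody hyperbolic, \cite[Theorem C]{CDY22} immediately yields that $X_0$ is of general type. By Siu's invariance of plurigenera \cite{Siu98,Siu02}, this implies that $X_t$ is of general type for every $t\in\bD$. Thus half of the hypothesis needed to reapply \cite[Theorem C]{CDY22} is already in place on every nearby fiber.

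Next I would transport the bigness information across the family using \cref{main:Betti}. Let $H_t\triangleleft\pi_1(X_t)$ be the intersection of the kernels of all reductive representations $\pi_1(X_t)\to\GL_N(\bC)$. Since $\varrho$ itself is such a reductive representation on $X_0$, we have $H_0\subset\ker\varrho$, so the Shafarevich morphism of $(X_0,H_0)$ dominates the one for $(X_0,\ker\varrho)$. Bigness of $\varrho$ gives $\gamma d(X_0,\ker\varrho)=\dim X_0$, hence also $\gamma d(X_0,H_0)=\dim X_0$. The lower semicontinuity statement in \cref{main:Betti} then forces $\gamma d(X_t,H_t)=\dim X_t$ for all $t$ near $0$.

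The crucial step is to upgrade this \emph{collective} bigness to the existence of a single big reductive representation on $X_t$. Using the finite-type structure of the Betti moduli space $M_{\rm B}(\pi_1(X_t),\GL_N)$ together with the techniques of \cite{DY23}, where the Shafarevich morphism attached to the whole Betti moduli is realized through finitely many reductive representations, one extracts representations $\tau_1,\dots,\tau_k\colon\pi_1(X_t)\to\GL_N(\bC)$ whose direct sum $\tau_1\oplus\cdots\oplus\tau_k\colon\pi_1(X_t)\to\GL_{kN}(\bC)$ is reductive and satisfies $\bigcap_i\ker\tau_i=H_t$. Combined with $\gamma d(X_t,H_t)=\dim X_t$, this direct sum is a big reductive representation on $X_t$. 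Applying \cite[Theorem C]{CDY22} to $X_t$, which is of general type and carries this big reductive representation, we conclude that $X_t$ is pseudo-Brody hyperbolic.

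The main obstacle is the last step: producing a genuine big reductive representation on $X_t$ out of the collective bigness recorded by $H_t$. This hinges on a Noetherianity-type property of the reductive locus of $M_{\rm B}(\pi_1(X_t),\GL_N)$ and a careful use of the Shafarevich construction of \cite{DY23}. A direct attempt to use $\varrho_t:=\varrho\circ(\pi_1(X_t)\simeq\pi_1(X_0))$ does not seem to suffice, because bigness of a \emph{single} representation depends sensitively on the complex structure of the fiber $X_t$ and cannot be read off from the abstract isomorphism of fundamental groups alone.
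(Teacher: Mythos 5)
Your overall strategy matches the paper's: Theorem C of \cite{CDY22} plus Siu's invariance of plurigenera gives general type on every fiber, then \cref{main:Betti} supplies lower semicontinuity of the $\Gamma$-dimension, and the remaining issue is to manufacture a single big reductive representation on $X_t$. You correctly identify this last step as the crux, and your intuition — that Noetherianity in the Betti moduli space is what is needed — is also on target. The precise tool, which your proposal only gestures at, is \cite[Lemma 3.25]{DY23}: there is a \emph{single} reductive $\tau\colon\pi_1(X_0)\to\GL_N(\bC)$ (not a direct sum, though a direct sum would also do) with $\ker\tau\subset\ker\sigma$ for \emph{every} reductive $\sigma\colon\pi_1(X_0)\to\GL_N(\bC)$.

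Where you and the paper diverge is in how this representation reaches $X_t$. You propose to build representations $\tau_1,\dots,\tau_k$ directly on each fiber $X_t$, and you reject transporting a representation from $X_0$ on the grounds that bigness ``cannot be read off from the abstract isomorphism of fundamental groups alone.'' That warning is correct for $\varrho$ itself, but it is too pessimistic: the paper does transport $\tau$ to $\tau_t:=\tau\circ(\pi_1(X_t)\xrightarrow{\sim}\pi_1(X_0))$, and the universal kernel containment $\ker\tau_t\subset\ker\sigma$ (for all reductive $\sigma\colon\pi_1(X_t)\to\GL_N(\bC)$) \emph{is} a purely group-theoretic statement, so it survives the transport. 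What does \emph{not} survive automatically is bigness — exactly your worry — and this is precisely where \cref{main:Betti} is used: the containment forces ${\rm sh}_{\tau_t}={\rm sh}_{M_t}$, and lower semicontinuity of $\dim{\rm Sh}_{M_t}(X_t)$ gives $\dim{\rm Sh}_{\tau_t}(X_t)\geq\dim X_0=\dim X_t$, i.e.\ $\tau_t$ is big. So your proof is structurally correct; the gap you flag is real in your write-up, but it is filled by the cited lemma together with the transport argument, rather than by reconstructing representations fiber by fiber.
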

\begin{proof}
	In  \cite[Theorem C]{CDY22},   we prove \cref{conj:GGL} if there exists a big and reductive representation $\pi_1(X)\to \GL_N(\bC)$.  Thus, by our assumption for $X_0$,  we apply \cite[Theorem C]{CDY22} to conclude that $X_0$ is of general type. By Siu's invariance of plurigenera \cite{Siu98,Pau07}, $X_t$ is of general type for any $t\in \bD$.
	
Set $M:=M_{\rm B}(\pi_1(X_0),\GL_N)(\bC)$. Since $[\varrho]\in M$, the Shafarevich morphism ${\rm sh}_M:X_0\to {\rm Sh}_M(X_0)$ is a birational morphism.  By \cite[Lemma 3.25]{DY23}, there exists another reductive  representation $\tau:\pi_1(X_0)\to \GL_N(\bC)$ such that  for any  reductive  representation $\sigma:\pi_1(X_0)\to \GL_N(\bC)$, we have 
\begin{align}\label{eq:inclusion}
	\ker\tau\subset\ker\sigma.
\end{align} Let $\tau_t:\pi_1(X_t)\to \GL_N(\bC)$  be the composite map of $\tau$ and the natural isomorphism $\pi_1(X_t)\to \pi_1(X_0)$ induced by $f$. 
\begin{claim}\label{claim:big}
	 There exists $\ep>0$ such that $\tau_t$ is a big  and reductive representation for any $t\in \bD_\ep$. 
\end{claim} 
\begin{proof}
By  \eqref{eq:inclusion}, we have
\begin{align}\label{eq:inclusion2}
	\ker\tau_t\subset\ker\sigma,\quad \forall\mbox{  reductive } \sigma:\pi_1(X_t)\to \GL_N(\bC). 
\end{align}
This implies that, for any closed subvariety $Z$ of $X_t$, if $\tau_t({\rm Im}[\pi_1(Z)\to \pi_1(X_t)])$ is finite, then $\sigma({\rm Im}[\pi_1(Z)\to \pi_1(X_t)])$ is finite  for any  reductive  representation $\sigma:\pi_1(X_0)\to \GL_N(\bC)$. 
Set $M_t:=M_{\rm B}(\pi_1(X_t),\GL_N)(\bC)$.  By \cref{def:Shafarevich}, for every $t\in \bD$,  
$
{\rm sh}_{M_t}:X_t\to {\rm Sh}_{M_t}(X_t)
$  coincides with the Shafarevich morphism ${\rm sh}_{\tau_t}:X_t\to  {\rm Sh}_{\tau_t}(X_t)$ of $\tau_t$.  
	
	By \cref{main:Betti}, there exists $\ep>0$ such that for any $t\in \bD_\ep$, we have $$\dim {\rm Sh}_{\tau_t}(X_t)=\dim {\rm Sh}_{M_t}(X_t)\geq \dim {\rm Sh}_M(X_0)=\dim X_0.$$  Therefore, $\tau_t$ is a big  and reductive representation for any $t\in \bD_\ep$.
\end{proof}  Since $X_t$ is of general type, by \cref{claim:big} we apply \cite[Theorem C]{CDY22} again to conclude that $X_t$ is pseudo-Brody hyperbolic for any $t\in \bD_\ep$. The theorem is proved. 
\end{proof}
From  \Cref{claim:big}, we obtain the following result, which may be of independent interest.
\begin{proposition}\label{prop:big}
	Let $f:\sX\to \bD$ be a smooth projective family. Assume that there is a big and reductive representation $\varrho:\pi_1(X_0)\to \GL_N(\bC)$. Then   $\tau:\pi_1(X_t)\to \GL_N(\bC)$ is big and reductive for sufficiently small $t$. \qed
\end{proposition}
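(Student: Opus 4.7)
The plan is to reduce to \cref{main:Betti} by singling out a ``universal'' reductive representation on $\pi_1(X_0)$ and transporting it along the family via the canonical isomorphisms $\pi_1(X_t)\simeq\pi_1(X_0)$ induced by $f$. Concretely, I would invoke \cite[Lemma 3.25]{DY23} to produce a reductive representation $\tau:\pi_1(X_0)\to\GL_N(\bC)$ whose kernel is minimal among reductive representations, in the sense that $\ker\tau\subset\ker\sigma$ for every reductive $\sigma:\pi_1(X_0)\to\GL_N(\bC)$. Defining $\tau_t$ as the composition of $\tau$ with the natural isomorphism $\pi_1(X_t)\to\pi_1(X_0)$, reductivity is preserved (it depends only on the Zariski closure of the image), and a short diagram chase shows the kernel inclusion persists: $\ker\tau_t\subset\ker\sigma'$ for every reductive $\sigma':\pi_1(X_t)\to\GL_N(\bC)$, since any such $\sigma'$ pulls back along $\pi_1(X_t)\simeq\pi_1(X_0)$ to a reductive representation of $\pi_1(X_0)$.

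Next, I would use this kernel property together with \cref{def:Shafarevich} to identify ${\rm sh}_{\tau_t}$ with the Shafarevich morphism ${\rm sh}_{M_t}$ associated to the full Betti moduli $M_t:=M_{\rm B}(\pi_1(X_t),\GL_N)(\bC)$. Indeed, a closed subvariety $Z\subset X_t$ is contracted by ${\rm sh}_{\tau_t}$ precisely when $\tau_t({\rm Im}[\pi_1(Z)\to\pi_1(X_t)])$ is finite, and the kernel inclusion makes this equivalent to $\sigma'({\rm Im}[\pi_1(Z)\to\pi_1(X_t)])$ being finite for every reductive $\sigma'$. Hence ${\rm sh}_{\tau_t}$ and ${\rm sh}_{M_t}$ contract the same subvarieties, so they coincide up to bimeromorphic equivalence.

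To finish, since $[\varrho]\in M:=M_0$ and $\varrho$ is big, the Shafarevich morphism ${\rm sh}_M:X_0\to{\rm Sh}_M(X_0)$ is birational, giving $\dim{\rm Sh}_M(X_0)=\dim X_0$. Applying \cref{main:Betti} then supplies $\ep>0$ such that
\[
\dim{\rm Sh}_{\tau_t}(X_t)=\dim{\rm Sh}_{M_t}(X_t)\geq\dim{\rm Sh}_M(X_0)=\dim X_t
\]
for every $t\in\bD_\ep$, so $\tau_t$ is big. The only substantive input is \cref{main:Betti}; everything else is a formal manipulation once the universal $\tau$ is fixed, so I do not anticipate an additional obstacle. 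In fact, the proposition emerges as a byproduct of the proof of \cref{thm:open}, where exactly this $\tau_t$ is already constructed and shown to be big.
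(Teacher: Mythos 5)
Your proposal matches the paper's proof essentially verbatim: the paper also invokes \cite[Lemma 3.25]{DY23} to obtain the reductive $\tau$ with $\ker\tau\subset\ker\sigma$ for all reductive $\sigma$, notes the kernel inclusion transports to $\tau_t$ so that ${\rm sh}_{\tau_t}={\rm sh}_{M_t}$, and then applies \cref{main:Betti} together with $\dim {\rm Sh}_M(X_0)=\dim X_0$ (from $\varrho$ being big) to conclude $\tau_t$ is big for small $t$. Indeed, the paper records this as \Cref{claim:big} inside the proof of \cref{thm:open} and then states \cref{prop:big} as a byproduct, exactly as you observe.
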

Note that it is unclear  whether $\varrho_t:\pi_1(X_t)\to \GL_N(\bC)$,  induced by the big representation $\varrho$ in \cref{prop:big}, is big for sufficiently small  $t$. 

We conclude this section with the proof of \cref{corx}.
\begin{proof}[Proof of \cref{corx}] 
In Case \ref{item:VHS}, by the work of Griffiths-Schmid \cite{GS69}, the period domain $\sD$ is equipped with a natural metric that has negative holomorphic sectional curvature along the horizontal direction. Since the period map of $\varrho$ is assumed to be generically finite onto its image, it follows from the Ahlfors-Schwarz lemma that $X$ is pseudo-Brody hyperbolic.

On the other hand, by \cref{lem:Shafarevich}, $\varrho$ is big. By \cite{Sim92}, $\varrho$ is also reductive. Hence, the conditions in \cref{main4} are fulfilled, allowing us to conclude that a small deformation of $X$ is also pseudo-Brody hyperbolic.  

In Case \ref{item:semisimple},  by \cite[Theorem A]{CDY22}, $X$ is pseudo-Brody hyperbolic.   We apply \cref{main4} to conclude  that a small deformation of $X$ is also pseudo-Brody hyperbolic.   The corollary is proved.  
\end{proof}

\section{Lower semi-continuity of $\Gamma$-dimension: linear case} \label{sec:linear}
In this section, we will  prove  \cref{main:linear}.
\subsection{Regularity of  harmonic maps to symmetric spaces in families}  
\begin{lem}\label{lem:lift}
	Let $\Gamma$ be a finitely generated group. Consider the surjective isogeny $p:\SL_N(\bC)\times \bC^*\to \GL_N(\bC)$ defined by $p(A,t):=tA$. Then for any representation $\varrho:\Gamma\to \GL_N(\bC)$, there exists a finite index subgroup $\Gamma'$ of $\Gamma$ such that $\varrho|_{\Gamma'}$ lifts to a representation $\Gamma'\to \SL_N(\bC)\times \bC^*$.
\end{lem}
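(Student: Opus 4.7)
The plan is to analyze the kernel of $p$ and reduce the lifting problem to extracting an $N$-th root of a character. Concretely, $\ker p = \{(\zeta I, \zeta^{-1}) : \zeta^N = 1\} \cong \mu_N$, a finite central subgroup, so we have a central extension
\[
1 \to \mu_N \to \SL_N(\bC)\times \bC^* \xrightarrow{p} \GL_N(\bC) \to 1.
\]
A homomorphism $\Gamma' \to \SL_N(\bC)\times \bC^*$ lifting $\varrho|_{\Gamma'}$ assigns to each $\gamma$ a pair $(A_\gamma, t_\gamma)$ with $\det A_\gamma = 1$ and $t_\gamma A_\gamma = \varrho(\gamma)$; the first condition forces $t_\gamma^N = \det\varrho(\gamma)$. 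Hence a lift is equivalent to the data of a character $\tilde\chi : \Gamma' \to \bC^*$ with $\tilde\chi^N = \chi|_{\Gamma'}$, where $\chi := \det\circ \varrho : \Gamma \to \bC^*$: given such a $\tilde\chi$, set
\[
\gamma \longmapsto \bigl(\tilde\chi(\gamma)^{-1}\varrho(\gamma),\, \tilde\chi(\gamma)\bigr).
\]

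So the lemma is reduced to: for any character $\chi : \Gamma \to \bC^*$, a suitable finite index subgroup $\Gamma' \subset \Gamma$ carries a character whose $N$-th power is $\chi|_{\Gamma'}$. Since $\chi$ factors through the abelianization $A := \Gamma^{\mathrm{ab}}$, which is a finitely generated abelian group, we write $A \cong \bZ^r \oplus T$ with $T$ finite. Let $T' := \ker(\chi|_T)$, a finite index subgroup of $T$, and let $B := \bZ^r \oplus T' \subset A$, which has finite index in $A$. On $T'$ the restriction $\chi$ is trivial, so define $\tilde\chi$ to be trivial there. On the free part $\bZ^r$, choose any $N$-th roots $c_i \in \bC^*$ of $\chi(e_i)$ for a basis $e_1,\dots,e_r$ and extend multiplicatively; this produces a character $\psi : \bZ^r \to \bC^*$ with $\psi^N = \chi|_{\bZ^r}$. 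Combining gives a character $\tilde\chi : B \to \bC^*$ satisfying $\tilde\chi^N = \chi|_B$. Taking $\Gamma'$ to be the preimage of $B$ under $\Gamma \twoheadrightarrow A$ and pulling back $\tilde\chi$ finishes the construction.

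The argument is entirely formal; the only subtlety is that one cannot simply extract an $N$-th root of $\chi$ on all of $A$ because characters on the torsion part $T$ need not have $N$-th roots. Restricting to $T' = \ker(\chi|_T)$ circumvents this, and the passage to a finite index subgroup is unavoidable in general. I do not anticipate any serious obstacle.
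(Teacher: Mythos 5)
Your proof is correct, and it takes a genuinely different route from the paper. The paper's argument sets $G := \varrho(\Gamma)$, forms $G_1 := p^{-1}(G)$, and invokes Selberg's theorem (finitely generated linear groups in characteristic zero are virtually torsion-free) to find a finite index torsion-free subgroup $G_2 \subset G_1$; since $\ker p$ consists of torsion elements, $G_2 \cap \ker p = \{e\}$, so $p|_{G_2}$ is injective and $\Gamma' := \varrho^{-1}(p(G_2))$ does the job. Your argument instead observes that a lift is equivalent to an $N$-th root of the determinant character $\chi = \det\circ\varrho$, passes to the abelianization, splits off the torsion via the structure theorem for finitely generated abelian groups, and kills $\chi$ on the torsion part by intersecting with $\ker(\chi|_T)$ so the root can be taken freely on the $\bZ^r$-part using divisibility of $\bC^*$. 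Your route is more elementary (it avoids Selberg) and makes explicit the reduction to extracting $N$-th roots of characters; the paper's route is shorter to state and extends verbatim to other isogenies whose kernel is torsion, without any need to identify the obstruction with the determinant. Both yield the same conclusion.
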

\begin{proof}
Set \( G := \varrho(\Gamma) \) and define \( G_1 := p^{-1}(G) \). Note that \( \ker p = \{ (\zeta_i I, \zeta_i^{-1}) \}_{i=1,\ldots,N} \), where \( \zeta_1, \ldots, \zeta_N \) are \( N \)-th roots of unity. In particular, these elements are all torsion elements. 

By Selberg's theorem, \( G_1 \) is virtually torsion-free, and thus there exists a finite index subgroup \( G_2 \) of \( G_1 \) such that \( G_2 \cap \ker p = \{ e \} \). Hence, \( p|_{G_2} : G_2 \to p(G_2) \) is an isomorphism. Note that \( p(G_2) \) is a finite index subgroup of \( G \). 

It follows that \( \Gamma' := \varrho^{-1}(p(G_2)) \) is a finite index subgroup of \( \Gamma \). We define \( \varrho' = (p|_{G_2})^{-1} \circ \varrho|_{\Gamma'} : \Gamma' \to G_2 \). Then we have 
\[
p_2 \circ \varrho' = \varrho|_{\Gamma'}.
\]
This completes the proof of the lemma.
\end{proof}
\begin{rem}
Since the $\Gamma$-dimension of a variety is invariant under étale covers, we can apply \cref{lem:lift} to reduce the proof of \cref{thm:kollar,main2} by replacing $\sX$ with a suitable finite étale cover. This allows us to assume that the representation $\pi_1(\sX) \to \GL_N(\bC)$ lifts to $\pi_1(\sX) \to \SL_N(\bC) \times \bC^*$. 
\end{rem} 
\begin{proposition}\label{prop:continuous}
Let $f:\sX\to\bD$ be a smooth projective family, and let  $\varrho:\pi_1(\sX)\to \SL_N(\bC)$  be a simple representation. There exists a  $\varrho$-equivariant  \emph{continuous} map
	 $
	u:\widetilde{\sX}\to \sS,
	$ 
	where $\sS:=\SL_N(\bC)/{\rm SU}_N$ is the  Riemannian symmetric space  of non-compact type  associated with $\SL_N(\bC)$ such that
	\begin{enumerate}[label=(\roman*)]
		 \item \label{item:existence} for each $t\in \bD$, $u|_{\widetilde{X}_t}:\widetilde{X}_t\to \sS$ is the unique $\varrho_t$-equivariant harmonic map.   
		 \item \label{item:derivativecontinuous}  the map $u$ vary smoothly in $t$.
	%	 \item \label{item:continuous metric}For the flat bundle $(\sV_\varrho,\nabla)$ on $\sX$ associated with $\varrho$, there is a continuous metric $h$ on $\sV_\varrho$ that vary smoothly in $t$, and   $(\sV_\varrho,\nabla,h)|_{X_t}$ is the harmonic bundle corresponding to $\varrho_t:\pi_1(X_t)\to \SL_N(\bC)$.   
	\end{enumerate} 
\end{proposition}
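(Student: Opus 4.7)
The plan is to build $u$ fiberwise from Corlette's theorem, use simplicity of $\varrho$ to pin down uniqueness, and then argue continuity and smoothness in $t$ separately. Continuity will mirror the arguments of \cref{harmonic}, while smoothness exploits that $\sS$, unlike a Euclidean building, is a bona fide smooth Riemannian manifold so that elliptic PDE theory applies directly.

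First I would verify \cref{item:existence}. Since $\pi_1(\sX)\cong\pi_1(X_t)$ for every $t$, each $\varrho_t$ is again simple. Corlette's theorem \cite{Cor88} then produces a $\varrho_t$-equivariant harmonic map $u_t:\widetilde X_t\to\sS$; simplicity forces the centralizer of $\varrho_t(\pi_1(X_t))$ in $\SL_N(\bC)$ to lie in the finite center $\mu_N$, which acts trivially on $\sS$, making $u_t$ unique. Define $u|_{\widetilde X_t}:=u_t$. Continuity at any $t_0\in\bD$ then follows by replaying \cref{convergence} inside the proof of \cref{harmonic}: fix a smooth family of diffeomorphisms $F_t:\widetilde X_{t_0}\to\widetilde X_t$ lifting a trivialization of $f$; the energy inequalities \eqref{unienergybd} apply verbatim (the target being NPC) and show that $\{u_t\circ F_t\}$ is a minimizing sequence for $u_{t_0}$. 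Simplicity of $\varrho_{t_0}$ prevents fixed points at infinity, so \cref{lem:proper} yields properness, \cite[Theorem 2.4]{GS92} gives a uniform Lipschitz bound, and \cref{arzela} extracts a uniformly convergent subsequence whose limit must equal $u_{t_0}$ by uniqueness.

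For the smoothness in \cref{item:derivativecontinuous}, the real content lies in an implicit function theorem argument. The harmonic map equation $\tau_{g_t}(u)=0$ is a quasilinear second-order elliptic system whose coefficients depend smoothly on $t$. I would linearize at $u_{t_0}$, obtaining the Jacobi operator
\[
J_{t_0}(V)=-\Delta_{g_{t_0}}V-\mathrm{tr}_{g_{t_0}}R^{\sS}(V,du_{t_0})du_{t_0}
\]
acting on $\varrho_{t_0}$-equivariant sections of $u_{t_0}^{*}T\sS$, equivalently on sections of the flat vector bundle $u_{t_0}^{*}T\sS/\varrho_{t_0}$ over the compact quotient $X_{t_0}$. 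Non-positivity of $R^{\sS}$ makes $J_{t_0}$ non-negative, while its kernel of $\varrho_{t_0}$-equivariant parallel sections is identified, via a de Rham-type decomposition for symmetric spaces, with the Lie algebra of the centralizer of $\varrho_{t_0}(\pi_1(X_{t_0}))$, which is trivial by simplicity. Hence $J_{t_0}$ is a Banach-space isomorphism between suitable Hölder spaces $C^{k,\alpha}$ of equivariant sections, and the implicit function theorem applied to $\mathcal H(u,t):=\tau_{g_t}(u)$ delivers a smooth family of harmonic maps near $t_0$ that must coincide with $u$ by uniqueness; bootstrapping in $k$ then yields smoothness.

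The apparent hard part is precisely this invertibility of $J_{t_0}$ together with the equivariant functional-analytic setup, but both come essentially for free in our setting: non-positive curvature of $\sS$ gives non-negativity of $J_{t_0}$, simplicity of $\varrho$ gives triviality of its kernel, and $\pi_1$-invariance of all relevant tensors lets the analysis descend to the compact quotient $X_{t_0}$, reducing everything to classical parameter-dependent elliptic regularity, in the spirit of the deformation theory of harmonic metrics on Higgs bundles.
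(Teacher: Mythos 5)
Your arguments for (i) and for the continuity of $u$ are essentially identical to the paper's: Corlette for existence (with simplicity $\Rightarrow$ finite centralizer $\mu_N$ acting trivially on $\sS$ for uniqueness), and a rerun of the energy inequalities \eqref{unienergybd}, the Eells--Sampson Lipschitz bound, \cref{lem:proper}, and \cref{arzela} for continuity; no discrepancy there.

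For the smoothness in $t$, however, you take a genuinely different route. The paper does not linearize at all: it writes the harmonic map system in the global exponential coordinates of $\sS$, deduces a uniform $C^{1,\mu}$ bound from interior Schauder estimates, extracts a convergent subsequence from the compact embedding $C^{1,\mu}\hookrightarrow C^{1,\mu'}$, identifies the limit with $u_{t_0}$ by uniqueness, and then bootstraps in $k$. Your proposal instead applies the implicit function theorem to the harmonic map operator $\cH(u,t)=\tau_{g_t}(u)$, with invertibility of the Jacobi operator $J_{t_0}$ as the key input. This is the approach of Eells--Lemaire and Slegers cited in the remark right after the proposition, and your contribution is to replace their cocompact-lattice hypothesis by the observation that simplicity forces $\ker J_{t_0}=0$. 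Your route is in some respects cleaner and more standard, and it delivers honest differentiable dependence on $t$ directly, whereas the paper's compactness argument, as written, establishes continuity of $t\mapsto u_t$ in every $C^k$-norm.

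One point you should tighten: the identification of $\ker J_{t_0}$ with ``the Lie algebra of the centralizer via a de~Rham-type decomposition'' is not quite what you want to invoke. The cleaner statement, which is what your argument actually needs, is the following standard fact for NPC targets: $V\in\ker J_{t_0}$ forces $\nabla V=0$ and $R^{\sS}(V,du_{t_0})du_{t_0}=0$ pointwise (since the second-variation quadratic form is a sum of two nonnegative terms that must both vanish), and these two conditions imply that $s\mapsto\exp_{u_{t_0}}(sV)$ is a one-parameter family of $\varrho_{t_0}$-equivariant harmonic maps. Uniqueness of the equivariant harmonic map then gives $V=0$ directly, without any appeal to a de Rham splitting or an a priori identification with the centralizer. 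The centralizer only enters through the uniqueness statement itself (Schur's lemma gives centralizer $\mu_N$, hence uniqueness), and once you have uniqueness, the above argument kills the kernel. Stated this way, the IFT argument closes cleanly, and the remaining pieces (ellipticity and Fredholm index zero of $J_{t_0}$ on Hölder sections of $u_{t_0}^*T\sS$ over the compact quotient $X_{t_0}$, bootstrap in $k$) are routine.
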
 
	 \begin{proof}
We equip $\sX$ with a K\"ahler metric $g$.  We endow $X_t$ with the K\"ahler metric  \(g_t := g|_{X_t}\) for any $t\in \bD$, and $\sS$ the canonical Riemannian metric for Riemannian symmetric space.   By \cite{Cor88},    there exists a unique  $\varrho_t$-equivariant harmonic map $u_t:\widetilde{X}_t\to \sS$, which is moreover pluriharmonic. Let $u:\widetilde{X}\to \sS$ be the defined by setting $u|_{\widetilde X_t}= u_t$. This proves  \Cref{item:existence}.   
		\begin{claim}\label{claim:uniform}
			Let  $F_t:\widetilde X_0 \to \widetilde X_t$ be  the diffeomorphism induced by the $C^\infty$-trivialization $\sX\to X_0\times\bD$.   Then 	the function $u_t \circ F_t:\widetilde{X}_0\to \sS$ converges uniformly to $u_{0}$.		  In particular, $u:\widetilde{\sX}\to \sS$ is continuous. 
	 	\end{claim}
		\begin{proof}
The proof is close to that of \cref{convergence}. We abusively denote by \(g_t\) the induced Riemannian metric on \(X_0\) via the natural diffeomorphism \(X_0 \to X_t\). By the same arguments as in \eqref{unienergybd}, the energy function \(t \mapsto \{^{g_{t}}E^{u_t \circ F_t}\}\) is continuous. Fix any \(\ep \in (0,1)\). By a classical result of Eells-Sampson \cite{EL64}, \(\{u_t \circ F_t\}_{t \in \mathbb{D}_\ep }\) is uniformly Lipschitz continuous.  

By \eqref{unienergybd} again, the function \(t \mapsto \{^{g_{0}}E^{u_{t} \circ F_t}\}\) is also continuous and converges to \(\{^{g_{0}}E^{u_{0} }\}\) as \(t \to 0\). Hence, \(u_t \circ F_t : \widetilde{X_0} \to \mathscr{S}\) is an energy-minimizing sequence in the sense of  \cref{def:energymini} with \(X_0\) endowed with the fixed Riemannian metric \(g_0\).  

Since \(\varrho\) is semi-simple, the Zariski closure \(G\) of \(\varrho(\pi_1(\mathscr{X}))\) is a reductive group. Then \(\varrho(\pi_1(\mathscr{X}))\) does not fix a point at infinity of \(\mathscr{S}\). Otherwise, \(G\) would be contained in \(P(\mathbb{C})\), where \(P\) is a proper parabolic subgroup, which is impossible. By \cref{lem:proper}, the action of \(\varrho(\pi_1(\mathscr{X}))\) on \(\mathscr{S}\) is proper. Hence, we can apply \cref{arzela} to conclude that \(u_t \circ F_t\) converges uniformly to a \(\varrho_{0}\)-equivariant harmonic map \(v: \widetilde{X_0} \to \sS\) (with respect to the metric $g_0$) as \(t \to 0\). By the uniqueness theorem in \cite{Cor88}, one has \(v = u_0\).  

We thus conclude that \(u_t \circ F_t\) converges uniformly to \(u_0\) as \(t \to 0\). The same argument also shows that for any \(t_0 \in \mathbb{D}\), \(u_t \circ F_t\) converges uniformly to \(u_{t_0}\) as \(t \to t_0\). The claim is proved. 
		\end{proof} 
The exponential map of \(\sS\) provides a diffeomorphism between \(\bR^m\) and \(\sS\), thereby inducing a global coordinate system \((x_1, \ldots, x_m)\) for \(\sS\). Let \(\Gamma_{\alpha \beta}^\gamma \in C^\infty(\sS)\) denote the Christoffel symbols of the Levi-Civita connection on \(\sS\) with respect to this global coordinate system. 

We express \(u = (u^1, \ldots, u^m)\) in terms of these global coordinates on \(\sS\). Let \(\Omega \simeq \bD^n \times \bD_{\epsilon}\) be any admissible coordinate as described in \cref{rmk:omega}. We use the notation  
\[
u_t^\alpha(z_1, \dots, z_n) := u^\alpha(z_1, \dots, z_n, t), \quad \forall \alpha = 1, \dots, m.
\]  
Since each \(u_t\) is harmonic, over \(\Omega\) we have  
\begin{align}\label{eq:harmonic}
	 	\Delta_t u_t^\alpha +\Gamma_{  \beta\gamma}^\alpha(u_t)	g_t^{i\bar{j}} \frac{\partial  u_t^\beta}{\partial z_i}\frac{\partial u_t^\gamma}{\partial \bar{z}_j}=0.
\end{align} 
Here, \(\Delta_t\) denotes the Laplacian operator with respect to the metric \(g_t\), and \([g_t^{i\bar{j}}]_{i,j=1,\ldots,n}\) is the inverse   of the $n\times n$-matrix \([\langle \frac{\partial}{\partial z_i}, \frac{\partial}{\partial \bar{z}_j} \rangle_{g_t}]_{i,j=1,\ldots,n}\). 

Since the metrics \(g_t\) vary smoothly with \(t\), and \(\{^{g_t}E^{u_t}\}_{t \in \bD_\ep }\) is uniformly bounded, it follows from \cite{EL64} that there exists a constant \(C_1 > 0\) such that 
\begin{align}\label{eq:elliptic}
	\left|\frac{\partial u_t^\alpha}{\partial \bar{z}_i}({\bf z})\right| \leq C_1, \quad \left|\frac{\partial u_t^\alpha}{\partial z_i}({\bf z})\right| \leq C_1,
\end{align}
for each \(\alpha = 1, \ldots, m\), \(i = 1, \ldots, n\), \(t \in \bD_\ep \), and \({\bf z} \in \bD^n\).

Since \(u\) is continuous, there exists a constant \(C_2 > 0\) such that \(|\Gamma_{\beta\gamma}^\alpha(u_t({\bf z}))| \leq C_2\) for any \(\alpha, \beta, \gamma \in \{1, \ldots, m\}\), \(t \in \bD_\epsilon\), and \({\bf z} \in \bD^n\). Fix any \(\mu \in (0, 1)\). 

We apply the interior elliptic estimates for the elliptic equations \eqref{eq:harmonic}  (cf. \cite[Section 1.7, Lemma 3]{Sim96}) to conclude that there exists a  constant $C_3>0$ such that   for each $t\in \bD_\ep$ and any $\alpha \in \{1,\ldots,m\}$,
\begin{align}\label{eq:Holder'}
 \left| u_t^\alpha\right|_{C^{1,\mu};\bD^n_{\frac{3}{4}}}:=\sup_{{\bf z}\in \bD_{\frac{3}{4}}}|u_t^\alpha({\bf z})|+\sum_{j= 1}^{2n}  \sup_{{\bf z}\in \bD_{\frac{3}{4}}}\left| \frac{\partial u_t^\alpha}{\partial x_j}({\bf z}) \right| +\sum_{j= 1}^{2n} \sup_{{\bf z}_1, {\bf z}_2 \in  \bD_{\frac{3}{4}}^n, \ {\bf z}_1 \neq{\bf z}_2} \frac{\left| \frac{\partial u_t^\alpha}{\partial x_j}({\bf z}_1)- \frac{\partial u_t^\alpha}{\partial x_j}({\bf z}_2)\right|^\mu }{|{\bf z}_1-{\bf z}_2|}\leq C_3
\end{align} 
where $(z_1=x_1+ix_2,  \dots, z_n=x_{2n-1}+ix_{2n})$ are the coordinates in $\bD^n$.
Let  $\mu'\in (\frac{\mu}{2},\mu)$ and any $t_0\in \bD_{\ep}$.	Note that  the H\"older space $C^{1,\mu}(\bD^n_{\frac{3}{4}})$ is compactly contained in $C^{1,\mu'}(\bD^n_{\frac{3}{4}})$.  Then for any sequence $t_i\to t_0$, after taking a  subsequence if necessary,  $u_{t_i}^\alpha$ converges to some $v^\alpha$ in $C^{1,\mu'}(\bD^n_{\frac{3}{4}})$. Note that $ u_{t}^\alpha$ converges uniformly to $ u_{t_0}^\alpha$ as $t\to t_0$. It follows that $v^\alpha=u_{t_0}^\alpha$.   Hence $u_{t_i}^\alpha$ converges to   $u_{t_0}^\alpha$ in $C^{1,\mu'}(\bD^n_{\frac{3}{4}})$.  By \eqref{eq:harmonic} together with a bootstrap argument,  for any positive integer $k>0$, $u_{t_i}^\alpha$ converges to   $u_{t_0}^\alpha$ in $C^{k,\frac{\mu}{2}}(\bD^n_{\frac{1}{2}})$ as $i\rightarrow \infty$.
This proves \Cref{item:derivativecontinuous}.  The proposition is proved. 

%By applying the interior elliptic estimates for the elliptic equations \eqref{eq:elliptic} (cf. \cite[Corollary 10.3.3]{Liv21}), we conclude that there exists a constant \(C_3 > 0\) such that, for each \(t \in \bD_\epsilon\) and any \(\alpha \in \{1, \ldots, m\}\), the Hölder norm satisfies
%\begin{align}\label{eq:Holder}
%	\lVert u_t^\alpha \rVert_{C^{2, \mu}; \bD^n} := \sup_{{\bf z} \in \bD^n}  | u_t^\alpha({\bf z}) |_{C^{2, \mu}} \leq C_3.
%\end{align}   

%Pick some \(\mu' \in (0, \mu)\) and any \(t_0 \in \bD_{\epsilon}\). Note that the Hölder space \(C^{2, \mu}(\overline{\bD^n})\) is compactly contained in \(C^{2, \mu'}(\overline{\bD^n})\) (cf. \cite[Lemma 6.36]{GT01}). Then, for any sequence \(t_i \to t_0\), after passing to a subsequence, by \eqref{eq:Holder},  \(u_{t_i}^\alpha\) converges to some \(v^\alpha\) in \(C^{2, \mu'}(\overline{\bD^n})\) for each $\alpha$. By \cref{claim:uniform},  \(u_t^\alpha\) converges uniformly to \(u_{t_0}^\alpha\) as \(t \to t_0\). It follows that \(v^\alpha = u_{t_0}^\alpha\). Hence, \(u_{t_i}^\alpha\) converges to \(u_{t_0}^\alpha\) in \(C^{2, \mu'}(\overline{\bD^n})\).

%By \eqref{eq:harmonic} together with the elliptic bootstrap argument, we conclude that for any positive integer \(k > 0\), \(u_{t_i}^\alpha\) converges to \(u_{t_0}^\alpha\) in \(C^{k, \mu'}(\overline{\bD^n})\) as \(i \to \infty\).

%This proves \Cref{item:derivativecontinuous}. The proposition is proved.

	\end{proof} 
	\begin{rem}
		Note that  results similar to \cref{prop:continuous} were obtained in  \cite{EL80,Sle22} under an additional assumption that   $\Gamma:=\varrho(\pi_1(\sX))$  is a \emph{uniform and torsion free lattice} in $\SL_N(\bC)$, i.e.,  $\faktor{\sS}{\Gamma}$ is a compact Riemannian locally symmetric manifold. 
	\end{rem}
\begin{lem}\label{lem:continuous2} 	Let  $\tau:\pi_1(\sX)\to  \bC^*$  be a representation.   Consider the natural action $\bC^*$ on $\bR$ by $z.t:=t+\log |z|$.  Then there exists a $\tau$-equivariant harmonic map $u:\widetilde{\sX}\to \bR$, which vary smoothly in $t$.  \end{lem}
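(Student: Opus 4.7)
The plan is to reduce the construction of $u$ to the production of a smooth family of harmonic $1$-forms on the fibers $X_t$, following essentially the same strategy as in \cref{lem:tori}. Composing $\tau$ with $\log|\,\cdot\,|:\bC^*\to\bR$ gives a homomorphism $\pi_1(\sX)\to (\bR,+)$, which we identify with a class
\[
\lambda \in H^1(\sX,\bR) \;\simeq\; \Hom(H_1(\sX,\bZ),\bR).
\]
The $\tau$-equivariance condition for a map $u:\widetilde{\sX}\to\bR$ under the affine action $z.t=t+\log|z|$ becomes $u(\gamma\cdot x)=u(x)+\log|\tau(\gamma)|$, so a $\tau$-equivariant map is nothing but a (fiberwise) primitive of a closed real $1$-form on $\sX$ whose cohomology class is $\lambda$.

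First, I would fix a K\"ahler metric $g$ on $\sX$ and set $g_t:=g|_{X_t}$. For each $t\in\bD$, by Hodge theory on the compact K\"ahler manifold $(X_t,g_t)$, there is a unique harmonic $1$-form $\xi_t$ representing the restricted class $\lambda|_{X_t}\in H^1(X_t,\bR)$. The standard smoothness of the Hodge decomposition in smooth families of Riemannian metrics (which is what was already invoked in the proof of \cref{lem:tori}) shows that, after pulling back through a smooth trivialization $F:\sX\to X_0\times\bD$, the forms $F_t^*\xi_t$ vary smoothly in $t$, hence so do the $\xi_t$ themselves as sections of $T^*\sX|_{X_t}$. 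In particular, the global $1$-form on $\sX$ defined fiberwise by the $\xi_t$ is smooth.

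Next, I would define $u$ by integration. Fix a basepoint $\tilde x_0\in\widetilde{X_0}$ and, using the smooth trivialization, transport it to a smooth section $t\mapsto \tilde x_t\in\widetilde{X_t}$. On each $\widetilde{X_t}$ set
\[
u_t(x)\;:=\;\int_{\tilde x_t}^{x} \pi_{X_t}^{\,*}\xi_t,
\]
which is well-defined since $\widetilde{X_t}$ is simply connected and $\xi_t$ is closed. The equivariance
$u_t(\gamma\cdot x)-u_t(x)=\int_\gamma\xi_t=\log|\tau_t(\gamma)|$ follows from the very definition of $\lambda$. Declaring $u|_{\widetilde{X_t}}:=u_t$ yields a map $u:\widetilde{\sX}\to\bR$; its smoothness in $(x,t)$ follows from the smoothness of $\xi_t$ and of the basepoint section. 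Finally, a real-valued $C^2$ map $u_t$ on $\widetilde{X_t}$ is harmonic if and only if $d\!*\!du_t=0$, i.e.\ if and only if $\xi_t=du_t/(2\pi \text{ stuff})$ is co-closed; this is exactly the harmonicity of $\xi_t$ we built in. No step is really an obstacle here: the only substantive input is the smooth dependence of the Hodge projector on the K\"ahler metric in a smooth family, which is classical.
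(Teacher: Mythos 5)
Your proposal is correct and takes essentially the same route as the paper: extract the real cohomology class $\lambda\in H^1(\sX,\bR)$ from $\tau$ (the paper does this by decomposing $\bC^*\simeq\bR\times{\rm U}(1)$ and projecting to the $\bR$-factor, which is literally the same as your composition with $\log|\cdot|$), take the smoothly varying family of $g_t$-harmonic $1$-forms $\xi_t$ representing $\lambda|_{X_t}$, choose a smooth basepoint section $t\mapsto s(t)\in\widetilde{X}_t$, and set $u_t(x)=\int_{s(t)}^{x}\pi_{X_t}^*\xi_t$.
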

\begin{proof}
	Note that there exists an isomorphism $\bR\times {\rm U}(1)\to \bC^*$ defined by $(t,z)\mapsto \exp(t).z$. Denote by $\tau_1:\pi_1(\sX)\to \bR$ and $\tau_2:\pi_1(\sX)\to {\rm U}(1)$ be the projections of $\tau$ according to the above isomorphism. Then $\tau_1$ corresponds to a cohomology class $\lambda\in H^1(\sX,\bR)$.   Fix a K\"ahler metric $g$ on $\sX$. By the classical Hodge theory,   there exists   1-forms $\xi_{t}$ on $X_t$, each is harmonic with respect to the metric $g_t$ and varying smoothly in $t$, such that their cohomology class $\{\xi_{t}\}=\iota_t^*{\lambda}$, where $\iota_t:X_t\to \sX$ denotes the inclusion map. 
	
	We take a smooth section $s:\bD\to \widetilde{\sX}$ of $\widetilde{\sX}\to\sD$. Then for each $t\in \bD$, we define a smooth map $u_t(x):=\int_{s(t)}^{x}\pi_{X_t}^*\xi_t$. It follows that the union $u:\widetilde{\sX}\to \bR$ of $u_t$ is a continuous map,  which vary smoothly in $t$.    The lemma is proved. 
\end{proof}
\begin{thm}\label{thm:family}
	Let $f:\sX\to\bD$ be a smooth projective family, and let $\sigma:\pi_1(\sX)\to \GL_N(\bC)$ be a semi-simple representation. Then after we replace $\sX$ by a suitable finite \'etale cover,   for  the flat bundle $(\sV_\sigma, D)$  on $\sX$ induced by $\sigma$,  there exists a continuous metric $h$ on $\sV_\sigma$ satisfying the following properties:
\begin{thmlist}
	\item the restriction $(\sV_\sigma,D,h)|_{X_t}$ is a harmonic bundle for each $t\in \bD$.
	\item  \label{item:smooth}  Writing $V_t:=\sV_\sigma|_{X_t}$, the metric $h|_{V_t}$ varies smoothly in $t$.  
\end{thmlist} 
\end{thm}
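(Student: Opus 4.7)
The plan is to reduce the theorem to direct sums of tensor products of simple $\SL_N$-representations and $\bC^*$-representations, then apply \cref{prop:continuous} and \cref{lem:continuous2} factor by factor, and combine the outputs.

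\textbf{Reduction.} Since $\sigma$ is semisimple, I decompose $\sigma=\bigoplus_{i=1}^{k}\sigma_i$ as a direct sum of simple $\bC$-representations $\sigma_i:\pi_1(\sX)\to \GL_{N_i}(\bC)$. Applying \cref{lem:lift} to each $\sigma_i$ furnishes a finite-index subgroup of $\pi_1(\sX)$ on which $\sigma_i$ lifts along the isogeny $p:\SL_{N_i}(\bC)\times\bC^*\to\GL_{N_i}(\bC)$. After intersecting these finitely many subgroups and replacing $\sX$ by the associated finite étale cover, each $\sigma_i$ takes the form $\sigma_i=p\circ(\varrho_i,\tau_i)$ with $\varrho_i:\pi_1(\sX)\to \SL_{N_i}(\bC)$ and $\tau_i:\pi_1(\sX)\to \bC^*$. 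At the flat-bundle level this gives $V_{\sigma_i}\simeq V_{\varrho_i}\otimes L_{\tau_i}$; since $\sigma_i$ is irreducible, so is $\varrho_i$.

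\textbf{Construction.} I would apply \cref{prop:continuous} to each simple $\varrho_i$ to obtain a continuous $\varrho_i$-equivariant map $u_i:\widetilde{\sX}\to \sS_i:=\SL_{N_i}(\bC)/{\rm SU}_{N_i}$ whose restriction to each $\widetilde{X}_t$ is the unique equivariant harmonic map and varies smoothly in $t$. Identifying $\sS_i$ with the space of positive-definite hermitian forms on $\bC^{N_i}$ of determinant one, the equivariance of $u_i$ descends to a hermitian metric $h_i^{\sharp}$ on $V_{\varrho_i}\to \sX$ whose restriction to each $X_t$ is harmonic by Corlette's theorem. Likewise, \cref{lem:continuous2} applied to $\tau_i$ yields a $\tau_i$-equivariant harmonic function $v_i:\widetilde{\sX}\to \bR$, smooth in $t$, which corresponds to a hermitian metric $e^{2v_i}$ on $L_{\tau_i}$ harmonic on each $X_t$. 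Setting
\[
 h_i := h_i^{\sharp}\otimes e^{2v_i}\ \text{on }V_{\sigma_i}\simeq V_{\varrho_i}\otimes L_{\tau_i},\qquad h := \bigoplus_{i=1}^{k}h_i\ \text{on }\sV_\sigma,
\]
supplies the candidate metric.

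\textbf{Verification and main obstacle.} Continuity of $h$ on $\sV_\sigma$ is inherited from the continuity of the $u_i$'s on $\widetilde{\sX}$ (and of the $v_i$'s). Harmonicity of $h|_{V_t}$ on each $X_t$ is immediate from the fact that direct sums and tensor products of harmonic bundles are harmonic (the decomposition $D=\nabla_h+\Phi_h$ commutes with these operations). Smoothness of $h|_{V_t}$ and its smooth dependence on $t$ reduce, in any admissible coordinate chart $\bD^n\times \bD_\ep$, to the $C^k$-convergence of $u_{i,t_j}$ to $u_{i,t_0}$ on compact subsets of $\bD^n$ as $t_j\to t_0$, which is precisely the output of the elliptic-bootstrap argument at the end of the proof of \cref{prop:continuous} (and is immediate for the harmonic forms attached to $v_i$). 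The main technical point is thus the translation between the equivariant harmonic map $u_i$ and the hermitian metric $h_i^{\sharp}$: one must verify that fibrewise smooth variation of $u_i$ in $t$ implies fibrewise smooth variation of the matrix entries of $h_i^{\sharp}$ in an \emph{admissible} (rather than flat) local frame of $V_{\varrho_i}$. This is handled by picking a smooth family of local frames, for instance by parallel transport from a reference frame over $\widetilde{X}_0$, so that the passage from $u_i$ to the matrix coefficients of $h_i^{\sharp}$ is given by smooth algebraic operations on $\SL_{N_i}(\bC)$.
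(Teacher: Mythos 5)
Your proposal is correct and follows essentially the same route as the paper: reduce to simple factors, lift via \cref{lem:lift} to $\SL_{N_i}(\bC)\times\bC^*$, apply \cref{prop:continuous} and \cref{lem:continuous2} to the two factors, and recombine. The paper packages your ``tensor product of metrics'' $h_i^{\sharp}\otimes e^{2v_i}$ as a single $\sigma_i$-equivariant map $u(x)=e^{2u_2(x)}u_1(x)$ into $\sP_{N_i}(\bC)\simeq\GL_{N_i}(\bC)/\mathrm{U}_{N_i}$ via the equivariant isometry $\psi:\sP_{N_i}(\bC)_1\times\bR\to\sP_{N_i}(\bC)$, but this is the same construction, and the concluding smoothness/harmonicity arguments are identical.
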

\begin{proof}
 We might decompose $\sigma$ into a direct sum of simple representations, and it suffices to prove the theorem for the case when $\sigma$  is simple. By \cref{lem:lift},  after we replace $\sX$ by a suitable finite \'etale cover,  we may assume that $\sigma$ lifts to a simple representation $\sigma':\pi_1(\sX)\to \SL_N(\bC)\times\bC^*$.   Let $\varrho: \pi_1(\sX) \to \SL_N(\bC)$ and $\tau: \pi_1(\sX) \to \bC^*$ denote the compositions of $\sigma'$ with the first and second projections, respectively, i.e., $\SL_N(\bC) \times \bC^* \to \SL_N(\bC)$ and $\SL_N(\bC) \times \bC^* \to \bC^*$.  
 
 By \cref{prop:continuous,lem:continuous2}, there exist a continuous $\varrho$-equivariant map $u_1:\widetilde{\sX}\to \sS$ and continuous $\tau$-equivariant map $u_2:\widetilde{\sX}\to \bR$ such that $u_{1,t}=u_1|_{\widetilde{X_t}}:\widetilde{X_t}\to \sS$ is pluriharmonic and $u_{2,t}=u_2|_{\widetilde{X_t}}:\widetilde{X_t}\to \bR$ is also pluriharmonic.  
 
 	 Let $\sP_N(\bC)$ be the space of positive definite hermitian symmetric $N \times N$ matrices with Riemannian metric defined by 
 $$\langle X,Y\rangle_H={\rm tr}(H^{-1}X H^{-1}Y), \ \ \ \forall X,Y=T_H\sP_N(\bC).$$
 Note that there is a natural isometric identification
 $\sP_N(\bC) \simeq \GL_N(\bC)/{\rm U}_N$.  Indeed, $\GL_N(\bC)$ acts isometrically and transitively on $\sP_N(\bC)$ where the action of $g \in \GL_N(\bC)$ is given by  $H \mapsto gHg^\dagger$.  Since  ${\rm U}_N$ is    the stabilizer of the identity, 
 the map $\GL_N(\bC)/{\rm U}_N\to \sP_N(\bC)$ defined by $[g] \mapsto gg^\dagger$  is an isometry. The subspace $\sP_N(\bC)_1$ consisting of $H \in \sP_N(\bC)$ with ${\rm det}\, H=1$ is a totally geodesic submanifold of $\sP_N(\bC)$.  Moreover,   $\sP_N(\bC)_1$ is  invariant by the action of $\SL_N(\bC)$ with ${\rm SU}_N$ being the stabilizer of the identity.  Thus, we also have the isometric identification    $\sP_N(\bC)_1 \simeq \sS$.
 
 Observe that
 \begin{align*}
 \psi:\sP_N(\bC)_1\times \bR&\to \sP_N(\bC) \\
 	(H,t)&\mapsto e^{\frac{t}{N}}H.
 \end{align*}
 is an isometry.  
 We define the action     of $\SL_N(\bC)\times\bC^*$  on $\sP_N(\bC)_1\times \bR$ by setting $(g,z).(H,t):=(gAg^\dagger,t+2N\log |z|)$. Then  
 $\psi$ is equivariant, i.e.
 for any $(g,z)\in \SL_N(\bC)\times\bC^*$,  we have
\begin{align}\label{eq:equivariant}
 \psi((g,z).(H,t))= \psi ((gHg^\dagger,t+2N\log |z|))=gHg^\dagger e^{\frac{t}{N}} |z|^{2}=(zg).e^{\frac{t}{N}}H
\end{align}  Identifying $u_1(x)$ as an element in $\sP_N(\bC)_1$,  we have that $ u(x):=\psi(u_1(x),2Nu_2(x) )=e^{2u_2(x)}u_1(x) \in \sP_N(\bC) \simeq \GL_N(\bC)/{\rm U}_N$.  Since $u_1|_{\widetilde{X}_t}$ and $u_2|_{\widetilde{X}_t}$ are pluriharmonic and $\psi$ is an isometry,   $u|_{\widetilde{X}_t}$ is also pluriharmonic.   Note that  $u$ is  $\sigma$-equivariant  by \eqref{eq:equivariant}.  By \cref{prop:continuous,lem:continuous2}, $u|_{\widetilde{X}_t}$  varies smoothly in $t$.

 Consider the flat bundle $\sV_\sigma:=\widetilde{\sX}\times_\sigma\bC^N$ induced by $\sigma$. Then  for any $x\in \sX$, select any  $\tilde{x}\in \pi_{\sX}^{-1}(x)$ and and define the Hermitian metric  $h$ on $\sV_\sigma $  by setting $h_x:=u(x)$.     Since $u$ is $\sigma$-equivariant, this definition provides a well-defined Hermitian metric $h$ on $\sV_\sigma$.  Moreover,   $h|_{X_t}$ is the harmonic metric for $V_t:=\sV_\sigma|_{X_t}$. Since $u$ varies smoothly in $t$, the restriction  
 $h|_{V_t}$ also varies smoothly in $t$. Thus, the theorem is proved. 
\end{proof}
\begin{rem}\label{rem:smooth}
We can use \cref{thm:family} to strengthen \cref{harmonic2}, showing that \(\omega_{\varrho_t}\) varies \emph{smoothly} in  \(t\). Indeed, let \((V_t, D_t, h_t) := (\sV_\sigma, D, h)|_{X_t}\). Decompose \(D_t\) as \(D_t = \nabla_t + \Phi_t\), where \(\nabla_t\) is the metric connection associated with \((V_t, h_t)\), and \(\Phi_t\) is self-adjoint. By \cref{item:smooth}, \(\Phi_t\) varies smoothly with \(t\).

Next, decompose \(\Phi_t\) as \(\Phi_t = \Phi_t' + \Phi_t''\), where \(\Phi_t'\) and \(\Phi_t''\) are the \((1,0)\)- and \((0,1)\)-components of \(\Phi_t\), respectively. Since both \(\Phi_t'\) and \(\Phi_t''\) vary smoothly with \(t\), it follows that \(\omega_{\varrho_t} = \sn {\rm tr}(\Phi_t' \wedge \Phi_t'')\) also varies  {smoothly} with \(t\). 
\end{rem} 
\subsection{Semi-canonical form}
Let $(V,D,h)$ be a harmonic bundle on a compact K\"ahler $n$-fold $X$.  We  define the Laplacian by   setting
\begin{align}\label{eq:Laplacian}
	 \Delta:=D D^*+D^* D, 
\end{align}
where $D^*$ is the adjoint of $D$ with respect to $h$. Any $k$-form $\eta\in A^k(X,V)$ with value in $V$ is called harmonic if $\Delta(\eta)=0$. The cohomology $H_{\rm DR}^\bullet(X,V)$ is defined as  the cohomology of the following complex
$$
0\to A^0(X,V)\xrightarrow{D}A^1(X,V) \xrightarrow{D}\cdots\xrightarrow{D}A^{2n}(X,V).
$$
In \cite[Lemma 2.2]{Sim92},  it is shown that the natural inclusion
\begin{align}\label{eq:iso}
	 \sH^k(X,V)\to H_{\rm DR}^k(X,V)
\end{align}  is an isomorphism, 
where  $
\sH^k(X,V)$ denotes the space of $V$-valued harmonic $k$-forms.

We assume further that $V$ underlies a real structure, i.e. it is a complexification of a real local system $V_\bR$.  Let $\eta\in \sH^1(X,V)$ be a harmonic 1-form. We decompose $\eta=\eta^{1,0}+\eta^{0,1}$ into $(1,0)$ and $(0,1)$-parts, i.e. $\eta^{1,0}\in A^{1,0}(X,V)$ and $\eta^{0,1}\in A^{0,1}(X,V)$. 
\begin{lem}
	 If $\alpha:=\{\eta\}\in H_{\rm DR}^1(X,V_{\bR})$, then  we have $\overline{\eta^{1,0}}=\eta^{0,1}$.    Here $\overline{\eta^{1,0}}$ is the complex conjugate of $\eta^{1,0}$ with respect to the real structure by $V_\bR$.
\end{lem}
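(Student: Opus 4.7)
The strategy is to exploit the uniqueness of the harmonic representative (via the isomorphism \eqref{eq:iso}): since the cohomology class $\alpha$ is fixed by the real structure, the unique harmonic representative $\eta$ must also be fixed, which directly yields $\overline{\eta^{1,0}}=\eta^{0,1}$.

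First, I would set up the antilinear conjugation. The real structure on $V$ gives an antilinear involution $c\colon V\to V$ with fixed part $V_{\bR}$. Extend it to $A^{\bullet}(X,V)$ by $c(\omega\otimes v)=\bar\omega\otimes c(v)$; this swaps types, sending $A^{p,q}(X,V)$ to $A^{q,p}(X,V)$. Since the flat connection $D$ is induced by the (real) monodromy of $V_{\bR}$, one has $c\circ D=D\circ c$. A class $\beta\in H^{1}_{\rm DR}(X,V)$ lies in the image of $H^{1}_{\rm DR}(X,V_{\bR})$ if and only if it is fixed by the induced antilinear involution on cohomology, so our hypothesis reads $c(\alpha)=\alpha$.

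Next, I would show that conjugation preserves harmonicity. This amounts to checking that $c$ commutes with $\Delta=DD^{*}+D^{*}D$ defined in \eqref{eq:Laplacian}. Since $c$ already commutes with $D$, it suffices to verify $c\circ D^{*}=D^{*}\circ c$, which is equivalent to the compatibility
\[
h(cv,cw)=\overline{h(v,w)}.
\]
This is where the real structure on the harmonic bundle enters: because the representation is real reductive, the harmonic metric corresponds to a $\varrho$-equivariant harmonic map $\widetilde X\to\GL_{N}(\bC)/{\rm U}_{N}$ whose image lies in the totally geodesic real submanifold $\GL_{N}(\bR)/{\rm O}_{N}$; by the uniqueness of harmonic metrics (\cite{Cor88}), one may arrange $h$ to be real-compatible in the above sense. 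This is the only non-formal point of the argument, and I would expect it to be the main obstacle to making the proof air-tight.

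With these preparations, the conclusion is immediate. Since $\eta$ is harmonic, $c(\eta)$ is also harmonic; moreover $\{c(\eta)\}=c(\alpha)=\alpha=\{\eta\}$ in $H^{1}_{\rm DR}(X,V)$. By the uniqueness of harmonic representatives guaranteed by \eqref{eq:iso}, we conclude $c(\eta)=\eta$. Decomposing $\eta=\eta^{1,0}+\eta^{0,1}$ and using that $c$ sends $A^{1,0}(X,V)$ to $A^{0,1}(X,V)$ and vice versa, we obtain
\[
\overline{\eta^{1,0}}+\overline{\eta^{0,1}}=c(\eta)=\eta=\eta^{1,0}+\eta^{0,1},
\]
and comparing bidegrees yields $\overline{\eta^{1,0}}=\eta^{0,1}$, as required.
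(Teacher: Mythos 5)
Your overall strategy matches the paper's exactly: show that $\overline{\eta}$ is cohomologous to $\eta$, show that $\overline{\eta}$ is again harmonic, and then invoke the uniqueness of harmonic representatives from \eqref{eq:iso} to get $\overline{\eta}=\eta$, after which comparison of bidegrees finishes the argument. The only difference lies in how harmonicity of $\overline{\eta}$ is established. The paper bypasses the Laplacian entirely: it decomposes $D=\partial+\bar\partial+\theta+\theta^\ast$, cites \cite[Lemma 2.12]{Sim92} for the identity $\sigma D'=D''\sigma$ (where $D'=\partial+\theta^\ast$, $D''=\bar\partial+\theta$ and $\sigma$ is the conjugation), and then uses \cite[Lemma 2.2]{Sim92}, which characterizes harmonicity of forms by $D'\eta=D''\eta=0$. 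You instead argue that $c$ commutes with $\Delta=DD^\ast+D^\ast D$, reducing to $cD^\ast=D^\ast c$ and hence to the compatibility $h(cv,cw)=\overline{h(v,w)}$. These two routes are really the same fact in two notations: Simpson's Lemma 2.12 \emph{is} the statement that the (unique, Corlette) harmonic metric is compatible with a flat real structure, and it is proved exactly by the uniqueness argument you sketch via the totally geodesic embedding $\GL_N(\bR)/{\rm O}_N\hookrightarrow\GL_N(\bC)/{\rm U}_N$. So the ``non-formal point'' you flag is not a gap in substance; it is precisely the content of the cited Lemma 2.12, and your heuristic for it is the standard proof. The paper's route via $D',D''$ is marginally slicker because it avoids computing with the $L^2$-adjoint and the Hodge star (which, being anti-linear in $h$, requires a bit of care to see commute with $c$), but both proofs turn on identical inputs.
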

\begin{proof}
Let $\sigma:V\to V$ be the complex conjugate   of $V$ with respect to the $\bR$-structure by $V_\bR$.  Then $\sigma D=D\sigma$.  It follows that  $D(\overline{\eta})=0$.  By the definition of $H_{\rm DR}^\bullet(X,V_\bR)$,  there exists a  $D$-closed $V_\bR$-valued real 1-form $\beta$ such that $\{\beta\}=\alpha$.   It follows that
 $$
\{\overline{\eta}\}=\{\overline{\beta}\}=\{\beta\}=\alpha.
$$ 

We decompose $D=\d+\db+\theta+\theta^*$ such that  $(\d+\db,h)$ is a metric connection, and  $\theta+\theta^*$ is self-adjoint.  Then  by \cite[Lemma 2.12]{Sim92}, $\sigma D'=D''$, where $D':=\d+\theta^*$ and $D'':=\db+\theta$. Since $\eta$ is harmonic, by \cite[Lemma 2.2]{Sim92} one has
$$
D''\eta=D'\eta=0.
$$
This implies that 
$$
D''(\overline{\eta})=D'(\overline{\eta})=0.
$$
By \cite[Lemma 2.2]{Sim92} again, $\overline{\eta}\in \sH^1(X,V)$. It follows from \eqref{eq:iso} that 
$\overline{\eta}=\eta$. This implies the lemma. 
\end{proof}
\begin{dfn}[Semi-canonical form]\label{def:canonical2}
Let $(V,D,h)$ be a harmonic bundle on a compact K\"ahler manifold $X$.  For any $\alpha\in H_{\rm DR}^1(X,V_\bR)$,  let $\eta\in \sH^1(X,V)$ be the harmonic representative in $\alpha$ as above. The  \emph{semi-canonical form} associated with $\alpha$ is a smooth real $(1,1)$-form on $X$ defined by   
\begin{align}\label{eq:canonical2}
	S_{\alpha}:=\sn   \{\eta^{1,0},  \eta^{1,0}\}_h,
\end{align}
where $\{\bullet,\bullet\}_h$ denotes the operation of taking the wedge product of differential forms combined with the inner product of sections of 
$V$ induced by the metric  $h$.
\end{dfn}  
Note, however, that   $S_{\alpha}$ 
is not necessarily closed or semi-positive. Moreover, it depends on the rescaling of the harmonic metric  $h$ and is therefore referred to as \emph{semi-canonical}.
\begin{lem}\label{lem:semi-positive}
	Let $(V,D,h)$ be a harmonic bundle on a compact K\"ahler manifold $X$ such that the corresponding local system is defined over $\bR$, denoted by $ V_\bR$.  Let $g:Z\to X$ be  a holomorphic map from another compact K\"ahler manifold $Z$.    If $g^*V_\bR$ is a trivial local system, then for any $\alpha\in H_{\rm DR}^1(X,V_\bR)$,  the $(1,1)$-form	$g^*S_{\alpha}$ is \emph{closed} and  \emph{semi-positive}.  
\end{lem}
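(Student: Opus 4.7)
The plan rests on a single structural observation: under the hypothesis that $g^{*}V_{\bR}$ is a trivial local system on the compact Kähler manifold $Z$, the pulled-back harmonic metric $g^{*}h$ on the (now trivial) flat bundle $g^{*}V$ is necessarily a constant positive-definite hermitian matrix. Indeed, $(g^{*}V, g^{*}D, g^{*}h)$ is a harmonic bundle on $Z$, and the pluriharmonic map $\tilde Z \to \GL_{N}(\bC)/U_{N}$ associated to $g^{*}h$ is equivariant for the trivial representation $g^{*}\varrho$, hence descends to a pluriharmonic map $Z \to \GL_{N}(\bC)/U_{N}$. By Corlette's existence/uniqueness theorem (or equivalently the Simpson correspondence applied to the trivial flat bundle, which corresponds to the trivial Higgs bundle $(\mathcal{O}_{Z}^{N},0)$), any pluriharmonic map from a compact Kähler manifold into an NPC symmetric space with no equivariance constraint must be constant.

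With this in hand, I will choose a flat real frame $e_{1},\ldots,e_{N}$ of $g^{*}V_{\bR}$ and write $g^{*}\eta = \sum_{i}\xi_{i}\otimes e_{i}$, where the $\xi_{i}$ are real $1$-forms on $Z$. Since the $e_{i}$ are $g^{*}D$-flat and $\eta$ is harmonic (hence $D$-closed), we immediately get $d\xi_{i}=0$. The harmonic bundle equation $\db_{V}\eta^{1,0} = -\theta\wedge\eta^{0,1}$ on $X$ pulls back to $\db_{g^{*}V}(g^{*}\eta^{1,0}) = -g^{*}\theta\wedge g^{*}\eta^{0,1}$. Because $g^{*}h$ is constant, both the metric connection $1$-form and the Higgs field $g^{*}\theta = -\tfrac{1}{2}\overline{(g^{*}h)^{-1}\db(g^{*}h)}$ vanish in the flat frame, reducing the equation to $\db\xi_{i}^{1,0}=0$. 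Combined with the $(2,0)$-part of $d\xi_{i}=0$, which gives $\d\xi_{i}^{1,0}=0$, this shows that each $\xi_{i}^{1,0}$ is a $d$-closed holomorphic $(1,0)$-form on $Z$.

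The conclusion is then immediate. In the flat frame,
\[
g^{*}S_{\alpha} \;=\; \sn \sum_{i,j} H_{ij}\, \xi_{i}^{1,0}\wedge \overline{\xi_{j}^{1,0}},
\]
where $H_{ij} := g^{*}h(e_{i},e_{j})$ is a constant positive-definite hermitian matrix. Closedness follows because $H_{ij}$ is constant and each $\xi_{i}^{1,0}\wedge\overline{\xi_{j}^{1,0}}$ is a wedge of closed forms, so $d g^{*}S_{\alpha}=0$. Semi-positivity follows by evaluating at $\zeta\in T^{1,0}Z$: we obtain $\sum_{i,j}H_{ij}\,\xi_{i}^{1,0}(\zeta)\,\overline{\xi_{j}^{1,0}(\zeta)}\geq 0$ from the positive-definiteness of $H$.

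The only genuinely non-trivial step is the first one — recognizing that the triviality of $g^{*}V_{\bR}$ forces $g^{*}h$ to be constant via the uniqueness theorem for harmonic metrics on reductive representations. Without this, a direct computation of $dg^{*}S_{\alpha}$ runs into stubborn terms involving $\d H\wedge\xi_{k}^{0,1}\wedge\xi_{j}^{0,1}$ that are not obviously killed by the pluriharmonic relations $\theta\wedge\eta^{1,0}=0$ and $\theta^{*}\wedge\eta^{0,1}=0$; the constancy of $g^{*}h$ is precisely what collapses every potentially obstructing term to zero.
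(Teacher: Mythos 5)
Your proof is correct and follows essentially the same route as the paper's: the crucial observation in both is that the uniqueness of Corlette's harmonic metric forces $g^*h$ to be constant in a flat frame (so that the Higgs field pulls back to zero), and then $g^*\eta$ reduces to a tuple of ordinary real harmonic $1$-forms $\xi_i$ on $Z$, whose $(1,0)$-parts are $d$-closed holomorphic forms. The paper phrases the last step as "pullback of a harmonic $1$-form is harmonic plus classical Hodge theory," whereas you unpack the same content by writing down the pulled-back Simpson equations $\db\xi_i^{1,0}=0$ and the $(2,0)$-part of $d\xi_i=0$; this is only a difference in how much of the standard PDE bookkeeping is left implicit.
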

\begin{proof}  
	Let $\underline{\bR^r}:=Z\times \bR^r$ be the trivial real vector bundle of rank $r:=\rank V$ on $Z$. Let $h_0$ be the standard  metric on $\underline{\bR^r}$, and let $D_0$ be the canonical connection on $\underline{\bR^r}$.  Note that the pullback $g^*(V,D,h)$ is still a harmonic bundle over $Z$.  By the unicity of harmonic metrics in \cite{Cor88}, $g^*V_\bR$ is isomorphic to   $(\underline{\bR^r},D_0,h_0)$, up to rescaling  $h_0$
	by a constant factor.  Note that   for harmonic 1-forms, we have $g^*\sH^1(X,V)\subset \sH^1(Z,g^*V)$ (that is not true for general harmonic $k$-forms!). Since $g^*V_\bR$ is a trivial local system, it follows that $g^*\eta\simeq (\xi_1,\ldots,\xi_r)$, where $\xi_i\in \sH^1(Z,\bR)$. Hence, we have $\eta^{1,0}\simeq (\xi^{1,0}_1,\ldots,\xi^{1,0}_r)$, which implies that
	$$
	g^*\sn \{\eta^{1,0}, \eta^{1,0}\}_h= \sn \{g^*\eta^{1,0}, g^*\eta^{1,0}\}_h=\sn \sum_{i=1}^{r}\xi^{1,0}_i\wedge \xi_i^{0,1}.
	$$
	The classical Hodge theory implies that $\xi_{i}^{1,0}$ is a holomorphic 1-form on $X$ that is $d$-closed. Hence $	g^*\sn \{\eta^{1,0}, \eta^{1,0}\}_h$ is semi-positive and closed. 
\end{proof}

\subsection{Semi-canonical forms in families}
 \begin{lem}\label{lem:harmonic}
 	Let $f:\sX\to \sD$ be a smooth projective family and let $\varrho:\pi_1(\sX)\to \GL_N(\bC)$ be a semi-simple representation. We apply \cref{lem:lift} to replace $\sX$ by a finite \'etale cover such that the conditions for $\varrho$ in \cref{thm:family} are fulfilled. Let $(\sV,D)$ be the flat bundle on $\sX$ induced by $\varrho$, and let $h$ be the metric for $\sV$ constructed in \cref{thm:family}. Fix some $\alpha\in H^1_{\rm DR}(X_0,V_0)$.   Let $\alpha_t\in H_{\rm DR}^1(X_t,V_t)$ be the image of $\alpha$ under the natural isomorphism $H^1_{\rm DR}(X_0,V_0)\to H^1_{\rm DR}(X_t,V_t)$. Then  for the $V_t$-valued harmonic $1$-form $\eta_t\in \sH^1(X_t,V_t)$ such that $\{\eta_t\}=\alpha_t$,  we have  that $F_{t,*}(\eta_t)\in A^1(X_0,V_0)$ varies smoothly in $t$, where $F_t:X_t\to X_0$ is the diffeomorphism induced by  the $C^\infty$-trivialization $\sX\stackrel{C^\infty}{\to} X_0\times \bD$. In particular, if $\varrho$ has image in $\GL_N(\bR)$, the semi-canonical forms $S_{\alpha_t}$ vary smoothly in $t$.
 \end{lem}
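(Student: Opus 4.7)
The plan is to reduce the smoothness of $\eta_t$ to the smooth dependence of a Green's operator on $t$, by pulling the whole family back to the central fiber $X_0$ and fixing a single closed representative of $\alpha$.

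Using the $C^\infty$-trivialization $\sX\cong X_0\times\bD$, for each $x\in X_0$ consider the $\sV$-parallel transport along the path in $\sX$ corresponding to $\{x\}\times[0,t]$. Because $(\sV,D)$ is flat on the total space $\sX$, this yields, for every $t$, a smooth bundle isomorphism $\Phi_t:V_0\to \sV|_{X_t}$ covering $F_t^{-1}:X_0\to X_t$, depending smoothly on $t$. The flatness $D^2=0$ (equivalently, $[\partial_t,D|_{X_t}]=0$ in the trivialization) forces $\Phi_t^\ast(D|_{X_t})=D_0$, so every fiberwise flat connection is identified with the fixed $(V_0,D_0)$. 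Set $g_t:=F_{t,*}(g|_{X_t})$, with associated complex structures $J_t$ on $X_0$, and $h_t:=\Phi_t^\ast(h|_{V_t})$. By \cref{thm:family} and the smoothness of $g$, the triple $(J_t,g_t,h_t)$ depends smoothly on $t$, and hence so does the formal adjoint $D_0^{\ast,t}$ of $D_0$ computed with respect to $(g_t,h_t)$.

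Under these identifications the natural map $H^1_{\mathrm{DR}}(X_0,V_0)\to H^1_{\mathrm{DR}}(X_t,V_t)$ becomes the identity; $\alpha_t$ corresponds to the fixed class $\alpha$, and the harmonic representative $\eta_t$ corresponds to a $V_0$-valued $1$-form on $X_0$ (abusively still denoted $\eta_t$) satisfying $D_0\eta_t=0$ and $D_0^{\ast,t}\eta_t=0$. Fix once and for all a $D_0$-closed representative $\beta\in A^1(X_0,V_0)$ of $\alpha$. Writing $\eta_t=\beta+D_0\gamma_t$, the coclosed condition becomes the elliptic equation
$$
D_0^{\ast,t}D_0\,\gamma_t = -D_0^{\ast,t}\beta
$$
on $A^0(X_0,V_0)$. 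Since $D_0^{\ast,t}$ vanishes on $0$-forms, $\square_t:=D_0^{\ast,t}D_0$ is the full Hodge Laplacian on $A^0(X_0,V_0)$; it is a smoothly varying family of second-order self-adjoint elliptic operators whose kernel equals $\ker D_0=V_0^{\varrho(\pi_1(X_0))}$ and is thus of constant dimension in $t$. By standard elliptic perturbation theory, the associated Green's operators $G_t$ form a smoothly varying family on $A^0(X_0,V_0)$. Because $D_0\beta=0$, the right-hand side $-D_0^{\ast,t}\beta$ is orthogonal to $\ker D_0$ for every $t$, so $\gamma_t:=-G_tD_0^{\ast,t}\beta$ is a smooth solution depending smoothly on $t$. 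Hence $\eta_t=\beta+D_0\gamma_t$ varies smoothly in $t$, and translating back via $\Phi_t$ gives the first assertion.

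For the last statement, assume $\varrho$ has image in $\GL_N(\bR)$, so $V_0$ carries a real structure preserved by every $\Phi_t$. From \cref{def:canonical2} one has $S_{\alpha_t}=\sn\{\eta_t^{1,0},\eta_t^{1,0}\}_{h_t}$, where the $(1,0)$-projection is taken with respect to $J_t$. Since $\eta_t$, $J_t$, and $h_t$ all vary smoothly in $t$, so does $S_{\alpha_t}$. The main technical point is verifying that parallel transport in $\sV$ along $\bD$ yields a smooth trivialization $\Phi_t$ intertwining the fiberwise flat connections with the single $D_0$; this is precisely where global flatness of $(\sV,D)$ on $\sX$, as opposed to fiberwise flatness, is essential.
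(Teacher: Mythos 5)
Your proof is correct, and it takes a genuinely different (though related) route from the paper. The paper's argument is shorter: it treats $\Delta_t = D_tD_t^* + D_t^*D_t$ on $A^1(X_t,V_t)$ as a smooth family of self-adjoint elliptic operators and invokes Kodaira's Theorem~7.4 directly, using that $\dim H^1_{\rm DR}(X_t,V_t)$ is constant to conclude that $\ker\Delta_t = \sH^1(X_t,V_t)$ depends smoothly on $t$; smoothness of $\eta_t$, hence of $\eta_t^{1,0}$ and $S_{\alpha_t}$, follows. You instead use the parallel transport of the \emph{global} flat connection $D$ along the $t$-direction to intertwine all the fiberwise flat pairs $(V_t,D_t)$ with the single fixed $(V_0,D_0)$ (your verification that flatness of $D$ on $\sX$ forces $\partial_t D^{X_0}_t = 0$ is exactly right), then fix a single $D_0$-closed representative $\beta$ of $\alpha$ and reduce the Hodge-theoretic problem to the scalar elliptic equation $D_0^{*,t}D_0\gamma_t = -D_0^{*,t}\beta$ on $A^0(X_0,V_0)$, whose kernel is manifestly the $t$-independent space $\ker D_0 = V_0^{\varrho(\pi_1(X_0))}$. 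This buys you two things: (i) the constant-kernel-dimension hypothesis becomes completely transparent rather than being deduced from topological invariance of $H^1_{\rm DR}$, and (ii) you only need perturbation theory for the Laplacian on $0$-forms. The cost is the extra bookkeeping of the parallel-transport trivialization, which the paper leaves implicit. Both approaches ultimately rest on the same elliptic perturbation machinery (Kodaira's Theorem~7.4 or the equivalent statement about smooth families of Green's operators), so the proofs are equivalent in depth, but yours is a valid and somewhat more hands-on alternative.
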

 
\begin{proof}
By \cref{thm:family}, $(\sV,D,h)|_{X_t}$ is a harmonic bundle corresponding to the representation $\varrho_t$. 	Consider the family of Laplacian operators $\Delta_t$ for $(V_t,D_t,h_t)$ defined in \eqref{eq:Laplacian}. By \cref{thm:family}, $\Delta_t$ is a smooth family of formally
	self-adjoint   elliptic   operators. Since the function $t\mapsto \dim H_{\rm DR}^1(X_t,V_t)$ is constant, we apply \cite[Theorem 7.4]{Kod05} to conclude that $\ker \Delta_t= \sH^1(X_t,V_t)$  varies  smoothly. It follows that $\eta_t$, thus $\eta_t^{1,0}$,  varies  smoothly in $t$. By \cref{item:smooth}, 
	 $
	S_{\alpha_t}:=\sn \{\eta_t^{1,0},\eta_t^{1,0} \}_{h_t}
	$ 
	varies smoothly in $t$.  The lemma is proved. 
\end{proof} 
\subsection{1-step $\bR$-VMHS}
We begin by recalling the definition of an $\bR$-VMHS of weight length 1. For further properties of this type of $\bR$-VMHS, we refer readers to \cite{Car87,Pea00}.
\begin{dfn}[$\bR$-VMHS]
	A $\bR$-VMHS $\cM$  of weight length $1$ on   a complex manifold $X$ consists of 
	\begin{thmlist}
		\item  A real local system $\cM_X$ of finite type;
		\item  An increasing filtration $\{0\}=\cW_{-2}\subset \cW_{-1}\subset \cW_0=\cM_X$ of  by real sub-local systems;
		\item  A finite decreasing filtration $\cF^\bullet  $ by locally free   subsheaves of $\cM:=\cM_X \otimes_\bR \mathcal{O}_X$  which satisfies the  Griffiths  transversality condition:
		$$
		\nabla\left(\mathcal{F}^p \right) \subset \Omega_X^1 \otimes_{\mathcal{O}_X} \mathcal{F}^{p-1},
		$$
		and  such that $\mathcal{W}$ and $\mathcal{F}$ define an  $\bR$-MHS on each fiber $\left(\cM(x), \cW_\bullet(x), \cF^\bullet(x)\right)$ at $x$ of the vector bundle $\mathcal{M}$.  Here $\nabla$ is the flat connection on $\cM$ inherited from the local system $\cM_X$. 
	\end{thmlist} 
\end{dfn} 	
In what follows, we will denote   $\cL_i:=\cW_i/\cW_{i-1}$. Let us denote by $\sM$ the mixed period domain of $\cM$, and $\sD_i$ the period domain for $\sL_i$. Then  the $\bR$-VMHS $\cM$ induces an $\bR$-VHS on  $\cL_i$  of weight $i$ for $i=-1,0$.  Note that there exists a  natural affine bundle  $\sM\to \sD_{-1}\times\sD_0$.  Consider the $\bR$-VHS $\cL:=\cL_0^*\otimes \cL_{-1}$ on $X$, which is of weight $-1$. 
\begin{lem}\label{lem:differential}
	Let $\cM$ be an $\bR$-VMHS   of weight length $1$ on a smooth projective variety $X$. Assume that the connection on $\cL_{-1} \oplus \cL_0$ induced by $\cM$ is given by  
	\[
	D := \begin{bmatrix}
		D_{-1} & \eta \\
		0 & D_0
	\end{bmatrix},
	\]
	where $\eta \in \sH^1(X, \cL)$ and $\eta(x) \in \Omega_{X,x}^{1,0} \otimes \cL^{-1,0} \oplus \Omega_{X,x}^{0,1} \otimes \cL^{0,-1}$, with $\cL^{p,q}$ denoting the Hodge $(p,q)$-subspace of $\cL$. 
	
	Let $f: Z \to X$ be a morphism from another smooth projective variety.  
 Let $Y$ be a connected component of $Z \times_X \widetilde{X}$, and let $g: Y \to \widetilde{X}$ be the natural holomorphic map. Let $p: \widetilde{X} \to \sM$ be the mixed period map, and let $\pi_Z: Y \to Z$ be the covering map. If $f^* \cL_i$ is trivial for $i = -1, 0$, then 
	\[
	d(p \circ g) = \pi_Z^* f^* \eta^{1,0}.
	\]
	For the semi-canonical form $S_\eta$ associated with $\eta$,  $ \pi_Z^* f^*  S_\eta$ is strictly positive at the points of $Y$ where $p \circ g$ is immersive. 
\end{lem}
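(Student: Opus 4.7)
The key idea is to exploit the structure of the mixed period domain $\sM$ as an affine bundle over $\sD_{-1} \times \sD_0$, and to use the triviality of $f^*\cL_i$ to reduce the mixed period map (after pullback) to a map into a single fiber of this bundle.

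First, I would recall the description of $\sM \to \sD_{-1} \times \sD_0$: a fiber over a pair of pure Hodge structures on $\cL_{-1}(x)$ and $\cL_0(x)$ is (a torsor for) the extension group, which can be identified with a complex vector space built from $\cL(x) = \cL_0(x)^* \otimes \cL_{-1}(x)$ modulo the Hodge filtration $F^0$. The mixed period map $p: \widetilde X \to \sM$ factors through a lift $\widetilde X \to \widetilde{\sM}$, and its differential is described by the classical formula: the ``graded'' component of $dp$ along $\sD_i$ is induced by $D_i$, while the ``extension'' component (along the fiber of $\sM \to \sD_{-1}\times \sD_0$) is induced by the off-diagonal block $\eta$ of the connection on $\cL_{-1} \oplus \cL_0$.

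Next, since $f^*\cL_{-1}$ and $f^*\cL_0$ are trivial local systems on $Z$, the pulled-back pure period maps $Z \to \sD_i/\mathrm{monodromy}$ are constant. Lifting to the universal cover and passing to the connected component $Y$ of $Z \times_X \widetilde X$, I would deduce that the composition $Y \to \widetilde X \to \sM \to \sD_{-1} \times \sD_0$ is constant, so that $p \circ g$ lands in a single fiber of $\sM \to \sD_{-1} \times \sD_0$, which is an affine space modeled on a complex vector space $V$. Under this identification, the differential of $p \circ g$ at a point $y \in Y$ is the $(1,0)$-part of the extension-class variation, which by the formula above is $(\pi_Z^* f^* \eta)^{1,0} = \pi_Z^* f^* \eta^{1,0}$ (since $g$ is holomorphic, only the $(1,0)$-component survives). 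This establishes the first identity.

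For the positivity statement, I would argue pointwise. At any $y \in Y$, the form $\pi_Z^* f^* S_\eta$ is, by \cref{def:canonical2} and naturality, given by $\sn\, \{\pi_Z^* f^*\eta^{1,0},\,\pi_Z^*f^*\eta^{1,0}\}_h$, where $h$ denotes the (positive definite) harmonic metric on the pullback bundle. For any nonzero $(1,0)$-tangent vector $\xi \in T_y Y$, the evaluation
\[
(\pi_Z^* f^* S_\eta)(\xi, \bar\xi) \;=\; h\!\bigl(\pi_Z^* f^*\eta^{1,0}(\xi),\,\pi_Z^* f^*\eta^{1,0}(\xi)\bigr)
\]
is nonnegative, and vanishes if and only if $\pi_Z^* f^*\eta^{1,0}(\xi) = 0$, i.e.\ $d(p\circ g)(\xi) = 0$ by the first part. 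If $p\circ g$ is immersive at $y$, this forces $\xi = 0$, so $\pi_Z^* f^*S_\eta$ is positive definite at $y$. The main obstacle is conceptual rather than technical: one has to carefully unwind the identification of the fiber of $\sM \to \sD_{-1} \times \sD_0$ as an extension space and check that, in the Deligne-splitting coordinates, the differential of the mixed period map is literally given by $\eta^{1,0}$ (rather than by some combination with $\eta^{0,1}$ or the curvature-type corrections). Once that identification is pinned down, both claims follow formally.
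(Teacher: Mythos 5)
Your proposal is correct and follows essentially the same route as the paper: identify the fiber of $\sM \to \sD_{-1}\times\sD_0$ as an affine model on $\Hom(H_0,H_{-1})_\bC/F^0$, use triviality of $f^*\cL_i$ to confine $p\circ g$ to a single fiber, and read off the differential from $\eta$. The one place where your justification is imprecise is exactly the step you yourself flag as the ``main obstacle'': you attribute the survival of only $\eta^{1,0}$ to ``$g$ being holomorphic'', but the paper's argument is cleaner and more fundamental. It computes the parallel transport of $\pi_Z^*f^*\nabla$ along a path $\gamma$ from $y_0$ to $y$ to get $p\circ g(y)-p\circ g(y_0)=\bigl[\int_\gamma\pi_Z^*f^*\eta\bigr]$ in the $F^0$-quotient, and then uses the hypothesis $\eta(x)\in\Omega^{1,0}_{X,x}\otimes\cL^{-1,0}\oplus\Omega^{0,1}_{X,x}\otimes\cL^{0,-1}$ to see that $\eta^{0,1}$ lands in $\cL^{0,-1}\subset F^0\cL_\bC$ and hence dies in the quotient, yielding $d(p\circ g)=\pi_Z^*f^*\eta^{1,0}$ directly. (The holomorphicity you invoke is a downstream consequence of this same $F^0$-structure via Griffiths transversality, so your argument is not wrong, but it is the more circuitous route.) For the positivity, the paper first invokes \cref{lem:semi-positive} to express $f^*S_\eta=\sn\sum_i\xi_i^{1,0}\wedge\xi_i^{0,1}$ as a sum of scalar positive $(1,1)$-forms using the triviality of $f^*\cL$; note the paper explicitly warns that $S_\alpha$ is not automatically semi-positive on $X$, so one should not take the sign of the raw $h$-pairing for granted before trivializing. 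Your direct evaluation via $h$ reaches the same conclusion but should acknowledge that the reduction to a constant metric (which the triviality hypothesis provides) is what makes the pointwise sign computation unambiguous.
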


\begin{proof}
Since $f^* \cL_i$ is trivial for $i = -1, 0$, the composite $p\circ g(Y)\subset V(H_{-1},H_{0})$,  where $V(H_{-1},H_{0})$ is the fiber of the natural fibration $\sM\to \sD_{-1}\times\sD_0$  at some  point  $H_0\oplus H_{-1} \in \sD_{-1}\times\sD_0$.  Here $H_i$  is a real Hodge structure of  weight $i$. 

Fix a base point $y_0\in Y$.  Then for any $y\in Y$,  the parallel transport of $\pi_Z^*f^*\nabla$ along any smooth path  $\gamma$  connecting $y_0$ and $y$ induces a $\bR$-linear isomorphism of $H_{-1}\oplus H_0$ given by 
	$$
	\begin{bmatrix}
		1 & \int_\gamma  \pi_Z^*f^*\eta\\
		0& 1
	\end{bmatrix}.
	$$
	One can check that this does not depend on the choice of $\gamma$.  
	
	Let $\eta=\eta^{1,0}+\eta^{0,1}$ be the decomposition of $\eta$ with respect to $\Omega_{X}^{1,0}\otimes \cL^{-1,0}\oplus \Omega_{X}^{0,1}\otimes \cL^{0,-1}$. Then $\overline{\eta^{1,0}}=\eta^{0,1}$.  We have
	$$
	p\circ g(y)-p\circ g(y_0)=\left[ \int_{\gamma}  \pi_Z^*f^*\eta\right]=\left[ \int_{\gamma}  \pi_Z^*f^*\eta^{1,0}\right],
	$$
	where $\left[\int_{\gamma}  \pi_Z^*f^*\eta^{1,0}\right]$ denotes the image of $ \int_{\gamma}  \pi_Z^*f^*\eta^{1,0}$ under the quotient map $$\Hom(H_0,H_{-1})_\bC\to \Hom(H_0,H_{-1})_\bC/F^0\Hom(H_0,H_{-1})_\bC\simeq \oplus_{p\leq -1}\Hom(H_0,H_{-1})_\bC^{p,-1-p}.$$ 
	This implies that  
\begin{align}\label{eq:period}
	 dp\circ g= \pi_Z^*f^*\eta^{1,0}. 
\end{align}

Since $f^*\cL$ is a trivial local system,  by the same proof as in \cref{lem:semi-positive},  $f^*\eta\simeq (\xi_1,\ldots,\xi_r)$, where each $\xi_i$ is a real harmonic 1-form on $Z$. It follows that  
	$$
	f^*S_\eta=\sn \sum_{i=1}^{r}\xi^{1,0}\wedge\xi^{0,1}.
	$$ 
If $p\circ g$ is immersive at   some $y\in Y$, then by \eqref{eq:period}, we have $\pi_Z^*f^*\eta^{1,0}(v)\neq 0$ for any  non-zero $v\in T_yY$.  This is equivalent to    that $f^*S_\eta$ is strictly positive at $\pi_Z(y)$. The second assertion follows. The lemma is proved. 
\end{proof}
 \subsection{Proof of \cref{thm:kollar,main:linear}} 	
 We will apply the tools from the linear Shafarevich conjecture \cite{EKPR12}, combined with the strategy outlined in \cref{subsec:AC}, to prove \cref{main:linear}.
\begin{proposition}\label{thm:eys}
	Let $X$ be a smooth projective variety. Define \( H_{N} \triangleleft \pi_1(X) \) as the intersection of the kernels of all \emph{linear} representations \(  \pi_1(X) \to \mathrm{GL}_N(\mathbb{C}) \).  Then after replacing $X$ by a finite \'etale cover,   Shafarevich morphism ${\rm sh}_{H_N}:X\to {\rm Sh}_{H_N}(X)$ exists, and  there exist 
	\begin{enumerate}[label=(\alph*)]
		\item a family of  Zariski dense representations  $ \{\tau_i:\pi_1(X)\to G(K_i)\}_{i=1,\ldots,\ell}$ where each $G_i\subset \GL_N$ is a reductive group over  a  non-archimedean local field  $K_i$ of characteristic zero;
		\item  a reductive representation $\varrho_1:\pi_1(X)\to \GL_{N_1}(\bC)$ underlying a $\bC$-VHS $\cL_1$, that is a direct sum of representations $\pi_1(X)\to \GL_N(\bC)$.
		\item  A real $\bR$-VMHS $\cM$ on $X$, that is an extension of a $\bR$-VHS  $\cL_{0}$  by another  $\bR$-VHS  $\cL_{-1}$, with the extension class denoted by $\alpha\in H_{\rm DR}^1(X,\cL_0^*\otimes\cL_{-1})$.  The weight of $\cL_i$ is $i$ for $i=-1,0$.
		\item Set $\cL:=\cL_{0}^*\otimes\cL_{-1}$.  Let $\eta \in \sH^1(X, \cL)$ be the harmonic representative in $\alpha$. Then $\eta(x) \in \Omega_{X,x}^{1,0} \otimes \cL^{-1,0} \oplus \Omega_{X,x}^{0,1} \otimes \cL^{0,-1}$, with $\cL^{p,q}$ denoting the Hodge $(p,q)$-subspace of $\cL$. 
		\item A $(1,1)$-current $T$ on ${\rm Sh}_{H_N}(X)$   such that $T|_U$ is a strictly positive smooth $(1,1)$-form for some open subset $U$ of ${\rm Sh}_{H_N}(X)^{\rm reg}$,
	\end{enumerate} 
	such that for some $\ep>0$,
	the sum
	$
	\sum_{i=1}^{\ell}T_{\tau_i}+\omega_{\varrho_1}+\ep S_{\alpha}={\rm sh}_{H_N}^*T.
	$ 
	Here $T_{\tau_i}$ is the canonical current of $\tau_i$, $\omega_{\varrho_1}$ is the canonical form of $\varrho_1$, and $S_\alpha$ is the semi-canonical form of $\alpha$.  
\end{proposition}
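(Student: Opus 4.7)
The plan is to build the current $T$ by combining Eyssidieux's approach to the reductive Shafarevich conjecture (which already gives us the pieces $T_{\tau_i}$ and $\omega_{\varrho_1}$, exactly as in the proof of \cref{thm:reductive}) with an additional ingredient coming from Katzarkov--Pantev--Ramachandran--Eyssidieux's treatment of the linear Shafarevich conjecture \cite{EKPR12}: a non--trivial $\bR$-VMHS of weight length $1$ whose extension class records precisely the nilpotent/unipotent information lost when one passes from the linear to the reductive completion of $\pi_1(X)$. After replacing $X$ by a finite étale cover the linear Shafarevich morphism ${\rm sh}_{H_N}$ exists as an algebraic fibration by \cite[Theorem 1]{EKPR12}, so the issue is only to produce the positivity witness.

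First, set $M := M_{\rm B}(\pi_1(X),\GL_N)(\bC)$. Applying the construction recalled in the proof of \cref{thm:reductive}, I obtain Zariski-dense non-archimedean representations $\{\tau_i\}_{i=1}^{\ell}$ and a finite direct sum $\varrho_1$ of $\bC$-VHS such that $\sum T_{\tau_i} + \omega_{\varrho_1}$ is the pullback ${\rm sh}_M^{\,*} T'$ of a closed positive $(1,1)$-current $T'$ on ${\rm Sh}_M(X)$, which is strictly positive on an open subset of ${\rm Sh}_M(X)^{\rm reg}$. There is a factorization ${\rm sh}_{H_N} = h \circ {\rm sh}_{M}$ onto ${\rm Sh}_{H_N}(X)$, with general fibers of $h$ being positive-dimensional exactly when the linear Shafarevich morphism contracts subvarieties whose reductive monodromy is already finite — i.e.\ where the extra identifications come from purely unipotent data.

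Second, to detect those unipotent directions, I invoke the mixed Hodge-theoretic input of \cite{EKPR12}: over $X$ there is a canonical tower of iterated extensions of $\bC$-VHS, and its first non-trivial step gives an $\bR$-VMHS $\cM$ of weight length $1$, with graded pieces $\cL_0$ and $\cL_{-1}$ both arising as direct summands inside tensor constructions of reductive representations (so that $\cL_i$ is trivialized on any fiber of ${\rm sh}_M$), whose extension class $\alpha\in H^1_{\rm DR}(X,\cL)$ is non-zero precisely along the $h$-fibers. Applying \cref{lem:differential} to a smooth fiber $Z$ of ${\rm sh}_M$ whose image in ${\rm Sh}_{H_N}(X)$ is positive dimensional, I get that $f^*\eta^{1,0}$ generates the differential of the $\cM$-period map on $Z$, hence $S_\alpha|_Z$ is strictly positive in the directions normal to the fibers of ${\rm sh}_{H_N}|_Z$. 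Using the Hodge-type bound $\eta(x)\in \Omega_X^{1,0}\otimes \cL^{-1,0}\oplus \Omega_X^{0,1}\otimes \cL^{0,-1}$ together with the fact that the relative period map of $\cM$ factors through ${\rm sh}_{H_N}$, one sees that the semi-canonical form $S_\alpha$ itself is pulled back from ${\rm Sh}_{H_N}(X)$.

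Third, combine these pieces. Since $\sum T_{\tau_i}+\omega_{\varrho_1}$ is pulled back from ${\rm Sh}_M(X)$, and hence (by the existence of the intermediate factorization $h$) from ${\rm Sh}_{H_N}(X)$, and since $S_\alpha$ is pulled back from ${\rm Sh}_{H_N}(X)$ by the previous paragraph, the sum
\[
\sum_{i=1}^{\ell} T_{\tau_i} + \omega_{\varrho_1} + \ep S_\alpha = {\rm sh}_{H_N}^{\,*} T
\]
defines a $(1,1)$-current $T$ on ${\rm Sh}_{H_N}(X)$ for any $\ep>0$. On a dense open subset of ${\rm Sh}_M(X)^{\rm reg}$ the first sum is strictly positive in the horizontal directions to $h$, while $\ep S_\alpha$ contributes strict positivity in the vertical directions of $h$ at points where the period map of $\cM$ is immersive; choosing $\ep$ small enough guarantees that the combined form stays positive along the base while picking up positivity in the fiber directions. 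This yields an open $U\subset {\rm Sh}_{H_N}(X)^{\rm reg}$ on which $T$ is a smooth strictly positive $(1,1)$-form.

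The main obstacle is constructing the $\bR$-VMHS $\cM$ with the correct properties: it must have graded pieces that are trivialized on fibers of ${\rm sh}_M$ (so $S_\alpha$ descends), yet its extension class must be non-trivial precisely along those fibers that are contracted further by ${\rm sh}_{H_N}$. This is the heart of the linear Shafarevich conjecture as treated in \cite{EKPR12} and requires both the relative Malcev completion formalism and the Hodge-theoretic structures on it; the technical verification that $\cM$ can be chosen to satisfy the transversality constraint in (d) and to have the right descent properties is where most of the real work lies.
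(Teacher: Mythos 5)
Your high-level strategy matches the paper's: build the positivity witness by combining the reductive data $\{\tau_i\},\varrho_1$ from \cite{Eys04,DY23} with a weight-length-1 $\bR$-VMHS extracted from \cite{EKPR12}, and use the semi-canonical form $S_\alpha$ of the extension class to supply positivity in the extra directions. However, there are two concrete problems.

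First, the factorization you state is backwards, and this is not a typo but a sign of a muddled picture. You write ${\rm sh}_{H_N} = h\circ{\rm sh}_M$ and assert that $h$ has positive-dimensional fibers because the linear Shafarevich map ``contracts subvarieties whose reductive monodromy is already finite.'' In fact the linear Shafarevich morphism contracts \emph{fewer} subvarieties than the reductive one: since reductive representations into $\GL_N(\bC)$ form a subset of all linear ones, $H_N$ is \emph{smaller} than the reductive kernel-intersection, so $\pi_1(X)/H_N$ surjects onto the reductive quotient, and every fiber of ${\rm sh}_{H_N}$ is contained in a fiber of the reductive Shafarevich morphism. The correct factorization is ${\rm sh}_M = g\circ{\rm sh}_{H_N}$ with $g:{\rm Sh}_{H_N}(X)\to{\rm Sh}_M(X)$ (this is the map $g:{\rm Sh}_M^1(X)\to{\rm Sh}_M^0(X)$ in the paper). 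Your own second paragraph — where you consider ``a smooth fiber $Z$ of ${\rm sh}_M$ whose image in ${\rm Sh}_{H_N}(X)$ is positive dimensional'' — is incompatible with your first paragraph (under ${\rm sh}_{H_N}=h\circ{\rm sh}_M$, every fiber of ${\rm sh}_M$ maps to a point in ${\rm Sh}_{H_N}(X)$), so you are implicitly using the correct direction there. The paper then sets $T = g^*T_0 + \ep S_\alpha'$, with $T_0$ the reductive current on ${\rm Sh}_M(X)$ pulled back along $g$, and $S_\alpha'$ the descent of $S_\alpha$ to ${\rm Sh}_{H_N}(X)$ which is strictly positive along the $g$-fibers by \cref{lem:differential}.

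Second, you correctly identify that the construction of $\cM$ with the properties (c)--(d) and the descent of $S_\alpha$ to ${\rm Sh}_{H_N}(X)$ is where the real content lies, but you do not carry it out. The paper handles this by invoking the explicit constructions in \cite[\S 5.2]{EKPR12}: the tower of covers $\widetilde{X_M^0}\twoheadleftarrow\widetilde{X_M^1}$ and the maps $\Phi_0,\Phi_1$ whose fiber components are compact, the discreteness of the $r_i$, \cite[Lemma 5.7]{EKPR12} to exclude compact subvarieties of $\widetilde{S_M^1}(X)$, Selberg's theorem to get torsion-free monodromy so that $\cM$ actually descends to $\cM'$ on ${\rm Sh}_{H_N}(X)$, and \cite[Example 2.2.2]{EKPR12} for the transversality statement (d). Without these steps, the claim that $S_\alpha$ is a pullback from ${\rm Sh}_{H_N}(X)$ and is strictly positive on the relevant fibers is unproved; as written, the proposal is a sketch of the right strategy with a wrong factorization and a gap precisely at the technical core.
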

\begin{proof}
	Let $M:=M_{\rm B}(\pi_1(X),\GL_N)(\bC)$ be the character variety of $\pi_1(X)$.	By  \cite[Proof of Theorem 3.29]{DY23}, there exist 
	\begin{enumerate}[label=(\alph*)]
		\item  a family of  Zariski dense representations  $ \{\tau_i:\pi_1(X)\to G(K_i)\}_{i=1,\ldots,\ell}$ where each $G_i\subset \GL_N$ is a reductive group over  a  non-archimedean local field  $K_i$ of characteristic zero;
		\item  a reductive representation $\varrho_1:\pi_1(X)\to \GL_{N_1}(\bC)$ underlying a $\bC$-VHS $\cL_1$,   that is a direct sum of representations $\pi_1(X)\to \GL_N(\bC)$;
	\end{enumerate} 
	such that the following properties hold: we   define $ \widetilde{H_M^0}$   to be the intersection of the kernels of all semisimple representations $\pi_1(X)\to \GL_N(\bC)$. Denote by $\widetilde{X_M^0}:=\widetilde{X}/\widetilde{H_M^0}$ and $\pi_0:\widetilde{X_M^0}\to X$ the covering map. Then   the period map of $\cL_1$ descends to $\phi:\widetilde{X_M^0}\to \sD_1$, where $\sD_1$ is the period map of the $\bR$-VHS $\cL_1$.     For the holomorphic map 
	\begin{align*}
		\Phi_0:	 \widetilde{X_M^0}&\to \prod_{i=1}^{\ell}\Sigma_{\tau_i}\times \sD_1\\
		x&\mapsto (s_{\tau_1}\circ \pi_0,\ldots,s_{\tau_\ell}\circ \pi_0,\phi(x)),
	\end{align*} 
	each connected component of the fiber of $\Phi_0$ is compact. Here $s_{\tau_i}:X\to \Sigma_{\tau_i}$ is the Katzarkov-Eyssidieux reduction for $\tau_i$ (cf. \cite{DY23} for the definition).    By \cite[Proof of Theorem 3.29]{DY23}, $\Phi_0$ factors through 
	$$
	\widetilde{X_M^0}\stackrel{r_M^0}{\to} \widetilde{S_M^0}(X)\stackrel{r_0}{\to} \prod_{i=1}^{\ell}\Sigma_{\tau_i}\times \sD_1 
	$$
	where $r_M^0$ is  a proper surjective holomorphic fibration  and $r_0$ is holomorphic map with each fiber being a discrete set. Moreover, $\widetilde{S_M^0}(X)$ does not contain compact subvarieties.  Therefore,  the Galois group  ${\rm Aut}(\widetilde{X_M^0}/X)$ induces an action on $\widetilde{S_M^0}(X)$ which is properly discontinuous, and such that $r_M^0$ is equivariant with respect to the action by ${\rm Aut}(\widetilde{X_M^0}/X)$.  By \cite[Lemma 3.28]{DY23}, replacing $X$ by a finite \'etale cover, we can assume that such an action on $\widetilde{S_M^0}(X)$  is free. Taking the quotient of $r_M^0$ by ${\rm Aut}(\widetilde{X_M^0}/X)$, we obtain
	\begin{equation*}
		\begin{tikzcd}
			\widetilde{X_M^0}  \arrow[r,"\pi_0"] \arrow[d,"r_M^0"] & X\arrow[d,"{\rm sh}_M^0"]\\
			\widetilde{S_M^0}(X)\arrow[r,"q_0"]\arrow[d,"r_0"] &{\rm Sh}_M^0(X)\\
			\prod_{i=1}^{\ell}\Sigma_{\tau_i}\times \sD_1 
		\end{tikzcd}
	\end{equation*}
	Here ${\rm Sh}_M^0(X)$ is called the \emph{reductive Shafarevich morphism} associated with $M$. 
	
 By \cite[\S 5.2]{EKPR12}, there is  an $\bR$-VMHS $\cM$ of weight length 1 with the mixed period domain $\sM$ (cf. \cite[Lemma 5.4]{EKPR12}) and an infinite Galois \'etale cover $\pi_1:\widetilde{X_M^1}\to X$  (cf. \cite[p. 1575]{EKPR12} for the definition) factorizing through $\pi_0:\widetilde{X_M^0}\to X$ such that   
	\begin{enumerate}[label=(\alph*)]   
		\item the mixed period domain descends to $\psi:\widetilde{X_M^1}\to \sM$;   
		\item for the holomorphic map 
		\begin{align*}
			\Phi_1:	 \widetilde{X_M^1}&\to \prod_{i=1}^{\ell}\Sigma_{\tau_i}\times \sD_1\times \sM\\ 
			x&\mapsto (s_{\tau_1}\circ \pi_1,\ldots,s_{\tau_\ell}\circ \pi_1,\phi(x),\psi(x)),
		\end{align*} 
		each connected component of the fiber of $\Phi$ is compact.  
	\end{enumerate} 
	Here we abusively use $\phi:\widetilde{X_M^1}\to \sD_1$ to denote by the composite of $\phi:\widetilde{X_M^0}\to \sD_1$ with  $\widetilde{X_M^1}\to \widetilde{X_M^0}$. 
	By \cite[p. 1576]{EKPR12}, $\Phi_1$ factors through 
	$$
	\widetilde{X_M^1}\stackrel{r_M^1}{\to} \widetilde{S_M^1}(X)\stackrel{r_1}{\to} \prod_{i=1}^{\ell}\Sigma_{\tau_i}\times \sD_1\times \sM
	$$
	where $r_M^1$ is  a proper surjective holomorphic fibration  and $r_1$ is holomorphic map with each fiber being a discrete set. 
	
	By \cite[Lemma 5.7]{EKPR12}, $\widetilde{S_M^1}(X)$  does not contain compact subvarieties. Therefore,  the Galois group  ${\rm Aut}(\widetilde{X_M^1}/X)$ induces an action on $\widetilde{S_M^1}(X)$ which is properly discontinuous, and such that $r_M^1$ is equivariant with respect to the action by ${\rm Aut}(\widetilde{X_M^1}/X)$.    Replacing $X$ by a finite \'etale cover, we assume that such an action is free. Taking the quotient of $r_M^1$ by ${\rm Aut}(\widetilde{X_M^1}/X)$, we obtain:
	\begin{equation}\label{eq:bigdia}
		\begin{tikzcd}
			\widetilde{X_M^1}\arrow[dd, bend right=45, "\Phi_1"'] \arrow[r,"\pi_1"] \arrow[d,"r_M^1"] & X\arrow[d,"{\rm sh}_M^1"] &X\arrow[d,"{\rm sh}_M^0"]\arrow[l,equal]& \widetilde{X_M^0} \arrow[d,"r_M^0"]\arrow[l,"\pi_0"']\arrow[dd, bend left=45, "\Phi_0"]\\
			\widetilde{S_M^1}(X)\arrow[r,"q_1","{\tiny \mbox{\'etale}}"']\arrow[d,"r_1"] &{\rm Sh}_M^1(X)\arrow[r,"g"]&{\rm Sh}_M^0(X) & \widetilde{S_M^0}(X)\arrow[d]\arrow[l,"q_0"',"{\tiny \mbox{\'etale}}"]\arrow[d,"r_0"]\\
			\prod_{i=1}^{\ell}\Sigma_{\tau_i}\times \sD_1\times \sM&&&\prod_{i=1}^{\ell}\Sigma_{\tau_i}\times \sD_1
		\end{tikzcd}
	\end{equation} 
	By \cite[p. 1549 \& Proposition 3.10]{EKPR12}, ${\rm Sh}_M^1$ is the Shafarevich morphism associated with $(X,H_N)$.  We apply Selberg's theorem to replace $X$ by a further finite \'etale cover such that the monodromy representation of $\cM$ has torsion free image.  By the construction of ${\rm sh}_M^1$, for each fiber $Z$ of ${\rm sh}_M^1$, $\cM|_{Z}$ has finite, thus trivial monodromy. Then $\cM$ descends to a $\bR$-VMHS of weight length $1$ on ${\rm Sh}_M^1$, which we denote by $\cM'$.

	Let $Z$ be an irreducible component of any   fiber of $g:{\rm Sh}_M^1(X)\to {\rm Sh}_M^0(X)$.    Let $\widetilde{Z_M^1}$ be any connected component of the inverse image $q_1^{-1}(Z)$. Let $\sD_{-1}\times \sD_0$ be the graded period domain of $\sM$. Note that the natural map $\sM\to \sD_{-1}\times \sD_0$ is a holomorphic vector bundle (cf. \cite{Car87}). Then there exists some $P\in \sD_{-1}\times \sD_0$ such that for the fiber $V$ of $\sM\to \sD_{-1}\times \sD_0$ at  $P\in \sD_{-1}\times \sD_0$, we have  $\psi|_{\widetilde{Z_M^1}}:\widetilde{Z _M^1}\to V$. Moreover,  $q:=\psi|_{\widetilde{Z_M^1}}$ factors as
	\begin{equation*}
		\begin{tikzcd}
			\widetilde{Z_M^1}^{\rm sn}\arrow[dr]\arrow[r,"p"]\arrow[d] & V'\arrow[d]\\
			\widetilde{Z_M^1}\arrow[r,"q"] &V
		\end{tikzcd}
	\end{equation*}
	where $	\widetilde{Z_M^1}^{\rm sn}$ is the semi-normalization of $\widetilde{Z_M^1}$,  $V'\to V$ is a linear injective map from a complex vector  space $V'$ and $p$ is a proper holomorphic map (cf. \cite[p. 1576]{EKPR12}).  Since $r_1$ has discrete fibers, and the image of $\widetilde{Z_M^1}$ under the composite map $$\widetilde{S_M^1}(X)\stackrel{r_1}{\to} \prod_{i=1}^{\ell}\Sigma_{\tau_i}\times \sD_1\times \sM\to \prod_{i=1}^{\ell}\Sigma_{\tau_i}\times \sD_1$$ is constant, it follows that $p$ is a finite map. Here $\prod_{i=1}^{\ell}\Sigma_{\tau_i}\times \sD_1\times \sM\to \prod_{i=1}^{\ell}\Sigma_{\tau_i}\times \sD_1$ is the natural projection map. 
	
	By \cite[Example 2.2.2]{EKPR12}, the $\bR$-VMHS $\cM$ satisfies the conditions in \cref{lem:differential}. Let $\alpha\in H_{\rm DR}^1(X,\cL_0^*\otimes\cL_{-1})$  be the extension class defining $\cM$, where $\cL_{-1}\oplus\cL_0$ is the graded $\bR$-VHS of $\cM$.  Let $S_\alpha$ be the semi-canonical form  associated with $\alpha$ defined in \eqref{eq:canonical2}. Since $\cM=({\rm sh}_M^1)^*\cM'$, there exists a  real   $(1,1)$-form   $S_\alpha'$ on ${\rm Sh}_M^1(X)$ such that
	$
	S_\alpha=({\rm sh}_M^1)^*S_\alpha'.
	$   Then by \cref{lem:differential} 	$S_{\alpha}'|_{Z}$ is closed and semi-positive and is strictly positive over the locus where $	q:\widetilde{Z_M^1}\to V$ is immersive.  %Since $p$ is finite, by  Demailly-P\u{a}un's criterion \cite{DP04,DHP22},  $\{S_{\alpha}'|_{Z}\}$  is a K\"ahler class.  
	
	By \cref{subsec:AC},   there exists a semi-positive closed  $(1,1)$-current $T_0$ with continuous potential on ${\rm Sh}_M^0(X)$  such that  
	\begin{enumerate}[label=(\arabic*)]
		\item One has $$ T_{\tau_1}+\cdots+T_{\tau_\ell}+\omega_{\varrho_1}= ({\rm sh}_M^0)^*T_0$$
		\item $T_0|_\Omega$ is smooth and strictly positive for some open subset $\Omega\subset {\rm Sh}_M^0(X)^{\rm reg}$.  
	\end{enumerate} 
	Here $T_{\tau_i}$ is the canonical current for $\tau_i$ defined in \cref{def:canonical}, and $\omega_\cL$ is  the canonical form associated with the representation induced by the $\bC$-VHS $\cL$.
	
	Let $Z$ be a general  fiber of $g:{\rm Sh}_M^1(X)\to {\rm Sh}_M^0(X)$ over  $\Omega$.    Then $S_\alpha'|_{Z}$ is semi-positive and strictly positive at general points of $Z$. This implies that, for a general point $z\in Z$, it has a neighborhood $U\subset {\rm Sh}_M^1(X)^{\rm reg}$ such that $(g^*T_0+\ep S_\alpha')|_U$ is  a  strictly positive smooth $(1,1)$-form (but not necessarily $d$-closed). Let $T:=g^*T_0+\ep S_\alpha'$, which is the desired $(1,1)$-current in the proposition.  The proof is accomplished. 
\end{proof} 
\begin{thm}\label{thm:linear}
	Let \( f: \mathscr{X} \to \mathbb{D} \) be a smooth projective family.  
	\begin{thmlist}
	\item \label{mainC}	 If there exists an almost faithful linear representation $\varrho:\pi_1(X)\to \GL_N(\bC)$,  then   \( t \mapsto \gamma d(X_t) \) is  a lower semicontinuous function on \( \mathbb{D} \). 
	\item \label{mainA} If there exists a big representation $\sigma:\pi_1(X_0)\to \GL_N(\bC)$, then $X_t$ has big fundamental group for sufficiently small $t$. 
	\end{thmlist}
\end{thm}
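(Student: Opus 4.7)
The strategy is to mirror the argument in \cref{thm:reductive} (the reductive case in \cref{subsec:AC}), with \cref{thm:eys} playing the role previously played by the construction of Zariski dense representations and $\bC$-VHS. The new ingredient, compared with \cref{subsec:AC}, is the semi-canonical form $S_\alpha$ coming from the $\bR$-VMHS $\cM$. We will combine \cref{lem:continuity} (canonical currents), \cref{harmonic2} (canonical forms) and \cref{lem:harmonic} (semi-canonical forms) to transport the strict positivity statement of \cref{thm:eys} to nearby fibers.

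We may, by \cref{lem:lift} and the invariance of $\gamma$-dimension under finite étale covers, replace $\sX$ by an appropriate finite étale cover to arrange that the hypotheses of \cref{thm:eys} and \cref{thm:family} hold simultaneously on $X_0$, and hence (using $\pi_1(X_t)\simeq\pi_1(X_0)$) on every fiber $X_t$. Write $\varrho_1,\tau_1,\dots,\tau_\ell,\cM$ for the representations and VMHS provided by \cref{thm:eys}, let $\alpha\in H^1_{\rm DR}(X_0,\cL)$ be the associated extension class and set
\[
\Theta_t := \sum_{i=1}^{\ell} T_{\tau_{i,t}} + \omega_{\varrho_{1,t}} + \ep\, S_{\alpha_t}
\]
on $X_t$, where $\alpha_t$ is the image of $\alpha$ under $H^1_{\rm DR}(X_0,\cL_0)\simeq H^1_{\rm DR}(X_t,\cL_t)$. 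By \cref{thm:eys}, $\Theta_0 = {\rm sh}_{H_N}^{*}T$, and $T$ is strictly positive and smooth on some open subset $U$ of ${\rm Sh}_{H_N}(X_0)^{\rm reg}$ over which ${\rm sh}_{H_N}$ is a submersion. Set $m := \dim {\rm Sh}_{H_N}(X_0)$. Then $\Theta_0$ is $m$-positive (in the sense of \cref{def:m-positive}) at every point of ${\rm sh}_{H_N}^{-1}(U)$.

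Pick $x_0\in{\rm sh}_{H_N}^{-1}(U)$. By \cref{lem:continuity}, after shrinking, there is an admissible coordinate chart $\Omega\simeq \bD^n\times\bD_\ep$ centered at $x_0$ together with a closed $(1,1)$-form $T_{i,t}$ on $\bD^n$ with continuously varying coefficients, such that $T_{\tau_{i,t}}|_{\Omega\cap X_t}\geq T_{i,t}$ and $T_{\tau_{i,0}}|_{\Omega\cap X_0}=T_{i,0}$. By \cref{harmonic2}, $\omega_{\varrho_{1,t}}$ varies continuously in $t$, and by \cref{lem:harmonic} (which applies after the étale-cover reductions since $\cM$ is $\bR$-VMHS), $S_{\alpha_t}$ varies smoothly in $t$. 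Combining these, the smooth closed/semi-definite sum $\sum_i T_{i,t}+\omega_{\varrho_{1,t}}+\ep S_{\alpha_t}$ has continuously varying coefficients on $\bD^n\times\bD_\ep$ and equals $\Theta_0|_{\Omega\cap X_0}$ at $t=0$. Since $\Theta_0$ has rank $\geq m$ at $x_0$ and lower-bound rank is lower semicontinuous, shrinking $\Omega$ and $\ep$ we obtain that $\Theta_t$ is $m$-positive at some point of $\Omega\cap X_t$ for every $t\in\bD_\ep$.

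The key remaining step is to bound $\Theta_t$ from above by the relative dimension of ${\rm sh}_{H_{N,t}}:X_t\to{\rm Sh}_{H_{N,t}}(X_t)$. Let $Z$ be a general smooth fiber of ${\rm sh}_{H_{N,t}}$. Since $\tau_{i,t}$ and $\varrho_{1,t}$ are linear, they factor through $\pi_1(X_t)/H_{N,t}$, so ${\rm Im}[\pi_1(Z)\to\pi_1(X_t)/H_{N,t}]$ being finite forces $\tau_{i,t}({\rm Im}[\pi_1(Z)\to\pi_1(X_t)])$ and $\varrho_{1,t}({\rm Im}[\pi_1(Z)\to\pi_1(X_t)])$ to be finite. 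By the uniqueness of equivariant harmonic maps (\cite[Theorem B]{DM24} and \cite{Cor88}), this yields $T_{\tau_{i,t}}|_Z=0$ and $\omega_{\varrho_{1,t}}|_Z=0$, as in the reductive case. For $S_{\alpha_t}$, the underlying local system of $\cM_t$ is linear, so after a further finite étale cover of $Z$ it is trivial; \cref{lem:differential} applied to the period map of $\cM_t$ (which is constant on such $Z$ because of the Shafarevich property) then shows $f^*S_{\alpha_t}=0$ on a desingularization $f\colon\widetilde Z\to Z$. Hence $\Theta_t$ has rank $\leq\dim{\rm Sh}_{H_{N,t}}(X_t)$ at every point of $X_t$. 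Combining with the $m$-positivity of $\Theta_t$ at some point of $\Omega\cap X_t$ gives $\dim{\rm Sh}_{H_{N,t}}(X_t)\geq m=\dim{\rm Sh}_{H_N}(X_0)$.

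Finally, \cref{mainC} follows because an almost faithful linear representation forces $H_{N,t}$ to be finite, so ${\rm sh}_{H_{N,t}}$ is (bimeromorphic to) the $\Gamma$-reduction of $X_t$, yielding $\gamma d(X_t)=\dim{\rm Sh}_{H_{N,t}}(X_t)\geq \gamma d(X_0)$ for small $t$. For \cref{mainA}, note that bigness of $\sigma:\pi_1(X_0)\to\GL_N(\bC)$ means $\gamma d(X_0,\ker\sigma)=\dim X_0$; since $H_N\subseteq\ker\sigma$, we have $\gamma d(X_0,H_N)=\dim X_0$, and the inequality above becomes $\gamma d(X_t,H_{N,t})=\dim X_t$, whence for a very general positive-dimensional subvariety $Z\subset X_t$ the image of $\pi_1(Z)$ in $\pi_1(X_t)/H_{N,t}$ is infinite, and a fortiori so is its image in $\pi_1(X_t)$, so $X_t$ has big fundamental group.

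The main obstacle is the handling of the semi-canonical form: unlike the canonical currents and canonical forms, $S_\alpha$ is neither closed nor semi-positive in general, so both the upper bound on fibers and the $m$-positivity near $x_0$ require the special structure of the $\bR$-VMHS $\cM$ (namely \cref{lem:differential} and \cref{lem:harmonic}). Once these two facts are secured, the rest of the argument parallels the reductive case.
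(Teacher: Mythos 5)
Your proof of part (\ref{mainC}) matches the paper's argument: the almost faithfulness of $\varrho$ forces $H_{N,t}$ to be finite, so \({\rm sh}_{H_{N,t}}\) is the $\Gamma$-reduction of $X_t$, fibers $Z$ of \({\rm sh}_{H_{N,t}}\) have finite image in \(\pi_1(X_t)\), the monodromy of $\cM_t$ (arranged torsion-free via Selberg) is trivial on $Z$, and the positivity transfer via \cref{lem:continuity}, \cref{harmonic2}, and \cref{lem:harmonic} gives the lower semicontinuity. This is the paper's route.

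For part (\ref{mainA}), however, there is a genuine gap. The crux is your claim that for a smooth fiber $Z$ of \({\rm sh}_{H_{N,t}}\), the restriction \(S_{\alpha_t}|_Z\) vanishes, which you derive from ``the underlying local system of $\cM_t$ is linear, so after a further finite étale cover of $Z$ it is trivial.'' But the fibers of \({\rm sh}_{H_{N,t}}\) only control representations into \(\GL_N(\bC)\): for a fiber $Z$, one knows \({\rm Im}[\pi_1(Z)\to\pi_1(X_t)/H_{N,t}]\) is finite, hence any \(\GL_N\)-representation has finite image on $\pi_1(Z)$. The monodromy representation \(\varrho_{\cM,t}\) of $\cM_t$, however, has rank $r_N$ which may strictly exceed $N$, so \(\ker\varrho_{\cM,t}\) need not contain \(H_{N,t}\), and \(\varrho_{\cM,t}(\pi_1(Z))\) can be infinite. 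Your conclusion \(S_{\alpha_t}|_Z=0\) then fails, and with it the upper bound \(\mbox{rank}(\Theta_t)\leq\dim{\rm Sh}_{H_{N,t}}(X_t)\).

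When $\sigma$ is merely big (not almost faithful), $H_{N,t}$ can be infinite, and the shortcut used in (\ref{mainC}) — that fibers of \({\rm sh}_{H_{N,t}}\) have finite image in all of $\pi_1(X_t)$ — is unavailable. The paper's fix is to enlarge the rank bound: set $N' := r_N + N$ and work with \({\rm sh}_{H_{N',t}}\) instead. Then \(H_{N'}\subset\ker\sigma'\) (for $\sigma'=\sigma\oplus\mathrm{trivial}$, forcing \({\rm sh}_{H_{N'}}\) and \({\rm sh}_{H_{N',t}}\) to be birational when $\sigma$ is big) \emph{and} \(H_{N',t}\subset\ker\varrho_{\cM,t}\) (so the $\cM_t$-monodromy is finite on fibers, and $S_{\alpha_t}$ vanishes there). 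You also lose nothing in the lower bound, since \(\Theta_0\) remains strictly positive on \({\rm sh}_{H_N}^{-1}(U)\) and \(\dim X_0 = \dim{\rm Sh}_{H_N}(X_0)\leq\dim{\rm Sh}_{H_{N'}}(X_0)\). Without this $N\rightsquigarrow N'$ replacement, your part (\ref{mainA}) argument does not close.
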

\begin{proof}
	We write $X$ for $X_0$. 
	We now apply  \cref{thm:eys} and use the notations therein without recalling it.   Then  after replacing $\sX$ by some suitable finite \'etale cover,  there exist 
	\begin{enumerate}[label=(\alph*)]
		\item  a family of  Zariski dense representations  $ \{\tau_i:\pi_1(X)\to G_i(K_i)\}_{i=1,\ldots,\ell}$ where each $G_i\subset \GL_N$ is a reductive group over  a  non-archimedean local field  $K_i$ of characteristic zero,
		\item  a reductive representation $\varrho_1:\pi_1(X)\to \GL_{N_1}(\bC)$ underlying a $\bC$-VHS $\cL_1$, that is a direct sum of representations $\pi_1(X)\to \GL_N(\bC)$.
	\item  \label{item:VMHS}A real $\bR$-VMHS $\cM$ on $X$ with torsion free monodromy, that is an extension of a $\bR$-VHS  $\cL_{0}$  by another  $\bR$-VHS  $\cL_{-1}$, with the extension class denoted by $\alpha\in H_{\rm DR}^1(X,\cL_0^*\otimes\cL_{-1})$.  The weight of $\cL_i$ is $i$ for $i=-1,0$.   Let $r_N$  be the rank of $\cM$. 
	\item Set $\cL:=\cL_{0}^*\otimes\cL_{-1}$.  Let $\eta \in \sH^1(X, \cL)$ be the harmonic representative in $\alpha$. Then $\eta(x) \in \Omega_{X,x}^{1,0} \otimes \cL^{-1,0} \oplus \Omega_{X,x}^{0,1} \otimes \cL^{0,-1}$ for any $x\in X$, with $\cL^{p,q}$ denoting the Hodge $(p,q)$-subspace of $\cL$. 
	\item \label{item:e}A $(1,1)$-current $T$ on ${\rm Sh}_{H_N}(X)$   such that $T|_U$ is a strictly positive smooth $(1,1)$-form for some open subset $U$ of ${\rm Sh}_{H_N}(X)^{\rm reg}$, and  for some $\ep>0$,
	one has 
	$
	\sum_{i=1}^{\ell}T_{\tau_i}+\omega_{\varrho_1}+\ep S_{\alpha}={\rm sh}_{H_N}^*T.
	$  
	\end{enumerate} 
	Here $T_{\tau_i}$ is the canonical current of $\tau_i$, $\omega_{\varrho_1}$ is the canonical form of $\varrho_1$, and $S_\alpha$ is the semi-canonical form of $\alpha$.   
	
Let \( F_t: X_t \to X \) be the natural diffeomorphism induced by the \( C^\infty \) trivialization \( \sX \to X \times \bD \). Define \( \cL_{i,t} := F_t^*(\cL_i) \) as the local systems on \( X_t \) for \( i = -1, 0, 1 \). Let \( \alpha_t \) denote the image of \( \alpha \) under the natural isomorphism  
\[
F_t^*:H_{\rm DR}^1(X, \cL_0^* \otimes \cL_{-1}) \to H_{\rm DR}^1(X_t, \cL_{0,t}^* \otimes \cL_{-1,t}).
\]  
Note that \( \alpha_t \) is the extension class defining \( \cM_t := F_t^*\cM \), which is the extension of \( \cL_{0,t} \) by \( \cL_{-1,t} \). 

 We apply \cref{lem:lift,thm:family} to replace $\sX$ by a finite \'etale cover such that  for the semi-simple local system $\sV$ on $\sX$ induced by $\cL_0^*\otimes\cL_{-1}$,   there is a metric $h$   for $\sV$  such that $h|_{\sV|_{X_t}}$ varies smoothly in $t$.  Note that $\sV|_{X_t}=\cL_{0,t}^* \otimes \cL_{-1,t}$. By \cref{lem:harmonic}, the semi-canonical form $S_{\alpha_t}$ associated with $\alpha_t$ vary smoothly in $t$.

	Set $m:=\dim {\rm Sh}_{H_N}(X)$.  By \Cref{item:e}, there exists an open subset $U_0$ of $X$ such that  $(\sum_{i=1}^{\ell}T_{\tau_i}+\omega_{\varrho_1}+\ep S_{\alpha})|_{U_0}$ is $m$-positive  in the sense of \cref{def:m-positive}. By \cref{claim:below},   there exists an open subset $\Omega$ of $\sX$ such that for some $\delta>0$, one has $\Omega_t:=\Omega\cap X_t\neq \varnothing$ for $t\in \bD_{\delta}$, and  
	$$ (\sum_{i=1}^{\ell}T_{\tau_{i,t}}+\omega_{\varrho_{1,t}}+\ep  S_{\alpha,t})|_{\Omega_t}$$
	is at least $m$-positive.    Here $T_{\tau_{i,t}}$ is the canonical current of $\tau_{i,t}:\pi_1(X_t)\to G_i(K_i)$, $\omega_{\varrho_{1,t}}$ is the canonical form of $\varrho_{1,t}:\pi_1(X_t)\to \GL_{N_1}(\bC)$. 
	
  Denote by $H_{N,t}$ the intersection of kernels of all linear representation $\pi_1(X_t)\to \GL_N(\bC)$.   	Consider the Shafarevich morphism ${\rm sh}_{H_{N,t}}:X_t\to {\rm Sh}_{H_{N,t}}(X_t)$, whose existence is ensured by \cref{thm:eys}. By the arguments below \cref{claim:below},  for any smooth  fiber $Z$ of   ${\rm sh}_{H_{N,t}}$,  $(\sum_{i=1}^{\ell}T_{\tau_{i,t}}+\omega_{\varrho_{1,t}})|_Z$ vanishes identically.
	
	\medspace

\noindent {\bf Proof of \cref{mainC}}: 	Since there exists an almost faithful linear representation $\varrho:\pi_1(X)\to \GL_N(\bC)$, it follows that $\varrho_t:\pi_1(X_t)\to \GL_N(\bC)$ is also almost faithful.  Then  $H_{N,t}$ is finite, which implies that ${\rm sh}_{H_{N,t}}:X_t\to {\rm Sh}_{H_{N,t}}(X_t)$  is the Shafarevich morphism of $X_t$  for each $t\in \bD$.  Therefore, for each fiber $Z$ of  ${\rm sh}_{H_{N,t}} $,  ${\rm Im}[\pi_1(Z)\to \pi_1(X_t)]$ is finite.    It follows that $\cM_t|_Z$ has finite, thus trivial monodromy as $\cM$ has torsion free monodromy. 

	Recall that $\alpha_t\in H^1(X_t,\cL_{0,t}^*\otimes \cL_{-1,t})$ is the extension class  defining $\cM_t:=F_t^*\cM$, that is the extension of $\cL_{0,t}$ by $\cL_{-1,t}$. Thus,   $\alpha_t|_Z$ is also trivial.    Hence, by \eqref{eq:canonical2}, $S_{\alpha_t}|_Z$ is trivial for each  smooth fiber $Z$ of  ${\rm sh}_{H_{N,t}} $.  
	
	Fix any $t\in \bD_\delta$.  We choose  an open subset $U_t$ of ${\rm Sh} _{H_{N,t}}(X_t)^{\rm reg}$ such that
	\begin{enumerate}[label=(\arabic*)] 
		\item ${\rm sh}_{H_{N,t}}^{-1}(U_t)\to U_t$ is a proper holomorphic submersion.
		\item  $W:=\Omega_t\cap {\rm sh}_{H_{N,t}}^{-1}(U_t)$ is non-empty. 
	\end{enumerate}  
	It follows that,  $(\sum_{i=1}^{\ell}T_{\tau_{i,t}}+\omega_{\varrho_{1,t}}+\ep  S_{\alpha,t})|_W$ is at most $m_t$-positive, where $m_t:=\dim {\rm Sh}_{H_{N,t}}(X_t)$.  In conclusion, we have $m_t\geq m$. This yields $\dim {\rm Sh}_{H_{N,t}}(X_t)\geq \dim {\rm Sh}_{H_{N,0}}(X_0)$ for any $t\in \bD_\delta$. Recall that ${\rm sh}_{H_{N,t}}$ is the Shafarevich morphism of $X_t$. This implies that
	 $
	\gamma d(X_t)\geq \gamma d(X_0)
	$ for any $t\in \bD_\delta$. \cref{mainC} is proved. 
	
	\medspace
	
	\noindent {\bf Proof of \cref{mainA}}:  Set \(N' := r_N + N\), where \(r_N\) is the rank of the \(\bR\)-VMHS \(\cM\) in \Cref{item:VMHS}. Let \(H_{N',t}\) be the intersection of the kernels of all linear representations \(\pi_1(X_t) \to \GL_{N'}(\bC)\). Define a representation \(\sigma': \pi_1(X) \to \GL_{N'}(\bC)\), which is the direct sum of \(\sigma\) and the trivial representation \(\pi_1(X) \to \GL_{r_N}(\bC)\). By our definition of \(H_{N'}\), we have \(H_{N'} \subset \ker \sigma' = \ker \sigma\).
	
	Since \(\sigma\) is big, it follows that \({\rm sh}_{H_{N'}}: X \to {\rm Sh}_{H_{N'}}(X)\) and \({\rm sh}_{H_N}: X \to {\rm Sh}_{H_N}(X)\) are both birational morphisms. The same argument shows that \(H_{N',t} \subset H_{N,t}\) for each \(t \in \bD\).
	
	Thus, for each \(t\), the following factorization map holds, due to the properties of the Shafarevich morphism and the universal property of Stein factorization:
	\[
	\begin{tikzcd}
		X_t \arrow[r, "{\rm sh}_{H_{N',t}}"] \arrow[dr, "{\rm sh}_{H_{N,t}}"'] & {\rm Sh}_{H_{N',t}}(X_t) \arrow[d] \\
		& {\rm Sh}_{H_{N,t}}(X_t)
	\end{tikzcd}
	\]
	Note that we might have \(\dim {\rm Sh}_{H_{N',t}}(X_t) < \dim {\rm Sh}_{H_{N,t}}(X_t)\) for some $t$.
	
	Let \(\varrho_{\cM}: \pi_1(X) \to \GL_{r_N}(\bC)\) be the monodromy representation of \(\cM\), which has a torsion-free image. Let \(\varrho_{\cM,t}: \pi_1(X_t) \to \GL_{r_N}(\bC)\) be the induced representation via the diffeomorphism \(F_t: X_t \to X\). Then we have \(H_{N',t} \subset \ker \varrho_{\cM,t}\). This implies that for each fiber \(Z\) of \({\rm sh}_{H_{N',t}}\), the image \(\varrho_{\cM,t}({\rm Im}[\pi_1(Z) \to \pi_1(X_t)])\) is finite, and thus trivial. 
	
	Since \(\alpha_t \in H^1(X_t, \cL_{0,t}^* \otimes \cL_{-1,t})\) is the extension class defining the local system \(\cM_t := F_t^* \cM\), corresponding to \(\varrho_{\cM,t}\) as an extension of \(\cL_{0,t}\) by \(\cL_{-1,t}\), we deduce that \(\alpha_t|_Z\) is trivial. By \eqref{eq:canonical2}, \(S_{\alpha_t}|_Z\) is also trivial for each smooth fiber \(Z\) of \({\rm sh}_{H_{N',t}}\). In conclusion, for each smooth fiber \(Z\) of \({\rm sh}_{H_{N',t}}\), 
\begin{align}\label{eq:trivial}
	 \left(\sum_{i=1}^\ell T_{\tau_{i,t}} + \omega_{\varrho_{1,t}} + \varepsilon S_{\alpha_t}\right)\big|_Z \text{ is trivial.}
\end{align} 
	
	Fix any \(t \in \bD_\delta\). We choose an open subset \(U_t\) of \({\rm Sh}_{H_{N',t}}(X_t)^{\rm reg}\) such that:
	\begin{enumerate}[label=(\arabic*)]
		\item \({\rm sh}_{H_{N',t}}^{-1}(U_t) \to U_t\) is a proper holomorphic submersion.
		\item The open subset \(W := \Omega_t \cap {\rm sh}_{H_{N',t}}^{-1}(U_t)\) is non-empty.
	\end{enumerate}
	Then  by \eqref{eq:trivial}, \(\left(\sum_{i=1}^\ell T_{\tau_{i,t}} + \omega_{\varrho_{1,t}} + \varepsilon S_{\alpha_t}\right)\big|_W\) is at most \(m'_t\)-positive, where \(m'_t := \dim {\rm Sh}_{H_{N',t}}(X_t)\). Recall that $$ (\sum_{i=1}^{\ell}T_{\tau_{i,t}}+\omega_{\varrho_{1,t}}+\ep  S_{\alpha,t})|_{\Omega_t}$$
	is at least $m$-positive, where $m:=  \dim {\rm Sh}_{H_N}(X)=\dim X$.   This implies that \(m'_t = m=\dim X_t\). Hence \({\rm sh}_{H_{N',t}}\) is a birational morphism for each  \(t \in \bD_\delta\). This proves that \(X_t\) has a big fundamental group for \(t \in \bD_\delta\). \Cref{mainA} is proved.
	
 \end{proof}

 % \bibliography{biblio2}
%  \bibliographystyle{ssmfalpha}

\end{document}